\documentclass[12pt,a4paper,twoside,openright]{book}

\usepackage[inner=32mm,outer=24mm,top=32mm,bottom=32mm]{geometry}



\usepackage{hyperref}
\hypersetup{colorlinks,linkcolor={blue},citecolor={blue},urlcolor={blue}}
\usepackage[T1]{fontenc}
\usepackage[utf8]{inputenc}
\usepackage{amsmath}
\usepackage{amssymb}
\usepackage{pgfplots}
\usepackage{xcolor}
\usepackage{pgfplots}
\usepackage{multirow}
\usepgfplotslibrary{groupplots}
\usetikzlibrary{pgfplots.groupplots}
\usetikzlibrary{fadings}
\usepackage{booktabs}
\usepackage{enumitem}
\usepackage{algorithm}
\usepackage{algorithmicx}
\usepackage{algpseudocode}
\usepackage{graphics}
\usepackage{enumitem}
\usepackage{geometry}
\usepackage{subcaption}
\usepackage{inputenc}
\usepackage{verbatim}

\newlength\figH
\newlength\figW
\newlength\textSize


\usepackage{minitoc}

\usepackage{afterpage}

\usepackage{stmaryrd}
\usepackage{mathrsfs}

\usepackage{amsthm}

\newtheorem{theorem}{Theorem}[chapter]
\newtheorem{definition}{Definition}[chapter]
\newtheorem{lemma}{Lemma}[chapter]
\newtheorem{remark}{Remark}[chapter]
\newtheorem{proposition}{Proposition}[chapter]
\newtheorem{corollary}{Corollary}[chapter]

\newtheorem{ex}{Example}[chapter]

\def\cop#1{\underline{#1}}

\def\ind{{\mathchoice {\rm 1\mskip-4mu l} {\rm 1\mskip-4mu l}
		{\rm 1\mskip-4.5mu l} {\rm 1\mskip-5mu l}}}

\newcommand\suite[1]{\left\{#1;\,n\in\N\right\}}

\setcounter{MaxMatrixCols}{30}

\newcommand\suiten[1]{\left\{#1;\,n\in\N\right\}}
\def\mumin{\mu_{\mbox{\scriptsize{min}}}}
\def\mumax{\mu_{\mbox{\scriptsize{max}}}}
\def\mumini{\mu_{\mbox{\emph{\scriptsize{min}}}}}
\def\mumaxi{\mu_{\mbox{\emph{\scriptsize{max}}}}}

\newcommand\pr[1]{{\mathbb P}\left[#1\right]}
\def\P{{\mathbb P}}  
\newcommand\td[1]{\overline{#1}}
\def\bw{\mathbf w}

\newcommand\gre{\textbf{e}}
\newcommand\grx{\textbf{x}}

\def\esp#1{{\mathbb E}\left[#1\right]}
\def\cop#1{\underline{#1}}

\newcommand{\pae}[1]{\mbox{$\lfloor \kern-1pt #1 \kern-1pt \rfloor$}}
\newcommand{\paep}[1]{\mbox{$\lceil \kern-1pt #1 \kern-1pt \rceil$}}

\newcommand\ccc{\circledcirc}

\def\v{{\--}}
\def\pv{{\not\!\!\--}}
\def\N{{\mathbb N}}
\def\R{{\mathbb R}}
\def\I{{\mathbb I}}
\def\mbS{{\mathbb S}}
\def\mbH{{\mathbb H}}
\def\mbG{{\mathbb G}}
\def\mbX{{\mathbb X}}
\def\mbW{{\mathbb W}}
\def\Z{{\mathbb Z}}
\def\D{{\mathbb D}}
\def\C{{\mathbb C}}

\def\maP{{\mathcal P}}
\def\maE{{\mathcal E}}
\def\maS{{\mathcal S}}
\def\maV{{\mathcal V}}
\def\maH{{\mathcal H}}
\def\maI{{\mathcal I}}
\def\maU{{\mathcal U}}
\def\maF{{\mathcal F}}
\def\maB{{\mathcal B}}
\def\maT{{\mathcal T}}
\def\maJ{{\mathcal J}}
\def\maM{{\mathcal M}}
\def\maN{{\mathcal N}}
\def\maZ{{\mathcal Z}}
\def\maA{{\mathcal A}}

\def\ll{[\![}
\def\rr{]\!]}

\def\V{\mathcal V}
\def\A{\mathcal A}
\def\B{\mathcal B}
\def\bV{\mathbf V}
\def\oV{\overline{\V}}
\def\w{\mathbf w}
\def\E{\mathcal E}

\newcommand{\boundellipse}[3]
{(#1) ellipse (#2 and #3)
}
\def\fcfm{\textsc{fcfm}}

\usepackage[utf8]{inputenc}

%

%


\usepackage{fancyhdr}
\pagestyle{fancy}
\fancyhead[RE]{}
\fancyhead[LO]{}
\fancyfoot[C]{\small\thepage}

\begin{document}
\pagestyle{empty}
\begin{titlepage}
	\begin{figure}[htp]
		\vspace{-0.5cm}
		\begin{subfigure}{.5\textwidth}
			\centering
			\includegraphics[width=.25\linewidth]{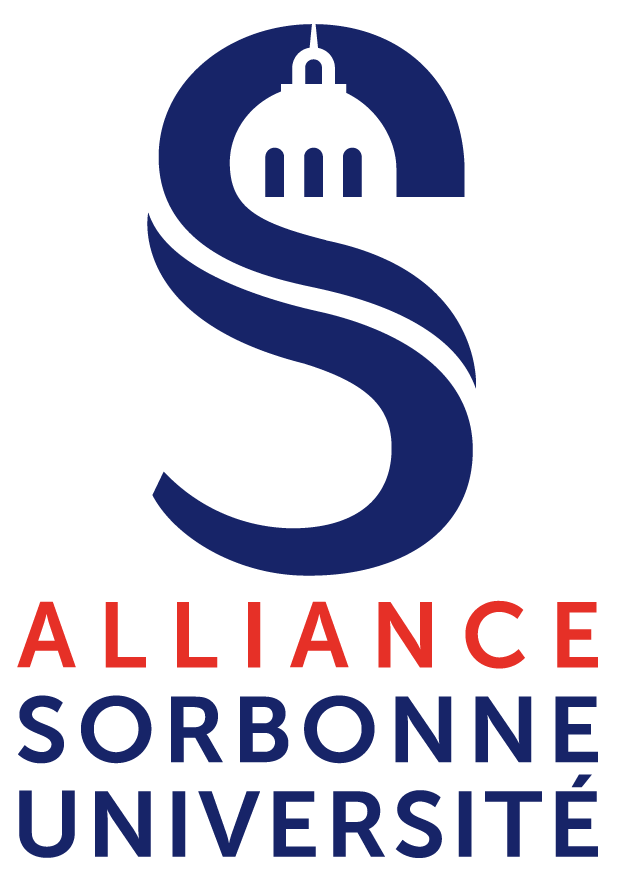}\qquad\qquad\qquad\qquad\qquad\qquad\qquad
		\end{subfigure}
		\begin{subfigure}{.5\textwidth}
			\centering
			\qquad\qquad\includegraphics[width=.7\linewidth]{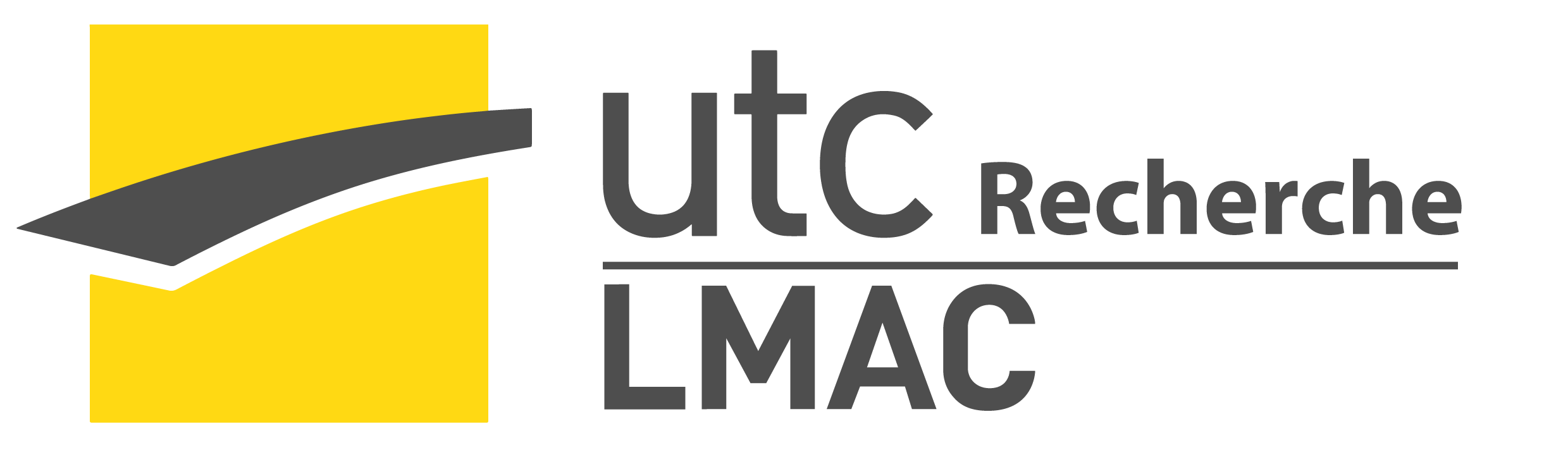}
		\end{subfigure}
	\end{figure}
	\begin{center}
		\vspace{0.7cm}
		
		\textbf{\huge Stochastic matching model on the general graphical structures}
		
		\vspace{1.1cm}
		{\large \textit{A thesis presented for the degree of Doctor of Philosophy}} \\
		\vspace{1.1cm}
		
		\textbf{\Large Youssef RAHME}
		
		\vspace{0.6cm}
		\begin{center}
			{\large April 8, 2021}
		\end{center}
		\vspace{1.1cm}
		\textcolor{black}{\Large Compi\`{e}gne University of Technologie}\\
		\vspace{0.6cm}
		{\large Department of Computer Engineering}\\
		\vspace{0.6cm}
		\textcolor{black}{\large Laboratory of Applied Mathematics of Compi\`{e}gne} \\
		
		\vspace{1.5cm}
		{\large\textbf{Jury Members}}
	\end{center}
	\vspace{0.3cm}
	\begin{center}
		\begin{tabular}{llll}
			\textbf{Supervisor:} &Pr. &MOYAL Pascal & University of Lorraine\\ 
			\\
			\textbf{Reviewers:}&Pr.& GAUJAL Bruno& INRIA Grenoble\\
			&Dr.& DEACONU Madalina& INRIA Nancy\\
			\\
			\textbf{Examiners:}& Pr.& GAYRAUD Ghislaine& Compi\`{e}gne University of Technologie\\
			&Pr.& KHRAIBANI Zaher& Lebanese University {\qquad$\;$ (co-advisor)}\\
			&Dr. &ROBIN Vincent& Compi\`{e}gne University of Technologie \\
			&Dr. &BUKE Burak&  University of Edinburgh\\
			&Dr. &BU$\check{\mbox{S}}$I\'C Ana& INRIA Paris
		\end{tabular}
	\end{center}
\end{titlepage}
\clearpage
\newpage	
\begin{titlepage}
	\begin{figure}[htp]
		\vspace{-0.5cm}
		\begin{subfigure}{.5\textwidth}
			\centering
			\includegraphics[width=.25\linewidth]{logo-sorbone-high.png}\qquad\qquad\qquad\qquad\qquad\qquad\qquad
		\end{subfigure}
		\begin{subfigure}{.5\textwidth}
			\centering
			\qquad\qquad\includegraphics[width=.7\linewidth]{LMAC-logo-high.png}
		\end{subfigure}
	\end{figure}
	\begin{center}
		\vspace{0.7cm}
		
		{\large\textbf{\huge Modèle d'appariement aléatoire sur des structures graphiques générales}}
		
		\vspace{1.1cm}
		{\large{\it Th\`{e}se pr\'{e}sent\'{e}e  pour l'obtention du grade de Docteur}} \\
		
		\vspace{1.1cm}
		
		\textbf{\Large Youssef RAHME}

		\vspace{0.6cm}	
		{\large Avril 8, 2021}\\
		\vspace{0.6cm}
		{\Large Universit\'{e} de Technologie de Compi\`{e}gne}\\
		\vspace{0.6cm}
		{\large Département de Génie Informatique}\\
		\vspace{0.6cm}
		{\large Laboratoire de Math\'{e}matiques Appliquées de Compi\`{e}gne} \\

		\vspace{1.5cm}
		{\large \textbf{Membres du Jury}}
	\end{center} 
	\vspace{0.3cm}
	
	\begin{center}
		\begin{tabular}{llll}
			\textbf{Directeur:} & Pr. & MOYAL Pascal & Université de Lorraine\\ 
			\\
			\textbf{Rapporteur:} & Pr. & GAUJAL Bruno & INRIA Grenoble \\  
			& Dr.& DEACONU Madalina& INRIA Nancy  \\
			\\
			\textbf{Examinateur:} &Pr.& GAYRAUD Ghislaine& Université de Technologie de Compiègne\\
			&Pr. &KHRAIBANI Zaher&Université Libanaise {\qquad \quad (co-encadrant)}\\
			&Dr.& ROBIN Vincent& Université de Technologie de Compiègne \\
			&Dr.& BUKE Burak&  Université de Edinburgh\\
			&Dr.& BU$\check{\mbox{S}}$I\'C Ana&  INRIA Paris
		\end{tabular}
	\end{center}
\end{titlepage}
	\clearpage

\dominitoc
\clearpage
\pagestyle{fancy}
\pagenumbering{roman}
\pagestyle{empty}
\pagenumbering{roman}
\thispagestyle{empty}
\newpage
\addcontentsline{toc}{chapter}{Remerciements}

\chapter*{Acknowledgment}


My thesis is a result of a challenging journey to which many people have contributed and given their support. It is a pleasant task to express my thanks to all those people who made this thesis possible and unforgettable yet a life-changing experience for me. 

First of all, I would like to express my gratitude to the French government for its generosity also for granting me the honor to study at the Compi\`{e}gne University of Technologie (UTC) which is a founding member of Sorbonne University association, and one of the best Universities in France. Moreover, I would also like to thank the Lebanese government and the general directorate of the Lebanese internal security forces to support me  seeking my goal and completing my Ph.D.

I would like to sincerely thank the Laboratory of Applied Mathematics of Compiègne for allowing me to complete my thesis under the best conditions, also, I am grateful for the funding received through `national research agency' ANR to support me.

My deep appreciation goes out to my director, Professor Pascal MOYAL who honored me by agreeing to guide and advise me throughout this work, for he has been a constant source of inspiration and the guide light for all my Endeavours. His expertise was invaluable, I appreciate his kindness, availability to listen and all the confidence he has instilled in me, thank you for everything you have done to help me reach where I am now.

I equally address a greater thanks to Professor Zaher KHRAIBANI who accompanied and supported me throughout this thesis, I am deeply indebted and sincerely grateful to you for your never-failing guidance and encouragement.

I have always been blessed with magnificent friends in my life who kept me going by providing a stimulating and fun-filled environment. Words are short to express my gratitude towards my following friends, Hani EL HAJJAR for hosting me at his house and making me feel so welcomed, my colleagues at university Josephine MERHI BLEIK, Joanna AKROUCHE, and Samer TAOUM who helped me all in numerous ways during my stay in France, my colleagues at work in Lebanon who always provided me with moral support whenever the need arises required, either directly or indirectly. 

The biggest support, motivation, and encouragement a man can receive is form his parents. My father has always given me the courage to dream and try to fulfill challenging dreams and my mother has always taught me the importance of being patient and humble in all stages of life. The expression ``thank you''  does not seem sufficient but it is said with appreciation and respect to my parents, Mr. Elias and Mrs. Mona for their unwavering support, care, love, motivation, understanding, and prayers. I would not have reached this stage in my life without your blessings and assistance. I would also like to extend a huge loving thanks to my brother Gabriel and my sister Galia who always had faith in my abilities and were always by my side.

The best outcome of these past years was finding my soul-mate, life partner, and lovely wife. I married the best girl out there for me! I feel fortunate to have her by my side and there are no words to convey how much I love her. I truly thank my wife Mrs. Saide BAHJA  for giving me strength and patience in my hard times. I could not have completed this without having you and our little girl Tatiana by my side whom I will seek to make her proud.

Thank you all for making it possible for me to complete what I started, for making this dream come true, it has been three years already and I have learned many things and created great memories that will stay with me forever.
\addcontentsline{toc}{chapter}{Publications}
 
\chapter*{Publications}
	\vspace{1.2cm}
\begin{enumerate}
	
\item \textbf{\large Submitted journal articles}
\begin{itemize}
\item {Y. Rahme and P. Moyal (2019).} ``A stochastic matching models on hypergraphs''. To appear in \textit{Advances in Applied Probability} 53.4 (December 2021). ArXiv math.PR/1907.12711, 2019.
	\vspace{0.5cm}
\item {J. Begeot, I. Marcovici, P. Moyal, Y. Rahme (2020).} ``A general stochastic matching model on multigraphs''. To appear in \textit{ALEA}. ArXiv  math.PR/2011.05169, 2020.
\end{itemize}
\vspace{0.7cm}
\item \textbf{\large Preparation of a journal article}
\begin{itemize}
\item {Y. Rahme and P. Moyal (2021).} ``Comparison of models for organ transplant applications''. In the process of submitting.
\end{itemize}
\end{enumerate}
\addcontentsline{toc}{chapter}{Abstract}
\chapter*{Abstract}

Motivated by a wide range of assemble-to-order systems and systems of the collaborative economy applications, we introduce a stochastic matching model on hypergraphs and multigraphs, extending the model introduced by Mairesse and Moyal 2016.\\

In this thesis, the stochastic matching model $(\mbS,\Phi,\mu)$ on general graph structures are defined as follows: given a compatibility general graph structure $\mbS=(\maV,\maS)$ which of a set of nodes denoted by $\maV$ that represent the classes of items and by a set of edges denoted by $\maS$ that allows matching between different classes of items. Items arrive at the system at a random time, by a sequence (assumed to be $i.i.d.$) that consists of different classes of $\maV,$ and request to be matched due to their compatibility according to $\mbS.$  The compatibility by groups of two or more (hypergraphical cases) and by groups of two with possibilities of matching between the items of the same classes (multigraphical cases). The unmatched items are stored in the system and wait for a future compatible item and as soon as they are matched they leave it together. Upon arrival, an item may find several possible matches, the items that leave the system depend on a matching policy $\Phi$ to be specified.\\

 We study the stability of the stochastic matching model on hypergraphs, for different hypergraphical topologies. Then, the stability of the stochastic matching model on multigraphs using the maximal subgraph and minimal blow-up to distinguish the zone of stability.
 \clearpage
\pagestyle{empty}
\addcontentsline{toc}{chapter}{R\'{e}sum\'{e}}
\chapter*{R\'{e}sum\'{e}}


Motivé par des applications à large éventail des systèmes d'assemblage à la commande et des systèmes de l'économie collaborative, nous introduisons un modèle d'appariement aléatoire sur les hypergraphes et sur les multigraphes, étendant le modèle par Mairesse et Moyal 2016.\\

\begin{sloppypar}
Dans cette thèse, le modèle d'appariement aléatoire $(\mbS,\Phi,\mu)$ sur les structures graphiques générales est défini comme suit: étant donné une structure graphique générale de compatibilité $\mbS=(\maV,\maS)$ qui est constituée d'un ensemble de nœuds noté par $\maV$ qui représentent les classes d'éléments et par un ensemble d'arêtes noté par $\maS$ qui permettent d'apparier entre les différentes classes. Les éléments arrivent au système à un moment aléatoire, par une séquence (supposée être $i.i.d.$) constituée de différentes classes de $\maV,$ et demandent d'être appariés selon leur compatibilité dans $\mbS. $ La compatibilité par groupe de deux ou plus (cas hypergraphique) et par groupe de deux avec les possibilités  d'apparier entre les éléments de même classe (cas multigraphique). Les éléments, qui ne sont pas appariés, sont stockés dans le système et en attente d'un futur élément compatible et dès qu'ils sont appariés, ils quittent le système ensemble. À l'arrivée, un élément peut trouver plusieurs d'appariements possibles, les éléments qui quittent le système dépendent d'une politique d'appariement $\Phi$ à spécifier. \\
\end{sloppypar}

\begin{sloppypar}
Nous étudions la stabilité du modèle d'appariement aléatoire sur l'hypergraphe, pour des différentes topologies hypergraphiques puis, la stabilité du modèle d'appariement aléatoire sur les multigraphes en utilisant son sous-graphe maximal et sur-graphe minimal étendu pour distinguer la zone de stabilité.
\end{sloppypar}
\pagestyle{empty}
\listoffigures
\newlist{abbrv}{itemize}{1}
\setlist[abbrv,1]{label=,labelwidth=1in,align=parleft,itemsep=0.1\baselineskip,leftmargin=!}
\addcontentsline{toc}{chapter}{List of Acronyms}
\pagestyle{empty}
\chapter*{List of Acronyms}
\pagestyle{fancy}
\chaptermark{List of Acronyms}
\begin{abbrv}
\item[\textsc{alis}] Assign the Longest Idling Server.
\item[BM]Bipartite Matching graph.
\item[\textsc{cftp}] Coupling From The Past.
\item[CTMC] Continuous-Time Markov Chain.
\item[DTMC] Discrete-Time Markov Chain.
\item[EBM] Extended Bipartite Matching graph.
\item[\textsc{fcfm}] First Come, First Matched.
\item[\textsc{fcfs}] First Come, First Served.
\item[FWLLN] Fluid  Weak Law of Large Numbers. 
\item[GM] General Matching graph.
\item[$i.i.d.$ (IID)] Independent and Identically Distributed.
\item[\textsc{lcfm}] Last Come, First Matched.
\item[\textsc{ml}] Match Longest.
\item[\textsc{ms}] Match Shortest.
\item[\textsc{mw}] Max-Weight.
\item[M/M/1]Discipline represents the queue length in a system having a single server, where arrivals are determined by a Poisson process and job service times have an exponential distribution. 
\item[r.v.] Random variable.
\item[RCLL] Right Continuous and have Limits from the Left everywhere.
\item[SLLN] Strong Law of Large Numbers.


\end{abbrv}

\addcontentsline{toc}{chapter}{List of Symbols}
\pagestyle{empty}

\tableofcontents
\addcontentsline{toc}{chapter}{Introduction}
\pagestyle{empty}

\chapter*{Introduction}
\pagenumbering{arabic}
\pagestyle{fancy}
\addcontentsline{toc}{section}{1. Context and Motivation}
\section*{1. Context and Motivation}
\label{sec:Context and Motivation}

Matching models have recently received a growing interest in the literature of queueing models in which compatibilities between the requests need to be taken into account. 
This is a natural enrichment of service systems in which the requests must be matched, or put in relation, rather than being served. 
Among other fields of applications, this is a natural representation of peer-to-peer networks, interfaces of the collaborative economy (such as car and ride-sharing, dating websites, 
and so on), assemble-to-order systems, job search applications, and healthcare systems (blood banks and organ transplant networks). 
All of these applications share the same common ground: \textit{elements/items/agents} enter a system that is just an interface to put them in relation, and relations are possible only if the ``properties'' (whatever this means) of the elements make them compatible. \\

In \cite{CKW09} (see also \cite{AW11}), a variant of such skill-based systems was introduced, which are now commonly referred to as \textit{Bipartite Matching models} (BM): couples customer/server enter the system at each time point, and customers and servers play symmetrical roles: exactly like customers, servers come and go into the system. Upon arrival, they wait for a compatible customer, and as soon as they find one, leave the system together with it. Otherwise, items remain in the system waiting for compatible arrivals (in particular, there are no service times). These settings are suitable to various fields of applications, among which, blood banks, organ transplants, housing allocation, job search, dating websites, and so on.  In both references, compatible customers and servers are matched according to the {\sc fcfs} `First Come, First Served' service discipline. \\

In \cite{ABMW17}, a subtle dynamic reversibility property is shown, entailing that the stationary state of such systems under {\sc fcfs}, can be obtained in a product form. Moreover, a sub-additivity property is proved, allowing (under stability conditions) the construction of a unique stationary bi-infinite matching of the customers and servers, by a coupling-from-the-past ({\sc cftp}) technique. Interestingly, the product form of the stationary state can then be adapted to various skill-based queueing models as well, and in particular, those applying (various declinations of) the so-called {\sc fcfs-alis} (Assign the Longest Idling Server) service discipline - see e.g. \cite{AW14}, and various extensions of BM models in \cite{AKRW18,BC15,BC17}. \\

In \cite{BGM13}, the settings of  \cite{CKW09,AW11} are generalized to more general service disciplines (termed `matching policies' in this context), and necessary and sufficient conditions for the stability of the system are introduced. 
Moreover, the results in \cite{BGM13} do not assume the independence between the types of the entering customer and the entering server. The system is then called \textit{Extended Bipartite Matching model} (EBM, for short), and suits applications in which independence between the classes of the customers and servers entering simultaneously cannot be assumed. \\

In \cite{MBM18}, a {\sc cftp} result is obtained, showing the existence of a unique bi-infinite matching in various cases for EBM models, and for a broader class of matching policies than {\sc fcfs}, thereby generalizing the results of \cite{ABMW17}.  

\addcontentsline{toc}{section}{2. Problem statement}
\section*{2. Problem statement}
\label{sec:Problem statement}
To model concrete systems, the need then arose to extend these different models. Indeed, in many applications, the assumption of pairwise arrivals may appear somewhat artificial, and it is more realistic to assume that arrivals are simple. Also, all the aforementioned references assume that the compatibility graph is bipartite, namely, there are easily identifiable classes of {\em servers} and classes of {\em customers}. 
For instance, in dating websites, it is a priori not possible to split items into two sets of classes (customers and servers) with no possible matches within those sets. 
In particular, if one considers blood types as a primary compatibility criterion, the compatibility graph between couples is naturally non-bipartite. \\
Motivated by these observations, a variant model was introduced in \cite{MaiMoy16}, in which items arrive one by one and the compatibility graph in general, i.e., not necessarily bipartite: specifically, in this so-called {\em General Matching model} (GM for short), items enter one by one in discrete-time in a buffer, and belong to determinate classes in a finite set $\maV$. Upon each arrival, the class of the incoming item is drawn independently of everything else, from a distribution $\mu$ having full support $\maV$. A connected graph $\mbG$ whose set of nodes is precise $\maV$ determines the compatibility among classes. Then, an incoming item is either immediately matched, if there is a compatible item in the line, or else stored in a buffer. It is the role of the matching policy $\Phi$ to determine the match of the incoming item in case of a multiple choice. Then, the two matched items immediately leave the system forever.

The {\em stability region} of the model, given $\mathbb{G}$ and $\Phi$, is then defined as the set of measures $\mu$ such that the model is positive recurrent.  A necessary condition for the stability $\textsc{Ncond}$ of GM models are provided in \cite{MaiMoy16}. Also, is proven that the matching policy `Match the Longest' has a maximal stability region, that is, the latter necessary condition $\textsc{Ncond}$ is also sufficient (we then say that the latter policy is {\em maximal}).  Further, the model with a complete $p$-partite (separable) graph is also stable for all matching policy $\Phi.$ However, the study of a particular model on a non-separable graph (see \cite{MaiMoy16}, p.14) shows that $\textsc{Ncond}$ is not sufficient in general for non-separable graphs. This raises the question of whether the sufficiency of $\textsc{Ncond}$ is true only for separable graphs. In \cite{MoyPer17} was proved that, except for a particular class of graphs, there always exists a matching policy rendering the stability region strictly smaller than the set of arrival intensities satisfying the necessary condition for stability $\textsc{Ncond}.$

\addcontentsline{toc}{section}{3. Objectives and Contributions}
\section*{3. Objectives and Contributions}
The main purpose of this thesis is to study the long-run stability of stochastic matching models, in the sense defined above, on hypergraphical and mutligraphical compatibility matching structures, and to illustrate the potential applications of these results to concrete settings.

\subsection*{3.1 Hypergraphical compatibility matching structures} 
{Two closest references to the stochastic matching model on hypergraphs are  \cite{GW14} and \cite{NS16}: in both cases, a general matching model is addressed 
	(in continuous time in the former, and discrete-time - allowing batches of arrivals - in the latter) on an hypergraphical matching structure (notice that \cite{NS16} also allows matchings including several items of the same class). \\
	In \cite{GW14} a matching control is introduced, that asymptotically minimizes the holding cost of items in an unstable system. \cite{NS16} introduces an algorithm that is a variant of the ``Primal-dual algorithm'', allowing to essentially optimize a given objective function {\em provided that stability can be achieved}. Then the objective function can incorporate stability (setting utility 0), in a way that stability is achieved by the essentially optimal algorithm, whenever it is achievable at all. Both references allow {\em idling policies}, i.e., scheduling algorithms allowing to perform no matching at all despite the presence of matchable items in the system, to wait for more profitable future matches.  
	Allowing idling policy makes sense in applications such as assemble-to-order systems, advertisement, or operations scheduling, but is much less suitable to kidney transplant networks, in which case the practitioners always perform a transplant whenever one is possible. In this thesis, \textit{all the matching policies we consider are non-idling}, i.e. entering items are always matched right away if this is possible at all. Thus, the model studied in the present thesis is a special case of the model studied in \cite{NS16}, for simple arrivals, no same-class matchings, and non-idling matching policies. \\
	
	Our approach see \cite{RM19}, is in fact, complementary to that in \cite{NS16} and \cite{GW14}: generalizing the approach of \cite{MaiMoy16} to hypergraphs instead of graphs, 
	in this thesis we are mostly concerned with the structural properties of the underlying hypergraph of the matching model, and determine classes of hypergraph for which there does, or does not, exist non-idling policy that can stabilize the system. In a sense, the present work addresses an upstream problem to that of implementing a performant matching algorithm: we provide simple and comprehensive criteria, {\em based only on the structural properties of the considered hypergraph} for the (non)-existence of a stabilizing non-idling policy.}\\

We address the problem of the existence of a steady-state for the system: we formally define the stability region of the system as the set of measures on the set of nodes, rendering the natural Markov chain of the system positive recurrent, for a given compatibility hypergraph and a given matching policy. Also, we assess the form of the stability region of specific stochastic matching models, as a function of the geometry of the underlying hypergraphs. In a nutshell, we show that such systems are not easily stabilizable, by exhibiting wide classes of models having an empty stability region, whatever the non-idling matching policy is. Finally, we provide or give bounds for, the stability region of particular stabilizable systems. 

\subsection*{3.2 Multigraphical compatibility matching structures}
Motivated again by concrete applications, we present a further extension of the GM model. Indeed, in various contexts, among which dating websites and peer-to-peer interfaces, it is natural to assume that items {\em of the same class} can be matched together. 
Hence, the need to generalize the previous line of research to the case where the matching architecture is a {\em multigraph} (a graph admitting {\em self-loops}, that is, edges connecting nodes to themselves), rather than just a graph. 

This generalization is the core of Chapter \ref{chap5:Multigraph} of the present thesis (see \cite{BMMR20}). We show how several stability results of \cite{MaiMoy16,MBM17,JMRS20} can be generalized to the case of a multigraphical matching structure. As is easily seen, the buffer of a matching model on a multigraph is hybrid by essence: nodes admitting self-loops (if any)  admit at most one item in the line, whereas nodes with no self-loops (if any) have unbounded queues. A matching model on a multigraph typically has a larger stability region than the corresponding model on a graph on which all self-loops are erased (the {\em maximal subgraph} of the latter -  see Definition \ref{def:restricted}), but the interplay between self-looped nodes and their non-self-looped neighbors needs to be clearly understood: intuitively, the arrival flows to self-looped nodes appear as auxiliary flows helping their neighboring non-self-looped nodes to stabilize their queues - provided that the arrivals to self-looped nodes don't match too often with one another. 



\addcontentsline{toc}{section}{4. Thesis outline}
\section*{4. Thesis outline}
\label{sec:Thesis outline}
This manuscript comprises two parts: the first lays prerequisites and background of the thesis while the second presents our main contributions. The hierarchy of the report is based on seven chapters as indicated in what follows.\\

Part \ref{part:background} is devoted to present basic knowledge related to our subject, the basic notions, and the literature review.\\

Chapter \ref{chap1:Definitions and Fundamental Concepts} provides the scientific context for our work. Chapter \ref{chap2:Definition of general model} is devoted to present the dynamic of the stochastic matching model on general graph structures, matching policies, and Markov representation of the model.  Chapter \ref{chap2: state of art} presents the state of arts that is devoted to the related work to stochastic matching model. We end this chapter with the positioning of our work compared to others.\\

Part \ref{part:contributions} is organized into four chapters, that consist of the contributions of this thesis and an application.\\

 Chapter \ref{chap4:Hypergraph} provides the first contribution of this thesis, which is the study of the stochastic matching model on hypergraphs, provide necessary conditions of stability then we identify classes of hypergraphs that has an empty stability region. However, we show that stable matching models on hypergraphs exist. To show how stability can be shown in concrete examples, we provide two case studies of simple hypergraphs, that is, complete $3$-uniform hypergraphs, and sub-hypergraphs of the latter where several hyperedges are erased. We finish with the discussion of the results of the chapter. 
 Chapter \ref{chap5:Multigraph} provides the second contribution of this thesis which is the study of the stochastic matching model on multigraph among which, the maximality and the explicit product form of the stationary probability for {\sc fcfm} policy, and the maximality of Max-Weight policies. Also, we provide a few examples to illustrate our main results.  We finish with the discussion of the results of the chapter. 
 Chapter \ref{chap6:FluidLimits} is devoted to developing the stability of particular cases for multigraph and hypergraph using the fluid limits techniques rather than the Lyapunov-Foster Theorem. 
 Chapter \ref{chap7:Apllication} present an application that compare the models for organ transplantation concerned to compatibilities of blood types that illustrate the importance of studied the stability of the model for complete 3-uniform hypergraphs instead of studied the stability of the complete 3-partite graphs according to some distributions.\\
 
 A general conclusion recapitulating the basic concepts and contributions of the thesis, as well as future work and perspectives, are given at last.


\clearpage
\pagestyle{empty}
\part{Background}\label{part:background}
\pagestyle{empty}
\clearpage
\chapter{Definitions and Fundamental Concepts}\label{chap1:Definitions and Fundamental Concepts}
\pagestyle{fancy}

In this chapter, we introduce the main definitions and fundamental concepts used in this work. First, we start with some preliminaries, we introduce the \textit{general graph structures}. Then,  we present the definitions and specific properties of \textit{graph structures}, \textit{hypergaph structures} and \textit{multigraph structures}.\\

In this introductory chapter, we provide an intuitive background to the material that will be used in the coming chapters. 
 

\section{Preliminaries}
\label{subsec:preliminary}
\subsection{Classical notations}
We adopt the usual $\R$, $\Z,$ and $\N$ notation for the sets of real numbers, of integers and natural integers, respectively. We let $\R^+$ and $\Z^+$ be the non-negative real numbers and non-negative integers respectively. Also, we denote by $\R^{++}$ and $\Z^{++}$ (or $\N^+$) the strictly positive real numbers and strictly positive integers, respectively. For $y\in\R$, we denote by $\lfloor y\rfloor$ is the integer part of $y.$ For $a$ and $b$ in $\N$, we denote by $\llbracket a,b \rrbracket$ the integer interval $[a,b]\cap \N$.  We let $a\wedge b$ and $a\vee b$ denote the minimum and the maximum of two numbers $a,b\in\R$ respectively.

Let $A$ be a finite set. The cardinality of $A$ is denoted by $|A|$  and for any $k \in\Z^{++},\;A^k$ denotes the set of $k$-dimensional vectors with components in $A$. For any $i\in \llbracket 1,q \rrbracket$, let $\gre_i$ denote the vector of $\mathbb{N}^{q}$ of components $(\gre_i)_j=\delta_{ij},\;j\in \llbracket 1,q \rrbracket$.  
Let $q\in \N^+$. The null vector of $\mathbb{N}^{q}$ is denoted by $\mathbf 0$. The norm of any vector $u \in \N^q$ is denoted by $\parallel u \parallel =\sum_{i=1}^{q} u_i$.  For any subset $J \subset \llbracket 1,k \rrbracket$ and $x \in \R^k$, we use the notation $x_J$ for the restriction of $x$ to its coordinates corresponding to the indices of $J$.

\subsection{Aphabet and words}
An alphabet is a finite non-empty set denoted by $A.$ A word form is writing the numerical/number as you would say it in words. We let $A^*$ denote the free monoid associated with $A$, i.e., the set of finite words over the alphabet $A$. 
The length of a word $w\in A^*$ is denoted by $|w|$. We write any word $w\in A^*$ as $w=w(1)w(2)...w(|w|)$. As a convention, let us denote by {\bf 0} the empty word.
We denote, for any $a\in A$, $|w|_a$ the number of occurrences of the letter $a$ in the word $w$. 
Having set an ordering on $A$, and denoting by $1,2,...,|A|$ the elements of $A$ in increasing order, the commutative image of a word $w\in A$ is the $\N^{|A|}$-valued vector $[w]$ defined by 
$[w]=\left(|w|_1,...,|w|_{|A|}\right),$ i.e., the vector whose $i$-th coordinate is the number of occurrences of the letter $i$ in the word $w$. The concatenation of $k$ words $w^1,w^2,\dots,w^k$ of $A^*$, that is, the word $w$ in which appear successively from left to right, the words $w^1, w^2, \dots, w^k$, is denoted by $w=w^1w^2\dots w^k$. Also, for any $w\in A^*$ of length $|w|=q$, $(w=w_1w_2\cdots w_q),$ and any $i,j,\cdots,k\in\llbracket 1,q \rrbracket,$ we denote by $w_{[i,j,...,k]}$ the word of length $|w|-|\{i,j,...,k\}|$ obtained from $w$ by deleting its $i$-th, $j$-th, $\cdots,$ and $k$-th letters. For any integer $q,$ the vectors of $\N^q$ are denoted as $x=\left(x(1),\cdots,x(q)\right),$ and denoted $\bw=(w(1),\dots,w(q)).$ Define for any subset $B$ of $\maV$, $x(B)$ to be the class-content of elements of $B$ as $x(B) = \sum_{i\in B}x(i).$
\subsection{Probability}
\begin{sloppypar}
All the random variables (r.v.'s, for short) are defined on a common probability space $(\Omega,\maF, \mathbb{P})$. Given a finite set $S$, we denote by $\mathscr M(S)$ the set of probability measures on $S$ having $S$ as exact support. 
Denote by $\bar S$ the complement set of $S$ (within a set of reference that is fixed by the context).
\end{sloppypar}

For an interval $I\subset [0,\infty),$ let $\D^d(I)$ denote the space of $\R^d$-valued functions on $I$ that are right continuous and have limits from the left everywhere denoted by `RCLL', given with the standard Skorohod $J_1$ topology [4]. To simplify notation,
we write, e.g., $\D^d(a, b):=\D^d((a, b))$, and $\D(I):=\D^1(I).$ However, for the convergence in $\D^d$ that holds over an arbitrary compact subinterval of $[0,\infty)$ we omit the interval from the notation. 


We write $\Rightarrow$ to denote convergence in distribution and denote $\{Y^n;\, n \in \Z^{++}\}$ the sequence of real-valued random variables. For any $M > 0,$ if $P(Y^n > M) \rightarrow 1$ as $n \rightarrow\infty,$ then we write $Y^n \Rightarrow \infty$. Denote by $\bar{Y}^n\; :=\; Y^n/n$ the fluid-scaled version of a sequence of stochastic processes $\{Y^n;\,n \in \Z^{++}\}.$\\

In section \ref{sec:generalGraphStructure} below, we present the specific properties of general graph structures.

\section{General graph structures}
\label{sec:generalGraphStructure}
We consider different types of general graph structures $\mbS$ defined  as a couple of $(\maV,\mathcal S),$ where:
\begin{itemize}
	\item The finite set $\maV$ is the set of nodes (vertices) of $\mbS$. We let $q(\mbS)$ be the cardinality of $\maV$, and the general graph structures are of order $q(\mbS)$. 
	\item A finite set $\mathcal S:=\left\{S_1,...,S_{m(\mbS)}\right\}$
	of subsets of  $\maV$ such that $\bigcup_{i=1}^{m(\mbS)}S_i=\maV,$ whose elements are called edges of $\mbS$ (in case of hypergraphical is called hyperedges).   
\end{itemize} 
  Whenever no ambiguity is possible we denote the general graph structures by \textbf{matching structures}, and we often write $q:=q(\mbS)$, $m:=m(\mbS)$. The degree of a node $i\in \maV$ is the number of edges $i$ belongs to, i.e., $d(i)=\sum_{\ell=1}^{m(\mbS)}\ind_{S_{\ell}}(i)$. If there exists a constant $d$ such that $d(i)=d$ for any $i\in\maV$, then $\mathbb S$ is said $d$-{regular}.

For any set $A \subset \maV$, we denote 
\begin{equation}
\label{eq:defSA}
\maS(A) = \left\{S \in \maS\;:\; S \cap A\ne \emptyset\right\},
\end{equation}
i.e., the set of edges that intersects with $A$. With some abuse, for any node $i\in\maV$, we write $\maS(i):=\maS(\{i\})$. 
\begin{definition}
	We say that $I \subset \maV$ is an {\em independent set} of $\,\mbS$ if $I$ does not include any edge of $\,\mbS$, i.e, for any 
	$S\in \maS$, $S\cap \bar I \ne \emptyset$. We also let $\mathbb I(\mbS)$ be the set of all independent sets of $\,\mbS$. 
\end{definition}
An independent set is said \textit{maximal} if it is not strictly included in another independent set.

\begin{definition}
	\label{def:transverse}
	A set $T\subset \maV$ is a {transversal} of	$\,\mathbb{S}$ if it meets all its edges, that is, $T\cap S \neq\emptyset, \mbox{ for any } S\in \maS.$ 
	The set of transversals of $\,\mbS$ is denoted by $\mathcal T(\mbS)$. 
	A transversal $T$ is said minimal if it is of minimal cardinality among all transversals of $\,\mbS$. 
	The transversal number of the matching structures $\mbS$ is the cardinality of its minimal transversals. It is denoted $\tau(\mbS)$. 
\end{definition}


 Different kinds of matching structures such as a \textit{graph}, \textit{hypergraph} and \textit{multigraph} in which each of them has specific properties (see Figure \ref{fig:graphHyperMulti}).

{\scriptsize \begin{figure}[htp]
	\begin{center}
		\def\fourthellip{(-2.8, 2) ellipse [x radius=2.cm, y radius=0.9cm, rotate=90]}
		\def\fifthellip{(-5.2, 2) ellipse [x radius=2.cm, y radius=0.9cm, rotate=90]}
		\def\firstellip{(-4, 2) ellipse [x radius=2.05cm, y radius=1cm, rotate=90]}
		\def\secondellip{(-4, 3.38) ellipse [x radius=2.6cm, y radius=0.7cm, rotate=-180]}
		\def\thirdellip{(-4, 0.62) ellipse [x radius=2.6cm, y radius=0.7cm, rotate=-180]}
		\begin{tikzpicture}
		
		\fill (-10,0) circle (2pt)node[above]{3};
		\fill (-8,0) circle (2pt)node[above]{4};
		\fill (-9,1.5) circle (2pt)node[right]{2};
		\fill (-9,3) circle (2pt)node[above]{1};

		\draw[-] (-8,0) -- (-9,1.5);
		\draw[-] (-8,0) -- (-10,0);
		\draw[-] (-10,0) -- (-9,1.5);
		\draw[-] (-9,1.5) -- (-9,3);
		
		\filldraw (-2.7,3.5) circle (2pt) node [below] {8};
		\filldraw (-4,3.5) circle (2pt) node [below] {7};
		\filldraw (-5.3,3.5) circle (2pt) node [below] {6};
		
		\filldraw (-3.5,2) circle (2pt) node [right] {5};
		\filldraw (-4.5,2) circle (2pt) node [left] {4};
		
		\filldraw (-5.3,0.5) circle (2pt) node [above] {1};
		\filldraw (-4,0.5) circle (2pt) node [above] {2};
		\filldraw (-2.7,0.5) circle (2pt) node [above] {3};

		\draw \firstellip {};
		
		\draw \secondellip {};
		
		\draw \thirdellip {};
		\draw \fourthellip {};
		\draw \fifthellip {};

		\fill (1,3) circle (2pt)node[above]{1};
		\fill (1,1.5) circle (2pt)node[right]{2};
		\fill (0,0) circle (2pt)node[above]{3};
		\fill (2,0) circle (2pt)node[above]{4};

		\draw[-] (1,3) -- (1,1.5);
		\draw[-] (1,1.5) -- (0,0);
		\draw[-] (1,1.5) -- (2,0);
		\draw[-] (0,0) -- (2,0);
		
		\draw[thick,->] (1,1.5) to [out=200,in=110,distance=15mm] (1,1.5);
		\end{tikzpicture}
		\caption{Left: Graph. Middle: Hypergraph. Right: Multigraph.}
		\label{fig:graphHyperMulti}
	\end{center}
\end{figure}
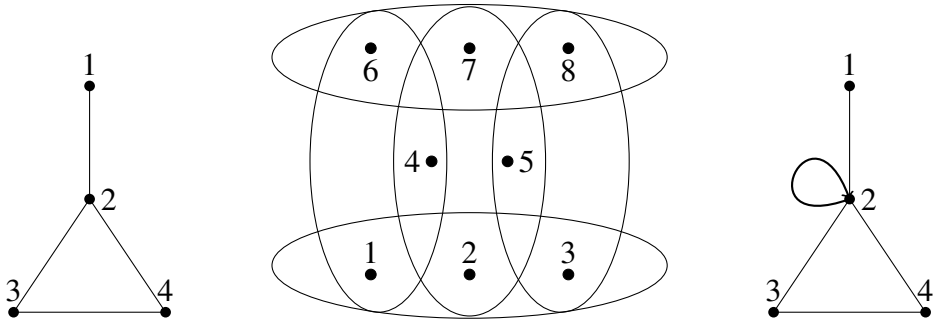
}
\section{Graphs}
	\label{sec:Graph}
In this section, we consider that $\mbS$ be a graph under the form $\mathbb G=(\maV,\maE).$  For easy reference, let us introduce the basics that will be used in this thesis. 
 
 A (simple) graph $\mathbb G$ is defined  as a couple $(\maV,\maE).$ Two vertices $u,v\in\maV$ are said to be \textit{adjacent}, if there is an edge between $u$ and $v$. We write $u\v v$ (or $v\v u$) for $\{u,v\}\in\mathcal E$ and $u \pv v$ (or $v\pv u$) else. The \textit{neighborhood} of a vertex $v$ is the subgraph of $\mathbb{G}$ induced by all vertices adjacent to $v.$
 
 As the equation (\ref{eq:defSA}), and specifically for any graph $\mathbb G=(\maV,\mathcal E)$ and any $U\subset\maV,$ we denote    
	\[\mathcal E(U):=\left\lbrace v\in\maV\;:\;\exists u\in U, u\v v\right\rbrace,\]
the neighborhood of $U$, and for $u\in \maV$, we write for short $\maE(u)=\maE(\{u\})$. The  {\it degree} of a vertex is the number of edges connecting it.
 A {\it walks} is a way of getting
	from one vertex to another, and consists of a sequence of edges, one following after another. A walk in which no vertex appears more than once is called a {\it path}.  A \textit{cycle} is a non-empty trail in which the only repeated vertices are the first and last vertices. For example, given the graph depicted in Figure \ref{fig:graphHyperMulti} we have, 1 —> 2 —> 3 is a path of length 2 and 1 —> 2 —> 3 —> 4 —> 2 is a walk of length 4. A walk of the form 2 —> 3 —> 4 —> 2 is called a {\it cycle}.\\
	 A \textit{chain} is a sequence of vertices from one vertex to another using the edges. A chain is \textit{closed} if the first and last vertex are the same. A graph is called \textit{connected} if there is a chain between every pair of vertices in the
 graph.
	
	 Throughout the presentation of this thesis all considered graphs are simple and connected.
	 
	
	\begin{definition}
		\label{def:Cycle}		
A cycle or circular graph is a graph that consists of a single cycle, or in other words, a number of vertices (at least 3) connected in a closed chain. The cycle graph with $q$ vertices is called $C_q$. The number of vertices in $C_q$ equals the number of edges, and every vertex has a degree 2; that is, every vertex has exactly two edges incident with it.
\end{definition}
		A cycle with an even number of vertices is called an {\em even cycle}; a cycle with an odd number of vertices is called an {\em odd cycle}. For example, in Figure \ref{fig:Graph.q-partiteAndComplementaire} the graph dedicated on the left is an odd cycle $C_3$.


\begin{definition}
\label{def:CompleteGraph}
A graph, in which each pair of distinct vertices is adjacent, is a {\it complete
graph} i.e., $(\forall i,j\in\maV,\;i\v j$). We denote the complete graph on $q$ vertices by $K_q$ and it has $q(q-1)/2$ edges. 
\end{definition}

\begin{definition}
\label{def:bipartiteGraphe}
A {\it $k$-partite} graph is a graph whose vertices are or can be partitioned into $k$ different independent sets. If $k=2,$ we say that the graph is a bipartite graph.
A {\it complete $k$-partite} graph (also its called separable graph in \cite{MaiMoy16}) is a k-partite graph in which there is an edge between every pair of vertices from different independent sets.
\end{definition}

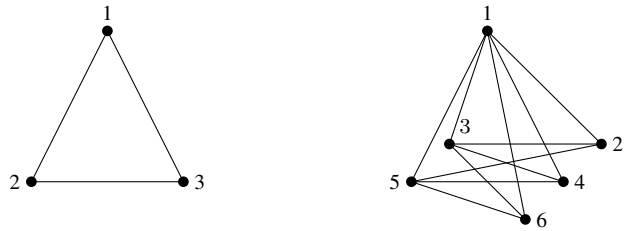
\begin{figure}[htb]
	\begin{center}
		\begin{tikzpicture}
		
		\fill (1,2) circle (2pt)node[above]{\scriptsize{1}};
		\fill (0,0) circle (2pt)node[left]{\scriptsize{5}};
		\fill (0.5,0.5) circle (2pt)node[above]{\scriptsize{$\;\;\;\;\;3$}};
		\fill (2,0) circle (2pt)node[right]{\scriptsize{4}};
		\fill (2.5,0.5) circle (2pt)node[right]{\scriptsize{2}};
		\fill (1.5,-0.5) circle (2pt)node[right]{\scriptsize{6}};
		
		\draw[-] (2.5,0.5) -- (1,2);
		\draw[-] (0.5,0.5) -- (1,2);
		\draw[-] (2,0) -- (1,2);
		\draw[-] (0,0) -- (1,2);
		\draw[-] (0,0) -- (2,0);
		\draw[-] (0,0) -- (2.5,0.5);
		\draw[-] (0.5,0.5) -- (2,0);
		\draw[-] (0.5,0.5) -- (2.5,0.5);
		\draw[-] (1.5,-0.5) -- (0.5,0.5);
		\draw[-] (1.5,-0.5) -- (0,0);
		\draw[-] (1.5,-0.5) -- (1,2);

		\fill (-4,2) circle (2pt)node[above]{\scriptsize{1}};
		\fill (-5,0) circle (2pt)node[left]{\scriptsize{2}};
		\fill (-3,0) circle (2pt)node[right]{\scriptsize{3}};

		\draw[-] (-4,2) -- (-3,0);
		\draw[-] (-4,2) -- (-5,0);
		\draw[-] (-5,0) -- (-3,0);

		\end{tikzpicture}
		\caption{Complete $3$-partite graphs.}
		\label{fig:Graph.q-partiteAndComplementaire}
	\end{center}
\end{figure}
\begin{ex}
	\label{ex:DefGraphKPartite}\rm
	The two graphs depicted in Figure \ref{fig:Graph.q-partiteAndComplementaire} are complete 3-partite graphs. The graph on the left is $K_3$ (i.e., for any two vertices $i,j$ we have $i\v j).$ The independent sets are $I_1=\{1\},\;I_2=\{2\}$ and $I_3=\{3\}.$ However, the graph on the right is not a complete graph. The independent sets are $I_1=\{1\},\;I_2=\{2,4,6\}$ and $I_3=\{3,5\}.$
	
\end{ex}

In the Figure \ref{fig:N-NN-W Graph}, we present the famous types of bipartite graph, such as, `\textbf{N}' graph, `\textbf{NN}' graph and `\textbf{W}' graph.
\begin{figure}[htp]
	\begin{center}
		\begin{tikzpicture}
		
		\fill (1,2) circle (2pt)node[right]{\scriptsize{$1^\prime$}};
		\fill (1,0) circle (2pt)node[right]{\scriptsize{1}};
		\fill (3,0) circle (2pt)node[right]{\scriptsize{2}};
		\fill (3,2) circle (2pt)node[right]{\scriptsize{$2^\prime$}};
		
		\fill (5,0) circle (2pt)node[right]{\scriptsize{1}};
		\fill (5,2) circle (2pt)node[right]{\scriptsize{$1^\prime$}};	
		\fill (7,0) circle (2pt)node[right]{\scriptsize{2}};
		\fill (7,2) circle (2pt)node[right]{\scriptsize{$2^\prime$}};
		\fill (9,0) circle (2pt)node[right]{\scriptsize{3}};
		\fill (9,2) circle (2pt)node[right]{\scriptsize{$3^\prime$}};

		\fill (11,2) circle (2pt)node[right]{\scriptsize{$1^\prime$}};	
		\fill (12,0) circle (2pt)node[right]{\scriptsize{1}};
		\fill (13,2) circle (2pt)node[right]{\scriptsize{$2^\prime$}};
		\fill (14,0) circle (2pt)node[right]{\scriptsize{2}};
		\fill (15,2) circle (2pt)node[right]{\scriptsize{$3^\prime$}};

		
		\draw[-] (1,0) -- (1,2);
		\draw[-] (1,2) -- (3,0);
		\draw[-] (3,0) -- (3,2);
		
		\draw[-] (5,0) -- (5,2);
		\draw[-] (5,2) -- (7,0);
		\draw[-] (7,0) -- (7,2);
		\draw[-] (7,2) -- (9,0);
		\draw[-] (9,0) -- (9,2);
		
		\draw[-] (11,2) -- (12,0);
		\draw[-] (12,0) -- (13,2);
		\draw[-] (13,2) -- (14,0);
		\draw[-] (14,0) -- (15,2);

		\end{tikzpicture}
		\caption{Left: `\textbf{N}' graph. Middle: `\textbf{NN}' graph. Right: `\textbf{W}' graph.}
		\label{fig:N-NN-W Graph}
	\end{center}
\end{figure}
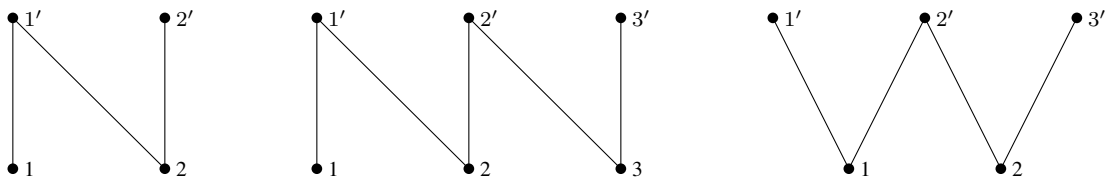



\section{Hypergraphs}
\label{subsec:prelimhypergraphs}
In this section we consider that $\mbS$ be a hypergraph under the form $\mbH=(\maV,\maH).$ For easy reference, let us first introduce the basics of hypergraph theory will be used in this thesis. \\

A throughout presentation of the topic can be found e.g., in \cite{Ber89}. \\

	A hypergraph $\mathbb H$ is defined as a couple ($\maV$,$\maH$), where $\maV$ is a finite set of nodes of $\mathbb H$ and  $\maH:=\left\{H_1,...,H_{m(\mbH)}\right\}$ is a finite set whose elements are called \textit{hyperedges} of $\mbH$. 
	
	We say that the hypergraph is {\it simple} (or a {Sperner family}) if $H_i\subset H_j$ implies $i=j$ for all $i,j \in \llbracket 1,m(\mbH) \rrbracket$, i.e.,  
	no hyperedge is included in another one (if not, say that the hypergraph is {\it multiple} hypergraphs) see Figure \ref{fig:completeHyper}. We assume hereafter that all hypergraphs are simple.  
	A {\it subhypergraph} of $\mbH$ is a hypergraph $\mbH'=(V,\maH')$ such that $\maH' \subset \maH$. 

\begin{definition}\label{Def:rankdegree}
	Let $\mathbb H=(\maV,\maH)$ be a hypergraph. 
	The {rank} of $\mathbb H$ is the largest size of a hyperedge, i.e., the integer $r(\mathbb H)=\max_{j\in\llbracket 1,m(\mbH)\rrbracket}|H_j|$; 
	the {anti-rank} of $\mathbb H$ is defined as $a(\mathbb H)=\min_{j\in\llbracket 1,m(\mbH)\rrbracket}|H_j|$, i.e., the smallest size of a hyperedge. 
	If there exists a constant $r$ such that $r(\mathbb H)= a(\mathbb H)=r$, then $\mathbb H$ is said $r$-{uniform}. 
	
\end{definition}
\begin{remark}
\label{rq:Hyper2UnifoIsGra}\rm
As is easily seen, any $2$-uniform hypergraph is a graph whose edges are the elements of $\maH$, and any simple, connected hypergraph contains no isolated node, i.e., has 
anti-rank at least 2. 
\end{remark}

\begin{figure}[!h]
	\begin{center}
		\def\firstellip{(0.15, 2.5) ellipse [x radius=4cm, y radius=0.8cm, rotate=180]}
		\def\secondellip{(0.15, 1.6cm) ellipse [x radius=4cm, y radius=0.8cm, rotate=180]} 
		\def\thirdellip{(0.8, 1.7) ellipse [x radius=3cm, y radius=1.3cm, rotate=90]} 
		\def\fourthellip{(-0.8, 1.7cm) ellipse [x radius=3cm, y
			radius=1.3cm, rotate=-90]}	
		
		\def\fifthellip{(8, 2) ellipse [x radius=3cm, y radius=1.5cm, rotate=-180]}
		\def\sixthellip{(8.3, 2) ellipse [x radius=2cm, y radius=1cm, rotate=180]}
		\begin{tikzpicture}
		\draw \firstellip {};
		
		\draw \secondellip {};
		
		\draw \thirdellip {};
		
		\draw \fourthellip {};
		
		\filldraw 
		(1.8,2) circle (2pt) node [left] {$\;4$};
		\filldraw 
		(0,2.8) circle (2pt) node [right] {$1$};
		\filldraw 
		(-1.8,2) circle (2pt) node [right] {$\;3$};
		\filldraw 
		(0,1.2) circle (2pt) node [right] {$2$};
		
		\draw \fifthellip {};
		
		\draw \sixthellip {};

		\fill (8.1,2) circle (2pt)node[above]{3};
		\fill (9.5,2) circle (2pt)node[above]{2};
		\fill (6.7,2) circle (2pt)node[above]{4};
		\fill (6,2.8) circle (2pt)node[right]{5};
		
		\fill (6,1.3) circle (2pt)node[right]{1};
		
		\end{tikzpicture} 
		\caption{\label{fig:completeHyper} Left: Complete $3$-uniform hypergraph of order $4.$ Right: multiple hypergraph.}
	\end{center}
\end{figure}
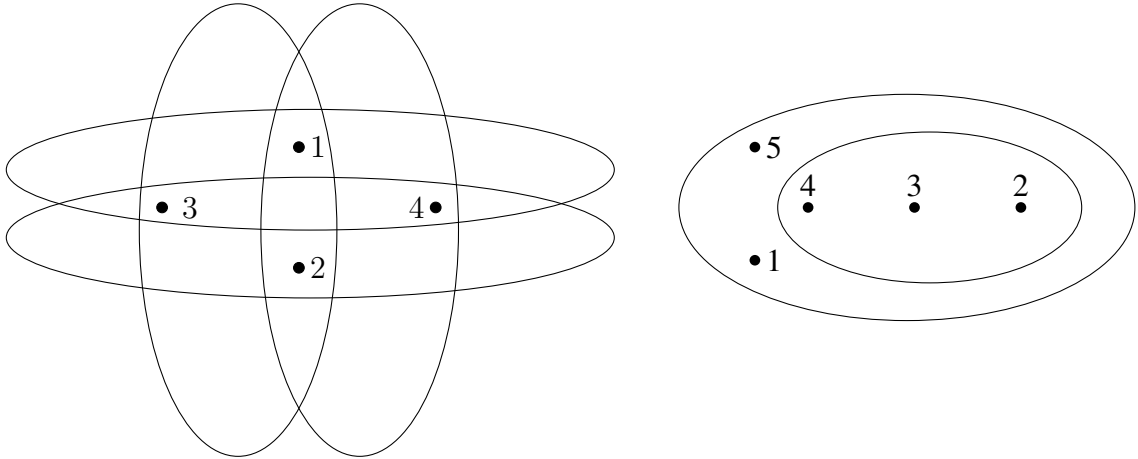
\begin{ex}
	\label{ex:completeHyper}\rm
Consider the structure depicted in Figure \ref{fig:completeHyper} (left), it represents a hypergraph $\mathbb{H}=(\maV,\maH)$ with $\maV=\{1,2,3,4\}$ and $\maH=\left\lbrace\{1,2,3\},\{1,2,4\},\{1,3,4\},\{2,3,4\}\right\rbrace$. The cardinal of $\maV$ is equal to 4, then $\mbH$ is of order 4; it is simple because no hyperedges is included in another one; it is $3$-uniform since all hyperedges are of cardinality $3$, 
	and $3$-regular, because all nodes are of degree 3 (they all belong to exactly 3 hyperedges). As all hyperedges of cardinality $3$ appear 
	in $\maH$, this hypergraph is said {\em Complete $3$-uniform of order 4}. 
\end{ex}

\begin{definition}\label{Def:GraphRepresentatif}
	The {representative graph} of a hypergraph $\mathbb H=(\maV,\maH)$ is the graph $L(\mathbb H)=(\maH,\mathcal E)$ whose nodes are the elements of $\maH$, 
	and such that $(H_i,H_j) \in \mathcal E$ (i.e., $H_i$ and $H_j$ share an edge in the graph) if and only if $H_i\cap H_j\neq\emptyset$. 
	The hypergraph $\mathbb H$ is said {connected} if $L(\mathbb H)$ is connected. 
\end{definition} 

\begin{figure}[htp!]
	\centering
	\includegraphics[width=.45\linewidth]{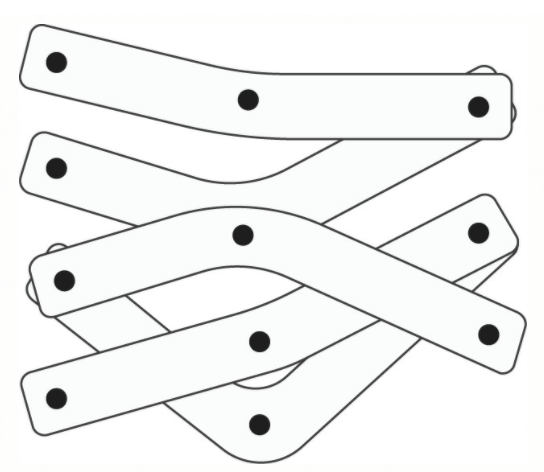}
	\caption{A 3-uniform 3-partite hypergraph.}
	\label{fig:3-uniform3-partite}
\end{figure}
\begin{definition}
	{An $r$-uniform $(r\geq 2)$ hypergraph $\mathbb{H}=(\maV,\maH)$ is said to be $r$-partite if there exists a partition $V_1, V_2,\cdots\,, V_r$ of $\maV$ such that every hyperedge in 
		$\mbH$ meets each of the $V_i$'s at precisely one vertex, i.e., for any $H\in\maH$ and any $i\le r$, $\left|H\cap V_i\right|=1.$ A 3-uniform 3-partite hypergraph depicted on Figure \ref{fig:3-uniform3-partite}.} 
	With some abuse, we say that an $r$-uniform hypergraph $\mbH$ is $r$-uniform bipartite, if there exists a partition $V_1,V_2$ of $\maV$ such that for any $H\in\maH$, $|H\cap V_1|=1$ and $|H\cap V_2|=r-1.$ 
\end{definition}

{
	\begin{remark}
		\label{remark:bipartite}
		Notice, first, that in the case $r=2$, $\mbH$ being $2$-partite means exactly that it is bipartite. Second, any $2$-uniform bipartite hypergraph cannot be $2$-partite 
		unless it is a bipartite graph.
\end{remark}}

\begin{definition}
	A hypergraph $\mbH=(\maV,\maH)$ satisfies Hall's condition if $|V_2| \geq |V_1|$ for any disjoint subsets $V_2$ and $V_1$ of $\maV$ satisfying 
	$|H\cap V_2|\geq |H\cap V_1|$ for all hyperedges $H\in\maH$.
	 
\end{definition}

\begin{ex}\rm
	Consider a 4-uniform hypergraph $\mbH=(\maV,\maH)$ such that $\maV=\{1,2,3,4,5\}$ and $\maH=\left\lbrace\{1,2,4,5\},\{1,3,4,5\},\{2,3,4,5\}\right\rbrace.$ There exists a partition of $\maV$ into two disjoint sets $V_1=\{1,2,3\}$ and $V_2=\{4,5\}$ such that $|V_2|<|V_1|$ and for any $H\in\maH,$ we have $|H\cap V_2|\geq |H\cap V_1|.$ Then $V_1$ and $V_2$ violating Hall's condition.
\end{ex}
\begin{figure}[htp]
	\begin{center}
		\def\tfourthellip{(0, 3.6cm) ellipse [x radius=2.5cm, y radius=0.7cm, rotate=180]} 
		\def\firstellip{(1.7, 3.1) ellipse [x radius=2.5cm, y radius=0.85cm, rotate=-33]}
		\def\secondellip{(2.9, 1.8cm) ellipse [x radius=2.5cm, y radius=0.8cm, rotate=-58]} 
		\def\thirdellip{(3.55, 0.05) ellipse [x radius=2.5cm, y radius=0.65cm, rotate=-90]}
		\def\fourthellip{(3.1, -1.8cm) ellipse [x radius=2.5cm, y radius=0.7cm, rotate=60]} 
		\def\sfirstellip{(1.7, -3.1) ellipse [x radius=2.5cm, y radius=0.85cm, rotate=33]}
		
		\def\ssecondellip{(0, -3.6cm) ellipse [x radius=2.5cm, y radius=0.7cm, rotate=180]} 
		\def\sthirdellip{(-1.7, 3.1) ellipse [x radius=2.5cm, y radius=0.85cm, rotate=33]}
		\def\sfourthellip{(-2.9, 1.8cm) ellipse [x radius=2.5cm, y radius=0.8cm, rotate=58]} 
		\def\tfirstellip{(-3.55, 0.05) ellipse [x radius=2.5cm, y radius=0.65cm, rotate=90]}
		\def\tsecondellip{(-3.1, -1.8cm) ellipse [x radius=2.5cm, y radius=0.7cm, rotate=-60]} 
		\def\tthirdellip{(-1.7, -3.1) ellipse [x radius=2.5cm, y radius=0.85cm, rotate=-33]}

		\def\ytfourthellip{(11, 2.2cm) ellipse [x radius=3.3cm, y radius=2cm, rotate=180]} 
		\def\ysecondellip{(13, 1cm) ellipse [x radius=3.3cm, y radius=2cm, rotate=-60]} 
		\def\yfourthellip{(13, -1cm) ellipse [x radius=3.3cm, y radius=2cm, rotate=60]} 
		
		\def\yssecondellip{(11, -2.2cm) ellipse [x radius=3.3cm, y radius=2cm, rotate=180]} 
		\def\ysfourthellip{(9, 1cm) ellipse [x radius=3.3cm, y radius=2cm, rotate=60]} 
		\def\ytsecondellip{(9, -1cm) ellipse [x radius=3.3cm, y radius=2cm, rotate=-60]}

		\begin{tikzpicture}[thick, scale=0.6]
		\filldraw (-0*360/12:4) circle (3pt) node [above]{$$};
		\filldraw (-1*360/12:4) circle (3pt) node [above]{$$};
		\filldraw (-2*360/12:4) circle (3pt) node [above]{$$};
		\filldraw (-3*360/12:4) circle (3pt) node [above]{$$};
		\filldraw (-4*360/12:4) circle (3pt) node [above]{$$};
		\filldraw (-5*360/12:4) circle (3pt) node [above]{$$};
		\filldraw (-6*360/12:4) circle (3pt) node [above]{$$};
		\filldraw (-7*360/12:4) circle (3pt) node [above]{$$};
		\filldraw (-8*360/12:4) circle (3pt) node [above]{$$};
		\filldraw (-9*360/12:4) circle (3pt) node [above]{$$};
		\filldraw (-10*360/12:4) circle (3pt) node [above]{$$};
		\filldraw (-11*360/12:4) circle (3pt) node [above]{$$};
		\draw \firstellip node [label={[xshift=1.5cm, yshift=0.8cm]$$}] {};
		\draw \secondellip node [label={[xshift=2.2cm, yshift=0cm]$$}] {};
		\draw \thirdellip node [label={[xshift=2.0cm, yshift=-0.8cm]$$}] {};
		\draw \fourthellip node [label={[xshift=2.1cm, yshift=-0.6cm]$$}] {};
		\draw \sfirstellip node [label={[xshift=2.1cm, yshift=-0.8cm]$$}] {};
		\draw \ssecondellip node [label={[xshift=0cm, yshift=-2cm]$$}] {};
		\draw \sthirdellip node [label={[xshift=-0.3cm, yshift=1.2cm]$$}] {};
		\draw \sfourthellip node [label={[xshift=-2.2cm, yshift=0.4cm]$$}] {};
		\draw \tfirstellip node [label={[xshift=-1.6cm, yshift=-0.4cm]$$}] {};
		\draw \tsecondellip node [label={[xshift=-1.5cm, yshift=-0.9cm]$$}] {};
		\draw \tthirdellip node [label={[xshift=-1.0cm, yshift=-1.8cm]$$}] {};
		\draw \tfourthellip node [label={[xshift=0cm, yshift=1.1cm]$$}] {};
		
		
		
		
		
		\filldraw 
		(9,1.2) circle (4pt) node [left] {};
		\filldraw 
		(9,-1.2) circle (4pt) node [right] {};
		\filldraw 
		(13,1.2) circle (4pt) node [right] {};
		\filldraw 
		(13,-1.2) circle (4pt) node [right] {};
		
		\filldraw 
		(11,2.2) circle (4pt) node [right] {};
		\filldraw 
		(11,-2.2) circle (4pt) node [right] {};
		
		\draw \ysecondellip node [label={[xshift=1cm, yshift=-1cm]$$}] {};
		\draw \yfourthellip node [label={[xshift=2.1cm, yshift=-0.6cm]$$}] {};
		\draw \yssecondellip node [label={[xshift=0cm, yshift=-2cm]$$}] {};
		\draw \ysfourthellip node [label={[xshift=-2.2cm, yshift=0.4cm]$$}] {};
		\draw \ytsecondellip node [label={[xshift=-1.5cm, yshift=-0.9cm]$$}] {};
		\draw \ytfourthellip node [label={[xshift=0cm, yshift=1.1cm]$$}] {};
		
		\end{tikzpicture}
		\caption{Left: A $3$-uniform $2$-cycle of order $12.$ Right: A $3$-uniform $2$-cycle of order $6.$ \label{fig:cycle}}
	\end{center}
\end{figure}
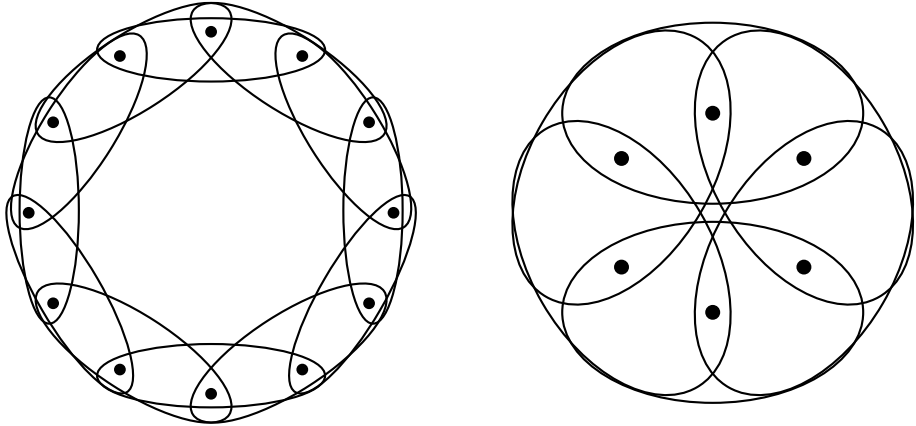
\begin{definition}
	An $r$-uniform ($r\geq 2$) hypergraph $\mbH$ is called an $\ell$-(Hamiltonian) cycle $(0<\ell<r)$, if there exists an ordering $\maV=\left(v_1,v_2,\cdots\,,v_{q(\mbH)}\right)$ of the nodes of 
	$\maV$ such that:
	\begin{itemize}
		\item {E}very hyperedge of $\maH$ consists of $r$ consecutive nodes modulo $q(\mbH)$; 
		\item Any couple of consecutive hyperedges (in an obvious sense) intersects in exactly $\ell$ vertices. 
	\end{itemize}
\end{definition}

In Figure \ref{fig:cycle} we have a $3$-uniform $2$-(hamiltonian) cycle of order $12$ and a $3$-uniform $2$-cycle of order $6$.

\begin{definition}
	A $3$-uniform hypergraph $\mbH=(\maV,\maH)$ is said to be {\em complete $k$-partite}, if there exists a partition of $\maV$ into 
	$k$ independent sets $I_1,...,I_k$ such that $\maH$ contains exactly all subsets of cardinality 3 of the form $\{v_1,v_2,v_3\}$, where $v_1\in I_{i_1}$, 
	$v_2\in  I_{i_2},$ and $v_3\in I_{i_3}$, for three distinct independent sets $I_{i_1}$, $I_{i_2},$ and $I_{i_3}$. 
\end{definition}
\begin{ex}\rm
	Consider a 3-uniform hypergraph $\mbH=(\maV,\maH)$ such that $\maV=\{1,2,3,4,5\}$ and $\maH=\left\lbrace\{1,2,3\},\{1,2,4\},\{1,2,5\},\{1,3,4\},\{1,3,5\},\{2,3,4\},\{2,3,5\}\right\rbrace.$ There exists a partiton of $\maV$ into four independent sets $I_1=\{1\},\;I_2=\{2\},\;I_3=\{3\}$ and $I_4=\{4,5\}.$ Then $\mbH$ is complete 3-uniform $4$-partite hypergraph. 
\end{ex}

{\bf Summaries:} Consider a hypergraph $\mathbb{H}=(\maV,\maH).$ The rank (respectively anti-rank) of $\mathbb{H}$ is the largest (respectively smallest) size of hyperedge. If the rank and anti-rank are equal to $k,$ we then say $\mbH$ is $k$-uniforme. A $k$-uniform hypergraph $\mbH=(\maV,\maH)$ of order $q$ is said,
\begin{itemize}
	\item \textbf{complete $k$-uniform} if all hyperedges of cardinality $k$ appear in $\maH.$
	\item \textbf{$k$-uniform bipartite} if there exists $V_1$ and $V_2$ a partition of $\maV$ such that $|H\cap V_1|=1$ and $|H\cap V_2|=k-1.$ 
	\item  \textbf{$k$-uniform $k$-partite} if there exists a partition $V_1, V_2,\cdots\,, V_k$ of $\maV$ such that for any $H\in\maH$ and any $i\le k$, $\left|H\cap V_i\right|=1$.
\end{itemize}

Throughout this thesis, all considered hypergraphs are connected and simple.
\section{Multigraphs}
In this section consider that $\mbS$ be a multigraph under the form $G=(\maV,\maE).$ For easy reference, let us introduce the basics that we will be used in this thesis. 

\begin{definition}
A {\em multigraph} is a graph that given by a couple $G=(\maV,\maE)$, where $\maV$ is the (finite) set of nodes, and $\maE$ is the set of edges, which is permitted to have {\it multiple edges} (also called parallel edges), else it is called simple edges and also a permitted to have {\it self-loops}, that is, an edge which starts and ends at the same nodes. Elements of the form $\{v\} \in \maE$, are called {\em self-loops}. 
We write $u \v v$ or $v \v u$ for $\{u,v\} \in \maE$, 
and $u \pv v$ (or $v \pv u$) else. 
\end{definition}
As the equation (\ref{eq:defSA}), and specifically for any multigraph $G=(\maV,\maE)$ and any $U\subset\maV$, we denote 
\[
\maE(U) := \{ v \in \maV \; : \; \exists u \in U, \ u
\-- v\},
\]
the set $\maV$ can then be partitioned in $\maV =\maV_1 \cup \maV_2$, where $\maV_1:= \{u\in\maV\;:\;u \v u \}$ and $\maV_2:= \{u\in\maV\;:\; u\pv u\}$, i.e., $\maV_1$ contains all nodes from which a self-loop emanates, if any, and $\maV_2$ is the complement set of $\maV_1$ in $\maV$. Observe that, concerning the classical notion of multigraphs, we assume hereafter that all edges are simple. \\
A multigraph having no self-loop, that is, a couple $G=(\maV,\maE)$ such that $\maV_1=\emptyset$, is simply a {\em graph}. A multigraph is connected if for any $u,v \in \maV$, there exists a subset $\{v_0= u, v_1, v_2,\dots, v_p= v\}\subset \maV$ such that $v_i \v v_{i+1}$, for any $i \in\llbracket 0,p-1 \rrbracket$.\\ 
For any multigraph $G=(\maV=\maV_1\cup\maV_2,\maE)$ and any $U\subset \maV$, the {\em
	subgraph induced by} $U$ in $G$ is the multigraph $(U,\maE\cap U)$. 
%
Observe that $\forall \:I\in \mathbb{I}(G), \;$ we get $\, I\cap\mathcal{V}_1=\emptyset$, i.e., $I\subset\mathcal{V}_2$.

\begin{remark}\rm
	\label{rem:DiffrenceMultigraphAndHypergraph}
	A multigraph is different from a hypergraph, which is a graph in which an hyperedge can connect any number of nodes, not just two.
\end{remark}


Throughout this thesis, all considered multigraphs are connected and without the possibility of multiple edges.
\begin{definition}
	\label{def:restricted}
	Let $G=(\maV,\maE)$ be a multigraph. The {\em maximal subgraph} of $G$ is the graph $\check G=(\maV,\check{\maE})$ 
	obtained by deleting all self-loops in $G$, that is 
	\begin{equation}
	\label{eq:defcheckE}
	\check{\maE}=\maE\setminus\left\lbrace(i,i):i\in\maV_1\right\rbrace.
	\end{equation}
	See an example of a Figure \ref{fig:GgraphGLmultigZAndGTilde}. 
\end{definition}

\begin{definition}
	\label{def:extended}
	Let $G=(\maV_1\cup\maV_2, \maE)$ be a multigraph. 
	The {\em minimal blow-up graph} of $G$ is the graph $\hat{G}=(\hat{\maV},\hat{\maE})$ defined as follows:
	\begin{equation}
	\label{eq:defGtilde}
	\hat{\maV}=\maV\cup\cop{\maV_1}\quad \textrm{and}\quad\hat{\maE}=\check{\maE} \cup\underline{\maE_1},
	\end{equation}
	where $\cop{\maV_1}$ is an independent copy of $\maV_1$, $\check{\maE}$ is defined by (\ref{eq:defcheckE}) and $$\underline{\maE_1}=\{(\underline{i},j):(i,j)\in\maE,\,i\in\maV_1,\,j\in\maV\}.$$
	In other words, $ \hat G$ is obtained from $G$ by duplicating each node having a self-loop by two nodes having the same neighborhood and replacing each self-loop by an edge between 
	the node and its copy. See an example of a Figure \ref{fig:GgraphGLmultigZAndGTilde}. 
	
	The maximal subgraph $\check G$ of $G$ is then called {\em reduced graph} of $\hat G$. \\

    For any set $\maA \subset \maV_1$, we denote by $\cop{\maA}$ the set of all copies of elements of $\maA$, that is 
\[\cop{\maA} = \left\{\underline i : i\in \maA\right\}.\]
\end{definition}
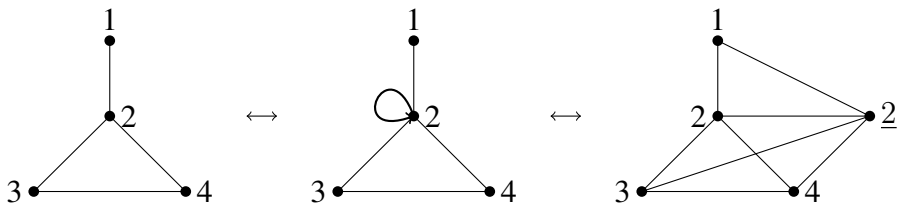
\begin{figure}[htb]
	\begin{center}
		\begin{tikzpicture}

		\fill (0,2) circle (2pt)node[above]{1};
		\fill (0,1) circle (2pt)node[right]{2};
		\fill (-1,0) circle (2pt)node[left]{3};
		\fill (1,0) circle (2pt)node[right]{4};
		
		\draw[-] (0,1) -- (1,0);
		\draw[-] (0,1) -- (-1,0);
		\draw[-] (0,1) -- (0,2);
		\draw[-] (-1,0) -- (1,0);
		
		\draw[<->] (1.8,1) -- (2.2,1);
		
		
		\fill (4,2) circle (2pt)node[above]{1};
		\fill (4,1) circle (2pt)node[right]{2};
		\fill (3,0) circle (2pt)node[left]{3};
		\fill (5,0) circle (2pt)node[right]{4};
		
		\draw[-] (4,1) -- (5,0);
		\draw[-] (4,1) -- (3,0);
		\draw[-] (4,1) -- (4,2);
		\draw[-] (3,0) -- (5,0);

		\draw[<->] (5.8,1) -- (6.2,1);
		
		\draw[-] (8,1) -- (9,0);
		\draw[-] (8,1) -- (7,0);
		\draw[-] (8,1) -- (8,2);
		\draw[-] (7,0) -- (9,0);

		\fill (8,2) circle (2pt)node[above]{1};
		\fill (8,1) circle (2pt)node[left]{2};
		\fill (7,0) circle (2pt)node[left]{3};
		\fill (9,0) circle (2pt)node[right]{4};
		\fill (10,1) circle (2pt)node[right]{$\cop{2}$};

		\draw[-] (10,1) -- (9,0);
		\draw[-] (10,1) -- (7,0);
		\draw[-] (10,1) -- (8,2);
		\draw[-] (10,1) -- (8,1);
		
		\draw[thick,->] (4,1) to [out=110,in=200,distance=10mm] (4,1);
		\end{tikzpicture}
		\caption{Middle: A multigraph $G.$ Left: Its maximal subgraph $\check G.$ Right: Its minimal blow-up graph $\hat{G}.$}
		\label{fig:GgraphGLmultigZAndGTilde}
	\end{center}
\end{figure}

\section{Main useful probabilistic results}

In Chapter \ref{chap2:Definition of general model}, we will define the stochastic matching model and the corresponding Markov chain representation, while in this Section we present two famous techniques (Lyapunov-Foster Theorem and Fluid limits) that help us to find the stability of the model (i.e., its Markov chain is positive recurrent).

\subsection{Lyapunov-Foster Theorem}
This section is taken from (\cite{Bre99}, \S  5.1):\\

The following Theorem provides an ergodicity criterion for countable Markov chain valued in countable state space $E$,
\begin{theorem}
	\label{the:LyapunovFosterTheorem}
	Let the transition matrix $P=\{p_{ij}\}_{i,j\in E}$ on the countable state space $E$ be irreducible and suppose that there exists a function $h: E \longrightarrow \R$ such that $\inf\limits_i h(i) > -\infty$ and
	\begin{align}
	\sum_{k\in E} p_{ik}h(k ) &< \infty \textrm{ for all } i\in F,\\
	\sum_{k\in E} p_{ik}h(k ) &\leq h(i)-\varepsilon \textrm{ for all } i\notin F,
	\end{align}
	for some finite set $F$ and some $\varepsilon > 0.$ Then the corresponding Markov chain is positive recurrent.
\end{theorem}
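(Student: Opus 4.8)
The plan is to exhibit a nonnegative supermartingale built from $h$, read off from it a bound on the expected hitting time of the finite set $F$, and then upgrade ``finite expected time to reach $F$'' into genuine positive recurrence using the finiteness of $F$.

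First I would reduce to the case $h \geq 0$: since $\inf_i h(i) > -\infty$, replacing $h$ by $h - \inf_i h(i)$ leaves both hypotheses unchanged (the additive constant cancels in the drift inequality, and finiteness of $\sum_k p_{ik}h(k)$ is preserved), so I may assume $h \geq 0$ throughout. Denote by $(X_n)$ the chain and by $\mathbb{E}_i$ the expectation under the chain started at $i$.

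Next, fix a starting state $i \notin F$ and let $T = \inf\{n \geq 0 : X_n \in F\}$ be the hitting time of $F$. The key object is $Z_n = h(X_{n \wedge T}) + \varepsilon\,(n \wedge T)$. On the event $\{T > n\}$ we have $X_n \notin F$, so the drift hypothesis gives $\mathbb{E}[h(X_{n+1}) \mid \mathcal{F}_n] = \sum_k p_{X_n,k}h(k) \leq h(X_n) - \varepsilon$, whence $\mathbb{E}[Z_{n+1}\mid \mathcal{F}_n] \leq Z_n$; on $\{T \leq n\}$ the process is frozen, so the inequality is trivial. Thus $(Z_n)$ is a nonnegative supermartingale, and since $h \geq 0$ we get $h(i) = Z_0 \geq \mathbb{E}_i[Z_n] \geq \varepsilon\,\mathbb{E}_i[n \wedge T]$ for every $n$. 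Letting $n \to \infty$ and invoking monotone convergence yields $\mathbb{E}_i[T] \leq h(i)/\varepsilon < \infty$; in particular $T < \infty$ almost surely. For a starting state $i \in F$, I would instead condition on the first step to control the first return time $T^+ = \inf\{n \geq 1 : X_n \in F\}$: decomposing over $X_1$ and using $\mathbb{E}_k[T]=0$ for $k\in F$ together with the bound just obtained for $k \notin F$ gives $\mathbb{E}_i[T^+] \leq 1 + \varepsilon^{-1}\sum_{k} p_{ik} h(k)$, which is finite precisely because of the hypothesis $\sum_k p_{ik}h(k) < \infty$ for $i \in F$. Since $F$ is finite, $M := \max_{i \in F}\mathbb{E}_i[T^+] < \infty$.

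Finally, to pass from a finite expected return time to $F$ to positive recurrence of the chain, I would observe the process only at its successive visits to $F$. This induced process is a Markov chain on the finite set $F$, and it inherits irreducibility from $P$ (any original trajectory joining two states of $F$ witnesses a positive transition probability for the induced chain); being irreducible on a finite state space, it is positive recurrent, so from any fixed $i_0 \in F$ the expected number $N$ of $F$-visits needed to return to $i_0$ is finite. Since every inter-visit duration has conditional expectation at most $M$, a Wald-type summation (justified through the strong Markov property and the uniform bound $M$) bounds the expected return time to $i_0$ in the original chain by $M\,\mathbb{E}_{i_0}[N] < \infty$. Hence $i_0$ is positive recurrent, and by irreducibility of $P$ the whole chain is positive recurrent. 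I expect the genuine difficulty to lie in this last step rather than in the martingale computation: the supermartingale $(Z_n)$ is essentially forced once one views $h$ as a Lyapunov function with $\varepsilon$-drift, whereas verifying that the induced chain on $F$ is irreducible and positive recurrent, and that the Wald-type bound is legitimate despite the dependence between successive inter-visit times, is where the argument needs care.
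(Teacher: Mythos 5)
Your proof is correct. Note that the paper does not prove this statement at all: it is quoted verbatim from Br\'emaud \cite{Bre99} (\S 5.1) as a known ergodicity criterion, so there is no in-paper argument to compare against. Your route --- normalise $h\ge 0$, show that $Z_n=h(X_{n\wedge T})+\varepsilon\,(n\wedge T)$ is a nonnegative supermartingale to get $\esp{T}\le h(i)/\varepsilon$ off $F$, bound the first return time from $F$ by a one-step decomposition using the hypothesis $\sum_k p_{ik}h(k)<\infty$, and then convert ``uniformly bounded expected excursion lengths plus positive recurrence of the induced chain on the finite set $F$'' into positive recurrence of the original chain --- is precisely the standard textbook proof of Foster's criterion, including the one in the cited reference. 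The two points you flag as delicate are indeed the right ones and your treatment of them is sound: irreducibility of the induced chain on $F$ follows by cutting any positive-probability path between two states of $F$ at its successive $F$-visits, and the Wald-type bound is legitimate because $\{N\ge n\}$ is measurable with respect to the $\sigma$-field at the $(n-1)$-st visit to $F$, so each increment can be bounded by $M$ conditionally before summing.
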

The stationary distribution criterion of positive recurrence of an irreducible chain requires solving the balance equation, a too-often hopeless enterprise except in a few textbook situations. The above sufficient condition is more tractable, and indeed quite powerful.
\subsection{Classification of random walks in $(\Z^+)^2$}
This section is taken from (\cite{FMM95}, \S  3.3):\\

Consider a discrete time homogeneous irreducible and aperiodic Markov chain $\mathcal{L}=\{\maE_n;\;n\geq 0\}.$ Its state space is the lattice in the positive quarter-plane $(\Z^+)^2=\{(i,j)\;:\;i,j\geq 0,\textrm{ integers}\}$ and it satisfies the recursive equation
$$\maE_{n+1}=[\maE_n+\theta_{n+1}]^+,$$
where the distribution of $\theta_{n+1}$ depends only on the position of $\maE_n$ in the following way (maximal space homogeneity):
$$p\{\theta_{n+1}=(i,j)/\maE_n=(k,l)\}=\left\lbrace\begin{array}{cl}
p_{ij},&\textrm{for }k,l\geq1,\\	
p_{ij}',&\textrm{for }k\geq1,l=0,\\	
p_{ij}'',&\textrm{for }k=0,l\geq1,\\
p_{ij}^0,&\textrm{for }k,l=0,
\end{array}\right.$$
where $p_{ij},\;p_{ij}',\;p_{ij}''$ and $p^0_{ij}$ are probabilities belong to the interval $[0,1].$
Moreover, for the one-step transition probabilities, making the following assumptions:

	\textbf{Condition A} (Lower boundedness)
	$$\left\lbrace\begin{array}{ccll}
		p_{ij}=0,&\textrm{ if }&i<-1 &\textrm{or }j<-1,\\	
		p_{ij}'=0,&\textrm{ if }&i<-1 &\textrm{or }j<0,\\		
		p_{ij}''=0,&\textrm{ if }&i<0 &\textrm{or }j<-1,\\	
	\end{array}\right.$$
	
	\textbf{Condition B} (First moment condition)
	$$\esp{||\theta_{n+1}||/\maE_n=(k,l)}\leq C<\infty,\;\forall(k,l)\in(\Z^+)^2,$$
	where $||z||,\;z\in(\Z^+)^2,$ denotes the euclidean norm and $C$ is an arbitrary but strictly positive number.\\
\textbf{Notation:} Using lower case greek letters $\alpha,\beta,\cdots$ to denote arbitrary points of $(\Z^+)^2,$ and then $p_{\alpha\beta}$ will mean the one-step transition probabilities of the Markov chain $\mathcal{L},\;\alpha>0$ means
$$\alpha_x>0,\:\alpha_y>0,\textrm{ for }\alpha=(\alpha_x,\alpha_y).$$
Also, from the homogeneity conditions, one can write
$$\theta_{n+1}=(\theta_x,\theta_y), \textrm{given that }\maE_n=(x,y).$$
Define the vector 
$$M(\alpha)=(M_x(\alpha),M_y(\alpha))$$
of the one-step mean jumps (drifts) from the point $\alpha.$ setting 
$$\alpha=(\alpha_x,\alpha_y),\;\beta=(\beta_x,\beta_y),$$
we have 
$$\begin{array}{ccc}
M_x(\alpha)&=\sum\limits_{\beta}p_{\alpha\beta}(\beta_x-\alpha_x),\\
M_y(\alpha)&=\sum\limits_{\beta}p_{\alpha\beta}(\beta_y-\alpha_y).\\
\end{array}$$
Condition \textbf{B} ensures the existence of $M(\alpha),$ for all $\alpha\in(\Z^+)^2.$ By the homogeneity condition \textbf{A},  only four drift vectors are different from zero:
$$M(\alpha)=\left\lbrace\begin{array}{cl}
M,&\textrm{for }\alpha_x,\alpha_y>0,\\
M',&\textrm{for }\alpha=(\alpha_x,0),\alpha_x>0;\\
M'',&\textrm{for }\alpha=(0,\alpha_y),\alpha_y>0;\\
M_0,&\textrm{for }\alpha=(0,0).\\
\end{array}\right.$$
\begin{remark}\rm
\begin{enumerate}
	\item[(i)] All our results remain valid if a finite number of transition probabilities are arbitrarily modified.
	\item[(ii)] Given $\maE_n=\alpha,$ the components of $\theta_{n+1}$ might be taken bounded from below not by -1, but by some arbitrary number $-K>-\infty,$ provided that:
	
	First, we keep the maximal homogeneity for the drift vectors $M(\alpha)$ introduced above (i.e., four of them only different);
	
	Secondly, the second moments and the covariance of the one-step jumps inside $(\Z^+)^2,$ i.e., from any point $\alpha>0,$ are kept constant.\\
	 These last fact will emerge more clearly in the course of the study.
\end{enumerate}
\end{remark}
\begin{theorem}
	\label{theo:TheoremFayolMalMain}
	Assume conditions \textbf{A} and \textbf{B} are satisfied.
\begin{enumerate}
\item[(a)] If $M_x<0,\;M_y<0,$ then the Markov chain $\mathcal{L}$ is 
\begin{enumerate}
	\item[(i)] ergodic if 
	$$\left\lbrace\begin{array}{cc}
	M_xM_y'-M_yM_x'<0,\\
	M_yM_x''-M_xM_y''<0;\\
	\end{array}\right.$$
	\item[(ii)] non-ergodic if either\\
	$$M_xM_y'-M_yM_x'\geq 0\textrm{ or }	M_yM_x''-M_xM_y''\geq 0.$$
\end{enumerate}

\item[(b)] If $M_x\geq 0,\;M_y<0,$ then the Markov chain $\mathcal{L}$ is 
\begin{enumerate}
	\item[(i)] ergodic if 
	$$M_xM_y'-M_yM_x'<0;$$

	\item[(ii)] transient if 
	$$M_xM_y'-M_yM_x'> 0.$$
\end{enumerate}
\item[(c)] (Case symmetric to case (b)) If $M_y\geq0,\;M_x<0,$ then the Markov chain $\mathcal{L}$ is 
\begin{enumerate}
	\item[(i)] ergodic if 
	$$M_yM_x''-M_xM_y''<0;$$
	\item[(ii)] transient if 
	$$M_yM_x''-M_xM_y''> 0.$$
\end{enumerate}
\item[(d)] If $M_x\geq 0,\;M_y\geq 0,\;M_x+M_y>0,$ then the Markov chain is transient.
\end{enumerate}
\end{theorem}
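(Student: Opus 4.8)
The plan is to treat the four sign regimes of the interior drift $M=(M_x,M_y)$ separately, using the Foster--Lyapunov criterion (Theorem~\ref{the:LyapunovFosterTheorem}) for every ergodicity assertion and the companion Lyapunov-type criteria for non-ergodicity and transience (the supermartingale / reverse-Foster criteria of the same reference \cite{FMM95}). Throughout I would abbreviate the two discriminants $\Delta' := M_x M_y' - M_y M_x'$ and $\Delta'' := M_y M_x'' - M_x M_y''$. A preliminary reduction is convenient: by the Remark above, finitely many transition probabilities may be altered without changing the classification, so for positive recurrence it suffices to exhibit a test function $h$, bounded below, whose mean one-step increment is $\le -\varepsilon$ on the complement of a finite set, all increments being controlled by $M,M',M'',M_0$ through Condition \textbf{B}.

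The transient case (d) is immediate: with $M_x,M_y\ge 0$ and $M_x+M_y>0$, the linear function $h(x,y)=x+y$ has strictly positive mean increment in the interior (and on the axes up to bounded corrections), so $\|\maE_n\|\to\infty$ and transience follows from the transience criterion of \cite{FMM95}. Cases (b) and (c) are symmetric, so consider (b): $M_x\ge 0$, $M_y<0$. Here the vertical drift forces the walk down onto the horizontal axis, along which it performs a quasi-one-dimensional motion; the effective horizontal velocity of this reflected motion is, after an averaging (regeneration / Poisson-equation) argument balancing the interior drift $M$ against the boundary drift $M'=(M_x',M_y')$, proportional to $\Delta'$. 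One then builds a one-dimensional Lyapunov function of the horizontal coordinate (corrected by a bounded function of the height) whose increment has the sign of $-\Delta'$, yielding ergodicity when $\Delta'<0$ and transience when $\Delta'>0$.

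Case (a), $M_x<0$ and $M_y<0$, is the heart of the statement. Any positive linear form $ux+vy$ decreases in the interior, so the only obstruction lies in thin layers along the two axes. A purely linear test function fails there: at a point of the horizontal axis the one-step drift of $ux+vy$ equals $uM_x'+vM_y'$, and since Condition \textbf{A} forces $M_y'\ge 0$, this can be positive when $M_x'>0$. The remedy is a \emph{boundary-layer correction}: because $M_y'\ge 0$ the walk started on the axis is pushed up and returns to the interior within $O(1)$ steps, over which its net displacement is governed by $M'$ for the first step and by $M$ thereafter. Encoding this in a test function that bends in a layer of bounded height above each axis (equivalently, using a bounded number of steps in the Foster criterion), the sign of the relevant multi-step increment along the horizontal axis turns out to be that of $\Delta'$, and along the vertical axis that of $\Delta''$. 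Choosing the slopes so that the interior, the two layers, and the two seams all have negative increment, and invoking Theorem~\ref{the:LyapunovFosterTheorem}, gives ergodicity under (a)(i). For (a)(ii), when say $\Delta'\ge 0$, the same computation shows no such correction can make the horizontal layer decrease; instead I would exhibit a function tending to $+\infty$ with non-negative increment outside a finite set and conclude non-ergodicity from the reverse-Foster criterion.

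The main obstacle is exactly this boundary-layer analysis in case (a): one must show that the short excursions of the walk off each axis can be summarized by a single effective increment whose sign is $\Delta'$ (resp. $\Delta''$), and that a test function realizing these increments can be assembled, continuous across the interface rays and with no spurious positive jump at the kinks, since a function built by gluing (or as a $\max$/$\min$ of linear forms) does not automatically inherit the drift inequality at the seams. This is where the determinant conditions are both used and seen to be sharp, as $\Delta'<0$ is strictly more permissive than the naive one-step condition $uM_x'+vM_y'<0$. The analogous averaging underlies the effective-velocity computation in cases (b) and (c); there the secondary difficulty is to control the excursions of the walk away from the dominant axis well enough to identify the horizontal (resp. vertical) drift with the sign of $\Delta'$ (resp. $\Delta''$).
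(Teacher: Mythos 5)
Your plan is workable in outline but it diverges from the paper's proof at the decisive point, and the step you yourself flag as ``the main obstacle'' is left unresolved rather than overcome. The paper does not use piecewise-linear test functions with boundary-layer corrections glued along seams. Instead it takes the single smooth function $f(x,y)=\sqrt{Q(x,y)}$ with $Q(x,y)=ux^2+vy^2+\omega xy$ positive definite, and uses Lemma~\ref{lem:UsInProofTheorem3.3.1}, which says that the one-step drift of $f$ at $(x,y)$ is
\[
\frac{x\bigl[2u\esp{\theta_x}+\omega\esp{\theta_y}\bigr]+y\bigl[\omega\esp{\theta_x}+2v\esp{\theta_y}\bigr]}{2f(x,y)}+o(1).
\]
On the horizontal axis only the $x$-bracket survives and the relevant quantity is $2uM_x'+\omega M_y'$; on the vertical axis it is $2vM_y''+\omega M_x''$; in the interior both brackets must be negative. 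So ergodicity in (a)(i), (b)(i), (c)(i) reduces to the solvability in $(u,v,\omega)$ with $u,v>0$, $\omega^2<4uv$ of the four linear inequalities (\ref{eq:uvwMRelation}), and the determinant conditions $M_xM_y'-M_yM_x'<0$, $M_yM_x''-M_xM_y''<0$ are exactly what makes the explicit choice $u=-M_x/2$, $v=-M_y/2$, $\max(M_x,M_y)<\omega<\min\bigl(\tfrac{M_yM_x'}{M_y'},\tfrac{M_xM_y''}{M_x''}\bigr)$ admissible. The cross term $\omega$ plays precisely the role of your boundary-layer correction, but because $f$ is a single radially homogeneous function there are no seams and no gluing lemma is needed. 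Your route, by contrast, requires you to prove that the multi-step excursion increment off each axis has the sign of $\Delta'$ (resp.\ $\Delta''$) and that a kinked function assembled from these pieces satisfies the drift inequality at the interface rays; you state this is the main difficulty and do not supply the argument, so as written the proof of (a)(i), and of the ergodic halves of (b) and (c) via your averaging/regeneration scheme, is incomplete.

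Two smaller points. For (a)(ii) your reverse-Foster idea matches the paper, which exhibits a linear $f(x,y)=ax+by$ with $f(\alpha+M(\alpha))\ge f(\alpha)+\varepsilon$ on a half-plane. For (d), however, ``the linear function $x+y$ has positive drift, hence transience'' is too quick: positive drift of a function tending to infinity yields non-ergodicity, but transience requires a separate criterion (a bounded submartingale-type construction), and the paper explicitly defers the transience proofs of (b)(ii), (c)(ii) and (d) to \cite{FMM95} as the genuinely harder part.
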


	Consider the following real functions on $(\Z^+)^2:$
	$$\left\lbrace\begin{array}{ccl}
	Q(x,y)&=&ux^2+vy^2+\omega xy,\\
	f(x,y)&=&Q^{1/2}(x,y),\\
	\Delta f(x,y)&=&Q^{1/2}(x+\theta_x,y+\theta_y)-Q^{1/2}(x,y),
		\end{array}\right.$$
		where $(x,y)\in(\Z^+)^2$ and $u,v,\omega$ are unspecified constants, to be properly chosen later, but subject to the constraints $u,v>0,\;4uv>\omega^2,$ so that the quadratic form $Q$ is positive definite. 
		\begin{lemma}
			\label{lem:UsInProofTheorem3.3.1}
			We have 
			$$	\esp{\Delta f(x,y)}=\displaystyle\frac{x[2u\esp{\theta_x}+\omega\esp{\theta_y}]+y[\omega\esp{\theta_x}+2v\esp{\theta_y}]}{2f(x,y)}+o(1),$$
			where $o(1)\rightarrow 0$ as $(x^2+y^2)\rightarrow \infty.$
		\end{lemma}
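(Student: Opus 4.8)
\section*{Proof plan for Lemma \ref{lem:UsInProofTheorem3.3.1}}

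The plan is to treat $f=Q^{1/2}$ as a smooth function away from the origin and to Taylor-expand $f$ at the point $z=(x,y)$ in the random direction $\theta=(\theta_x,\theta_y)$: the expectation of the linear term will produce the announced main term, and the expectation of the Taylor remainder will be shown to be $o(1)$ as $r:=\|z\|=\sqrt{x^2+y^2}\to\infty$.

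First I would record the basic analytic facts about $f$. Since $u,v>0$ and $4uv>\omega^2$, the form $Q$ is positive definite, so $f=\sqrt Q$ is a genuine norm on $\R^2$ and there are constants $0<c\le C$ with $c\,r\le f(z)\le C\,r$. A direct computation gives $\partial_x f=(2ux+\omega y)/(2f)$ and $\partial_y f=(\omega x+2vy)/(2f)$, both bounded uniformly in $z\ne\mazero$. Writing $L(\theta):=\theta\cdot\nabla f(z)$ for the linear term and regrouping, $L(\theta)=\bigl[x(2u\theta_x+\omega\theta_y)+y(\omega\theta_x+2v\theta_y)\bigr]/(2f)$, so that $\esp{L(\theta)}$ is exactly the fraction displayed in the statement. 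It then remains to prove that the remainder $R(\theta):=\Delta f-L(\theta)=f(z+\theta)-f(z)-\theta\cdot\nabla f(z)$ satisfies $\esp{R(\theta)}\to 0$.

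For the remainder I would combine a global and a local estimate. Globally, since $f$ is a norm the triangle inequality yields $|\Delta f|\le f(\theta)\le C\|\theta\|$, and the boundedness of $\nabla f$ gives $|L(\theta)|\le C\|\theta\|$, hence $|R(\theta)|\le C\|\theta\|$ everywhere. Locally, computing the Hessian $H_f$ one finds $\|H_f(\xi)\|=O(1/\|\xi\|)$; consequently, whenever $\|\theta\|\le r/2$ every point $\xi$ of the segment $[z,z+\theta]$ satisfies $\|\xi\|\ge r/2$, so Taylor's theorem gives $|R(\theta)|\le C\|\theta\|^2/r$ on the event $\{\|\theta\|\le r/2\}$. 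Here I use that, by the space-homogeneity assumption, the law of $\theta$ is the same at every point of a given region (the interior, one of the two boundary axes, or the origin) and that condition \textbf{B} bounds $\esp{\|\theta\|}$ uniformly; since there are finitely many regions, it suffices to argue in each one separately with the corresponding fixed distribution, whose first moment then furnishes the $\esp{\theta_x},\esp{\theta_y}$ of the main term.

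Finally I would split $\esp{R(\theta)}$ according to $\{\|\theta\|>r/2\}$ and $\{\|\theta\|\le r/2\}$. On the large-jump event the global bound contributes at most $C\,\esp{\|\theta\|\ind_{\{\|\theta\|>r/2\}}}$, which tends to $0$ by integrability of $\|\theta\|$ (tail of an integrable variable). On the small-jump event the local bound contributes at most $(C/r)\,\esp{\|\theta\|^2\ind_{\{\|\theta\|\le r/2\}}}$; splitting this once more at a fixed threshold $M$ bounds it by $C M^2/r+(C/2)\,\esp{\|\theta\|\ind_{\{\|\theta\|>M\}}}$, so letting first $r\to\infty$ and then $M\to\infty$ makes it arbitrarily small. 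Together these give $\esp{R(\theta)}=o(1)$. The main obstacle is precisely this remainder control under only the first-moment hypothesis \textbf{B}: a crude second-order bound would require a finite second moment of $\theta$, so truncating the large jumps using only integrability of $\|\theta\|$ is the delicate step. If, as in the standard setting where the one-step second moments are kept constant, $\esp{\|\theta\|^2}<\infty$, then $\esp{|R(\theta)|}\le C\,\esp{\|\theta\|^2}/r$ yields the conclusion directly.
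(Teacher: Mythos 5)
The paper does not actually prove this lemma: it is quoted from \cite{FMM95}, \S 3.3, and used as a black box in the proof of Theorem \ref{theo:TheoremFayolMalMain}, so there is no internal proof to compare against. Judged on its own, your argument is correct and complete. The gradient computation does yield exactly the displayed main term, and your two-scale control of the Taylor remainder $R(\theta)=f(z+\theta)-f(z)-\theta\cdot\nabla f(z)$ --- the global bound $|R(\theta)|\le C\|\theta\|$ from the triangle inequality for the norm $f=Q^{1/2}$, combined with the local bound $|R(\theta)|\le C\|\theta\|^2/r$ on $\{\|\theta\|\le r/2\}$ coming from the degree-$(-1)$ homogeneity of the Hessian of a positively $1$-homogeneous function --- is sound; the final truncation at a fixed level $M$ correctly reduces everything to the uniform integrability of $\|\theta\|$ guaranteed by Condition \textbf{B} together with the fact that, by maximal homogeneity, $\theta$ takes only four possible laws (which is also what makes the $o(1)$ uniform in the position, a point worth stating explicitly). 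The classical presentation in \cite{FMM95} reaches the same conclusion by rationalizing, i.e.\ writing $\Delta f=\bigl(Q(x+\theta_x,y+\theta_y)-Q(x,y)\bigr)/\bigl(f(x+\theta_x,y+\theta_y)+f(x,y)\bigr)$, expanding the numerator as the linear form in $\theta$ plus $Q(\theta)$, and replacing the denominator by $2f(x,y)$ up to a controlled error; this avoids the Hessian but meets the same integrability issue you flag for the quadratic term $Q(\theta)$, which is there usually dispatched under the boundedness or constant-second-moment conventions recorded in the Remark following Condition \textbf{B}. Your truncation argument is in fact the more general of the two, since it needs only the first-moment hypothesis.
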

		\begin{proof}[Proof of Theorem \ref{theo:TheoremFayolMalMain} :]
		First, we shall prove ergodicity in the case (a(i)). Lemma \ref{lem:UsInProofTheorem3.3.1} shows that, if there exists $u,v>0$ and $\omega^2<4uv,$ such that, for somme $\varepsilon_2>0$ and all $(x,y)\in(\Z^+)^2\backslash A,$ where $A$ in a finite set, 
		\begin{equation}
		\label{eq:uvwRelation}
		\left\lbrace\begin{array}{ccc}
		2u\esp{\theta_x}+\omega\esp{\theta_y}&<-\varepsilon_2,\\
		\omega\esp{\theta_x}+2v\esp{\theta_y}&<-\varepsilon_2,
		\end{array}\right.
		\end{equation}
		then, for some $\mathcal{D},$ there exists $\varepsilon>0$ such that for all $(x,y)$ with $x^2+y^2>\mathcal{D}^2,$ we have 
		\begin{equation}
		\label{eq:uvwEspRelation}
		\esp{\Delta f(x,y)}<-\varepsilon.
		\end{equation}
		Therefore, when (\ref{eq:uvwEspRelation}) holds, the random walk is ergodic, by using Lyapunov-Foster Theorem. Let us rewirte inequalities (\ref{eq:uvwEspRelation}) in terms of the drifts on the axes and in the internal part of $(\Z^+)^2,$
		\begin{equation}
			\label{eq:uvwMRelation}
			\left\lbrace\begin{array}{ccc}
			2uM_x+\omega M_y&<-\varepsilon_2,\\
			2vM_y+\omega M_x&<-\varepsilon_2,\\
			2uM_x'+\omega M_y'&<-\varepsilon_2,\\
			2vM_y''+\omega M_x''&<-\varepsilon_2.\\
			\end{array}\right.
		\end{equation}
		It is easy to show that, if 
		\begin{equation}
		\label{eq:uvwConlusion}
		\left\lbrace\begin{array}{lc}
		M_x<0,\\
		M_y<0,\\
		M_xM_y'-M_yM_x'<0,\\
		M_yM_x''-M_sM_y''<0,
		\end{array}\right.
		\end{equation}
		then there exists $u=-M_x/2,\;v=-M_y/2>0$ and $\max\left(M_x;M_y\right)<\omega<\min\left(\frac{M_yM'_x}{M'_y};\frac{M_xM''_y}{M''_x}\right)$ then $\omega^2<4uv,$ such that $(\ref{eq:uvwMRelation})$ is satisfied for some $\varepsilon_2>0,$ thus poving case a(i). 
		The cases (b(i)) and (c(i)) are analogus to (a(
		i)). Indeed, if 
		$$\left\lbrace\begin{array}{lc}
		M_x\geq 0,\\
		M_y<0,\\
		M_xM_y'-M_yM_x'<0,
		\end{array}\right.\textrm{ or }\qquad\left\lbrace\begin{array}{lc}
		M_x<0,\\
		M_y\geq 0,\\
		M_yM_x''-M_xM_y''<0,
		\end{array}\right.$$ 
		we show that there exists $u,v>0$ and $\omega^2<4uv,$ such that (\ref{eq:uvwMRelation}) holds, so that the chain is ergodic in both cases.
		
		Now the prove of non-ergodicity in (a(ii)). Assume that
		$$\left\lbrace\begin{array}{lc}
		M_x< 0,\\
		M_y<0,\\
		M_xM_y'-M_yM_x\geq 0.
		\end{array}\right.$$
		There exists a linear function $f(x,y)=ax+by,$ such that, for all $\alpha=(x,y)$ with $ax+by\geq C,$ we have 
		$$f(\alpha+M(\alpha))\geq f(\alpha)+\varepsilon,\textrm{ for some }C, \varepsilon\geq 0,$$
		and the non-ergodicity is immediately deduced.  The remain proof of the transience in (b(ii)), (c(ii)) and (d) is more difficult and can be found in (\cite{FMM95}, \S  3.3).
\end{proof}
\subsection{Rescaled Markov processes and Fluid Limits}
\label{subsec:Rescaled Markov Processes and Fluid Limits}
This section is taken from \cite[Chapters 5 and 9]{R13} :

In this section, limit results consist in speeding up time and scaling appropriately the process itself with some parameter. The behavior of such rescaled stochastic processes is analyzed when the scaling parameter goes to infinity. In the limit one gets a sort of caricature of the initial stochastic process which is defined as a \textit{fluid limit}. These ideas of rescaling stochastic processes has emerged  in the analysis of stochastic networks, to study their ergodicity properties in particular, see \cite{RS92}.
In statistical physics, these methods are quite classical, see \cite{C91}.


In the following, $(X(x,t))$ denotes an irreducible `RCLL' continuous-time Markov chain on a countable state space $\maU$ starting from $x\in\maU$, i.e., such that $X(x,0) = x\in\maU$. 
Denotes by $\maN_{\varepsilon}(\omega,dx), \omega\in\Omega$, a Poisson point process on $\R$ with parameter $\varepsilon\in\R^+$, all Poisson processes used are assume to be a priori independent. The topology on the space of probability distributions induced by the Skorokhod topology on the space of `RCLL' functions $\mathbb{D}\left([0, T], \R^d\right)$ is used.

\subsubsection{\large Rescaled Markov Processes}
Throughout this subsection, assumed that the state space $\maU$ can be embedded in a subset of some normed space $\R^d,$ $\parallel.\parallel$ denotes the associated norm.

\begin{definition}
For $x\in\maU,\;\left(\bar{X}(x,t)\right)$ denotes the process $\left(X(x,t)\right)$ renormalized so that for $t\geq0,$ 
$$\bar{X}(x,t)=\displaystyle\frac{1}{\parallel x\parallel}X(x,\parallel x\parallel t).$$ 
\end{definition}


Only continuous-time Markov processes are considered in this section. In discrete-time, if $(X_n)_{n\in\N}$ is a Markov chain, the corresponding rescaled process can be also defined by
$$\bar{X}(x,t)=\displaystyle\frac{X_{\lfloor\parallel x\parallel t\rfloor}}{\parallel x\parallel},$$
where $X_0 = x\in\maU$ and $t\in\R^+.$ 

\subsubsection{\large Fluid limits}

Fluid limits are the results of a scaling of the number of customers of an M/M/1 queue. The scaling considered here consists in speeding up the time scale with the size of its initial state and in scaling the process with the same
quantity. The procedure suppresses some random fluctuations around what appears to be the main trajectory of the process. For the M/M/1 queue the behavior of the rescaled process is very simple.

Initially, there are $x_N$ customers in the queue and the sequence $(x_N)$ is such that
$$\lim_{N\rightarrow +\infty}\displaystyle\frac{x_N}{N}=x\in\R^+.$$
Suppose that $L(t)$ is the number of customers of the queue at time $t\geq 0.$ Assumed that $L(t)$ be a stochastic process. The renormalized process $\left(\bar{L}_N(t)\right)$ is defined by,
$$\bar{L}_N(t)=\displaystyle\frac{L(Nt)}{N},$$
notice that $\bar{L}_N$ lives on a very rapid time $t\rightarrow Nt,$ arrivals and services are sped up by a factor $N.$ The scaling by $1/N$ compensates the acceleration of time.\\

\begin{figure}[htp!]
	\centering
	\includegraphics[width=.45\linewidth]{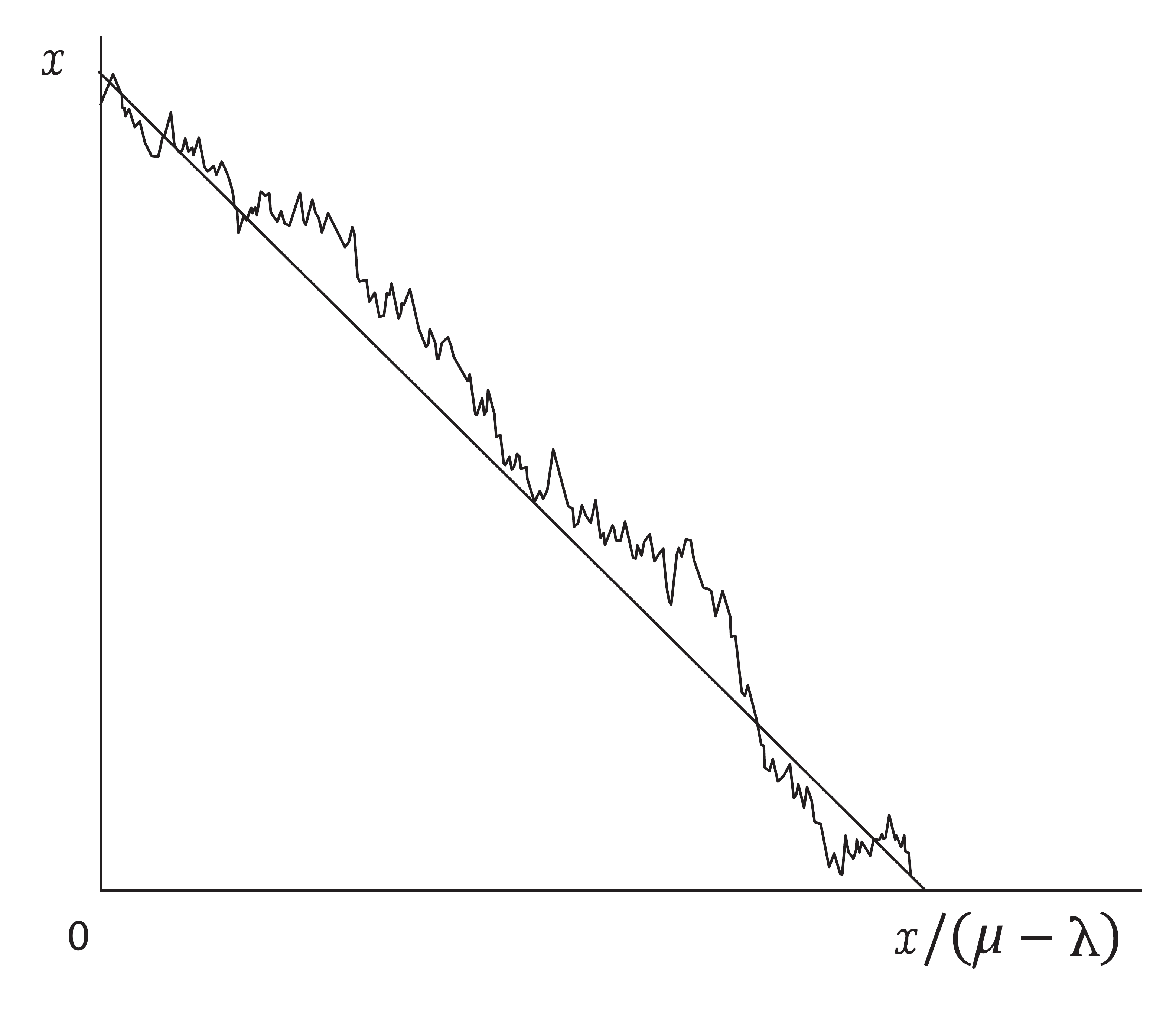}
	\caption{The renormalized process $(\bar{L}(t))$ and the fluctuations of $(\bar{L}_N(t))$.}
	\label{fig:renormalization}
\end{figure}

Consider the scaling depicted on Figure \ref{fig:renormalization}, the stochastic process $(L(t))$ is reduced to a deterministic drift $(\lambda-\mu)t.$ 

If $\lambda<\mu$, once the renormalized process hits 0, it remains at 0. This property is, in some sense, characteristic of ergodic Markov processes. Intuitively, it can be argued as follows: Once the process $(L(t))$ hits 0, with the coupling argument of \cite[Proposition 5.8]{R13}, it is approximately at equilibrium and it is mainly living in bounded neighborhood of 0. The time scale, linear with $N$, does not allow the visit of large values since an exponential time scale is necessary for this purpose, of the order $(\mu/\lambda)^{Ny}$ to reach the value $Ny$. The scaling factor $1/N$ suppresses these small variations. This explains that the renormalized process is stuck at 0.

	

\begin{definition}
	\label{def:fluidLimitAndQueues}
	A fluid limit associated with the Markov process $(X(t))$ is a stochastic process which is one of the limits of the process
 	$$\left(\bar{X}(x,t)\right)=\left(\displaystyle\frac{X(x,\parallel x\parallel t)}{\parallel x\parallel}\right)$$
	when $\parallel x\parallel$ goes to infinity.
\end{definition}
Strictly speaking, if $Q_x$ is the distribution of $\left(\bar{X}(x,t)\right)$ on the space of `RCLL' functions $\mathbb{D}\left(\R^+,\maU\right),$ a fluid limit is a probability distribution $\tilde{Q}$ on $\mathbb{D}\left(\R^+,\R^d\right)$ such that 
$$\tilde{Q}=\lim_n{Q_x}_n$$
for some sequence $(x_n)$ of $\maU$ whose norm converges to infinity. By choosing an appropriate probability space, it can be represented as a `RCLL' stochastic process $(W(t))$ whose distribution is $\tilde{Q}$. A fluid limit is thus an asymptotic description of sample paths of a Markov process with a large initial state.

\begin{ex}
	\label{ex:5.16Robert}\em
	(Fluid limits of the M/M/1 queue).  The arrival rate is $\lambda$ and the service rate $\mu,\;L(t)$ is the number of customers of the queue at time $t\geq 0.$ The renormalized process $(\bar{L}_N(t))$ converges to a deterministic function, piecewise linear, i.e., if $\mathbb{P}$-almost surely, the convergence of processes associated with the uniform norm on compact sets holds
	$$\lim_{N\rightarrow +\infty}\left(\bar{L}_N(t)\right)=\left(x+(\lambda-\mu)t\right)^+.$$
	
	In addition, for $\varepsilon,\delta>0,$ there exists $N_0\in\N$ such that, if $N\geq N_0$ then
	\begin{equation}
	\label{eq:ProbaEnx}
	\inf_{|x/N-x|<\delta/2}\mathbb{P}_x\left(\sup_{0\leq x \leq t}|\bar{L}_N(s)-(x+(\lambda-\mu)s)^+|<\delta\right)\geq 1-\varepsilon.
	\end{equation}
The function $\left(x+(\lambda-\mu)t\right)^+$ is therefore the unique fluid limit of this Markov process. If we assume that $\lambda<\mu,$ this implies in particular that the Markov process $L(t)$ is ergodic.

\end{ex}


\pagestyle{empty}
\chapter{Stochastic matching model}\label{chap2:Definition of general model}
\pagestyle{fancy}

Stochastic matching techniques aim to study the dynamic systems resulting from group matching of individuals or agents on a microscopic scale. Different applications of this technique, in economics or finance, provide tools for studying over-the-counter contracts, labor and housing markets, co-operative sites, peer-to-peer networks, and so on. These applications stimulate the introduction of dynamic probabilistic modeling, at a discrete or continuous-time representing the evolution of stochastic matching between agents. The discrete-time models, initially introduced for healthcare systems (blood banks, organ allocations) are also adapted with continuous-time to specific applications. 
For this class of models, it is possible to describe the restrictive behavior of the size of a subset that meets such or that criteria, as well as market price formations, approximating them through a reduced system of non-linear differential equations.\\

The objective of this chapter is to describe the stochastic matching model with the approach of the Markov chain in discrete and continuous-time. In Section \ref{sec:GenModel} we present the stochastic matching model on matching  structures. In Section \ref{sec:pol} we present further information for matching policies. In Section \ref{sec:Markov} we formulate a Markov representation of the general model. In Section \ref{sec:defstab} we define the stability and the instability of the model. 
Finally, in Section \ref{sec:MatchingQueuContiounsTime} we present the matching queue and the stability of the continuous model which we will be used in Chapter \ref{chap6:FluidLimits}.

\section{Stochastic model on matching structures}
\label{sec:GenModel}
A (discrete-time, matching structure) stochastic matching model is specified by a triple $(\mathbb{S},\Phi,\mu)$, such that:
\begin{itemize}
	\item $\mathbb S=(\maV,\maS)$ is a  \textit{connected matching structure} which can be a non-bipartite graph, hypergraph or multigraph;
	\item $\Phi$ is a \textit{matching policy}, which defines the new buffer-content given the pair formed by the old buffer-content and the arriving item, precisely defined in section \ref{sec:pol} below; 
	\item $\mu$ is an element of $\mathscr M(\maV),$ the \textit{common law} of the independent and identically distributed $(i.i.d.)$ classes of the arriving items.
\end{itemize}

\subsection{The models}
\label{sec:TheModel}

\noindent The \textit{matching model} $(\mathbb S,\Phi,\mu)$ is then defined as follows. At each time point $n\in\N^+$, 
\begin{enumerate}
	\item An item enters the system. Its class $V_n$ is drawn from the measure $\mu$ on $\maV$, independently of everything else. 
	\big(Thus the sequence of classes of incoming items $\suite{V_n}$ is $i.i.d.$ of common distribution $\mu$\big).
	\item The incoming item then faces the following alternatives: 
	\begin{itemize}
		\item[(i)] If there exists in the buffer, at least one set of items whose respective set of classes forms, together with $V_n$, an edge of $\maS$, then it is the role of the matching policy $\Phi$ to select one of these sets of classes, say 
		$\{i_1,...,i_m\},\;m\geq 1$ \big(in the cases of graph and multigraph $(m=1),$ and for hypergraph $(m\geq 1)$\big). Then the $m+1$ items of respective classes $i_1,...,i_m,V_n$ are matched together and leave the system right away. Denoting  $S_j:=\{i_1,...,i_m,V_n\} \in \maS,$ for $j\in\ll 1,|\maS|\rr$ \big(in the case of multigraph for the self-loop edges, we have the possibility of having $i_1$ and $V_n$ coincides\big), we then say that $V_n$ {\em completes} a {\em matching} of type $S_j$ at time $n$, and we denote $S(n)=S_j$, the matching performed at $n$. 
		\item[(ii)] {E}lse, the item is stored in the buffer of the system, waiting for a future match, and we write $S(n)=\emptyset$. 
	\end{itemize}
\end{enumerate}

\begin{figure}[htp]
	\begin{center}
		\begin{tikzpicture}
		\draw[-] (1,2) -- (13,2);
		\fill (1,2) circle (2pt) node[below] {\small{2}};
		\node[draw] at (2,3.3) {234};
		\draw[-, thick] (1,2) -- (2,3);
		\draw[-, thick] (2,2) -- (2,3);
		\draw[-, thick] (3,2) -- (2,3);
		\fill (2,2) circle (2pt) node[below] {\small{3}};
		\fill (3,2) circle (2pt) node[below] {\small{4}};
		\fill (4,2) circle (2pt) node[below] {\small{1}};
		\node[draw] at (5,3.3) {123};
		\draw[-, thick] (4,2) -- (5,3);
		\draw[-, thick] (6,2) -- (5,3);
		\draw[-, thick] (7,2) -- (5,3);
		\fill (5,2) circle (2pt) node[below] {\small{1}};
		\fill (6,2) circle (2pt) node[below] {\small{2}};
		\fill (7,2) circle (2pt) node[below] {\small{3}};
		\fill (8,2) circle (2pt) node[below] {\small{3}};
		\fill (9,2) circle (2pt) node[below] {\small{4}};
		\fill (10,2) circle (2pt) node[below] {\small{2}};
		\node[draw] at (7,3.3) {134};
		\draw[-, thick] (5,2) -- (7,3);
		\draw[-, thick] (8,2) -- (7,3);
		\draw[-, thick] (9,2) -- (7,3);
		\fill (11,2) circle (2pt) node[below] {\small{2}};
		\node[draw] at (12,3.26) {...};
		\node[draw] at (13,3.26) {...};
		\draw[-, thick] (10,2) -- (12,3.09);
		\draw[-, thick] (11,2) -- (13,3.09);
		\end{tikzpicture}
		\caption{The matching model in action, on the matching hypergraph of Figure \ref{fig:completeHyper} (left).}
		\label{fig:Dyn}
	\end{center}
\end{figure}
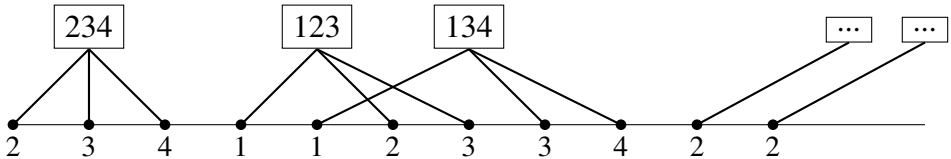
\begin{ex}
	\rm 
	Consider the complete $3$-uniform hypergraph $\mathbb{H}=(\maV,\maH)$ of size $4,$  with $\maV=\{1,2,3,4\}$ and $\maH=\left\lbrace\{1,2,3\},\{1,2,4\},\{1,3,4\},\{2,3,4\}\right\rbrace$. The dynamic matchings of the realization $\suite{V_n(\omega)}=2,3,4,1,1,2,3,3,4,2,2,....$ is represented in Figure \ref{fig:Dyn}. 

	\subsection{State spaces}
	\label{subsec:state}
	We reproduce here the state description of the model introduced in \cite{MaiMoy16} for the stochastic model on general graphs, and then 
	\cite{MBM18} for the same model under the matching policy {\sc fcfm}. 
	Fix a connected matching structure $\mathbb{S}=(\maV,\maS)$, in the sense specified above, until the end of this section. 
	Fix an integer $n_0 \ge 1$, a realization $v_1,\dots,v_{n_0}$ of $V_1,\dots,V_{n_0}$, and define the word $z= v_1\dots v_{n_0} \in \maV^*$.  
	Then, for any matching policy $\Phi$, there exists a unique {\em matching} of the word $z$, that is, a matching structure having a set of nodes 
	$\left\{v_1,\dots,v_{n_0}\right\}$ and whose edges represent the matches performed in the system until time $n_0$, if the successive arrivals are given by $z$.   
	This matching is denoted by $M^\Phi(z)$. 
	The state of the system is then defined as the word $W^\Phi(z)\in \maV^*$, whose letters are the classes of the unmatched items at time $n_0$, 
	i.e., the isolated vertices in the matching $M^{\Phi}(z)$, in their order of arrivals. The word $W^\Phi(z)$ is called {\bf queue detail} at time $n_0$. 
	Then, any admissible queue detail belongs to the set 
	\begin{equation}
	\mathbb W = \Bigl\{ w\in \maV^*\; : \; \forall  i\neq j\neq\cdots \neq k \; \text{s.t.} \; \{i,j,\cdots,k\} \in \maS, \; |w|_i|w|_j\cdots|w|_k=0 \Bigr\}.\label{eq:defmbW}
	\end{equation} 
	
	As will be seen below, depending on the service discipline $\Phi$, we can also restrict the available information on the state of the system at time $n_0$, to a vector only keeping track of 
	the number of items of the various classes remaining unmatched at $n_0$, that is, of the number of occurrences of the various letters of the alphabet $\maV$ in the word $W^\Phi(z)$.    
	This restricted state thus equals the commutative image of $W^{\Phi}(z)$ and is called {\bf class detail} of the system. It takes values in the set 
	\begin{align}
	\label{eq-css}
	\mathbb X &= \Bigl\{x \in \N^{|\maV|}\,:\,\forall i\neq j\neq\cdots\neq k \mbox{ s.t. }\{i,j,\cdots,k\}\in\maS,\; x(i)x(j)\cdots x(k)=0\Bigl\}\nonumber\\
	&=\Bigl\{\left[w\right]\;:\,w \in \mathbb W\Bigl\}.
	\end{align} 
	\begin{remark}
		\label{rq:restrictionFor Multig}\rm
		Denote, in case that $\mathbb{S}=(\maV,\maS)$ be a multigraphs whose vertex divided into two subsets $\maV=\maV_1\cup\maV_2$ such that $\maV_1\neq\emptyset,$ we must add the following restrictions:
		
		\begin{itemize}
			\item In equation (\ref{eq:defmbW}) we have $\forall i\in\maV_1,\;|w|_i\leq 1;$
			\item In equation (\ref{eq-css}) we have $\forall i\in\maV_1,\;x(i)\leq 1.$ 
		\end{itemize} 
		
		Let us order the elements of $\maV$ and identify them with $\llbracket 1,|\maV| \rrbracket$, in a way that the $|\maV_1|$ first elements are those of $\maV_1$ and the remaining $|\maV_2|$ elements are those of $\maV_2$. We index the elements of $\mbX$ accordingly: namely, for any $x\in\mbX$ and any $i\in\llbracket 1,|\maV| \rrbracket$, $x(i)$ is the queue size of node $i$. With these conventions, for instance, 	the coordinate $x(|\maV_1|+3)$ corresponds to the queue size of the third node of $\maV_2$. 
	\end{remark}
	
	\section{Matching policies}
	\label{sec:pol}
In this section, we present and formally define the set of matching policies that can be taken into consideration. 
	
	\begin{definition}
		A matching policy $\Phi$ is said {\em admissible} if the choice of the match of an incoming item depends 
		{solely} on the queue detail upon the arrival (and possibly on an independent uniform draw, in case of a tie). 
	\end{definition} 
	An admissible matching policy can be formally characterized by an action $\odot_{\Phi}$ of $\maV$ on $\mathbb W$, defined as follows: 
	if $w$ is the queue detail at a given time and the input is augmented by the arrival of $v \in \maV$ at that time, then the new queue detail $w'$ and $w$ satisfies the relation 
	\begin{equation}
	\label{eq:defodot}
	w'= w\odot_{\Phi} v.
	\end{equation}
	Notice that the action $\odot_\Phi$ is possibly random.  
	
	\subsubsection{\large Matching policies that depend on the arrival times}  
	In `First Come, First Matched' ({\sc fcfm}), the oldest item in line is chosen, so the map  $\odot_{\textsc{fcfm}}$ is 
	given, for all $w \in \mathbb W$ and all $v\in\mathcal{V}$, by 
	$$
	w \odot_{\textsc{fcfm}} v =
	\left \{
	\begin{array}{lll}
	wv & \textrm{if for any }S\in\maS(v)\textrm{ there exists }i\neq v\in S\textrm{ s.t }|w|_{i} = 0,\\
	w_{\left [\Phi(w,v)\right]} & \textrm{else, }\textrm{where }\Phi(w,v) =\{i,j,\cdots,k\}\textrm{ such that}\\
	&\qquad i=\min \{\ell\in[\![1;|w|]\!] : \textrm{ there exists }S\in\maS(v)\textrm{ s.t }w_{\ell}\in S\},
	\end{array}
	\right.
	$$
	where ties are broken from the minimum of the next term $j$ as $i$ whenever the above is non-unique.\\
	
	In `Last Come, First Matched' ({\sc lcfm}), the updating map $\odot_{\textsc{lcfm}}$ is analog to $\odot_{\textsc{fcfm}}$, for 
	$\Phi(w,v) =\{i,j,\cdots,k\}\textrm{ such that }i=\max \{\ell\in[\![1;|w|]\!] :\textrm{ there exists }S\in\maS(v)\textrm{ s.t }w_{\ell}\in S\}.$ 
	
\end{ex}

\subsubsection{\large Class-admissible matching policies}
A matching policy $\Phi$ is said to be {\em class-admissible} if it can be implemented upon the sole knowledge of 
the class detail of the system. Let us define, for any $x\in \mathbb X,$ and any $v \in \maV$,
\begin{equation*}
\maP(x,v) =\Bigl\{S\in\maS(v)\,:\;\forall i\in S\backslash\{v\},\;x\left(i\right) > 0\Bigl\},\label{eq:setP2}
\end{equation*}
the set of classes of available compatible items with the entering class $v$-item, if the class detail of the system is given by $x$. 
Then, a class-admissible policy $\Phi$ is fully characterized by a mapping $p_\Phi$, such that $p_\Phi(x,v)$ denotes the class of the match chosen 
by the entering $v$-item under $\Phi$, in a system of class detail $x$, such that $\mathcal P(x,v)$ is non-empty. 
Then, the arrival of $v$ entails the following action on
the class detail:
\begin{equation}
\label{eq:defccc}
x \ccc_{\Phi} v = \left \{
\begin{array}{ll}
x+\gre_v &\mbox{ if }\mathcal P(x,v)=\emptyset,\\
x-\gre_{p_\Phi(x,v)}&\mbox{ else}. 
\end{array}
\right .
\end{equation}
\begin{remark}
	\label{rem:equiv}
	\rm
	As is easily seen, to any class-admissible policy $\Phi$ corresponds  an admissible policy, if one makes precise the rule of choice of a match for the 
	incoming items {\em within} the class that is chosen by $\Phi$, in the case where more than one item of that class is present in the system. 
	In this thesis, we always assume that within classes, the item chosen is always the {\em oldest} in the line, i.e., we always apply an \textsc{fcfm} policy {\em within classes}.  
	Under this convention, any class-admissible policy $\Phi$ is admissible, that is, the mapping $\ccc_\Phi$ from 
	$\mathbb X\times \maV$ to $\mathbb X$ can be detailed into a map $\odot_{\Phi}$ from 
	$\mathbb W \times \maV$ to $\mathbb W$, as in (\ref{eq:defodot}), that is, such that for any queue detail $w$ and any $v$,
	\[\left[w\odot_\Phi v\right] = \left[w\right]\ccc_\Phi v.\]    
\end{remark}


\medskip

\paragraph{{\em Fixed priority policies}} 
In the context of fixed priorities, each vertex $i \in \maV$ is assigned a full ordering of the edges and chooses to be matched with the first matchable edge following this order. Formally, to each node $i$ is associated a permutation $\sigma_i$ of the index set $\llbracket 1,d(i) \rrbracket$, and if we denote $\maS(i)=\left\{S_{i_1},S_{i_2},...,S_{i_{d(i)}}\right\}$, 
then at any time $n$, 
\begin{equation}
\label{eq:deffixedpriority}
p_{\Phi}(x,v)= S_{i_{\sigma_i(j)}},\mbox{ where }j=\mbox{min\,}\left\{k\in \llbracket 1,d(i) \rrbracket\,:\,S_{i_{\sigma_i(k)}}\in\mathcal{P}(x,v)>0\right\}.
\end{equation}

\paragraph{{\em Random policies}} 
For this matching policy, the priority order defined above is not fixed and is drawn uniformly at random upon each arrival, i.e., for any $v$, $p_{\Phi}(x,v)$ is defined as in equation (\ref{eq:deffixedpriority}), for a permutation $\sigma_i$ that is drawn, independently of everything else, uniformly at random among all permutations of $\llbracket 1,d(i) \rrbracket$.


\medskip

\paragraph{{\em Max-Weight policies}}
The Max-Weight policies are an important class of class-admissible policies, in which matches are based upon the queue length and 
a fixed reward that is associated with each match. 
Formally, for any $S\in \maS$ , we let $w_{S}$ be the reward associated to the match of the items of $S$ together, and fix a real parameter $\beta$. 

Then, in a system of class detail $x$, the match of the incoming $v$-item is given by 
\begin{equation*}
p_{\Phi}(x,v)=\mbox{argmax}\left\{\beta x(S) + w_{S}\,:\,S\in \maP(x,v)\right\},
\end{equation*}
where ties are broken uniformly at random whenever the above is non-unique. In other words, $S$ maximizes a linear combination of the queues-size and the rewards.  Several particular cases are to be mentioned:
\begin{enumerate}
	\item[(i)] If $\beta>0$ and the rewards are constant (i.e., $w_{S} = w_{S'}$, for any $S,\,S'\in\maS$), then the matching policy is `Match the Longest' ({\sc ml}), i.e., the incoming $v$-item is matched upon the arrival with the items of the compatible classes having the longest queues size (ties being broken uniformly at random).
	\item[(ii)] If $\beta<0$ and the rewards are constant, then the matching policy is `Match the Shortest' ({\sc ms}), i.e., the incoming $v$-item is matched upon the arrival with the items of the compatible classes having the shortest queues size (ties being broken uniformly at random).
	\item[(iii)] If $\beta=0$ and $w_{S}\ne w_{S'}$ for any $i\in \maV$ and any $S\ne S' \in \maS(i)$ (implying that there is a strict ordering of rewards for all possible matches of any given class), then the matching policy is of a priority type, defined above. 
\end{enumerate}

\section{Primary Markov representations}
\label{sec:Markov}
The Markov representations of the model are similar to general matching models on graphs. 
Denote, for all $w\in\mathbb W$ and all $n\ge 1$, by $W^{\{w\}}_n$, the buffer-content at time $n$ 
(i.e., just after the arrival of the item $V_n$) if the buffer-content at time 0 was set to $w$. In other words,
\[\left\{\begin{array}{ll}
W^{\{w\}}_0 &=0,\\
W^{\{w\}}_n &= W^\Phi\left(wV_1\dots V_{n}\right),\quad n\in \N_+.\end{array}\right.\]
It readily follows from (\ref{eq:defodot}) that the buffer-content sequence $\suite{W^{\{w\}}_n}$ is a Markov chain.
Indeed, for any $w\in\mathbb W$, we have 
\[W^{\{w\}}_{n+1} =W^{\{w\}}_n \odot_\Phi V_{n+1},\,\forall n\in\N.\]

Secondly, we deduce from (\ref{eq:defccc}) that for any class-admissible matching policy $\Phi$ (e.g., $\Phi=\textsc{u}, \textsc{ml}$ or $\textsc{ms}$) 
and any initial condition as above, the $\mathbb X$-valued sequence $\suite{X_n}$ of class details also is a Markov chain, as for any initial condition $x \in \mathbb X$, 
we get 
\[X^{\{x\}}_{n+1} =X^{\{x\}}_n \ccc_\Phi V_{n+1},\,\forall n\in\N.\]
For a fixed initial condition and all $n\in\N$, we denote, for all $B \subset \maV$, by $W_n(B)$, the number of items in the line of classes in $B$ at time $n$, and 
by $|W_n|$, the total number of items in the system at time $n$. In other words, 
\[\left\{\begin{array}{ll}
W_n(B) &= \displaystyle\sum_{i\in B} W_n(i),\\
|W_n|  &= W_n(\maV) = \displaystyle\sum_{i\in\maV} W_n(i). 
\end{array}\right.\]




\section{Stability of the matching model}
\label{sec:defstab}
We say that the matching model $(\mathbb S,\Phi,\mu)$ is stable if the Markov chain $\suite{W_n}$ \big(and thereby $\suite{X_n}$\big) is positive recurrent. \\
Consider a matching structure $\mathbb S=(\maV,\maS)$ and a matching policy $\Phi$, we define the \textbf{stability region} associated to $\mathbb S$ and $\Phi$ as the set of probability measures on $\maV$ rendering the model $(\mathbb S,\Phi,\mu)$ stable, i.e., 
\[\textsc{Stab}(\mathbb S,\Phi)=\left\{\mu \in\mathscr M(\maV):\suite{W_n} \mbox{ is positive recurrent}\right\}.\]

\begin{remark}
	\label{rq:MultigraphSelfLoop}
	\rm
	If the matching structure $\mathbb{S}=(\maV,\maS)$ is a multigraph such that $\maV = \maV_1$, i.e., all nodes of the multigraph have a self-loop, we say, for obvious reasons, that the considered matching models are {\em finite}. Then any matching model on $\mathbb{S}$ is necessarily stable, that is, for any admissible $\Phi$ we have that
	\[\textsc{Stab}(\mathbb{S},\Phi)=\mathscr M(\maV).\]
	Indeed, the Markov chain $\suite{W_n}$ is 
	irreducible on the finite state space $\mathbb W$, containing only words having a size less or equal to the cardinality of the largest independent set of $\mathbb{S}$. 
\end{remark}
\begin{definition}
	A connected matching structure $\mathbb{S}$ is said to be,	\begin{itemize}
		\item \textbf{stabilizable} if $\textsc{Stab}(\mathbb S,\Phi)$ is non-empty for some matching policy $\Phi$,
		\item \textbf{non-stabilizable} if $\textsc{Stab}(\mathbb S,\Phi)$ is empty.
	\end{itemize}
	\label{def:StabilisableOrNot}
\end{definition}

\section{Continuous-Time Markov chain processes}
\label{sec:MatchingQueuContiounsTime}
Now, we present a stochastic matching model in continuous-time where each class of items arrive at the system according to an independent Poisson process of intensity $\lambda>0$.

A throughout presentation of this section can be found e.g. in \cite{MoyPer17}.
\subsection{Matching Queues}
\label{sec:MatchQueAndStabConTime}
The matching queue associated with a matching structure $\mbS=(\maV,\maS)$, an arrival-rate vector $\lambda := (\lambda_1, \cdots , \lambda_{|\maV|})$ and the matching policy $\Phi$, is defined as follows:
\begin{itemize}
\item Each node of $\maV$ is associated with a class of items;
\item Items of each class $i\in\maV$ arrive to the system according to an independent Poisson process $N_i$ of intensity $\lambda_i> 0 $;
\item  A class-$i$ items can be matched with class-$j$, $\cdots,$  class-$k$ items if and only if there is an edge $S=\{i, j,\cdots,k \}\in\maS;$
\item Upon arrival at time $t$, a class-$i$ item is either matched exactly with the classes $\,j,\cdots,k$ such that $S(t)=\{i,j,\cdots,k\}$, if any such items are available it  leave the system immediately, or are placed in an infinite buffer.
\end{itemize}
Let us define the following summation $\bar{\lambda}$:
\begin{equation}
\label{eq:SomeOfLambda}
\bar{\lambda}:=\sum\limits_{i\in\maV}\lambda_i\qquad\bar{\lambda}_A:=\sum\limits_{i\in A}\lambda_i,\qquad A\subset\maV.
\end{equation}

\begin{remark}
	\label{rq:assumptionForPoissonArr}\rm
	The Poisson process can be obtained by evaluating the following assumptions for arrivals during an infinitesimal short period of time $\delta t:$
	\begin{itemize}
	\item The probability that one arrival occurs between $t$ and $t+\delta t$ is  $t + o(t),$ independent of the time $t$, and independent of arrivals in earlier intervals.
	\item The number of arrivals in non-overlapping intervals are statistically independent.
	\item The probability of two or more arrivals happening during $[t,t+\delta t]$ is negligible compared to the probability of zero or one arrival, i.e., it is of the order $o(t)$.
	\end{itemize}

\end{remark}

\subsection{System dynamics}
  Each vertex $i$ associated to a buffer content called class-$i$ queue, and denote the associated class-$i$ queue process by $Q_i := \{Q_i(t)\; :\; t \geq 0\}$. More precisely, for all $t \geq 0,\; Q_i(t)$ is the number of the class-$i$ items in the queue at time $t$. The $|\maV|$-dimensional queue process of the system denoted as follows:
\begin{equation}
\label{eq:TheQueueQ}
Q=(Q_1,\cdots,Q_{\maV}).
\end{equation}
 For $t \geq 0$ and $A \subset \maV$, we let $Q_A(t)$ be the restriction of $Q(t)$ to its coordinates in $A$.  Upon arrival to the system, an element of the class-$i$ can find several possible matches. A matching policy is a rule specifying how to match in such cases. We say that a matching policy $\Phi $  is admissible if matches always occur when possible, and decisions are made only on the value of the $Q$ queue process at arrival times.
 
 For a matching structure $\mathbb{S}$, an arrival-rate vector $\lambda$ and under the admissible matching policy $\Phi,$ the queue process $Q$ is as follows:
 \begin{itemize}
 	\item Its a \textbf{Continuous-Time Markov Chain} denoted by  `CTMC';
 	\item The initial queue length $Q(0);$
 	\item For all $t \geq 0,\;Q_i(t)Q_j(t)\cdots Q_k(t) = 0$ where $i, j,\cdots, k\in\maV$ such that $S(t)=\{i,j,\cdots k\}\in\maS$.
 \end{itemize}
 
We thus characterize the system by the triple $(\mbS,\Phi, \lambda)_C$ (where we append the subscript $C$ to denote a {\em continuous-time model}, as opposed to the one in discrete-time, which will omit the subscript $C$).
\begin{ex}
	\rm
In \cite{NS16} was shown an example of a matching system with $4$ item types depicted on Figure \ref{fig:CTMCFigure}. The items arrive as a random process, as individual items, or in batches. The average arrival rate of type $i$ items is $\alpha_i.$ There exist three possible matchings; e.g., $\langle1, 2\rangle$ respectivily $\langle2,3\rangle$ is a matching which matches one item of type $1$ with one item of type $2$ resp one item of type $2$ with one item of type $3$. $\langle2, 3, 4\rangle$ is another matching which matches one item of types $2$, $3$ and $4$.

A matching can only be applied if all contributing items are present in the system; and if it is applied, the contributing items instantaneously leave the system.

\end{ex}
 
 \begin{figure}[ht]
 	\centering
 	\includegraphics[width=.9\linewidth]{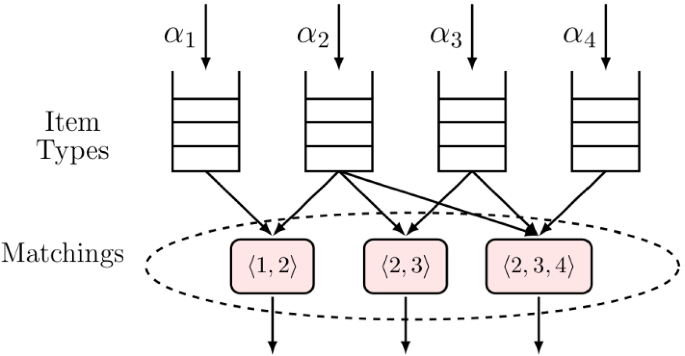}
 	\caption{Matching Model (CTMC).}
 	\label{fig:CTMCFigure}
 \end{figure}
 


\subsection{Stability of a matching queue}
\label{sec:StabOfMatchingQueue}
The matching queue $(\mbS,\Phi, \lambda)_C$ is said to be stable if the corresponding CTMC $Q$ is positive recurrent, and unstable
otherwise.

\begin{definition}
	\label{def:stabConti}
	The stability region corresponding to the connected matching structures
	$\mbS$ and the matching policy $\Phi$ is the set
	\[\textsc{Stab}(\mbS,\Phi)=\left\lbrace\lambda\in (\R^{++})^{|\maV|}\;:\; (\mbS,\Phi,\lambda)_C\;\textrm{ is stable}\right\rbrace.\]
\end{definition}

We also say that node $i\in\maV$ is stable if, for some initial condition, the\textit{ mean time for its associated queue to empty is finite}. Otherwise, the node is unstable.\\


Indeed, the advantage of the fluid limit techniques in continuous-time is to facilitate stability analysis.
Thus, as was shown in \cite[Theorem 2]{MoyPer17}, the stability region of a discrete-time stochastic model can be studied by embedding it in an appropriate continuous-time mode. Then, the continuous-time counterparts of the results in \cite{RM19,MaiMoy16,JMRS20} hold for our matching queues and vice versa.

\clearpage

\pagestyle{empty}

\chapter{Literature review}\label{chap2: state of art}
\pagestyle{fancy}

Nowadays, the \textbf{matching model} is considered one of the major challenges that are of interest in various sectors (healthcare systems, peer-to-peer networks, interfaces of the collaborative economy, assemble-to-order systems, job search applications, and so on). Other references address specific models for designated applications: \cite{DMKPFL15} on organ transplantation,  \cite{BDPS11} on kidney transplants, \cite{TW08} on housing allocations systems, or \cite{OW19} on ride-sharing models. A more recent application for the matching model results in modeling sharing-economy (collaborative consumption) platforms, with the most relevant examples being car-sharing platforms, such as Uber and Lyft, lodging services, such as Airbnb, and virtual call centers (namely call centers with home-based agents), as considered in, e.g., \cite{GLM16, I16}. Since a platform operating in a sharing-economy market must match supply and demand at every instance, possibly in a multi-region setting, matching queues can be used to model and optimize such platforms; see \cite{SRJ15} for an application in the car-sharing setting.

Before passing to part \ref{part:contributions}  which contains the main subject and the results of our contributions, in this chapter, we present some of the related work for the matching models.
In Section \ref{sec:SkillBased}, we present the first  natural representation of service systems in which customers and servers are of different classes called skill-based queueing systems. In Section \ref{sec:BipartiteMatchingModel(BM)} we start from \cite{CKW09} where they have introduced the matching model referred to the bipartite matching model and we present some of related works. In Section \ref{sec:Extended bipartite matching model} we describe the extended bipartite matching model (EBM). In Section \ref{sec:matching model} we represent all dedicated studies for the general matching model (GM). In Section \ref{sec:Optimisation} 
we address a point of view of stochastic optimization of the matching models. In Section \ref{sec:OtherExtention} we present other extensions of the matching model. Finally, in Section \ref{sec:ProbAndPosi} we represent the related study with our contributions.

\section{Skill-based service systems}
\label{sec:SkillBased}
Over the past decade, an increasing interest has been dedicated to stochastic systems in which incoming elements are matched according to specified compatibility rules. 
This is, first, a natural representation of service systems in which customers and servers are of different classes, and where designated classes of servers can serve 
designated classes of customers. For this general class of queueing models, termed skill-based queueing systems, it is then natural to investigate the conditions for the existence of a stationary state, 
and under these conditions, to \textit{design and control the model at best}, for given performance metrics (end-to-end delay, matching rates, fairness, and so on.) Such models are classical queueing systems, in the sense that 
there is a dissymmetry between customers and servers: customers come and depart the system, whereas servers are part of the `hardware', remain in the system, and switch to the service of another customer when they have completed one (with possible vacation times in-between services) see Figure \ref{fig:Example} (a).  In \cite{AAM07, GM2000} various types of customers call are routed to various groups of skill-based servers.  

Generally, these studies consist of analyzing the stability of the model under which matching policy is optimal and the probability measures that makes it  stable. It should be noted that a lot of works have advanced research and have been developed within the framework of stochastic matching model.

\begin{figure}[htp]
	\begin{subfigure}{.5\textwidth}
		\centering
		\includegraphics[width=.9\linewidth]{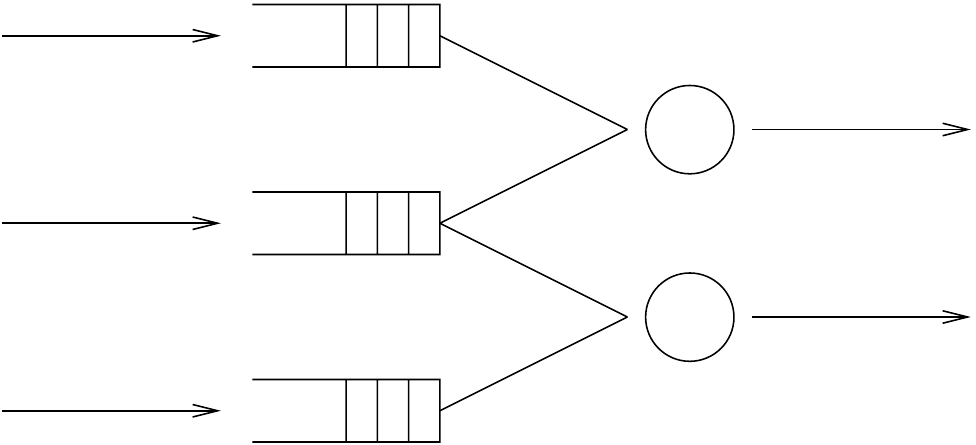}
		\caption{Modelisation of call center `Skill-based'.}
		\label{fig:QueueingModelCallCenter}
	\end{subfigure}
	\begin{subfigure}{.5\textwidth}
		\centering
		\includegraphics[width=.9\linewidth]{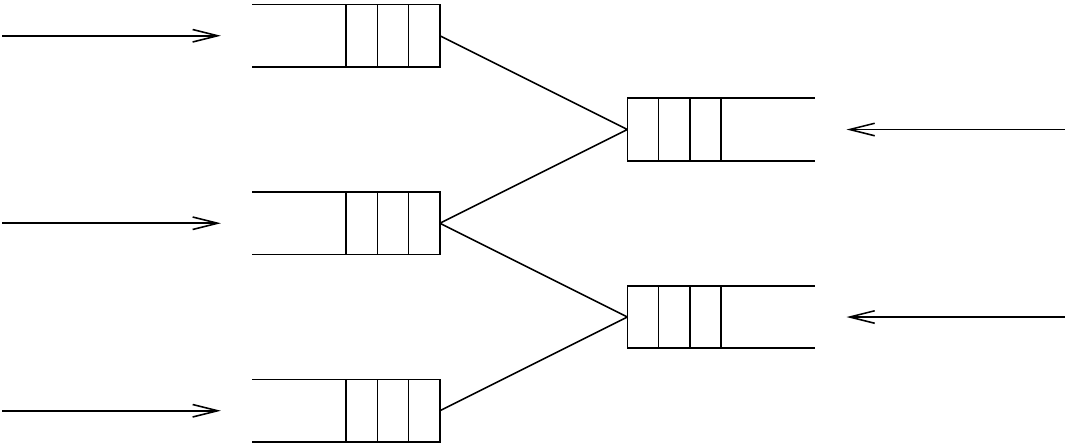}
		\caption{Bipartite matching model.}
		\label{fig:QueueingModeTwoEntrance}
	\end{subfigure}
	\caption{Left: A skill-based queueing system. Right: A bipartite matching model.}
	\label{fig:Example}
\end{figure}


\section{Bipartite Matching model (BM)}
\label{sec:BipartiteMatchingModel(BM)}
In \cite{CKW09,AW11}, a variant of such skill-based systems have introduced, which are now commonly referred to as `Bipartite Matching models' (BM): couples customer/server enter the system at each time point, and customers and servers play symmetric roles: exactly like customers, servers come and go into the system see Figure \ref{fig:Example} (b). Upon arrival, they wait for a compatible customer, and as soon as they find one, leave the system together with it.  These settings are suitable to various fields of applications, among which, blood banks, organ transplants, housing allocation, job search, dating websites, and so on.  

The mathematical setting is the following: in the two aforementioned references, it is assumed that the incoming customers are of various classes in the set	$C = \{1,2,\cdots,I\}$, and that server are of classes in the set $S =\{1,2,\cdots,J\}$. It is assumed that a server of type $j\in S$ can serve a subset of customer types $C(j)\subset C$, and that a customer of type $i\in C$ can be served by a subset of server types $S(i)\subset S$. 

Caldentey \& al. \cite{CKW09} have considered the types of customers and servers if the infinite sequences are \textit{random}, \textit{independent identically distributed}, and customers and servers are matched according to their order in the sequence, on a First Come, First Matched ({\sc fcfs}) basis. This service system can be represented by a bipartite graph $\mathbb{G}=(\mathcal C+\maS,\maE).$

Consider the sequences $(c^n; s^n)_{n\geq 1}\in\mathcal C^\infty\times\maS^\infty$ have probability distribution $\mathbb P((c^n; s^n)=(i,j))=\alpha_i\beta_j$, for probability vectors,
$$\alpha=(\alpha_i)\in(\R^+)^I \textrm{ and }   \beta=(\beta_j)\in(\R^+)^J.$$ 
Define \textit{matching rates}, by counting for each $n$ the number of $(c_i; s_j)$ matches created between $c_1;\cdots;c_n$ and $s_1;\cdots;s_n$, and divide by $n$ to get $r^n_{c_i;s_j}.$ For a given $\mathbb{G}$, $\alpha$, $\beta,$ the matching rates $r_{c_i;s_j} =\lim\limits_{n\rightarrow +\infty}r^n_{c_i;s_j}$ if these limits exist almost surely.\\

The construction of the matching is to add one pair (\textit{independent}) of a server and a customer at a time and to match those to the earliest unmatched customer or server that they find, or leave them unmatched, waiting for subsequent pairs. \\

They have proposed two following simplifications:\\
\textbf{1. First simplification} which leads to their model is that there are \textit{no service times}. \\
\textbf{2. Second simplification} to the considered model, by \textit{ignoring the arrival times}.\\

They said that the system has a \textit{balanced infinite matching}: if the fraction of customers of type $i$ among the first $n$ customers, which are matched by one of the first $n$ servers, converge almost surely to $\alpha_i$, and the fraction of servers of type $j$ which are matched by one of the first $n$ customers, converge almost surely to $\beta_j$ as $n\longrightarrow +\infty.$\\
They have found a \textit{necessary condition for the system} to be balanced, that is:
\begin{equation}
\label{eq:NecCondForBalance}
\alpha(C) \leq\beta(S(C)),\;\;\beta(S)\leq\alpha(C(S))\;;\; C\subseteq\mathcal C,\qquad S\subseteq\maS.
\end{equation}
Also, they provided the following conjecture that is, a sufficient condition for ergodicity (existence of matching rates),
\begin{equation}
\label{eq:ConjCKWNecErgo}
\alpha(C)<\beta(S(C)),\;\;\beta(S)<\alpha(C(S))\;;\;\; C\subset\mathcal C,\;C\neq\mathcal C,\qquad S\subset\maS,\;S\neq\mathcal S.
\end{equation}

They have studied specific models such as, `N' model, (respectively `W' model) (i.e., the structure of the model is of the form `N' (respectively `W') graph) and an almost complete graph case (in which each server type can be matched to all except at most one customer type and vice versa). For these, they proved the Conjecture (\ref{eq:ConjCKWNecErgo}) above, solved the balance equations and obtained the matching rates. However, they studied the ergodicity for the  `NN' model (i.e., the matching structure of the model is of the form `NN' graph), and then the Conjecture (\ref{eq:ConjCKWNecErgo}) holds for any bipartite graph in which every server type is connected to all but at most 2 of the customer types and vice versa.

	A dynamic representation of  `W' model is given in Figure \ref{fig:Example of W model}. \\

\begin{figure}[htp]
	\begin{subfigure}{.5\textwidth}
		\centering
		\includegraphics[width=.9\linewidth]{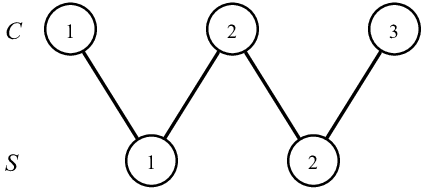}
		\caption{The bipartite graph for the `W' model}
		\label{fig:The-bipartite-graph-for-the-W-model}
	\end{subfigure}
	\begin{subfigure}{.5\textwidth}
		\centering
		\includegraphics[width=.7\linewidth]{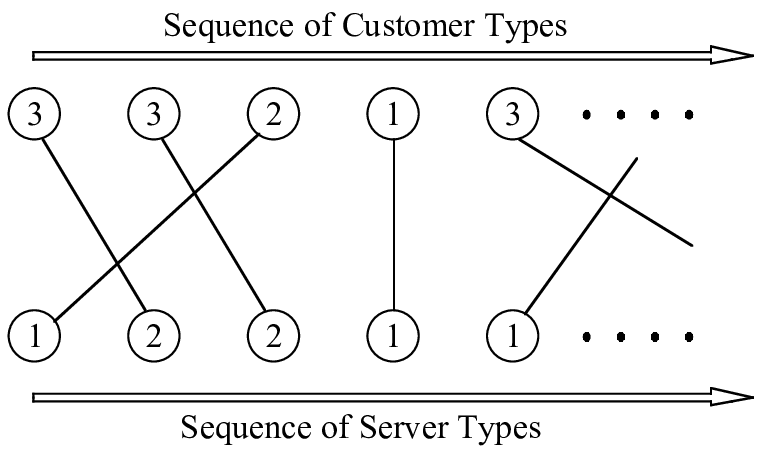}  
		\caption{The {\sc fcfs} infinite matching for the `W' model}
		\label{fig:The-FCFS-infinite-matching-for-the-W-model}
	\end{subfigure}
	\caption{The bipartite matching graph for the `W' model.}
	\label{fig:Example of W model}
\end{figure}

Adan and Weiss \cite{AW11} have considered a bipartite matching model in which multi-type customer multi-type server models take into account the special needs of customers as well as the aptitudes and capabilities of the servers. Through the overlap of the client and server subsets, the main interest is in the ability of providing individually tailored service while still allowing for cooperation and pooling of the servers. The Markovian model suggested in \cite{CKW09} to describe the {\sc fcfs} infinite matching turns out to be intractable in general, and could therefore not be used to prove the conjecture in general or to calculate the rates.  
Also, they have solved the balance conditions (\ref{eq:NecCondForBalance}) by an \textit{explicit product form} stationary distribution.
Further, they have given some examples and demonstrated the calculation of the matching rates for some special system graphs.\\

	Adan \& al. \cite{ABMW17} have proved the fundamental structure of the model in the following three steps:
	\begin{itemize}
		\item  Derive a Loynes' scheme, which enables to get to stationarity through sample path dynamics, and to prove the existence of a \textit{unique} {\sc fcfs} matching over $\Z$ and not solely for $\N$.
		\item  Define a pathwise transformation in which they interchanged the positions of the two items in a matched pair, and they proved the dynamic ``reversibility'' of the model under this transformation.
		\item  Construct ``primitive'' Markov chains whose product form stationary distributions are obtained directly from the dynamic reversibility. Using these as building blocks, they have drived product form stationary distributions for multiple `natural' Markov chains associated with the model, and they computed various non-trivial performance measures as a by-product.\\
		They illustrated these results for the `NN' model that was described in \cite{CKW09}, and that couldn't be fully analyzed.
	\end{itemize}
	They used the following \textit{reversibility result}: starting from two independent $i.i.d.$ sequences over $\Z$ with {\sc fcfs} matching between them, and performing the exchange transformation on all the links, they obtained two sequences of exchanged customers and servers and matching between them. It is then true that the sequences are again independent $i.i.d.$, and the matching between them is {\sc fcfs} in the reversed time direction.\\
	
Adan and Weiss \cite{AW14} have considered a queueing system with $J$ parallel servers $S = \{m_1,\cdots,m_J\}$ (fixed set of server), and with customer types $C=\{a, b, c,\cdots\}$. A bipartite graph $\mathbb{G}$ describes which pairs of server-customer types are compatible. Further, they considered {\sc fcfs-alis} policy: \textit{A server always picks the first, longest waiting compatible customer, and a customer is always assigned to the longest idle compatible server}. ALIS is the best way to equalize the efforts of the servers, and thus it encourages diligent service.

Assume that arrivals are Poisson and service is exponential. Customers of type $c$ arrive at the system in independent Poisson streams with rates $\lambda_c$, $c\in C$. Service times of server $m_j$ are independent and exponentially distributed with rate $\mu_{m_j}$, $j\in S.$ Note that service durations of customers depend on the server providing the service, and not on the customer type.

Also, they calculated fluid limits of the system under overload, to show that a local steady state exists. They distinguished the case of complete resource pooling when all the customers are served at the same rate by the pooled servers, and the case when the system has a unique decomposition into subsets of customer types, each of which is served at its rate by a pooled subset of the servers. 

Finally, they discussed the possible behavior of the system with generally distributed abandonments, under many server scaling. 


\section{Extended Bipartite Matching model (EBM)}
\label{sec:Extended bipartite matching model}

	In \cite{BGM13}, the settings of  \cite{CKW09,AW11} are generalized to more general service disciplines (termed `matching policies' in this context), and necessary and sufficient conditions for the stability of the system are introduced, which are functions of the compatibility graph and of the matching policy. Define the following condition on $\mu,$ see conditions (\ref{eq:ConjCKWNecErgo}) : 
	\begin{equation}
		\label{eq:NcondEBM}
\left\lbrace\begin{array}{ccccc}		\mu_C(U) &< &\mu_S(S(U)),& \forall\, U\, \subset C\\
		\mu_S(V) &< &\mu_C(C(V )),& \forall\, V \,\subset S.
		\end{array}\right.
			\end{equation}
		The above conditions are then shown necessary and sufficient for the stability of the system for various graph geometries 
		 and have a natural interpretation. Let $\mu_C$ and $\mu_S$ be the	marginals of the arrival probability $\mu$. Customers from $U$ need to be matched with servers from 	$S(U)$. The first line in (\ref{eq:NcondEBM}) asks for strictly more servers in average from $S(U)$ than customers from $U$. The second line has a dual interpretation.\\

		In other words, the measure $\mu$ is not of the form $\mu_C\otimes \mu_S$. Instead, the arrival scenario is characterized by a subset $F\subset C\times S$ representing the possible arrivals of couples, and a measure $\mu$ on $C\times S$ having support $F$.  The system is then called the Extended Bipartite Matching model (EBM, for short), and suits applications in which \textit{independence between the classes of the customers and servers entering simultaneously cannot be assumed}. In the applications to organ transplants and blood transfusions, this extension of the settings of the BM is justified by the possible correlations between the blood types of the arriving couples, who may be parents of one another. Also, in \cite{BGM13} the authors have proven the following results:  
	\begin{itemize}
		\item \textit{Sufficient conditions are obtained}, under which any admissible matching policy can make the system stable,
		\item For the `NN' model, the {\sc ms} policy and some priority policies do not have a maximal stability region - in the sense that the conditions  $(\ref{eq:NcondEBM})$ \textit{are not sufficient for stability}. 
		\item For any bipartite graph, the {\sc ml} {\it policy has a {maximal} stability region}.
	\end{itemize}
	However, \textit{the maximality} of {\sc fcfs} \textit{is left as an open problem.}\\

Moyal \& al. \cite{MBM18} have found an explicit construction of the stationary state of Extended Bipartite Matching (EBM) models as defined in \cite{BGM13}. They used a Loynes-type backward scheme allowing to show the existence and uniqueness of a bi-infinite perfect matching under various conditions, for a large class of matching policies and of bipartite matching structures.



\section{General Matching model (GM)}
\label{sec:matching model}
To model concrete systems, the need then arose to extend these different above models. Indeed, in many applications, the assumption of pairwise arrivals may appear somewhat artificial, and it is more realistic to assume that arrivals are simple. Also, all the aforementioned references assume that the compatibility graph is bipartite, namely, there are easily identifiable classes of {\em servers} and classes of {\em customers}, whatever these mean: donors/receivers, houses/applicants, jobs/applicants, and so on. However, in many cases, the context requires that the compatibility graph take a general (i.e., not necessarily bipartite) form. For instance, in dating websites, it is a priori not possible to split items into two sets of classes (customers and servers) with no possible matches within those sets. Similarly, in kidney exchange programs, intra-incompatible couples donor/receiver enters the system, looking for a compatible couple to perform a `crossed' transplant. 
Then, it is convenient to represent couples donor/receiver as  {\em single} items, and compatibility between couples means that a kidney exchange can be performed between the two couples (the donor of the first couple can give to the receiver of the second, and the donor of the second can give to the receiver of the first). In particular, if one considers blood types as a primary compatibility criterion, the compatibility graph between couples is naturally non-bipartite. \\
	Motivated by these above observations, a variant model was introduced in \cite{MaiMoy16}, in which \textbf{items arrive one by one} and the compatibility graph is general, i.e., not necessarily bipartite: specifically, in this so-called {\em General Matching model} (GM for short), its a particular case of the extended matching model (EBM), it has a queueing model flavor, with the crucial specificity that \textit{items play the roles of both customers and servers}. Then, an incoming item is either immediately matched, if there is a compatible item in the line, or else stored in a buffer. It is the role of the matching policy $\Phi$ to determine the match of the incoming item in case of a multiple choice. Then, the two matched items immediately leave the system forever. Indeed, consider a matching model with a graph $(\maV,\maE)$ and sequence of arriving items $(v_n)_n.$ Let $\tilde{\maV}$ be a disjoint copy of $\maV.$ Define a bipartite matching model with classes $\maV,$ server classes $\tilde{\maV},$ possible matches $\{(u,\tilde{v})\;|\;u\v v\},$ and arriving sequence $(v_n,\tilde{v}_n)_n.$ If the matching policies are the same, then at any time, the buffer-content of the bipartite matching model $(U,\tilde{U}),$ if the buffer-content of the original matching model is $U.$ Then several result transferred from the (EBM) model to (GM) model.\\
	 

Given a connected graph $\mathbb{G}=(\maV,\maE),$ and a matching policy $\Phi.$  Let $\mu$ be a probability measure on $\maV,$ they  defined the following conditions on $\mu$ :
\begin{equation}
\label{eq:NcondGr}
\textsc{Ncond}(G):\left\lbrace
\mu\in\mathscr M(\maV)\;\;;\;\mu(U)<\mu(\maE(U)),\;\forall\,U \subset \maV
\right\rbrace.
\end{equation}
These $\textsc{Ncond}$ are necessary stability conditions. An analog result holds in (\ref{eq:NcondEBM}). In particular, the latter condition is empty if and only the compatibility graph is bipartite (which justifies why items enter by pairs in BM and EBM models - otherwise the model could not be stabilizable). 

	The {\em stability region} of the model denoted by $\textsc{Stab}(\mathbb{G},\Phi)$, is then defined as the set of measures $\mu$ such that the model is 
	positive recurrent (see Section \ref{sec:defstab}). Also, in \cite{MaiMoy16} the authors  have proven that:
	\begin{itemize}
		\item the matching model may be stable if and only if the matching graph in non bipartite, 
		\item $\mathbb{G}$ graph non bipartite then the model is always stable under $\Phi$={\sc ml}, i.e., $$\textsc{Stab}(\mathbb{G},\textsc{ml})=\textsc{Ncond}(\mathbb G),$$
		\item $\mathbb{G}$ complete $p$-partite graph, $p\geq 3$ (which is called in \cite{MaiMoy16} separable graph of order $p)$, then for all $\Phi,\;\mu\in\textsc{Ncond}(\mathbb G),$ the model $(\mathbb{G},\Phi,\mu)$ is stable, i.e., $$\forall \Phi,\;\textsc{Stab}(\mathbb{G},\Phi)=\textsc{Ncond}(\mathbb G).$$
	\end{itemize} 
	However, \cite{MoyPer17} shows that, aside from a particular class of graphs, random policies are never maximal, and that there always exists a strict priority policy that isn't maximal either. Then, by adapting the dynamic reversibility argument of \cite{ABMW17} to the GM models, \cite{MBM17} shows that the matching policy First Come, First Matched ({\sc fcfm}) is maximal and derives the stationary probability in a product form. More recently, following the work of \cite{NS16}, 
	matching policies of the broader {\em Max-Weight} type (including `Match the Longest') are shown to be also maximal and drift inequalities allow to bound the speed of convergence to the equilibrium, and the first two moments of the stationary state. \\

On another hand, since a matching queue is easily seen, the stability region of a discrete-time stochastic model can be studied by embedding it in an appropriate continuous-time model. Thus, the continuous-time counterparts of the results in \cite{MaiMoy16} hold for matching queues and vice versa. The advantage of the continuous-time setting is that powerful fluid-limit techniques can be employed, which greatly facilitate the stability analysis.\\

Fluid models are arguably the most effective tool to prove that a queueing
network is stable, and can also be employed to prove the instability of such
networks. Specifically, following \cite{RS92,JD95} they have found that, under mild regularity conditions, if all the (subsequential) fluid limits of the queues, for all possible initial conditions, converge to 0 in a finite time w.p.1, then the system is stable, in the sense that the underlying queue process is positive recurrent. \\

Moyal and Perry \cite{MoyPer17} have studied the interesting feature of the fluid limits that is to obtain their dynamics determined by the stationary distribution of a “fast” CTMC. Specifically, if the fluid queue associated with one of the nodes is positive, then the relevant time scale for this queue is slower than the time scale for the fluid queues that are null. In the limit, the effect of the “fast” (i.e., null) queues on the evolution of the positive fluid queues are averaged-out instantaneously, a phenomenon known as a stochastic averaging principle (AP) in the literature. See \cite{PW11, PW13} and the references therein, as well as \cite{LZ13, WZZ18} for recent examples of fast averaging in queueing networks.\\
Also, in \cite{MoyPer17} was shown a necessary condition for stability of a matching queue: for any matching graph $\mathbb{G},$ 
\begin{equation}
\label{eq:NcondCDuGraphConti}
\textsc{Ncond}_C(\mathbb G):=\left\lbrace \lambda\in (\R^{++})^{|\maV|}\;:\;\bar{\lambda}_{ I}<\bar{\lambda}_{\maE( I)}\textrm{ for all } I\in\mathbb{I}(G)\right\rbrace.
\end{equation}
That condition can be thought of as an analog to the usual traffic condition for traditional queueing networks see equation (\ref{eq:NcondGr}), and it is thus natural to study whether it is also sufficient. \\

 Except for a particular class of graphs, there \textit{always exists a matching policy rendering the stability region strictly smaller than the set of arrival intensities} satisfying $\textsc{Ncond}_C$ and they are showing explicitly, via fluid-limit arguments, that the stability regions of two basic models {\em pendant graph} and 5-{\em cycle graph} depicted in  Figure \ref{fig:PendantAndCyleGraphs} is strictly included in $\textsc{Ncond}_C$. They generalized this result to any graph $\mathbb{G}$ that is not complete $p$-partite there always \textit{exists a policy of the strict priority type that does not have a maximal stability region}, and that the `Uniform' random policy (natural in the case where no information is available to the entering items on the state of the system) never has a maximal stability region. 
 \begin{definition}
	\cite{MoyPer17} A connected matching structure $\mathbb{S}$ is said to be,	\begin{itemize}
		\item  \textbf{matching-stable} if $\textsc{Ncond}_C(\mathbb{S})$ is non-empty and all admissible matching	policies on $\mathbb{S}$ are maximal;
		\item \textbf{matching-utable
			nstable} if the set $\textsc{Ncond}_C(\mathbb{S})$ is empty.
	\end{itemize}
	\label{def:MatchingStableUnstable}
\end{definition}

Let $\mathcal{G}_7$ denote the set of all connected graphs inducing an odd cycle of size $7$ or more, but no pendant graph and no 5-cycle, and let $\mathcal{G}_7^c$ denote its complement in the set of connected graphs. Also, a result was proven in \cite{MoyPer17} that:
\begin{itemize}
	\item the pendant and  5-cycle graphs depicted in Figure \ref{fig:PendantAndCyleGraphs}, it is shown that those graphs are matching-unstable, (i.e., never maximal).
	\item The only matching-stable graphs in $\mathcal{G}^c_7$ are separable of order 3 or more.
\end{itemize} 
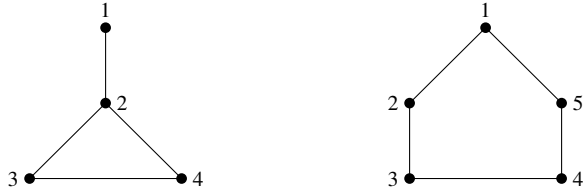
\begin{figure}[htb]
	\begin{center}
		\begin{tikzpicture}
		
		\fill (2,2) circle (2pt)node[above]{\scriptsize{1}};
		\fill (2,1) circle (2pt)node[right]{\scriptsize{2}};
		\fill (1,0) circle (2pt)node[left]{\scriptsize{3}};
		\fill (3,0) circle (2pt)node[right]{\scriptsize{4}};
		
		\draw[-] (2,1) -- (3,0);
		\draw[-] (2,1) -- (1,0);
		\draw[-] (2,1) -- (2,2);
		\draw[-] (1,0) -- (3,0);
		
		\fill (7,2) circle (2pt)node[above]{\scriptsize{1}};
		\fill (6,1) circle (2pt)node[left]{\scriptsize{2}};
		\fill (8,1) circle (2pt)node[right]{\scriptsize{5}};
		\fill (6,0) circle (2pt)node[left]{\scriptsize{3}};
		\fill (8,0) circle (2pt)node[right]{\scriptsize{4}};
		
		\draw[-] (7,2) -- (6,1);
		\draw[-] (7,2) -- (8,1);
		\draw[-] (6,1) -- (6,0);
		\draw[-] (8,1) -- (8,0);
		\draw[-] (6,0) -- (8,0);		
		\end{tikzpicture}
		\caption{Left: Pendant graph. Right: 5-cycle graph.}
		\label{fig:PendantAndCyleGraphs}
	\end{center}
\end{figure}

Variants of the GM model to the case of graphical systems with reneging  	are investigated, respectively in \cite{GW14,NS16,RM19} and \cite{JMRS20} (see also \cite{BDPS11}).\\

\section{Optimizations of the matching model}
\label{sec:Optimisation}
In another line of research, such stochastic matching architectures are addressed from the point of view of stochastic optimization in \cite{BC17}, \cite{GW14} and \cite{NS16}, among others. \\

Buke and Chen \cite{BC17} have focused on the \textit{infinite-horizon average-cost optimal control problem}. In which, they considered a \textit{control policy} determines which are matched at each time by considering a discrete-time bipartite matching model with random arrivals of units of {\it supply} and {\it demand} that can wait in queues located at the nodes in the network.
For a parameterized family of models in which the network load approaches capacity, a new matching policy  for the relaxation admits a closed-form expression is shown to be approximately optimal, with bounded regret, even though the average cost grows without bound.\\

\begin{figure}[htp]
	\centering
	\includegraphics[width=.4\linewidth]{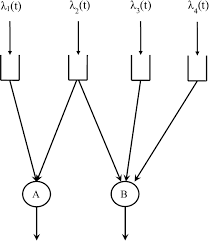}
	\caption{A queueing network view of a system with four input streams and two matchings.}
	\label{fig:A queueing network view of a system with four input}
\end{figure}

Gurvich and Ward \cite{GW14} have considered a model in which each item arrive in a dedicated queue, and wait to be matched with items that exist in other queues (possibly multiple). Once a decision has been made to match, the match itself is instantaneous and the corresponding items leave the system. Upon arrival, an item may find several possible matching, in this cases, an \textit{exsisting controller} must decide which matchings to execute given multiple options. They considered the problem of \textit{minimizing finite-horizon cumulative holding costs}. In principle, the controller may choose to wait until some “{inventory}” of items builds up to facilitate more profitable matches in the future. 

In the example depicted in Figure \ref{fig:A queueing network view of a system with four input}, there are 4 classes of items, and items of class $i$ arrive according to a time-varying Poisson process $A_i$ having instantaneous rate $\lambda_i(t),\;i =1,\,2,\,3,\,4.$ Items of class 1 can be matched to items of class 2. Items of class 2 can be also matched with items of classes 3 and 4. This matching structure is reflected in the graph in Figure \ref{fig:A queueing network view of a system with four input} where each rectangle corresponds to an item class and each of the circles $A$ and $B$ to matching types. When a class 1 item is matched with a class 2 item they both leave the system: matchings are instantaneous. An item of class 4 must be matched to both a class 3 and a class 2 item to depart.\\

Nazari and Stolyar \cite{NS16} have introduced an algorithm that is a variant of the ``Primal-dual algorithm", allowing to achieve stability if this is feasible at all, for a very large class of models. The proposed algorithm furthermore optimizes utility functions that are convex functions of the average matching rates. \cite{GW14} and \cite{NS16} \textbf{allow idling policies}, i.e., scheduling algorithms \textit{allowing to perform no matchin}g at all despite the presence of matchable items in the system,\textit{ to wait for more profitable future matches.} Allowing idling policy makes sense in applications such as assemble-to-order systems, advertisement or operations scheduling, but is much less suitable to kidney transplant networks, in which case the practitioners always perform a transplant whenever one is possible.


\section{Other extensions of matching models}
\label{sec:OtherExtention}
 Specific comparison results concerning single-server queueing systems with impatient customers are also provided in \cite{M08} and \cite{M13}. Moreover, it is well known since the seminal work of Propp and Wilson \cite{PW96}, that coupling from the past algorithms, which mostly use backward coupling convergence, provides a powerful tool for simulating in many cases (monotonicity, stochastic bounds of Markov chains) the steady state of the system.
\\

Along these lines, the above is devoted to the explicit construction of a stationary queue with $S_r$ servers $(S_r \geq 1)$ and impatient customers, by a scheme à la Loynes. Models with impatience (or abandonment, reneging) have been introduced in the queueing literature to represent a strong real-time constraint on the system: the requests have a due date, before which their treatment must be initiated, or completed. Specifically, assume hereafter that any incoming customer is either served if a server becomes available before its deadline or else eliminated forever once the deadline has elapsed. Observe that a loss system of $S_r$ systems (i.e., there is no waiting room, so the incoming customers are either served provided that a server is immediately available or immediately lost otherwise) is a particular case of the present model, for identically null patience. There are $S_r$ servers obeying the First Come, First Served ({\sc fcfs}) rule to serve impatient customers, and the sequences of inter-arrival times, service times, and patience times of the customers are assumed stationary and ergodic, but not necessarily independent.\\



Buke and Chen \cite{BC15} have introduced a new queueing model, called \textit{probabilistic matching system}, to model the traffic in web portals. This queueing model consists of two user classes, in which users wait in the system to match a candidate from the other class, instead of accessing a resource. 
They have stabilized \textit{four admissible matching policies} for the probabilistic matching systems which are:
\begin{itemize}
	\item[1-] the simple threshold policy,
	\item[2-] accept-the-shortest-queue policy, 
	\item[3-] functional threshold policy, 
	\item[4-] the one-sided threshold policy.
\end{itemize}
This study of \cite{BC15} is followed by \cite{BC17} which have proposed approximation methods and analyzed its properties based on fluid and diffusion
limits. They performed numerical experiments to gain insight into probabilistic matching systems.
Also, they showed that some performance measures are insensitive to the matching probability, agreeing with the existing results.\\

Adan \& al. \cite{AKRW18} have considered \textit{three parallel service models} in which customers of several types are served by several types of servers subject to a  \textit{bipartite graph}, and the \textit{service policy is First Come, First Served}. Two of the models have a fixed set of servers. 
\begin{itemize}
	\item The first is a queueing model in which arriving customers are \textit{assigned to the longest idling compatible server} if available, or else queue up in a single queue, and servers that become available pick the longest waiting compatible customer as studied in \cite{AW14}.
	\item  The second is a \textit{redundancy service mode}l where arriving customers split into copies that queue up at all the compatible servers, and are served in each queue on {\sc fcfs} basis, and leave the system when the first copy completes service.
	\item The third model is a matching queueing model with a random stream of arriving servers. Arriving customers queue in a single queue and arriving servers match with the first compatible customer and leave immediately with the customer, or they leave without a customer.
\end{itemize}
They {\it studied the relations between these models}, and showed that they are closely related to the {\sc fcfs} infinite bipartite matching model, in which two infinite sequences of customers and servers of several types are matched {\sc fcfs} according to a bipartite compatibility graph. They also introduced a directed bipartite matching model in which they embed the queueing systems. This leads to a generalization of Burke’s theorem to parallel service systems.

\section{Problem statement and positioning}
\label{sec:ProbAndPosi}
A stochastic matching model, as we said before, is a system of components and each of these components could have more than one state of functioning.\\

The main purpose of this thesis is devoted to three contexts:\\

 First, we study the long-run stability of stochastic matching models, in the sense defined above, on a hypergraphical compatibility matching structure generalizing the approach of \cite{MaiMoy16} to hypergraphs instead of graphs. By doing so, the two closest references to the present work are \cite{GW14} and \cite{NS16}: in both cases, a general matching model is addressed on a hypergraphical matching structure (notice that \cite{NS16} also allows matchings including several items of the same class). The first reference addresses continuous-time models; the second considers discrete-time models, however, most of the results therein can easily be extended to the continuous-time settings. In \cite{GW14} a matching control is introduced, that asymptotically minimizes the holding cost of items in an unstable system (we justify the instability of such systems under the assumptions of \cite{GW14} in Remark \ref{rem:GW}). In the present thesis all the matching policies we consider are non-idling, i.e., entering items are always matched right away if this is possible at all. Thus, the model studied in Chapter \ref{chap4:Hypergraph} is a special case of the model studied in \cite{NS16}, for simple arrivals, \textit{no same-class matchings}, and \textit{non-idling matching policies}.\\

Secondly, we showed how several stability results of \cite{MaiMoy16,MBM17,JMRS20} can be generalized to the case of a multigraphical matching structure which is motivated again by concrete applications, among which dating websites and peer-to-peer interfaces, it is natural to assume that items {\em of the same class} can be matched together. Hence, the need to generalize the previous line of research to the case where the matching architecture is a {\em multigraph} (a graph admitting {\em self-loops}, that is, edges connecting nodes to themselves), rather than just a graph.\\

 On other hand, we study in continuous-time different examples of multigraph $G=(\maV,\maE)$ and hypergraph $\mathbb{H}=(\maV,\maH)$ corresponding respectively matching queues $(G,\Phi,\lambda)_C$ and $(\mathbb H,\Phi,\lambda)_C.$ We deduce the precise stability regions of the corresponding stochastic matching models using the fluid limit techniques.\\

Finally, we present an application for organ transplantation of a stochastic model on hypergraphs, in which we compare the behavior in the long-run stability of the model of complete 3-uniform hypergraphs matching (three-by-three) with the model of complete 3-partite graph matching (two-by-two). Then, according to the distribution of the items, we deduce what is the best matching procedure between two-by-two and three-by-three matchings.


\pagestyle{empty}
\part{Contributions}\label{part:contributions}
\chapter{Hypergraphs} \label{chap4:Hypergraph}
\pagestyle{fancy}
\section*{Introduction}\label{chap4: intro}
In the Stochastic Matching model (introduced in the bipartite case by Caldentey and al. in \cite{CKW09} and generalized by Mairesse and Moyal in \cite{MaiMoy16}), items enter the system randomly and may be matched or not according to their classes. The compatibility between classes is given by a fixed matching structure. In this chapter, we study the long-run stability of stochastic matching
models, in the sense defined above, on a hypergraphical compatibility matching structure.\\

Several applications should naturally incorporate the possibility of matching items by groups of more than two. Let us exemplify this on a concrete example: in organ transplants, (in)-compatibility between givers and receivers are given by a variety of factors, and 
mostly by blood types and immunological factors. 
In kidney exchange programs, items represent intra-incompatible couples $(A,B)$ 
(e.g., a patient $A$ waiting for a transplant and $B$ a parent of his/hers, incompatible with $A$ for a potential organ donation), 
entering a system to find another intra-incompatible couple $(A',B')$ that is compatible with it, in the sense that 
$A$ can receive an organ from $B'$ and $A'$ can receive from $B$. Then the ability of such a system to accommodate all requests and to maximize 
the number of successful transplants and avoid congestion, is translated into the positive recurrence of a stochastic process representing the stochastic system over time. 
Then if we view the items as the {\em couples}, and translate the ``cross-compatibility'' 
(i.e., $A$ can receive from $B'$ and $A'$ can receive from $B$) into the existence of an edge between node $(A,B)$ and node $(A',B')$, such a system is a typical application of the GM introduced in \cite{MaiMoy16}.

Let us now consider the case where such exchanges $(A,B) \leftrightarrow (A',B')$ and $(A',B') \leftrightarrow (A'',B'')$ 
cannot be realized, but $A$ can receive from $B'$, $A'$ can receive from ${B''}$ and ${A''}$ can receive from $B$. 
Then it is natural to consider the possibility of executing the three transplants contemporarily, i.e., to match the 
triplet $(A,B)$, $(A',B')$ and $(A'',B'')$ altogether. 
In several countries including the U.S., such ``exchanges'' by groups of 3 (or more) are allowed, which raises the issue of maximizing ``matchings'' that do not coincide with sets of edges, but of sets of subsets of nodes of cardinality $3$ or more. Hence the need to consider matching models on compatibility matching structures that are {\em hypergraphs} rather than graphs, i.e., a set of nodes $\maV$ equipped with a set of subsets of $\maV$ of cardinality 3 or more. 

The hypergraphical stochastic matching model addressed in this chapter is formally defined as follows: items enter the system by single arrivals and get matched by groups of 2 or more, following compatibilities that are represented by a given hypergraph. 
A matching policy determines the matchings to be executed in the case of a multiple-choice, and the unmatched items are stored in a buffer, waiting for a future match.


At the border between discrete mathematics and probability theory, the main scientific aim of this chapter is to study this widely applicable class of models. In the first step, addressing the crucial question of stability of the system will lead us to study the structural properties of hypergraphs (connectivity, independent sets, rank, anti-rank, degree, size of the transversals, existence of cycles, and so on.) 
In a second step, we will address the weak approximation of the natural Markov process of the model, to better apprehend its main characteristics in steady state, its long-run simulation, and possibly, the estimation of its parameters.\\

This chapter is organized as follows: we start in Section \ref{sec:Ncond} by providing necessary conditions of stability for the present class of systems: 
as will be developed therein, and unlike the particular case of the GM on graphs (see \cite{MaiMoy16}), for which a natural necessary condition could be obtained, we introduce various necessary conditions that depend on distinct geometrical properties of the considered hypergraphs. We then deduce from this, classes of hypergraphs for which the corresponding matching model cannot be stable, see Section \ref{sec:unstable}. Finally, in Section \ref{sec:stable}, we provide the precise stability region in the particular cases where the compatibility hypergraph is complete $3$-uniform, complete $k$-partite  $3$-uniform and then complete up to a partition of its hyperedges (see the precise definitions of these objects below). 
We conclude and discuss this chapter in Section \ref{discussion of results1}.\\

Throughout this chapter, let us consider that the matching structure $\mathbb{S}$ be a hypergraph $\mathbb{H}=(\maV,\maH).$ Recall that $V_n$ is an item which enters the system from the measure $\mu$ on $\maV$ at time $n$ and $H(k)$ is the realized matching at time $k$.
\section{Necessary conditions of stability}
\label{sec:Ncond} 

Fix a matching model $(\mathbb H,\Phi,\mu)$ on a hypergraph $\mathbb H=(\maV,\maH)$. 
Denote for any $n$, $B \subset \maV$ and $\maB \subset \maH$, by $A_n(B)$ the number of arrivals of elements in $B$ and by $M_n(\maB)$ the number of matchings of hyperedges in $\maB$ realized up to $n$, i.e., 
\begin{align*}
A_n(B) &= \sum_{k=1}^n \ind_{\{V_k \in B\}};\\
M_n\left(\maB\right) &= \sum_{k=1}^n \ind_{\{H(k) \in \maB\}};
\end{align*}
and with some abuse, denote $A_n(i)=A_n(\{i\})$ and $M_n(H)=M_n(\{H\})$ for any $i\in \maV$ and $H \in \maH$. 
Observe that the following key relation holds for all $B \subset \maV$, 
\begin{equation}
X_n(B) = A_n(B) - \sum\limits_{H \in \maH} \left|H \cap B \right| M_n\left(H\right)\ge 0,\,\,n\in\N, \label{eq:base}
\end{equation}
since the number of items of classes in $B$ at any time $n$ is precisely the number of arrivals of such items up to time $n$, minus 
the number of these items that leave the system upon each matching of a hyperedge that intersects with $B$.

\subsection{General conditions}
\label{subsec:Ncond}
\noindent We start by introducing several `universal' stability conditions. Fix a hypergraph $\mathbb H=(\maV,\maH)$ throughout the section. 
\begin{definition}
	We say that $I \subset \maV$ is an {\em independent set} of $\mbH$ if $I$ does not include any hyperedge of $\mbH$, i.e, for any 
	$H\in \maH$, $H\cap \bar I \ne \emptyset$. We recall that $\mathbb I(\mbH)$ be the set of all independent sets of $\mbH$. 
\end{definition}
{Let us define for any $\mu\in\mathscr M(\maV)$, and any $B\subset \maV$, the set 
	\begin{equation}
	\label{eq:deflmu}
	L_\mu(B) = \mbox{argmin} \left\{\mu(j)\,:\,j\in B\right\}, \quad B \subset \maV.
	\end{equation}
	To clarify the exposition of the Lemma \ref{lemma:muminimal} (stated below), we need to introduce the following notion, 
	\begin{definition}
		\label{def:muminimal}
		For any $\mu\in\mathscr M(\maV)$ we say that the independent set 
		$I\in\mathbb I(\mbH)$ is $\mu$-{\em minimal} if the intersection of any hyperedge $H\in \maH$ with $I$ is either empty, 
		or reduced to a singleton $\{v_H\}$ that is such that: 
		\begin{itemize}
			\item $v_H$ is of degree 1, i.e., $H$ is the only hyperedge $v_H$ belongs to;
			\item $L_{\mu}(H)=\{v_H\}$, i.e., $v_H$ is the only minimum of $\mu$ over the set $H$.
		\end{itemize}
		%
		An independent set $I\in\mathbb I(\mbH)$ that is not $\mu$-minimal is said {\em non-$\mu$-minimal}. 
		We let $\mathbb I_\mu(\mbH)$ be the set of $\mu$-minimal independent sets of $\mbH$, and ${\mathbb I^\prime_\mu(\mbH)}$ be 
		the set of non-$\mu$-minimal independent sets of $\mbH$, that is, the complement set of $\mathbb I_\mu(\mbH)$ in $\mathbb I(\mbH)$. 
	\end{definition}
	In other words, a $\mu$-minimal independent sets gathers nodes that are the only minimum of $\mu$ over the only hyperedge they belong to. 
	Notice that the collection $\mathbb I_\mu(\mbH)$ can be empty. This is the case if and only if all nodes are of degree at least 2, or all nodes 
	of degree 1 are not the only minimum of $\mu$ on the single hyperedge they belong to. Observe the following characterization, 
	\begin{lemma}
		\label{lemma:muminimal}
		Let $\mbH=(\maV,\maH)$ be a hypergraph, and denote $\maH=\{H_1,...,H_m\}$. 
		An independent set $I=\{v_1,...,v_p\}$ is $\mu$-minimal if and only if for all $n$ and all $k_1,...,k_m$ such that $k_j\in L_\mu(H_j)$ for all $j$,  
		\begin{equation}
		\label{eq:mumin2}
		A_n(I) = \sum_{j=1}^m|H_j\cap I|A_n(k_j). 
		\end{equation}
	\end{lemma}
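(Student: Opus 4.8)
The plan is to strip the probabilistic content out of (\ref{eq:mumin2}) by differencing in $n$ and using the full support of $\mu$, thereby reducing the identity between the random counting processes $A_n$ to a single deterministic identity between indicator coefficients; the structural conclusions then follow from a double-counting argument on degrees together with a careful exploitation of the quantifier ``for all minimizers $k_j$''. Since $A_n(B)-A_{n-1}(B)=\ind_{\{V_n\in B\}}$, subtracting (\ref{eq:mumin2}) at times $n$ and $n-1$ shows that (\ref{eq:mumin2}) holds for every $n$ if and only if, for each arrival, $\ind_{\{V_n\in I\}}=\sum_{j=1}^m |H_j\cap I|\,\ind_{\{V_n=k_j\}}$. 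As $\mu$ has full support, $V_n=w$ has positive probability for every $w\in\maV$, so this is equivalent to the purely combinatorial statement
\begin{equation}
\label{eq:coeff}
\ind_{\{w\in I\}}=\sum_{j\,:\,k_j=w}|H_j\cap I|,\qquad \forall w\in\maV,
\end{equation}
required to hold for every choice of minimizers $k_j\in L_\mu(H_j)$. Both implications of the lemma are read off from (\ref{eq:coeff}).

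For the converse (the delicate direction), assume (\ref{eq:coeff}) for all choices. Summing over $w\in\maV$, and noting that each index $j$ contributes to exactly one value $w=k_j$, the right-hand side collapses to $\sum_{j=1}^m|H_j\cap I|$, which by exchanging the two sums equals $\sum_{v\in I}d(v)$; the left-hand side equals $|I|$. Since every node belongs to at least one hyperedge, $d(v)\ge 1$, so $\sum_{v\in I}d(v)=|I|$ forces $d(v)=1$ for every $v\in I$. Consequently each $v\in I$ lies in a unique hyperedge $H_{j(v)}$, and because $k_j\in L_\mu(H_j)\subset H_j$, the choice $k_j=v$ is possible only for $j=j(v)$; hence (\ref{eq:coeff}) at $w=v$ reduces to $1=\ind_{\{k_{j(v)}=v\}}\,|H_{j(v)}\cap I|$. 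As this must hold for every admissible $k_{j(v)}\in L_\mu(H_{j(v)})$, no minimizer other than $v$ can exist, i.e. $L_\mu(H_{j(v)})=\{v\}$, and then necessarily $|H_{j(v)}\cap I|=1$. Finally any hyperedge meeting $I$ contains some $v\in I$ and so equals $H_{j(v)}$, whose intersection with $I$ is the singleton $\{v\}$ with $v$ of degree $1$ and the unique $\mu$-minimum of $H_{j(v)}$ --- exactly the definition of $\mu$-minimality.

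For the direct implication, assume $I$ is $\mu$-minimal and verify (\ref{eq:coeff}). If $w\in I$, then $w$ has degree $1$, its unique hyperedge $H_{j(w)}$ satisfies $L_\mu(H_{j(w)})=\{w\}$, forcing $k_{j(w)}=w$, while $k_j=w$ is impossible for $j\neq j(w)$ as $w\notin H_j$; the right-hand side is thus $|H_{j(w)}\cap I|=1$. If $w\notin I$, then for any $j$ with $k_j=w$ we must have $H_j\cap I=\emptyset$: otherwise $\mu$-minimality would make the unique $\mu$-minimum of $H_j$ lie in $I$, forcing $k_j\in I$ and contradicting $k_j=w$; hence every such term has $|H_j\cap I|=0$ and the sum vanishes.

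I expect the main obstacle to be the bookkeeping of the ``for all minimizers'' quantifier. It is precisely the freedom to move $k_{j(v)}$ away from $v$ that upgrades the degree-$1$ conclusion to the uniqueness statement $L_\mu(H_{j(v)})=\{v\}$; invoking this with a single fixed choice of minimizers would only yield the weaker degree condition, so the quantifier must be used at exactly this point. A secondary point to state cleanly is the passage from the almost-sure identity of processes to the pointwise relation (\ref{eq:coeff}), which is where the full support of $\mu$ is essential.
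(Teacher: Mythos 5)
Your proof is correct, and it takes a genuinely different route from the paper's. You first strip out the probability entirely: by telescoping in $n$ and using the full support of $\mu$, you reduce the almost-sure process identity (\ref{eq:mumin2}) to the deterministic coefficient identity $\ind_{\{w\in I\}}=\sum_{j:k_j=w}|H_j\cap I|$ for all $w\in\maV$ and all admissible choices of minimizers, and then argue purely combinatorially. Your converse is the interesting part: summing the coefficient identity over $w$ and double-counting gives $\sum_{v\in I}d(v)=|I|$, which together with $d(v)\ge 1$ (guaranteed since $\bigcup_j H_j=\maV$) forces degree $1$ globally on $I$ in one stroke; the uniqueness of the minimizer and the singleton intersection then drop out locally, and you correctly isolate the one place where the ``for all minimizers'' quantifier is indispensable. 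The paper instead proves the forward direction via an explicit bijection $\varphi$ between the hyperedges meeting $I$ and the elements of $I$, and proves the converse by a three-way case analysis on how $\mu$-minimality can fail ($|H_j\cap I|\ge 2$; a minimizer outside $I$; a repeated value $\varphi(j)=\varphi(l)$), exhibiting in each case an arrival class whose occurrence breaks the identity. The two arguments have the same logical content, but yours buys a cleaner separation of the probabilistic reduction from the combinatorics and replaces the exhaustive case split by a single counting identity, at the modest cost of having to spell out the passage from the a.s.\ process identity to the pointwise one; the paper's version is more hands-on and makes the failure mechanism in each non-minimal configuration explicit.
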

	\begin{proof}
		First, it is clear that if $I\in\mathbb I_\mu(\mbH)$, then $m\ge p$ and the mapping 
		\begin{equation}
		\label{eq:defvarphi}
		\varphi:\left\{
		\begin{array}{ll}
		\left\{j\in \llbracket 1,m \rrbracket\,:\,I\cap H_j \ne \emptyset\right\} &\longrightarrow \llbracket 1,p \rrbracket\\
		j &\longmapsto i\,:\,L_\mu(H_j)=H_j\cap I =  \{v_{i}\}
		\end{array}\right.\end{equation}
		is bijective. 
		Thus we have a.s. for all $n$, 
		\[A_n(I) = \sum_{j=1}^m A_n(v_{\varphi(j)})=\sum_{\substack{j\in\llbracket 1,m\rrbracket :\\H_j\cap I\ne\emptyset}} |H_j\cap I|A_n(v_{\varphi(j)})= 
		\sum_{j=1}^m |H_j\cap I|A_n(k_j).\]
		Let us now assume that $I\in\mathbb I^\prime_\mu(\mbH)$. Then, 
		\begin{itemize}
			\item If for some hyperedge $H_j$ is such that $|H_j\cap I|\ge 2$, then upon each arrival of an element of class $k_j$, the 
			right-hand side of (\ref{eq:mumin2}) increases {by} $|H_j\cap I|$ while the left-hand side increases {by} 
			1 if $k_j \in I$, or 0 else; 
			\item If for some hyperedge $H_j$ intersecting with $I$, there exists $k_j \in L_\mu(H_j) \cap \bar I$, then upon each arrival of a class $k_j$-item the 
			right-hand-side of (\ref{eq:mumin2}) increases while the left-hand side does not; 
			\item Finally, if for all $j\in\llbracket 1,m \rrbracket$, $|H_j\cap I| \le 1$ and for all $j$ such that $|H_j\cap I| = 1$, $L_\mu(H_j)=\{v_{\varphi(j)}\}$ (defining again $\varphi$ by (\ref{eq:defvarphi})), then if $\varphi(j)=\varphi(l)$ for some $l\ne j$, 
			upon each arrival of a class $v_{\varphi(j)}$-item, the right-hand side of (\ref{eq:mumin2}) increases {by} 2 while the left-hand side increases {by} 1. 
		\end{itemize}
		In all cases, (\ref{eq:mumin2}) cannot hold for all $n$, which concludes the proof.
	\end{proof}
	Now define the following set of measures, 
	\begin{equation*}
	\mathscr N_1(\mathbb H) = \left\{\mu\in\mathscr M(\maV):\mbox{\small{for all }}I\in\mathbb I^\prime_\mu(\mbH),\,\mu(I)\;<\sum_{H\in \maH}\left|H \cap I\right|\min_{k \in H}\mu(k)\right\}.
	\end{equation*}}

\noindent We have the following result, 
\begin{proposition}
	\label{prop:Ncond1}
	For any connected hypergraph $\mathbb H$ and any admissible matching policy $\Phi$,
	\begin{equation*}
	\textsc{Stab}(\mbH,\Phi) \subset \mathscr N_1(\mathbb H).
	\end{equation*}
\end{proposition}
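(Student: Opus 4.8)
The plan is to exploit the positive recurrence of $\suite{W_n}$ in order to pass to ergodic averages in the conservation identity (\ref{eq:base}), and then to upgrade the resulting inequality to a \emph{strict} one precisely on the non-$\mu$-minimal independent sets. First I would fix $\mu\in\textsc{Stab}(\mbH,\Phi)$ and an independent set $I=\{v_1,\dots,v_p\}\in\mathbb I_\mu^\prime(\mbH)$, and introduce the long-run matching rates $q_H=\lim_n M_n(H)/n$. Since the chain is positive recurrent, the strong law of large numbers for the i.i.d. arrivals gives $A_n(B)/n\to\mu(B)$, the ergodic theorem (applied over the regeneration cycles of $\suite{W_n}$) gives $M_n(H)/n\to q_H\ge 0$, and the tightness of the stationary buffer-content gives $X_n(B)/n\to 0$, all almost surely. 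Dividing (\ref{eq:base}) by $n$ and letting $n\to\infty$ then yields the balance relations $\mu(B)=\sum_{H\in\maH}|H\cap B|\,q_H$ for every $B\subset\maV$; taking singletons, $\mu(k)=\sum_{H\ni k}q_H$ for each $k\in\maV$.

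Second, the consumption constraint $\sum_{H\ni k}M_n(H)\le A_n(k)$ (each completed matching of a hyperedge containing $k$ removes one class-$k$ item) passes to the limit and gives $q_H\le\min_{k\in H}\mu(k)$ for every $H$. Plugging $B=I$ into the balance relation then produces the non-strict bound $\mu(I)=\sum_H|H\cap I|\,q_H\le\sum_H|H\cap I|\min_{k\in H}\mu(k)$, with equality if and only if $q_H=\min_{k\in H}\mu(k)$ for every hyperedge $H$ meeting $I$.

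The crux, and the step I expect to be the main obstacle, is ruling out this equality when $I$ is non-$\mu$-minimal. My approach is to fix, for each $H_j$ meeting $I$, a minimizer $k_j\in L_\mu(H_j)$ (see (\ref{eq:deflmu})) and to record the exact finite-$n$ identity
\[
\Big(\sum_j|H_j\cap I|A_n(k_j)-A_n(I)\Big)+X_n(I)=\sum_j|H_j\cap I|\Big(X_n(k_j)+\sum_{H\ni k_j,\,H\neq H_j}M_n(H)\Big),
\]
whose right-hand side is a sum of non-negative terms. Dividing by $n$ recovers the non-strict inequality, whereas assuming equality forces every term to have limit zero, hence $q_H=0$ for all $H\neq H_j$ containing any minimizer $k_j$ of any hyperedge meeting $I$.

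Feeding this zero-rate condition back into the balance relations $\mu(k)=\sum_{H\ni k}q_H$ and using that $\mu$ has full support ($\mu(k)>0$ for all $k$), I would deduce successively that no node can minimize two distinct hyperedges meeting $I$ (otherwise $\mu(k)=0$), that each such minimizer belongs to a single hyperedge meeting $I$, and finally that every node of $I$ is a degree-one unique minimizer of the only hyperedge it belongs to, i.e.\ that $I$ satisfies the characterization of Lemma \ref{lemma:muminimal} and is therefore $\mu$-minimal, contradicting $I\in\mathbb I_\mu^\prime(\mbH)$. The delicate part of this bookkeeping is handling the nodes of $I$ of degree two or more and the minimizers lying outside $I$, where the contradiction is not immediate and has to be propagated through the balance equations; this is exactly where the full-support hypothesis and the precise form of $\mu$-minimality in Definition \ref{def:muminimal} are essential. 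Concluding that equality is impossible then upgrades the bound to $\mu(I)<\sum_H|H\cap I|\min_{k\in H}\mu(k)$, which is the defining property of $\mathscr N_1(\mathbb H)$.
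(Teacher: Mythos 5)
Your first-order analysis is correct and is a legitimate reformulation of the paper's set-up: positive recurrence does yield matching rates $q_H=\lim_n M_n(H)/n$, the balance relations $\mu(B)=\sum_{H\in\maH}|H\cap B|\,q_H$, the bound $q_H\le\min_{k\in H}\mu(k)$, and hence the weak inequality $\mu(I)\le\sum_{H\in\maH}|H\cap I|\min_{k\in H}\mu(k)$. Your finite-$n$ identity also checks out, and in the equality case it does force $q_H=0$ for every $H\ne H_j$ containing a chosen minimizer $k_j$.

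The genuine gap is the final step: the balance equations together with non-negativity of the rates are \emph{consistent} with equality for a non-$\mu$-minimal $I$, so no amount of feeding the zero-rate conditions back into them can produce a contradiction. Concretely, take $\maV=\{u,u',w,a,b\}$, $\maH=\bigl\{\{u,u',w\},\{w,a,b\}\bigr\}$, $\mu(u)=\mu(u')=\mu(a)=\mu(b)=1/6$, $\mu(w)=1/3$, and $I=\{u\}$: the node $u$ has degree one and minimizes $\mu$ on its unique hyperedge, but $L_\mu(\{u,u',w\})=\{u,u'\}$ is not a singleton, so $I\in\mathbb I^\prime_\mu(\mbH)$ and equality holds, while the rates $q_{\{u,u',w\}}=q_{\{w,a,b\}}=1/6$ satisfy every balance equation with $\mu$ of full support. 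Your proposed bookkeeping can therefore never reach the conclusion that $I$ is $\mu$-minimal (it cannot see the difference between a unique and a tied minimizer, nor rule out $|H\cap I|\ge 2$ with all elements of $H\cap I$ tied minimizers). What actually kills stability in these tie cases is second-order information: for a non-$\mu$-minimal $I$, Lemma \ref{lemma:muminimal} guarantees that for \emph{some} choice of the $k_j$ the process $Y_n=A_n(I)-\sum_j|H_j\cap I|A_n(k_j)$ is a non-degenerate zero-drift random walk (here $A_n(u)-A_n(u')$), hence null recurrent with $\limsup_n Y_n=+\infty$, and since $X_n(I)\ge Y_n$ this fluctuation is incompatible with $\suite{W_n}$ returning to $\mathbf 0$ with finite mean inter-passage times. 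That is precisely the mechanism of the paper's proof; it is invisible at the law-of-large-numbers scale on which your argument operates, so the equality case cannot be closed by rate-level reasoning alone.
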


\begin{proof}
	Fix $\mbH=(\maV,\maH)$ and an admissible policy $\Phi$. Denote by $H_1,...,H_m$ the hyperedges of $\mbH$. 
	Suppose that $\mu \in \mathscr M(\maV)$ is such that there exists an independent set $I\in \mathbb I^\prime_\mu(\mbH)$ such that 
	\begin{equation}
	\label{eq:contrNcond1}
	\mu(I) > \sum\limits_{H\in\maH}\left|H \cap I\right|\min_{k \in H} \mu(k). 
	\end{equation}
	For any $i\in\llbracket 1,m \rrbracket$ and any $k_i\in L_\mu(H_i)$ we have that 
	$$M_n(H_i) \le \min_{k \in H_i} A_n(k) \le A_n\left(k_i\right),\quad n\ge 0.$$
	Thus, from the equality in (\ref{eq:base}), for any $k_1,...,k_m$ such that $k_i\in L_\mu(H_i)$ for all $i$, we have that 
	\begin{equation}
	\label{eq:compareXY}
	{X_n(I) \over n} \ge {A_n(I) \over n} - \sum\limits_{i=1}^m\left|H_i \cap I\right| {A_n\left(k_i\right) \over n},\quad n\ge 1.
	\end{equation}
	Applying the SLLN to the right-hand side of (\ref{eq:compareXY}) implies that for any such $k_1,...,k_m$, 
	\[\limsup_{n} {X_n(I) \over n}\ge \mu(I) - \sum\limits_{i=1}^m\left|H_i \cap I\right| \mu\left(k_i\right) =  \mu(I) - \sum\limits_{H\in\maH}\left|H \cap I\right| \min_{k\in H}\mu\left(k\right)>0,\] 
	implying that $X_n(I)$ goes a.s. to infinity and thereby (as $X_n=[W_n]$ for all $n$), the transience of $\suite{W_n}$.
	
	Assume now that $\mu$ is such that for some independent set $I\in \mathbb I^\prime_\mu(\mbH)$, an equality holds in (\ref{eq:contrNcond1}). 
	Then, for any $k_1,...,k_m$ such that $k_j\in L_\mu(H_j)$ for all $j$,
	the Markov chain $\suite{Y_n}$ defined as
	\begin{equation*}
	Y_n =A_n(I)\;-\;\sum\limits_{j=1}^m\left|H_j \cap B\right|A_n\left(k_j\right),\quad n\in\N,
	\end{equation*}
	is a random walk with drift 0 that is different from the identically null process, in view of Lemma \ref{lemma:muminimal}. 
	Hence $\suite{Y_n}$ is null recurrent. Would the chain $\suite{W_n}$ be positive recurrent, the sequence 
	$\suite{X_n}$ would visit the state 
	$\mathbf 0$ infinitely often, with inter-passage time at $\mathbf 0$ of finite expectation. 
	Thus from (\ref{eq:compareXY}), the sequence $\suite{Y_n}$ would be positive recurrent, an absurdity. This concludes the proof. 
	%
\end{proof}

{Define the following sets of measures, 
	\begin{align*}
	\mathscr N^{\tiny{+}}_1(\mathbb H)&= \left\{\mu\in\mathscr M(\maV):\, \forall I \in \mathbb I(\mbH),\,\mu(I)\;<\sum_{H\in \maH}\left|H \cap I\right|\min_{k \in H \cap \bar I}\mu(k)\right\};\\
	\mathscr N_1^{\tiny{++}}(\mbH) &= \left\{\mu\in\mathscr M(\maV):\,\forall B \subset \maV,\,\mu(B)\;\le \sum_{H\in \maH}\left|H \cap B\right|\min_{k \in H}\mu(k)\right\}.
	\end{align*}
	We have the following result,
	\begin{corollary}
		\label{cor:StabN+N++}
		For any connected hypergraph $\mathbb H$ and any admissible matching policy $\Phi$,
		\begin{equation*}
		\textsc{Stab}(\mbH,\Phi) \subset \mathscr N_1^{\tiny{+}}(\mathbb H)\cap \mathscr N_1^{\tiny{++}}(\mathbb H). 
		\end{equation*}
\end{corollary}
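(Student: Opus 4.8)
The plan is to treat the two inclusions $\textsc{Stab}(\mbH,\Phi)\subset\mathscr N_1^{++}(\mbH)$ and $\textsc{Stab}(\mbH,\Phi)\subset\mathscr N_1^{+}(\mbH)$ separately: the first by a direct fluid-type computation, and the second by establishing the stronger \emph{set} inclusion $\mathscr N_1(\mbH)\subseteq\mathscr N_1^{+}(\mbH)$ and then invoking Proposition \ref{prop:Ncond1}.

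For the inclusion in $\mathscr N_1^{++}$, fix $\mu\in\textsc{Stab}(\mbH,\Phi)$ and an arbitrary $B\subset\maV$. Starting from the identity (\ref{eq:base}), I would rewrite it as $A_n(B)-X_n(B)=\sum_{H\in\maH}|H\cap B|M_n(H)$ and bound each matching count by the bottleneck class of its hyperedge, $M_n(H)\le\min_{k\in H}A_n(k)$ (a matching of $H$ consumes one item of every class of $H$, hence cannot exceed the arrivals of the least frequent such class). Dividing by $n$, applying the SLLN to get $A_n(B)/n\to\mu(B)$ and $\min_{k\in H}A_n(k)/n\to\min_{k\in H}\mu(k)$, and using that positive recurrence forces $X_n(B)/n\to 0$ almost surely, I obtain in the limit $\mu(B)\le\sum_{H\in\maH}|H\cap B|\min_{k\in H}\mu(k)$, which is exactly the defining inequality of $\mathscr N_1^{++}(\mbH)$. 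Since $B$ was arbitrary, $\mu\in\mathscr N_1^{++}(\mbH)$.

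For the inclusion in $\mathscr N_1^{+}$, I would show that every $\mu\in\mathscr N_1(\mbH)$ already lies in $\mathscr N_1^{+}(\mbH)$. Fix $\mu\in\mathscr N_1(\mbH)$ and $I\in\mathbb I(\mbH)$; since $I$ is independent no hyperedge is contained in $I$, so $H\cap\bar I\neq\emptyset$ for every $H\in\maH$ and the right-hand side is well defined. If $I$ is non-$\mu$-minimal, the defining inequality of $\mathscr N_1(\mbH)$ gives $\mu(I)<\sum_H|H\cap I|\min_{k\in H}\mu(k)$; because $H\cap\bar I\subseteq H$ implies $\min_{k\in H}\mu(k)\le\min_{k\in H\cap\bar I}\mu(k)$ for each $H$ with $|H\cap I|>0$, the right-hand side only grows when the minimum is restricted to $H\cap\bar I$, yielding the desired strict inequality. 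If $I$ is $\mu$-minimal, I would argue the inequality holds \emph{deterministically}: by Definition \ref{def:muminimal} each hyperedge meets $I$ in at most a singleton $\{v_H\}$ with $L_\mu(H)=\{v_H\}$ and $v_H$ of degree one, so $v\mapsto$ (its unique hyperedge) is a bijection from $I$ onto $\{H:H\cap I\neq\emptyset\}$ and $\mu(I)=\sum_{H:H\cap I\neq\emptyset}\mu(v_H)$; since $v_H$ is the unique minimizer of $\mu$ over $H$ and $v_H\in I$, every $k\in H\cap\bar I$ satisfies $\mu(k)>\mu(v_H)$, whence $\min_{k\in H\cap\bar I}\mu(k)>\mu(v_H)$, and summing gives $\sum_H|H\cap I|\min_{k\in H\cap\bar I}\mu(k)>\mu(I)$. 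Thus $\mu\in\mathscr N_1^{+}(\mbH)$ in both cases, so that $\textsc{Stab}(\mbH,\Phi)\subset\mathscr N_1(\mbH)\subseteq\mathscr N_1^{+}(\mbH)$ by Proposition \ref{prop:Ncond1}.

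Combining the two parts gives $\textsc{Stab}(\mbH,\Phi)\subset\mathscr N_1^{+}(\mbH)\cap\mathscr N_1^{++}(\mbH)$, as claimed. I expect the only delicate point to be the justification that positive recurrence yields $X_n(B)/n\to 0$ almost surely (rather than merely a Cesàro/time-average statement); this is where I would be most careful, using the renewal structure of the successive returns of $\suite{X_n}$ to $\mathbf 0$ together with the fact that the norm of $\suite{X_n}$ increases by at most $1$ per step, so that the excursion lengths before time $n$ are $o(n)$ and hence $|X_n|=o(n)$.
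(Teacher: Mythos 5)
Your proof is correct, and the two halves relate to the paper's argument differently. For the inclusion $\textsc{Stab}(\mbH,\Phi)\subset \mathscr N_1^{+}(\mbH)$ you do exactly what the paper does: show $\mathscr N_1(\mbH)\subseteq \mathscr N_1^{+}(\mbH)$ by treating $\mu$-minimal and non-$\mu$-minimal independent sets separately (bijection $\varphi$ and strict minimality of $v_H$ in the first case, monotonicity of the minimum under restriction to $H\cap\bar I$ in the second), then invoke Proposition \ref{prop:Ncond1}. For the inclusion in $\mathscr N_1^{++}(\mbH)$, however, you take a genuinely different route. The paper stays entirely combinatorial: it first proves the weak inequality for all independent sets (using the $\varphi$-bijection again for the $\mu$-minimal case), and then propagates it to an arbitrary $B\subset\maV$ by constructing a decreasing chain $B=B_0\supset B_1\supset\dots\supset B_r$ that strips off one $L_\mu$-minimizer of a contained hyperedge at a time until an independent set is reached, with the induction step (\ref{eq:sasha}). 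Your argument instead goes straight from stability: $A_n(B)-X_n(B)=\sum_H|H\cap B|M_n(H)\le\sum_H|H\cap B|\min_{k\in H}A_n(k)$, divide by $n$ and pass to the limit. This is shorter and avoids the induction entirely, at the cost of proving only $\textsc{Stab}(\mbH,\Phi)\subset\mathscr N_1^{++}(\mbH)$ rather than the stronger, purely measure-theoretic fact $\mathscr N_1(\mbH)\subseteq\mathscr N_1^{++}(\mbH)$ that the paper's proof yields (and which is of some independent interest, since it orders the three necessary conditions). One remark on the point you flag as delicate: you do not need $X_n(B)/n\to 0$ almost surely, nor the renewal/excursion-length estimate. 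Since $A_n(B)/n$ and $\min_{k\in H}A_n(k)/n$ converge a.s. to constants, it suffices to evaluate your inequality along the successive return times of $\suite{X_n}$ to $\mathbf 0$, where $X_n(B)=0$ exactly; recurrence alone guarantees infinitely many such times, and the desired weak inequality between the deterministic limits follows. So the step you were most worried about is in fact immediate.
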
}

\begin{proof}
	{We just show that $\mathscr N_1(\mathbb H)$ is included in $\mathscr N_1^{\scriptsize{+}}(\mathbb H)\cap \mathscr N_1^{\tiny{++}}(\mathbb H)$. Set again $\maH=\{H_1,...,H_m\}$ and fix $\mu \in \mathscr N_1(\mathbb H)$. 
		To show that $\mu\in\mathscr N_1^{\tiny{+}}(\mathbb H)$, first observe that for any independent set $I=\{v_1,...,v_p\}\in\mathbb I_\mu(\mbH)$, for any hyperedge $H_j$ 
		intersecting with $I,$ we have that $\min_{k\in H_j}\mu(k) = \mu(v_{\varphi(j)}) < \min_{k\in H_j\cap \bar I}\mu(k).$ Therefore, 
		\[\mu(I) = \sum_{i=1}^p \mu(v_i) = \sum_{j=1}^m |H_j\cap I| \mu(v_{\varphi(j)}) < \sum_{H \in\maH} |H\cap I| \min_{k\in H\cap \bar I}\mu(k),\]
		whereas if $I\in\mathbb I^\prime_\mu(\mbH)$, as $\mu\in\mathscr N_1(\mathbb H)$ we have that 
		\[\mu(I)  < \sum_{H \in\maH} |H\cap I| \min_{k\in H}\mu(k) \le \sum_{H \in\maH} |H\cap I| \min_{k\in H\cap \bar I}\mu(k),\]
		hence $\mu\in \mathscr N_1^{\tiny{+}}(\mathbb H)$.}
	
	\medskip
	
	{It remains to show that $\mu\in \mathscr N_1^{\tiny{++}}(\mathbb H)$, and for this, we first observe that 
		\begin{equation}
		\label{eq:animaux}
		\mbox{for all } I \in \mathbb I(\mbH),\,\mu(I)\;\le \sum_{H\in \maH}\left|H \cap I\right|\min_{k \in H}\mu(k).
		\end{equation}
		To see this, it suffices to observe that for any independent set $I\in\mathbb I_\mu(\mbH)$, recalling (\ref{eq:defvarphi}), 
		\[\mu(I) = \sum_{j=1}^m \mu(v_{\varphi(j)})=\sum_{\substack{j\in\llbracket 1,m\rrbracket :\\H_j\cap I\ne\emptyset}} |H_j\cap I|\mu(v_{\varphi(j)})= 
		\sum_{j=1}^m |H_j\cap I|\mu(k_j)=\sum_{H\in \maH} |H\cap I|\min_{k\in H}\mu(k),\]
		hence (\ref{eq:animaux}). 
		Now fix $B$, a subset of $\maV$ that is not an independent set of $\mbH$. 
		Then, we construct by induction the family of sets $B:=B_0 \supset B_1 \supset B_2 \supset ...\supset B_r$, where $r$ is properly defined below, as follows: for any $i\ge 0$, if $B_i$ is not an independent set of $\mathbb I(\mbH)$, 
		then we take an arbitrary hyperedge $H_{j_i} \in\maH$ such that $H_{j_i}\subset B_i$, and set $B_{i+1}=B_i \setminus\{k_i\}$, for an arbitrary $k_i\in L_\mu(H_{j_i})$. 
		Then, there exists an integer $r \le |B|-1$ such that $B_r$ is an independent set of $\mathbb I(\mbH)$, and we stop the construction at this point. 
		Observe that for any $i\in\llbracket 0,r-1 \rrbracket$, 
		\begin{equation}
		\label{eq:sasha}
		\mu(B_{i+1})\;\le \sum_{H\in \maH}\left|H \cap B_{i+1}\right|\min_{k \in H}\mu(k) \,\,\Longrightarrow \,\,\mu(B_{i})\;\le \sum_{H\in \maH}\left|H \cap B_{i}\right|\min_{k \in H}\mu(k).
		\end{equation}
		To see this, fix $i$ and suppose that the left-hand side of the above holds true. Then, we have 
		\begin{equation}
		\label{eq:sasha11}
		\mu(B_{i}) = \mu(B_{i+1}) +\mu(k_i),
		\end{equation}
		and on the other hand, 
		\begin{multline}
		\label{eq:sasha1}
		\sum_{H\in \maH}\left|H \cap B_{i}\right|\min_{k \in H}\mu(k)\\
		\begin{aligned}
		&= \sum_{H\in \overline{\maH(k_i)}}\left|H \cap B_{i}\right|\min_{k \in H}\mu(k) +  \sum_{H\in \maH(k_i)}\left|H \cap B_{i}\right|\min_{k \in H}\mu(k)\\
		&= \sum_{H\in \overline{\maH(k_i)}}\left|H \cap B_{i+1}\right|\min_{k \in H}\mu(k) +  \sum_{H\in \maH(k_i)}\left(\left|H \cap B_{i+1}\right|+1\right)\min_{k \in H}\mu(k)\\
		&= \sum_{H\in \maH}\left|H \cap B_{i+1}\right|\min_{k \in H}\mu(k) +  \sum_{H\in \maH(k_i)}\mu(k_H),
		\end{aligned}
		\end{multline}
		where for any $H\in\maH(k_i)$, $k_H$ is an arbitrary element of $L_\mu(H)$. 
		But $\mu(k_i)$ is less or equal than the second term of the latter sum because $\mu(k_i)=\mu(k_{H_i})$, and we assumed that 
		$\mu(B_{i+1})$ is less than the first term $\left(\mu(B_{i+1})\leq\sum_{H\in \maH}\left|H \cap B_{i+1}\right|\min_{k \in H}\mu(k)\right)$. This completes the proof of (\ref{eq:sasha}) in view of (\ref{eq:sasha1}). 
		To conclude, as $B_r \in \mathbb I(\mbH)$ and in view of (\ref{eq:animaux}), we have that 
		$\mu(B_r) \le \sum_{H\in \maH}\left|H \cap B_{r}\right|\min_{k \in H}\mu(k)$, which implies by an immediate induction using (\ref{eq:sasha}), that 
		$$\mu(B) \le \sum_{H\in \maH}\left|H \cap B\right|\min_{k \in H}\mu(k).$$ This completes the proof.}
\end{proof}
{
	\begin{remark}\rm[Graphical case]
		Let us consider the special case where $\mbH=(\maV,\maH)$ is a graph. Then, it is shown in Proposition 2 of \cite{MaiMoy16} that the stability region of the model is included in the set 
		\[\textsc{Ncond}(\mbH)=\left\{\mu \in \mathscr M(\maV)\,:\;\forall I\in\mathbb I(\mbH),\,\mu(I)<\mu(\maE(I))\right\},\]
		where for any set $B\subset \maV$, $\maE(B)=\left\{j\in \maV\,:\,(i,j)\in \maH\mbox{ for some }i\in B\right\}.$ 
		It is then easy to check by hand that 
		{\sc Ncond}$(\mbH)$ is included in $\mathscr N^{++}_1(\mbH)$. Indeed, 
		if we let $\mu\in \textsc{Ncond}(\mbH)$ and $I\in\mathbb I(\mbH)$ (meaning that $I$ is an independent set of the graph 
		$\mbH$, in the usual sense), then, 
		for any edge $H\in\maH$, $|H\cap I| =1$ if $I$ contains a vertex of the edge $H$, and 0 else, so we get that 
		\[\sum_{H\in \maH}\left|H \cap I\right|\min_{j \in H \cap \bar I}\mu(j) = \sum_{(i,j)\in \maH\,:\,i\in I}\mu(j) \ge \sum_{j\in\maE(I)} \mu(j) = \mu(\maE(I)),\]
		where the inequality above is an equality whenever each element of $\maE(I)$ shares an edge with a single element of $I$, and else a strong inequality. 
		Thus $\mu \in \mathscr N_1^{++}(\mbH)$. 
		In fact, it is necessarily the case that $\textsc{Ncond}(\mbH) \subset \mathscr N_1(\mbH)$ because if it was not true, 
		there would exist in particular a $\mu\in \overline{\mathscr N_1(\mbH)} \cap \textsc{Ncond}(\mbH)$, making 
		the system $(\mbH,\textsc{ml},\mu)$ unstable (in view of Proposition \ref{prop:Ncond1}) despite the fact that 
		$\mu \in \textsc{Ncond}(\mbH)$, a contradiction to Theorem 2 in \cite{MaiMoy16}. 
		Observe however that $\textsc{Ncond}(\mbH) \ne \mathscr N_1(\mbH)$ in general. 
		To see this, consider the case where $\mbH$ is the cycle of size $5$, $1-2-3-4-5-1$. 
		For a small enough $\epsilon>0$, set 
		\[\left\{\begin{array}{ll}
		\mu(1) &={1\over 2}-{3\epsilon \over 4};\\
		\mu(2) = \mu(5) &={1\over 4}-{\epsilon \over 8};\\
		\mu(3) &={4\epsilon \over 5};\\
		\mu(4) &={\epsilon \over 5}.
		\end{array}\right.\]
		It is then easily checked that $\mu\in\mathscr N_1(\mbH)$. However $\mu\not\in \textsc{Ncond}(\mbH)$, since the independent set $I=\{1,3\}$ is such that 
		\[\mu(I)={1\over 2} +{\epsilon \over 20} > {1\over 2} - {\epsilon \over 20} = \mu(\{2,4,5\}) = \mu(\maE(I)).\]
		{\it As a conclusion, if $\mbH$ is a graph the necessary condition ``$\mu\in \textsc{Ncond}(\mbH)$'' is \textbf{stronger than} the necessary condition ``$\mu\in \mathscr N_1(\mbH)$''.} 
	\end{remark}
}

%
%
%

Let us now define the following set of measures, 
\begin{equation*}
\mathscr N_2(\mathbb H)= \left\{\mu\in\mathscr M(\maV):\,\quad \forall T \in \maT(\mathbb H)\;,\,\mu(T)\;>{1\over r(\mbH)}\right\}.
\end{equation*}
We also have that
\begin{proposition}
	\label{prop:Ncond2}
	For any connected hypergraph $\mathbb H$ and any admissible matching policy $\Phi$,
	\begin{equation*}
	\textsc{Stab}(\mbH,\Phi) \subset \mathscr N_2(\mathbb H).
	\end{equation*}
\end{proposition}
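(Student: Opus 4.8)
The plan is to derive, for every transversal $T\in\maT(\mbH)$, a pathwise lower bound on the total buffer content $|W_n|$ in terms of the arrival count $A_n(T)$, and then to split the argument into the strictly sub-critical case $\mu(T)<1/r(\mbH)$, handled by the strong law, and the critical case $\mu(T)=1/r(\mbH)$, handled by a null-recurrent random walk argument in the spirit of the equality case of Proposition~\ref{prop:Ncond1}. Throughout I argue by contraposition: I show that if some transversal $T$ satisfies $\mu(T)\le 1/r(\mbH)$, then $\suite{W_n}$ is not positive recurrent.

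First I would establish the key estimate. Fix $T\in\maT(\mbH)$. Since $T$ meets every hyperedge, $|H\cap T|\ge 1$ for every $H\in\maH$, so the nonnegativity of $X_n(T)$ in \eqref{eq:base} yields
\[
M_n:=\sum_{H\in\maH}M_n(H)\;\le\;\sum_{H\in\maH}|H\cap T|\,M_n(H)\;\le\;A_n(T).
\]
Taking $B=\maV$ in \eqref{eq:base} and using $|H|\le r(\mbH)$ gives $|W_n|=n-\sum_{H\in\maH}|H|\,M_n(H)\ge n-r(\mbH)M_n$. Combining the two bounds produces the central estimate $|W_n|\ge n-r(\mbH)A_n(T)$, valid for all $n$. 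Dividing by $n$ and invoking the SLLN ($A_n(T)/n\to\mu(T)$ a.s.) gives $\liminf_n |W_n|/n\ge 1-r(\mbH)\mu(T)$. In the sub-critical case $\mu(T)<1/r(\mbH)$ the right-hand side is strictly positive, so $|W_n|\to\infty$ almost surely, $\suite{W_n}$ is transient, and hence not positive recurrent.

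The delicate point is the critical boundary $\mu(T)=1/r(\mbH)$, which I would treat as in Proposition~\ref{prop:Ncond1}. Introduce the random walk $Y_n:=r(\mbH)A_n(T)-n$, whose i.i.d. increments $r(\mbH)\ind_{\{V_n\in T\}}-1$ have mean $r(\mbH)\mu(T)-1=0$ and are nondegenerate, since $r(\mbH)\ge 2$ by Remark~\ref{rq:Hyper2UnifoIsGra} and $0<\mu(T)<1$ on the boundary; thus $\suite{Y_n}$ is a null-recurrent martingale with bounded increments. The central estimate reads $Y_n\ge -|W_n|$, so at any return time $\tau$ of $\suite{W_n}$ to the empty state one has $|W_\tau|=0$ and therefore $Y_\tau=\sum_{H\in\maH}\bigl(r(\mbH)|H\cap T|-|H|\bigr)M_\tau(H)\ge 0$, every coefficient being nonnegative. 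Were $\suite{W_n}$ positive recurrent, the first return time $\tau$ to the empty state would have finite mean, so optional stopping applied to the bounded-increment martingale $\suite{Y_n}$ (dominated by $C\tau$) would force $\esp{Y_\tau}=0$ with $Y_0=0$, whence $Y_\tau=0$ almost surely.

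The main obstacle is converting $Y_\tau=0$ into a genuine absurdity. I would argue that, with positive probability, the first excursion away from the empty state completes a matching of some hyperedge $H$ with $|H|<r(\mbH)$ or $|H\cap T|\ge 2$, which makes $Y_\tau>0$ on that event and contradicts $Y_\tau\ge0$ together with $\esp{Y_\tau}=0$. The genuinely hard subcase is when $\mbH$ is $r(\mbH)$-uniform and $T$ is an exact transversal ($|H\cap T|=1$ for all $H$): then $Y_\tau\equiv 0$ identically and optional stopping produces no contradiction, exactly as $\mu$-minimal independent sets produce no contradiction in Proposition~\ref{prop:Ncond1} (which is precisely why its condition is stated only over non-$\mu$-minimal sets). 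In that residual configuration I would abandon the one-dimensional comparison and instead analyze the dominating (generally multidimensional) critical walk directly, showing it is not positive recurrent; I expect this to be the step requiring the most care.
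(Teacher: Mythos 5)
Your central estimate and the sub-critical case coincide with the paper's argument: the paper bounds $M_n(\maH)\le A_n(T)$ (each completed match consumes at least one item whose class lies in $T$), deduces $X_n(\maV)\ge A_n(\maV)-r(\mbH)A_n(T)$, applies the SLLN to get transience when $\mu(T)<1/r(\mbH)$, and then disposes of the boundary case $\mu(T)=1/r(\mbH)$ with the words ``we conclude as in the previous proof,'' i.e.\ by the null-recurrent random-walk comparison used for the equality case of Proposition \ref{prop:Ncond1}.

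The gap is in your treatment of that boundary case, specifically in the subcase you flag as ``genuinely hard'' and leave open. You have in fact already assembled everything needed to close it. You show $Y_\tau\ge 0$ at the first return time $\tau$ of $\suite{W_n}$ to the empty state, and optional stopping (legitimate, since the increments are bounded and $\esp{\tau}<\infty$ under the assumed positive recurrence) gives $\esp{Y_\tau}=0$, hence $Y_\tau=0$ a.s. The contradiction does not have to come from exhibiting an event on which $Y_\tau>0$: it comes from $Y_\tau=0$ itself. Since $Y_0=0$, $\tau\ge 1$ and $Y_\tau=0$ a.s., the first return time of the walk $\suite{Y_n}$ to the state $0$ is a.s.\ dominated by $\tau$ and therefore has finite expectation. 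But $\suite{Y_n}$ is a nondegenerate zero-mean random walk whose increments $-1$ and $r(\mbH)-1$ (each taken with positive probability, as $0<\mu(T)<1$) generate $\Z$; such a walk is recurrent but null recurrent, so its expected return time to $0$ is infinite. This is the absurdity, and it requires no case distinction on whether $\mbH$ is uniform or $T$ is an exact transversal --- the configuration in which $Y_\tau\equiv 0$ identically is precisely the one in which the null-recurrence contradiction bites hardest, not a residual difficulty. Your proposed escape route of ``analyzing the dominating multidimensional critical walk directly'' is therefore unnecessary and, as you acknowledge, unexecuted; as written the proof is incomplete exactly where the paper's (terse) back-reference does the work.
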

\begin{proof}
	Suppose that there exists a transversal $T\in\maT(\mbH)$ such that $\mu(T)\;\le{1\over r(\mbH)}$. 
	{As each match contains at least one element whose class is an element of $T$, at any time 
		the overall number of completed matches cannot exceed the number of arrivals of elements 
		whose class belongs to $T$,}
	in other words $M_n(\maH)\leq A_n(T)$  for all $n$. Thus, for all $n$ we have that 
	\begin{equation*}
	{X_n(\maV)\over n}\ge {1\over n}\left(A_n(\maV)-r(\mbH)M_n(\maH)\right)\ge {1\over n}\left(A_n(\maV)-r(\mbH)A_n(T)\right).
	\end{equation*}
	Taking $n$ to infinity in the above yields 
	\begin{equation*}
	\limsup_n {X_n(\maV)\over n} \ge 1 - r(\mbH)\mu(T),
	\end{equation*}
	and we conclude as in the previous proof. 
\end{proof}

\begin{remark}
	\label{rem:unifsubN2}
	\rm
	As an immediate consequence of Proposition \ref{prop:Ncond2}, if $\mbH=(\maV,\maH)$ is of {order} $q$, 
	and such that $\tau(\mbH) \le {q \over r(\mbH)}$, 
	then $\textsc{Stab}(\mbH,\Phi)$ does not contain the uniform measure $\mu_{\textsc{u}}=(1/q,...,1/q)$ on $\maV$, in other words the model $(\mbH,\Phi,\mu_{\textsc{u}})$ is unstable 
	for any $\Phi$. Indeed, for any minimal transversal $T$ of $\mbH$ we have that 
	\[\mu_{\textsc{u}}(T) = {\tau(\mbH) \over q} \le {1\over r(\mbH)}.\]
\end{remark}

\medskip

We now introduce two necessary conditions of stability based on the anti-rank of the considered hypergraph. 
We first introduce the following sets of measures, 
\begin{align}
\label{eq:Ncond3+}
\mathscr N^{\tiny{+}}_3(\mathbb H) &= \left\{\mu \in \mathscr M(\maV):\;\forall i \in \maV,\,\mu(i) \le {1 \over a(\mbH)}\right\};\\
\label{eq:Ncond3-}
\mathscr N^{-}_3(\mathbb H)&= \left\{\mu \in \mathscr M(\maV):\;\forall i \in \maV,\,\mu(i) < {1 \over a(\mbH)}\right\}. 
\end{align}
We have the following, 
\begin{proposition}
	\label{prop:Ncond3}
	For any connected hypergraph $\mathbb H=(\maV,\maH)$ and any admissible policy $\Phi$, 
	\begin{equation}
	\label{eq:Ncondantirank}
	\textsc{Stab}(\mathbb H,\Phi) \subset \mathscr N^{\tiny{+}}_3(\mathbb H). 
	\end{equation}
	If the hypergraph $\mathbb H=(\maV,\maH)$ is $r$-{uniform} (i.e., $a(\mbH)=r(\mbH)=r$) we have that 
	\begin{equation}
	\label{eq:Ncondkunifcomplet}
	\textsc{Stab}(\mathbb H,\Phi) \subset \mathscr N^{-}_3(\mathbb H).
	\end{equation}
	in other words the model $(\mbH,\Phi,\mu)$ cannot be stable unless $\mu(i)<1/r$ for any $i\in \maV$. 
\end{proposition}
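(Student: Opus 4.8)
The plan is to establish both inclusions by the same instability template already used for Propositions \ref{prop:Ncond1} and \ref{prop:Ncond2}: starting from the conservation identity (\ref{eq:base}), I would exhibit a coordinate of $X_n$ that grows linearly in $n$ under the forbidden regime, forcing transience of $\suite{W_n}$, with the boundary case deferred to a null-recurrence argument.

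For inclusion (\ref{eq:Ncondantirank}) I argue by contraposition. Suppose $\mu(i) > 1/a(\mbH)$ for some $i \in \maV$. Applying (\ref{eq:base}) to the singleton $B=\{i\}$ gives $X_n(i) = A_n(i) - \sum_{H \in \maH(i)} M_n(H)$. Since every hyperedge has size at least $a(\mbH)$, each realized match removes at least $a(\mbH)$ items, so the total match count obeys $a(\mbH)\, M_n(\maH) \le A_n(\maV) = n$, whence $\sum_{H\in\maH(i)} M_n(H) \le M_n(\maH) \le n/a(\mbH)$. Therefore $X_n(i)/n \ge A_n(i)/n - 1/a(\mbH)$, and the SLLN yields $\liminf_n X_n(i)/n \ge \mu(i) - 1/a(\mbH) > 0$; hence $X_n(i)\to\infty$ a.s.\ and $\suite{W_n}$ is transient, exactly as concluded in Proposition \ref{prop:Ncond1}.

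For the $r$-uniform case (\ref{eq:Ncondkunifcomplet}), inclusion (\ref{eq:Ncondantirank}) already forces $\mu(i)\le 1/r$ for all $i$, so it remains to exclude equality. Assume $\mu(i)=1/r$ for some $i$. If $i$ lies in every hyperedge, then $\{i\}$ is a transversal with $\mu(\{i\})=1/r=1/r(\mbH)$, and Proposition \ref{prop:Ncond2} rules out stability directly. Otherwise fix a hyperedge $H_0$ with $i\notin H_0$ and set $Y_n = A_n(i) - n/r$, a non-degenerate mean-zero random walk with $X_n(i)\ge Y_n$ (using $r\,M_n(\maH)=n-X_n(\maV)\le n$). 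Suppose for contradiction that $\suite{W_n}$ is positive recurrent, and consider the regeneration times $0=\tau_0<\tau_1<\cdots$ at the empty state $\mathbf 0$, which are a.s.\ finite with $\esp{\tau_1}<\infty$. On each excursion the buffer starts and ends empty, so by $r$-uniformity the number of matches equals $(\tau_{k+1}-\tau_k)/r$, while the number of $i$-arrivals equals the number of matches involving $i$, which is at most the total match count; hence the i.i.d.\ increments $D_k := Y_{\tau_k}-Y_{\tau_{k-1}}$ satisfy $D_k\le 0$. Since $Y_n/n\to 0$ by the SLLN and $\tau_k/k\to\esp{\tau_1}$, we get $\frac1k\sum_{j\le k}D_j = Y_{\tau_k}/k \to 0$, so $\esp{D_1}=0$ (integrability from $|D_1|\le\tau_1$), and together with $D_1\le 0$ this forces $D_1=0$ a.s. But with positive probability the first $r$ arrivals are exactly the $r$ classes of $H_0$ (full support of $\mu$), which under any non-idling policy produce a single match of $H_0$ and return to $\mathbf 0$ at time $r$ with no $i$-arrival, giving $D_1=-1$; this contradiction shows $\suite{W_n}$ cannot be positive recurrent.

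The first-order steps (inclusion (\ref{eq:Ncondantirank}) and the reduction in the $r$-uniform case) are routine SLLN arguments. The main obstacle is the boundary case $\mu(i)=1/r$: the drift vanishes, so the SLLN is inconclusive and one must pass to the regeneration structure of the empty state to compare $\suite{W_n}$ with the null-recurrent walk $Y_n$. The delicate point is verifying that every excursion has a non-positive $Y$-increment while the excursion building $H_0$ has a strictly negative one—precisely where $r$-uniformity and the existence of a hyperedge avoiding $i$ (equivalently, the non-applicability of Proposition \ref{prop:Ncond2}) enter.
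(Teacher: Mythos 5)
Your proof is correct, but it takes a genuinely different route from the paper's at both steps. For the inclusion in $\mathscr N^{+}_3(\mathbb H)$, the paper applies the inequality in (\ref{eq:base}) to $B=\maV\setminus\{i_0\}$ and to $\{i_0\}$, introduces an auxiliary parameter $x_0$ chosen via the monotonicity of $x \mapsto (r(\mbH)-a(\mbH)+x)/(xa(\mbH))$, and shows that the linear combination $X_n(\maV)+\bigl(x_0-\tfrac{r(\mbH)+x_0}{a(\mbH)}\bigr)X_n(i_0)$ grows linearly; your observation that $a(\mbH)M_n(\maH)\le \sum_{H}|H|M_n(H)\le n$, hence $X_n(i)\ge A_n(i)-n/a(\mbH)$, reaches the same transience conclusion in two lines and is considerably easier to verify. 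For the boundary case of the $r$-uniform statement, the paper reuses the same inequality with a weak bound and then appeals, as in Proposition \ref{prop:Ncond1}, to a comparison with a zero-drift random walk to conclude the chain is at best null recurrent; you instead run a regeneration argument over the excursions from the empty state, showing that the per-excursion increments of $A_n(i)-n/r$ are nonpositive with zero mean, yet equal to $-1$ with positive probability on the event that an excursion consists of a single match of a hyperedge avoiding $i$ (where $r$-uniformity and simplicity of $\mbH$ guarantee that such an arrival pattern forces exactly that match under any non-idling admissible policy). Your version makes fully explicit the point the paper leaves implicit — why a positive recurrent chain cannot dominate a non-degenerate zero-drift walk — at the price of splitting off the case where $i$ meets every hyperedge, which you correctly dispatch via Proposition \ref{prop:Ncond2}. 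Both arguments deliver the stated result.
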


\begin{proof}
	To prove the first statement, we argue again by contradiction. Suppose that 
	$\mu(i_0) > {1 \over a(\mbH)}$ for some node $i_0$. 
	As the function 
	\[
	\begin{cases}
	\R^+ &\longrightarrow \R^+\\
	x &\longmapsto {r(\mbH)-a(\mbH)+x \over xa(\mbH)}
	\end{cases}\]
	strictly decreases to ${1\over a}$, there exists $x_0 > 0$ such that 
	\begin{equation}
	\label{eq:i0}
	\mu(i_0) >  {r(\mbH)-a(\mbH)+x_0 \over x_0a(\mbH)}.
	\end{equation}
	Then, applying the inequality in (\ref{eq:base}) to $B \equiv \maV \setminus \{i_0\}$, we readily obtain that a.s. for all $n$, 
	\begin{multline}
	{r(\mbH)+x_0\over a(\mbH)}A_n\left(\maV\setminus\{i_0\}\right)\\
	\begin{aligned}
	&\ge {r(\mbH)+x_0\over a(\mbH)}\left(\sum\limits_{H \in \maH(i_0)} \left|H -1\right| M_n\left(H\right)+\sum\limits_{H \in \overline{\maH(i_0)}} \left|H \right| M_n\left(H\right)\right)\\
	&\ge \left(r(\mbH)+x_0-{r(\mbH)+x_0\over a(\mbH)}\right)M_n\left(\maH(i_0)\right) + (r(\mbH)+x_0)M_n\left(\overline{\maH(i_0)}\right). \label{eq:conditionM}
	\end{aligned}
	\end{multline}
	Likewise, applying the equality of (\ref{eq:base}) to $\{i_0\}$ and then $\maV\setminus \{i_0\}$ also yields to 
	\begin{multline*}
	X_n\left(\maV\setminus \{i_0\}\right)+\left(x_0 +1 - {r(\mbH)+x_0 \over a(\mbH)}\right)X_{n}(i_0)\\
	\shoveleft{
		= A_n\left(\maV\setminus \{i_0\}\right)-\sum\limits_{H \in \maH(i_0)} \left| H -1\right| M_n\left(H\right)-\sum\limits_{H \in \overline{\maH(i_0)}} \left| H\right| M_n\left(H\right)}\\
	\shoveright{+\left(x_0 +1- {r(\mbH)+x_0 \over a(\mbH)}\right)\left(A_n(i_0)-M_n\left(\maH(i_0)\right)\right)}\\
	\shoveleft{> A_n\left(\maV\setminus \{i_0\}\right)+\left(x_0 +1- {r(\mbH)+x_0 \over a(\mbH)}\right)A_n(i_0)}\\
	-\left(r(\mbH)+ x_0 - {r(\mbH)+x_0 \over a(\mbH)}\right)M_n(\maH(i_0)) - (r(\mbH)+x_0)M_n\left(\overline{\maH(i_0)}\right).
	\end{multline*}
	Combining this with (\ref{eq:conditionM}), implies that a.s. for all $n$, 
	\begin{equation*}
	X_n\left(\maV\right)+\left(x_0 - {r(\mbH)+x_0 \over a(\mbH)}\right)X_{n}(i_0) 
	>  \left(1- {r(\mbH)+x_0 \over a(\mbH)}\right)A_n\left(\maV\right)+x_0A_n(i_0).
	\end{equation*}
	Therefore we have that 
	\begin{equation}
	\label{eq:boundN3}
	\limsup_n {1\over n}\left(X_n\left(\maV\right)+\left(x_0 - {r(\mbH)+x_0 \over a(\mbH)}\right)X_{n}(i_0)\right) \ge 
	1- {r(\mbH)+x_0 \over a(\mbH)}+x_0\mu(i_0),
	\end{equation}
	hence the chain {$\suite{W_n}$} is transient since the right-hand side of the above is positive from (\ref{eq:i0}).  
	
	\medskip
	It remains to check that in the case where the hypergraph is $r$-uniform, the model cannot be stable whenever 
	$\mu(i_0) \ge {1\over a(\mbH)}={1\over r}$ 
	for some $i_0\in \maV$. For this, notice that, as $r(\mbH)=a(\mbH)=r$ a weak inequality holds true in (\ref{eq:i0}) for any $x_0>0$. 
	Then, it readily follows from (\ref{eq:boundN3}) that for any $x_0$, 
	$$\limsup_n {1\over n}\left(X_n\left(\maV\right)+\left(x_0 - {r(\mbH)+x_0 \over a(\mbH)}\right)X_{n}(i_0)\right) \ge 0,$$
	and we conclude, as in the proof of Proposition \ref{prop:Ncond1}, that the chain $\suiten{W_n}$ is at best null recurrent. 
\end{proof}

\section{{Non-stabilizable hypergraphs}}
\label{sec:unstable} 
Having {Corollary \ref{cor:StabN+N++}, and} Propositions \ref{prop:Ncond1}, \ref{prop:Ncond2} and \ref{prop:Ncond3} in hand, one can identify classes of hypergraphs 
$\mbH$ such that $(\mbH,\Phi,\mu)$ has an empty stability region for any admissible $\Phi$.

We start with the following elementary observation, 
\begin{proposition}
	\label{prop:isolated}
	If a hyperedge of $\mbH=(\maV,\maH)$ contains two isolated nodes, i.e., there exist $H\in \maH$ and $i,j \in H$ such that $d(i)=d(j)=1$, then 
	the model cannot be stable, i.e., $\textsc{Stab} (\mbH,\Phi)=\emptyset$ for any admissible $\Phi.$ 
\end{proposition}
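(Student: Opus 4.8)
The plan is to exploit that both $i$ and $j$, being of degree $1$, can only ever be consumed through matchings of the single hyperedge $H$, so that the difference of their queue sizes coincides with a free random walk that no policy can control. I fix an admissible policy $\Phi$ and an arbitrary $\mu\in\mathscr M(\maV)$, and show the model is never positive recurrent.

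First I would record the structural consequence of the hypothesis: since $d(i)=d(j)=1$ and $i,j\in H$, we have $\maH(i)=\maH(j)=\{H\}$. Applying the equality in (\ref{eq:base}) successively to the singletons $\{i\}$ and $\{j\}$ then gives, for all $n$,
$$X_n(i) = A_n(i) - M_n(H), \qquad X_n(j) = A_n(j) - M_n(H),$$
because $H$ is the only hyperedge meeting $\{i\}$ (resp. $\{j\}$), with $|H\cap\{i\}|=|H\cap\{j\}|=1$. Subtracting, the control term $M_n(H)$ cancels and I obtain
$$Y_n := X_n(i) - X_n(j) = A_n(i) - A_n(j) = \sum_{k=1}^n\left(\ind_{\{V_k=i\}}-\ind_{\{V_k=j\}}\right),\qquad n\in\N.$$
Thus $\suite{Y_n}$ is a random walk on $\Z$ with i.i.d. increments of mean $\mu(i)-\mu(j)$ and, since $\mu$ has full support so that $\mu(i),\mu(j)>0$, strictly positive variance; in particular it is entirely independent of $\Phi$. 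The key observation is that $Y_n=0$ whenever $X_n=\mathbf 0$.

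The argument then splits according to the sign of the drift. If $\mu(i)\neq\mu(j)$, the SLLN gives $Y_n/n\to\mu(i)-\mu(j)\neq 0$ a.s., so $|Y_n|\to\infty$ and hence $Y_n\neq 0$ for all $n$ large enough; consequently $X_n\neq\mathbf 0$ eventually, so the irreducible chain $\suite{W_n}$ visits $\mathbf 0$ only finitely often and is transient. If instead $\mu(i)=\mu(j)$, then $\suite{Y_n}$ is a centered, non-degenerate random walk on $\Z$, hence null recurrent, so its return time to $0$ from $0$ has infinite expectation. Were $\suite{W_n}$ positive recurrent, $\suite{X_n}$ would visit $\mathbf 0$ infinitely often with inter-passage time of finite mean; but $Y_n=0$ at every such visit, so starting from any instant where $X_n=\mathbf 0$ (whence $Y_n=0$) the first return time of $\suite{Y_n}$ to $0$ is pathwise dominated by that of $\suite{X_n}$ to $\mathbf 0$, and would therefore have finite expectation — contradicting null recurrence, exactly as in the proof of Proposition \ref{prop:Ncond1}.

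In both cases $\suite{W_n}$ fails to be positive recurrent, for every $\mu\in\mathscr M(\maV)$ and every admissible $\Phi$, so $\textsc{Stab}(\mbH,\Phi)=\emptyset$. I expect the only delicate step to be the centered case: there one cannot produce a direct transience estimate and must instead route the contradiction through the null recurrence of the auxiliary walk $\suite{Y_n}$ and the pathwise domination of return times, whereas the drifted case is an immediate SLLN argument.
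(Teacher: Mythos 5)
Your proof is correct, but it takes a different route from the paper's. The paper disposes of this proposition in two lines as a corollary of the general necessary conditions already established: by Corollary \ref{cor:StabN+N++} any stabilizing $\mu$ must lie in $\mathscr N_1^{+}(\mbH)$, and applying that condition to the independent sets $\{i\}$ and $\{j\}$ (each of which meets only the hyperedge $H$) yields $\mu(i)<\min_{k\in H\setminus\{i\}}\mu(k)\le\mu(j)$ and, symmetrically, $\mu(j)<\mu(i)$ — an immediate absurdity, so $\mathscr N_1^{+}(\mbH)=\emptyset$. You instead give a self-contained argument: the identity $X_n(i)-X_n(j)=A_n(i)-A_n(j)$, in which the matching counts cancel exactly because $\maH(i)=\maH(j)=\{H\}$, exhibits a policy-independent free random walk, and you conclude by the SLLN when $\mu(i)\neq\mu(j)$ and by null recurrence of the centered walk (routed through the finite-mean inter-passage times at $\mathbf 0$ that positive recurrence would force) when $\mu(i)=\mu(j)$. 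The two arguments are close relatives — your case split is essentially the mechanism inside the proof of Proposition \ref{prop:Ncond1} that underlies $\mathscr N_1^{+}$, and your domination of the first return time of $Y_n$ to $0$ by the inter-passage time of $X_n$ at $\mathbf 0$ is exactly the comparison the paper uses at the end of that proof. What yours buys is transparency and independence from the earlier machinery: the exact cancellation makes it visible that no admissible $\Phi$ can influence the obstruction. What the paper's buys is brevity, since Corollary \ref{cor:StabN+N++} is already in hand. Both of your cases are sound as written.
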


\begin{proof}
	Let $\mu \in  \mathscr N^{\tiny{+}}_1(\mbH)$. Then, considering successively the sets $\{i\}$ and $\{j\}$, 
	as $j \in H\cap \bar{\{i\}}$ and $i \in H \cap \bar{\{j\}}$ we obtain that $\mu(i) < \mu(j)\mbox{ and }\mu(i) > \mu(j),$ an absurdity. 
\end{proof}

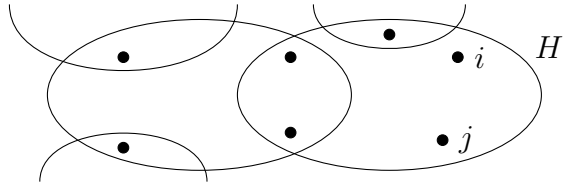
\begin{figure}[htp]\begin{center}
		\def\firstellip{(1, 2) ellipse [x radius=2cm, y radius=1cm, rotate=180]}
		\def\secondellip{(3.5, 2cm) ellipse [x radius=2cm, y radius=1cm, rotate=180]} 
		\def\thirdellip{(-1.5, 2) ellipse [x radius=2cm, y radius=1cm, rotate=-180]}
		\begin{tikzpicture}
		
		
	\filldraw (-2.5,1.3) circle (2pt) node [right] {};
	\draw[-] (-3.6,0.85) to[bend left=90] (-1.4,0.85);
	\filldraw (-2.5,2.5) circle (2pt) node [right] {};
	\draw[-] (-4,3.2) to[bend right=90] (-1,3.2);
	\filldraw (-0.3,2.5) circle (2pt) node [right] {};
	\filldraw (-0.3,1.5) circle (2pt) node [right] {};
	\filldraw (1.9,2.5) circle (2pt) node [right] {$\,i$};
	\draw[-] (0,3.2) to[bend right=90] (2,3.2);
	\filldraw (1,2.8) circle (2pt) node [right] {};
	\filldraw (1.7,1.4) circle (2pt) node [right] {\,$j$};
	\draw \firstellip node [label={[xshift=2.1cm, yshift=0.2cm]$H$}] {};
	\draw \thirdellip node [label={[xshift=-2.0cm, yshift=-0.8cm]}] {};
		\end{tikzpicture} 
		\caption{\label{fig:Ex0} {Any hypergraph with two isolated nodes is non-stabilizable}.}
	\end{center}
\end{figure}

\subsection{Stars}
\label{subsec:starcycles}
First recall that, as for any bipartite graph (see Theorem 2 in \cite{MaiMoy16}), graphical matching models on 
trees are always unstable. This is true in particular if the matching graph is a ``star'', i.e., a connected graph in which all but one vertices are of degree one. 
The following two results can be seen as generalizations of this fact to hypergraphical models, 

\begin{proposition}
	\label{prop:superstar}
	If {an} $r$-uniform hypergraph $\mbH=(\maV,\maH)$ has transversal number $\tau(\mbH)=1$, then it is non-stabilizable. 
\end{proposition}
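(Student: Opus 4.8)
The plan is to derive a contradiction by combining the two necessary conditions of stability already established in Propositions \ref{prop:Ncond2} and \ref{prop:Ncond3}, which become incompatible precisely when $\tau(\mbH)=1$ and $\mbH$ is $r$-uniform. First I would unpack the hypothesis $\tau(\mbH)=1$: by Definition \ref{def:transverse}, the transversal number being $1$ means that $\mbH$ admits a transversal of cardinality one, i.e., there is a node $v^\star\in\maV$ lying in every hyperedge, so that $T:=\{v^\star\}\in\maT(\mbH)$.

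Next I would invoke the two inclusions on a hypothetical stabilizing measure. Suppose, for contradiction, that $\mu\in\textsc{Stab}(\mbH,\Phi)$ for some admissible $\Phi$. On the one hand, Proposition \ref{prop:Ncond2} gives $\mu\in\mathscr N_2(\mbH)$, so applying its defining inequality to the transversal $T=\{v^\star\}$ yields
\[
\mu(v^\star)=\mu(T)>\frac{1}{r(\mbH)}=\frac1r.
\]
On the other hand, since $\mbH$ is $r$-uniform we have $a(\mbH)=r(\mbH)=r$, so the second part of Proposition \ref{prop:Ncond3} gives $\mu\in\mathscr N^{-}_3(\mbH)$; specializing its defining inequality to the node $v^\star$ yields
\[
\mu(v^\star)<\frac{1}{a(\mbH)}=\frac1r.
\]
These two strict inequalities on $\mu(v^\star)$ are plainly contradictory.

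Hence no measure can lie simultaneously in $\mathscr N_2(\mbH)$ and $\mathscr N^{-}_3(\mbH)$, and therefore $\textsc{Stab}(\mbH,\Phi)\subset\mathscr N_2(\mbH)\cap\mathscr N^{-}_3(\mbH)=\emptyset$ for every admissible $\Phi$, which is exactly the assertion that $\mbH$ is non-stabilizable. There is essentially no analytic obstacle to overcome here: the entire content is the observation that a size-one transversal forces a single ``bottleneck'' node to carry strictly more than a $1/r$ share of the arrival mass in order to keep pace with the global matching rate, while $r$-uniformity simultaneously caps every node's mass strictly below $1/r$. The only point requiring minor care is to ensure that the \emph{strict} bound of $\mathscr N^{-}_3$ is indeed the one available, which is guaranteed by $r$-uniformity; the weaker non-uniform bound $\mathscr N^{+}_3$ (with $\le 1/a(\mbH)$) would leave open the boundary case $\mu(v^\star)=1/r$ and would not by itself close the argument.
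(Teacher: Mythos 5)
Your proof is correct and follows essentially the same route as the paper's: apply Proposition \ref{prop:Ncond2} to the singleton transversal and Proposition \ref{prop:Ncond3} to that same node, using $r$-uniformity to identify $a(\mbH)=r(\mbH)=r$ and obtain the two incompatible strict bounds on $\mu(v^\star)$. One minor quibble with your closing remark: the weak bound from $\mathscr N^{+}_3$ would in fact also suffice, since $\mathscr N_2$ already forces the \emph{strict} inequality $\mu(v^\star)>1/r$, which contradicts $\mu(v^\star)\le 1/r$ just as well.
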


\begin{proof}
	Fix $\Phi$ and $\mu$ in $\textsc{Stab} (\mbH,\Phi)$. Let $T$ be a transversal of cardinality $1$, i.e., $T=\{i_0\}$, where the vertex $i_0$ belongs 
	to all hyperedges in $\maH$. Then from Proposition \ref{prop:Ncond3}, we have that $\mu(i_0) < 1/a(\mbH) = 1/r$.  
	However, Proposition \ref{prop:Ncond2} implies that $\mu(i_0)>1/r(\mbH)=1/r,$ an absurdity. 
\end{proof}

In other words, any uniform hypergraph whose hyperedges all contain the same node $i_0$ cannot make the corresponding system stable. Moreover, 

\begin{proposition} \label{prop:IpHi}
	Suppose that there exists a subset $B \subset \maV$ in the hypergraph $\mbH=(\maV,\maH)$ such that: 
	\begin{itemize}
		\item all hyperedges of $\maH(B)$ contain at least one node of degree 1; 
		\item at least one of these nodes of degree 1 lies outside of $B$.
	\end{itemize} 
	Then $\mbH$ is non-stabilizable. 
\end{proposition}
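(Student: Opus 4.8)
The plan is to argue by contradiction: suppose that $\mu\in\textsc{Stab}(\mbH,\Phi)$ for some admissible $\Phi$, and derive an impossible balance of rates. First I would record two consequences of stability. On one hand, Corollary \ref{cor:StabN+N++} gives $\mu\in\mathscr N_1^{\tiny{+}}(\mbH)$; applying the defining inequality of $\mathscr N_1^{\tiny{+}}(\mbH)$ to each singleton $\{\ell\}$ with $\ell$ of degree $1$ shows that such an $\ell$ is the \emph{strict} minimizer of $\mu$ over the unique hyperedge $H$ it belongs to. On the other hand, Proposition \ref{prop:isolated} forbids any hyperedge from carrying two degree-$1$ nodes, so every hyperedge $H\in\maH(B)$ carries exactly one degree-$1$ node, which I denote $\ell_H$, and consequently $\mu(\ell_H)=\min_{k\in H}\mu(k)$, attained only at $\ell_H$.

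The heart of the argument is to reduce $B$ to a set on which the leaves play no ambiguous role. Let $j\notin B$ be the degree-$1$ node supplied by the hypothesis, lying in some $H_0\in\maH(B)$; then $\ell_{H_0}=j$, so by the uniqueness above $H_0\cap B$ contains no degree-$1$ node. I would set $B' = B\setminus\{i\in\maV:\,d(i)=1\}$, the set obtained by deleting every degree-$1$ node from $B$. Since $H_0\cap B\subseteq B'$, the set $B'$ is nonempty and $H_0\in\maH(B')$; moreover $\maH(B')\subset\maH(B)$, so every $H\in\maH(B')$ still carries its leaf $\ell_H$, and by construction $\ell_H\notin B'$. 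This reduction is the step I expect to be the most delicate to phrase: the hypothesis ``at least one of these degree-$1$ nodes lies outside $B$'' is used precisely here, to guarantee that $H_0$ survives into $\maH(B')$ and hence that $B'\neq\emptyset$ (if all leaves lay inside $B$, the reduction could empty $B$, which matches the fact that such configurations may well be stabilizable).

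Next I would extract the forced matching rates. Each degree-$1$ node $\ell_H$ can only be consumed through matchings of $H$, so (\ref{eq:base}) applied to $\{\ell_H\}$ reads $X_n(\ell_H)=A_n(\ell_H)-M_n(H)$; as in the proofs of Propositions \ref{prop:Ncond1}--\ref{prop:Ncond3}, positive recurrence forces $X_n(\ell_H)/n\to 0$ a.s., whence $M_n(H)/n\to\mu(\ell_H)$ by the SLLN. Applying (\ref{eq:base}) to $B'$ itself and using $X_n(B')/n\to 0$ together with these limits over the finite family $\maH(B')$, I obtain the exact balance
\[
\mu(B') \;=\; \sum_{H\in\maH(B')}\bigl|H\cap B'\bigr|\,\mu(\ell_H).
\]

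Finally I would contradict this equality through $\mathscr N_1^{\tiny{+}}(\mbH)$. Every $b\in B'$ has degree at least $2$, hence is not a leaf, so for each $H\in\maH(b)$ we have $\ell_H\in H\setminus\{b\}$ and therefore $\min_{k\in H\setminus\{b\}}\mu(k)=\mu(\ell_H)$. Applying the inequality defining $\mathscr N_1^{\tiny{+}}(\mbH)$ to the (independent) singleton $\{b\}$ gives $\mu(b)<\sum_{H\in\maH(b)}\mu(\ell_H)$; summing over $b\in B'$ and exchanging the order of summation yields the strict inequality
\[
\mu(B') \;<\; \sum_{H\in\maH(B')}\bigl|H\cap B'\bigr|\,\mu(\ell_H),
\]
which is incompatible with the equality displayed above. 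This contradiction shows that no measure can stabilize $\mbH$, i.e. $\textsc{Stab}(\mbH,\Phi)=\emptyset$ for every admissible $\Phi$, so $\mbH$ is non-stabilizable.
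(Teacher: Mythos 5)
Your proof is correct, but it takes a genuinely different route from the paper's. The paper works directly with $B$: assuming $\mu\in\mathscr N_1^{++}(\mbH)$ (via Corollary \ref{cor:StabN+N++}), it deduces $\sum_{H\in\maH(B)}|H\cap B|\mu(i_H)-\mu(B)\ge 0$ for the leaves $i_H$, and then splits into two cases — a strict inequality forces transience of $\sum_H |H\cap B| X_n(i_H)$, while equality is handled by lower-bounding the chain by the zero-drift random walk $\tilde Y_n = A_n(B)-\sum_H|H\cap B|A_n(i_H)$, whose non-degeneracy (and hence null recurrence) is exactly where the hypothesis ``some leaf lies outside $B$'' enters. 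You instead strip the leaves out of $B$ to form $B'$, use positive recurrence to derive the exact rate balance $\mu(B')=\sum_{H\in\maH(B')}|H\cap B'|\mu(\ell_H)$, and contradict it with the strict inequality obtained by summing the $\mathscr N_1^{+}$ condition over the singletons $\{b\}$, $b\in B'$; for you the outside-leaf hypothesis serves only to guarantee $B'\neq\emptyset$. Your version buys a cleaner single contradiction with no case analysis on strict versus weak inequality and no appeal to null-recurrent random walks, at the price of invoking Proposition \ref{prop:isolated} (to get uniqueness of the leaf in each hyperedge of $\maH(B)$) and the $\mathscr N_1^{+}$ half of Corollary \ref{cor:StabN+N++} rather than the $\mathscr N_1^{++}$ half; the paper's version is more self-contained relative to the drift machinery it has already set up in Proposition \ref{prop:Ncond1}. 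One small remark: the strict-minimizer property of $\ell_H$ over $H$ that you derive at the outset is not actually needed — for the final step you only use $\min_{k\in H\setminus\{b\}}\mu(k)\le\mu(\ell_H)$, which holds simply because $\ell_H\in H\setminus\{b\}$.
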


\begin{proof}
	Let $k=|\maH(B)|$, i.e., the number of hyperedges intersecting with $B$. Denote by $H_{1},...,H_{k}$ these intersecting 
	hyperedges, and for any $l \in\llbracket 1,k \rrbracket$, by $i_l\in \maV$, a node of degree one belonging to $H_{l}$. 
	Observe that the nodes $i_1,...,i_k$ are not necessarily distinct. On the one hand, for 
	any $l\in\llbracket 1,k \rrbracket$ we have that 
	\begin{equation*}
	X_n(i_l)=A_n(i_l)-M_n(H_{l}).
	\end{equation*}
	Thus, applying again the inequality in (\ref{eq:base}) we get that for all $n$, 
	\begin{equation*}A_n(B)\geq \sum\limits_{l=1}^k |H_{l}\cap B| M_n(H_l)
	=\sum\limits_{l=1}^{k}|H_l\cap B|(A_n(i_l)-X_n(i_l)).\end{equation*}
	This entails that if $\mu \in \mathscr N_1^{\tiny{++}}(\mathbb{H})$, 
	\begin{equation*}
	\limsup\limits_{n\to\infty}{1 \over n}  \sum\limits_{l=1}^{k}|H_l\cap B|X_n(i_l)\geq\sum\limits_{l=1}^{k}|H_l\cap B|\mu(i_l)-\mu(B) \ge 0.
	\end{equation*}
	If the above inequality is strong, then the chain $\suiten{W_n}$ is transient. If the inequality is weak, then as above we can stochastically lower-bound the chain by a zero-drift chain $\suiten{\tilde Y_n}$, defined by 
	\begin{equation*}
	\tilde Y_n =\left(A_n(B)\;-\;\sum\limits_{l=1}^{k}|H_l\cap B|A_n(i_l)\right),\quad n\in\N,
	\end{equation*}
	which is not identically null from the assumption that at least one of the nodes $i_l$, $l=1,...,k$ is not an element of $B,$ which concludes the proof. 
\end{proof}

\begin{ex}\label{ex:chaine}
	\rm
	Any hypergraph $\mbH=(\maV,\maH)$ such that there exist two hyperedges $H_1$ and $H_2$ 
	with $H_1 \cap H_2 \ne \emptyset$ and two nodes $i_1 \in H_1 \cap \overline{H_2}$, $i_2 \in H_2 \cap \overline{H_1}$ and 
	$d\left(i_1\right)=d\left(i_2\right)=1$ is non-stabilizable (see Figure \ref{fig:Ex2}). 
	To see this, take $B = H_1 \cap H_2$ in Proposition \ref{prop:IpHi}. 
	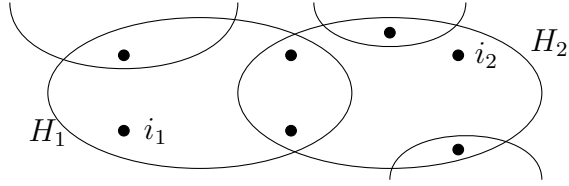
\begin{figure}[htp]\begin{center}
			\def\firstellip{(1, 2) ellipse [x radius=2cm, y radius=1cm, rotate=180]}
			\def\secondellip{(3.5, 2cm) ellipse [x radius=2cm, y radius=1cm, rotate=180]} 
			\def\thirdellip{(-1.5, 2) ellipse [x radius=2cm, y radius=1cm, rotate=-180]}
			\begin{tikzpicture}
			
			
			\filldraw (-2.5,1.5) circle (2pt) node [right] {$\;i_1$};
			\filldraw (-2.5,2.5) circle (2pt) node [right] {};
			\draw[-] (-4,3.2) to[bend right=90] (-1,3.2);
			\filldraw (-0.3,2.5) circle (2pt) node [right] {};
			\filldraw (-0.3,1.5) circle (2pt) node [right] {};
			\filldraw (1.9,2.5) circle (2pt) node [right] {$\,i_2$};
			\draw[-] (0,3.2) to[bend right=90] (2,3.2);
			\filldraw (1,2.8) circle (2pt) node [right] {};
			\filldraw (1.9,1.25) circle (2pt) node [right] {};
			\draw[-] (1,0.85) to[bend left=90] (3,0.85);
			\draw \firstellip node [label={[xshift=2.1cm, yshift=0.2cm]$H_2$}] {};
			\draw \thirdellip node [label={[xshift=-2.0cm, yshift=-1cm]$H_1$}] {};
			\end{tikzpicture} 
			\caption{\label{fig:Ex2} Two intersecting hyperedges containing each, an isolated node outside of their intersection, make the system unstable.}\end{center}
	\end{figure}
\end{ex}

{\begin{remark}[About the DI condition in  \cite{GW14}]
		\label{rem:GW}\em
		Most results of \cite{GW14} hold under the Assumption 1 therein, stating that the Dedicated Item DI condition is satisfied; namely, each hyperedge contains an isolated 
		node. The above example shows that any matching model $(\mbH,\Phi,\mu)$ on a hypergraph $\mbH$ satisfying the DI condition, is unstable 
		for any admissible $\Phi$ (the case where $\mbH$ contains a single hyperedge $H$ is trivial). 
\end{remark}}


\subsection{$r$-partite hypergraphs}
We now turn to hypergraphical generalizations of bipartite graphs. 



\begin{proposition}
	\label{prop:Hbipartiteunstable}
	Any $r$-uniform bipartite hypergraph $\mathbb{H}=(\maV,\maH)$ is non-stabilizable. \end{proposition}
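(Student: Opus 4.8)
The plan is to derive a contradiction by confronting two of the necessary conditions already established, specialized to the two blocks of the bipartition. Write $V_1,V_2$ for the partition of $\maV$ witnessing $r$-uniform bipartiteness, so that every hyperedge $H\in\maH$ satisfies $|H\cap V_1|=1$ and $|H\cap V_2|=r-1$ (recall $r\ge 2$, since a simple connected hypergraph has anti-rank at least $2$). The first step is the elementary observation that $V_1$ is a transversal of $\mbH$: it meets every hyperedge, in fact in exactly one vertex, so $V_1\in\maT(\mbH)$. Proposition \ref{prop:Ncond2} then yields the lower bound: any $\mu\in\textsc{Stab}(\mbH,\Phi)$ must satisfy the \emph{strict} inequality $\mu(V_1)>1/r$.

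The second step produces the opposing upper bound by feeding both blocks into the balance relation (\ref{eq:base}). Applying it to $B=V_2$ and using $|H\cap V_2|=r-1$ gives $X_n(V_2)=A_n(V_2)-(r-1)M_n(\maH)\ge 0$, hence $M_n(\maH)\le A_n(V_2)/(r-1)$; applying it to $B=V_1$ and using $|H\cap V_1|=1$ gives $X_n(V_1)=A_n(V_1)-M_n(\maH)$, so that $X_n(V_1)\ge A_n(V_1)-A_n(V_2)/(r-1)$ for all $n$. Dividing by $n$ and invoking the SLLN for the i.i.d. arrivals, with $A_n(V_1)/n\to\mu(V_1)$ and $A_n(V_2)/n\to 1-\mu(V_1)$, I would conclude
\[
\limsup_n \frac{X_n(V_1)}{n}\;\ge\;\mu(V_1)-\frac{1-\mu(V_1)}{r-1}\;=\;\frac{r\mu(V_1)-1}{r-1}.
\]
If $\mu(V_1)>1/r$ the right-hand side is strictly positive, forcing $X_n(V_1)\to\infty$ a.s. and hence the transience of $\suite{W_n}$; therefore positive recurrence requires $\mu(V_1)\le 1/r$. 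This is incompatible with the strict bound $\mu(V_1)>1/r$ from the first step, so no measure can be stable and $\textsc{Stab}(\mbH,\Phi)=\emptyset$ for every admissible $\Phi$.

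The step I expect to need the most care is the borderline regime $\mu(V_1)=1/r$, which is exactly what a purely self-contained argument (using only (\ref{eq:base}), without Proposition \ref{prop:Ncond2}) would have to face: there the analogous computation on $V_2$ shows $\mu(V_1)<1/r$ also forces transience, leaving $\mu(V_1)=1/r$, at which point one must argue via the zero-drift random walk $A_n(V_2)-(r-1)A_n(V_1)$ that $\suite{W_n}$ is at best null recurrent, as in the proof of Proposition \ref{prop:Ncond1}. The cleaner route above sidesteps this entirely, precisely because Proposition \ref{prop:Ncond2} delivers a \emph{strict} inequality, which cannot coexist with $\mu(V_1)\le 1/r$. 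It is worth noting that applying Proposition \ref{prop:Ncond2} to both transversals $V_1$ and $V_2$ (the latter being a transversal since $|H\cap V_2|=r-1\ge 1$) gives $1=\mu(V_1)+\mu(V_2)>2/r$, which already closes the case $r=2$ (where $\mbH$ is a bipartite graph, recovering the classical instability of bipartite matching) but is vacuous for $r\ge 3$; this is exactly why the sharper bound $\mu(V_1)\le 1/r$ obtained from (\ref{eq:base}) is indispensable.
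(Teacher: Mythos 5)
Your proposal is correct and follows essentially the same route as the paper: the paper likewise applies (\ref{eq:base}) to $V_1$ and $V_2$ to get $X_n(V_1)\ge A_n(V_1)-A_n(V_2)/(r-1)$, concludes via the SLLN that stability forces $\mu(V_1)\le 1/r$, and then contradicts this with Proposition \ref{prop:Ncond2} applied to the transversal $V_1$. The only difference is presentational (you state the transversal bound first and add remarks on the borderline case), not mathematical.
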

\begin{proof}
	Applying (\ref{eq:base}) successively to $V_1$ and $V_2$ readily implies that for all $n$, 
	\begin{equation*}
	X_n(V_1) =A_n(V_1)-M_n(\maH)\ge 0 \quad \mbox{ and }\quad
	X_n(V_2) = A_n(V_2)-(r-1) M_n(\maH)\geq 0,
	\end{equation*} 
	and thus \[X_n(V_1) \geq A_n(V_1)-\displaystyle\frac{1}{r-1}A_n(V_2).\]
	Then, the usual SLLN-based argument implies that the model cannot be stable unless $\mu(V_2) \geq (r-1)\mu(V_1)$. 
	But as $\mu(V_1)+\mu(V_2)=1$ we have that $\mu(V_1)\leq\displaystyle\frac{1}{r}$, 
	hence $\mu \not\in \mathscr N_2(\mbH)$ since $V_1$ is a transversal. 
\end{proof}


\begin{figure}[htp!]
	\begin{center}
		\def\firstellip{(4, 3.9) ellipse [x radius=5cm, y radius=0.8cm, rotate=180]}
		\def\secondellip{(2,8) ellipse [x radius=5cm, y radius=0.77cm, rotate=65]} 
		\def\thirdellip{(4, 8.2) ellipse [x radius=4.33cm, y radius=0.7cm, rotate=90]} 
		\def\fourthellip{(6, 8cm) ellipse [x radius=5cm, y
			radius=0.77cm, rotate=-65]}	
		\def\fifthdellip{(3,6) ellipse [x radius=4.33cm, y radius=0.6cm, rotate=33]} 
		\def\sixthdellip{(5,6) ellipse [x radius=4.33cm, y radius=0.6cm, rotate=-33]} 
		
		\begin{tikzpicture}[thick, scale=0.5]
		\filldraw 
		(4,12) circle (2pt) node [label={[xshift=0.41cm, yshift=-0.2cm]$1$}] {};
		\filldraw 
		(4,6.5) circle (2pt) node [label={[xshift=0cm, yshift=-0.25cm]$2$}] {};
		\filldraw 
		(4,4.3) circle (2pt) node [label={[xshift=0.1cm, yshift=-0.75cm]$4$}] {};
		\filldraw 
		(0.1,4) circle (2pt) node [label={[xshift=-0.2cm, yshift=-0.9cm]$6$}] {};
		\filldraw 
		(2,8) circle (2pt) node [label={[xshift=-0.2cm, yshift=-0.75cm]$5$}] {};
		\filldraw 
		(7.9,4) circle (2pt) node [label={[xshift=0.4cm, yshift=-0.85cm]$3$}] {};
		(
		\filldraw 
		(6,8) circle (2pt) node [label={[xshift=0.27cm, yshift=-0.7cm]$7$}] {};
		\draw \firstellip node [label={[xshift=4cm, yshift=-0.5cm]}] {};
		\draw \secondellip node [label={[xshift=4.5cm, yshift=-0.6cm]}] {};
		\draw \thirdellip node [label={[xshift=4.5cm, yshift=-0.5cm]}] {};
		\draw \fourthellip node [label={[xshift=3cm, yshift=-2.5cm]}] {};
		\draw \fifthdellip node [label={[xshift=3cm, yshift=-2.5cm]}] {};
		\draw \sixthdellip node [label={[xshift=3cm, yshift=-2.5cm]}] {};
		\end{tikzpicture} 
		\caption{\label{fig:FanoPlane}The Fano plane minus the hyperedge $\{4,5,7\}$.}
	\end{center}
\end{figure}
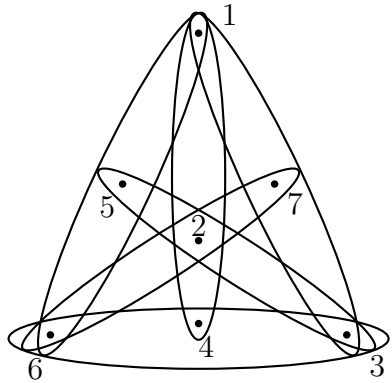

\begin{ex}\rm
	{The so-called Fano plane is a well-known object in discrete geometry. It is the smallest {\em projective plane}, namely, the smallest set of {\em points} and {\em lines} such that 
		any two points share a line, any two lines intersect at a single point, and on every line lies the same number of points. In the settings of hypergraphs (points being nodes and 
		{lines} being hyperedges), the Fano plane is thus the smallest uniform hypergraph $\mbH$ in which each pair of nodes belongs to a single hyperedge, and each pair of hyperedges  intersects at a single node. It can be checked that $\mbH=(\maV,\maH)$ is of order 7, for $\maV=\llbracket 1,7 \rrbracket$ and {e.g.}}
	$$\maH=\left\{\{1,2,4\},\{1,5,6\},\{1,3,7\},\{2,3,5\},\{4,5,7\},\{4,3,6\},\{6,2,7\}\right\}.$$ 
	Supported by simulations, we conjecture that Fano planes are stabilizable. However, 
	if $\mathbb{H}'=(\maV,\maH')$  is the subhypergraph defined by $\maH'=\maH\backslash H$, where $H$ is an arbitrary hyperedge of $\maH$, then 
	it is easily seen that $\mathbb{H}'$ is a 3-uniform bipartite hypergraph with $V_1=H$ and $V_2=\maV\backslash H$. So we deduce from 
	Proposition \ref{prop:Hbipartiteunstable} that $\mathbb{H}'$ is non-stabilizable. A Fano plane minus the hyperedge 
	$\{4,5,7\}$ is represented in Figure \ref{fig:FanoPlane}. 
\end{ex}


{We know from Theorem 2 in \cite{MaiMoy16} that 
	bipartite graphs are not stabilizable. The next result shows that this can be generalized to $r$-partite hypergraphs (which generalize bipartite graphs - see Remark \ref{remark:bipartite}), 
	\begin{proposition}
		\label{prop:Hr-partiteunstable}
		Any $r$-partite hypergraph $\mbH$ is non-stabilizable. 
\end{proposition}}
\begin{proof}
	As in the above proof we get that for any $i\ne j$ and any $n$, 
	\begin{equation*}
	X_n(V_j) =A_n(V_j)-M_n(\maH)\ge 0 \quad \mbox{ and }\quad X_n(V_i) =A_n(V_i)-M_n(\maH)\ge 0,
	\end{equation*}
	implying that $X_n(V_i)\ge A_n(V_i)-A_n(V_j)$, and in turn, that the model cannot be stable unless $\mu(V_i) \le \mu(V_j)$. 
	By symmetry, this implies that $\mu(V_i)=\mu(V_j)$. As the $V_i$'s are disjoint, we thus have that $\mu(V_i)=1/r$ for all $i$. 
	Thus, as any $V_i$ is a transversal of $\mbH$, $\mu $ is not an element of $\mathscr N_2(\mbH)$. 
\end{proof}

It is well known (see \cite{Hall35} for the particular case of graphs, and the general result in \cite{PH95}) that Hall's condition is necessary and sufficient for the existence of a perfect matching on $\mbH$, i.e., a spanning subhypergraph of $\mbH$ in which all nodes have degree 1, in the case where the hypergraph is balanced, i.e., it does not contain any odd 
strong cycle. It is intuitively clear that the construction of stable stochastic matching models on hypergraphs is somewhat reminiscent of that of perfect matchings on a growing hypergraph that replicates the matching hypergraph a large number of times in the long run (in the case of graphs, see the discussion in Section 7 of \cite{MoyPer17}). This connexion has a simple illustration in the next Proposition, which provides a family of probability measures, naturally including the uniform measure on $\maV$, that cannot stabilize a matching model on the hypergraph $\mbH$ unless the latter satisfies Hall's condition. In what follows we denote for any $\mbH=(\maV,\maH)$ and any measure $\mu\in\mathscr M(\maV)$, 
\begin{equation}
\label{eq:defmuminmax}
\mumin=\min\left\{\mu(i)\,:\,i\in \maV\right\}\quad\mbox{ and }\quad\mumax=\max\left\{\mu(i)\,:\,i\in \maV\right\}.
\end{equation}
\begin{proposition}
	\label{pro:Hall}
	For any hypergraph $\mathbb{H}=(\maV,\maH)$ that violates Hall's condition, 
	any matching policy $\Phi$ and any $\mu\in \mathscr M(\maV)$ such that 
	\begin{equation}
	\label{eq:condHall}
	{\mumini \over \mumaxi} > {\left\lfloor {q(\mbH) +1 \over 2}\right\rfloor -1 \over \left\lfloor {q(\mbH) +1 \over 2}\right\rfloor}, 
	\end{equation}
	the model $(\mbH,\Phi,\mu)$ is {unstable}. In particular,  $(\mbH,\Phi,\mu_{\textsc{u}})$ is {unstable} for $\mu_{\textsc{u}}$ the uniform distribution on $\maV$. 
\end{proposition}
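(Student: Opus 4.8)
The plan is to exploit the violation of Hall's condition to produce two disjoint sets whose arrival flows are unbalanced, and then to convert this imbalance into transience via the key relation (\ref{eq:base}), exactly as in the proofs of Propositions \ref{prop:Hbipartiteunstable} and \ref{prop:Hr-partiteunstable}. First I would unpack the hypothesis: since $\mbH$ violates Hall's condition, there exist disjoint subsets $V_1,V_2\subset\maV$ with $|H\cap V_2|\ge|H\cap V_1|$ for every $H\in\maH$, yet $|V_1|>|V_2|$. Applying the equality in (\ref{eq:base}) separately to $V_1$ and to $V_2$ gives
\[
\sum_{H\in\maH}|H\cap V_1|M_n(H)=A_n(V_1)-X_n(V_1),\qquad \sum_{H\in\maH}|H\cap V_2|M_n(H)=A_n(V_2)-X_n(V_2).
\]
Because $|H\cap V_2|\ge|H\cap V_1|$ and $M_n(H)\ge 0$, the matching-sum for $V_2$ dominates that for $V_1$, which rearranges to $X_n(V_1)\ge A_n(V_1)-A_n(V_2)+X_n(V_2)\ge A_n(V_1)-A_n(V_2)$, using $X_n(V_2)\ge 0$. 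Dividing by $n$ and invoking the SLLN then yields $\limsup_n X_n(V_1)/n\ge \mu(V_1)-\mu(V_2)$, so that $\suite{W_n}$ is transient whenever $\mu(V_1)>\mu(V_2)$.

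Second, I would turn the cardinality gap $|V_1|>|V_2|$ into the quantitative threshold. Bounding crudely, $\mu(V_1)\ge|V_1|\,\mumin$ and $\mu(V_2)\le|V_2|\,\mumax$, so it suffices to ensure $|V_1|\,\mumin>|V_2|\,\mumax$, i.e. $\mumin/\mumax>|V_2|/|V_1|$. The combinatorial heart of the argument is to bound $|V_2|/|V_1|$ uniformly over all admissible pairs: writing $b=|V_2|$ and $a=|V_1|$ with $a\ge b+1$ and $a+b\le q(\mbH)$ (disjointness), one gets $2b+1\le q(\mbH)$, hence $b\le\lfloor(q(\mbH)-1)/2\rfloor=\lfloor(q(\mbH)+1)/2\rfloor-1$. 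Since $x\mapsto x/(x+1)$ is increasing, $|V_2|/|V_1|\le b/(b+1)\le(\lfloor(q(\mbH)+1)/2\rfloor-1)/\lfloor(q(\mbH)+1)/2\rfloor$, and condition (\ref{eq:condHall}) is exactly what forces $\mumin/\mumax$ to strictly exceed this maximum, giving $\mu(V_1)>\mu(V_2)$ and thus instability.

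Finally, the uniform-measure corollary is immediate: for $\mu_{\textsc{u}}$ one has $\mumin=\mumax=1/q(\mbH)$, so the left-hand ratio equals $1$, which strictly exceeds the right-hand side $(\lfloor(q(\mbH)+1)/2\rfloor-1)/\lfloor(q(\mbH)+1)/2\rfloor<1$ for every $q(\mbH)\ge 1$. I expect the main obstacle to be the bookkeeping with the floor functions — one must verify the identity $\lfloor(q-1)/2\rfloor=\lfloor(q+1)/2\rfloor-1$ for both parities of $q$ and check the monotonicity step carefully — rather than the probabilistic part, which is a verbatim reuse of the SLLN-based transience mechanism already established in the preceding propositions.
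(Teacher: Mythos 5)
Your proposal is correct, and the probabilistic half (the relation $X_n(V_1)\ge A_n(V_1)-A_n(V_2)+X_n(V_2)$ from (\ref{eq:base}) followed by the SLLN) is exactly the paper's mechanism. Where you genuinely diverge is in the combinatorial step that turns the cardinality gap into $\mu(V_1)>\mu(V_2)$. The paper proves a stronger intermediate statement: condition (\ref{eq:condHall}) makes $\mu$ monotone with respect to cardinality for \emph{arbitrary} subsets $E,F$ (not necessarily disjoint), which forces a two-case analysis on $|F|$ relative to $\left\lfloor (q+1)/2\right\rfloor$ to handle the overlap of the index ranges $\llbracket 1,k\rrbracket$ and $\llbracket q-k+2,q\rrbracket$ in the ordered family $\mu(\alpha(1))\le\dots\le\mu(\alpha(q))$. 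You instead exploit the disjointness of $V_1$ and $V_2$ (which is built into the paper's definition of Hall's condition) to get the crude bounds $\mu(V_1)\ge |V_1|\mumin$ and $\mu(V_2)\le |V_2|\mumax$, and then the inequality $|V_1|+|V_2|\le q$ together with $|V_1|\ge |V_2|+1$ yields $|V_2|\le\left\lfloor (q-1)/2\right\rfloor=\left\lfloor (q+1)/2\right\rfloor-1$ and, by monotonicity of $x\mapsto x/(x+1)$, the same threshold as in (\ref{eq:condHall}). This is shorter and avoids the index-overlap bookkeeping entirely; what it buys less of is generality — the paper's monotonicity lemma (\ref{eq:monotonemu}) is a reusable statement about all pairs of subsets, whereas your estimate only covers the disjoint pair actually needed here. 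Both arguments give the identical threshold, and your floor-function identities and the uniform-measure specialization check out.
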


\begin{proof}
	Fix $\mbH$, $\Phi$, and a measure $\mu$ satisfying (\ref{eq:condHall}). We first show that $\mu$ is monotonic with respect to the counting measure on $\maV$, i.e., 
	\begin{equation}
	\label{eq:monotonemu}
	\forall E,F \subset \maV,\quad |E| < |F| \Longrightarrow \mu(E) < \mu(F). 
	\end{equation}
	Let $E$ and $F$ be such that $|E| < |F|$, and let $k = |F|$. Let also $\alpha$ be a bijection from $\llbracket 1,q(\mbH) \rrbracket$ to $\maV$ such that 
	\begin{equation}
	\label{eq:defalpha}
	\mumin=\mu(\alpha(1)) \le \mu(\alpha(2)) \le ... \le \mu(\alpha(q(\mbH)))=\mumax,
	\end{equation}
	in other words 
	$\left(\mu(\alpha(1)),\mu(\alpha(2)),...,\mu(\alpha(q(\mbH)))\right)$ is an ordered (in increasing order) version of the family $\left\{\mu(i);\, i \in \maV\right\}$. 
	As $|E|\le k-1$ we clearly have 
	\begin{equation}
	\label{eq:mad0}
	\mu(F) - \mu(E) \ge \sum_{i=1}^k \mu(\alpha(i)) - \sum_{i=q-k+2}^q \mu(\alpha(i)). 
	\end{equation}
	First, if $k \le \left\lfloor {q(\mbH) +1 \over 2}\right\rfloor$, (\ref{eq:condHall}) entails that $k\mumin >(k-1) \mumax,$ whence 
	\begin{equation}
	\label{eq:mad1}
	\sum_{i=1}^k \mu(\alpha(i)) - \sum_{i=q-k+2}^q \mu(\alpha(i)) \ge k \mumin - (k-1)\mumax >0,
	\end{equation}
	If $k>\left\lfloor {q(\mbH) +1 \over 2}\right\rfloor$, then the index sets $\llbracket 1,k \rrbracket$ and  $\llbracket q-k+2, q \rrbracket$ intersect precisely 
	on $\llbracket q-k+2,k \rrbracket$. 
	Thus 
	\begin{align}
	\sum_{i=1}^k \mu(\alpha(i)) - \sum_{i=q-k+2}^q \mu(\alpha(i)) &= \sum_{i=1}^{q-k+1} \mu(\alpha(i)) - \sum_{i=k+1}^q \mu(\alpha(i))\nonumber\\
	&\ge  (q-k+1)\mumin - (q-k)\mumax >0, \label{eq:mad2}
	\end{align}
	where the last inequality follows, as in (\ref{eq:mad1}), from the fact that $q-k+1\le \left\lfloor {q(\mbH) +1 \over 2}\right\rfloor$. 
	Gathering (\ref{eq:mad0}) with (\ref{eq:mad1}-\ref{eq:mad2}) concludes the proof of (\ref{eq:monotonemu}) in all cases. 
	
	Now fix $V_2$ and $V_1$ such that $|H\cap V_2|\geq |H\cap V_1|$ for any $H\in\maH$, and $|V_2|<|V_1|$ which from (\ref{eq:monotonemu}), implies that 
	$\mu(V_2) < \mu(V_1)$. Then, 
	applying again (\ref{eq:base}) to $V_2$ and $V_1$ we get that 
	\begin{align*}
	\label{eq:compareXY}
	X_n(V_2)+X_n(V_1)
	&\ge A_n(V_2)+A_n(V_1)-2\sum\limits_{H\in\maH}\left|H \cap V_2\right|M_n(H)\\
	&\ge A_n(V_2)+A_n(V_1)-2A_n(V_2),
	\end{align*}
	thus, from the usual argument, the model cannot be stable unless $\mu(V_2)\geq \mu(V_1)$, a contradiction.
\end{proof}



\subsection{Cycles}
\label{subsec:cycles}


\begin{proposition}
	\label{prop:cycles}
	Any $r$-uniform $\ell$-cycle of order $q$ such that $r$ divides $q$, is non-stabilizable. 
\end{proposition}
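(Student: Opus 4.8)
The plan is to show that for such a hypergraph the necessary condition of Proposition \ref{prop:Ncond2} is already vacuous, that is, $\mathscr N_2(\mbH) = \emptyset$; this immediately yields $\textsc{Stab}(\mbH,\Phi) = \emptyset$ for every admissible $\Phi$. The mechanism is to exhibit $r$ mutually disjoint transversals that together partition $\maV$, so that the simultaneous lower bounds $\mu(T) > 1/r(\mbH)$ demanded by $\mathscr N_2(\mbH)$ become incompatible with $\mu$ being a probability measure.

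First I would fix the cyclic ordering $\maV = (v_1, \ldots, v_q)$ associated with the $\ell$-cycle and identify the node set with $\Z/q\Z$. Since $r$ divides $q$, reduction modulo $r$ descends to a well-defined map on $\Z/q\Z$, so I can set, for each $j \in \llbracket 0, r-1 \rrbracket$,
\[
T_j = \left\{v_p \,:\, p \equiv j \!\pmod r\right\}.
\]
Each $T_j$ contains exactly $q/r$ nodes, and $T_0, \ldots, T_{r-1}$ form a partition of $\maV$.

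The hard part --- really the only nontrivial step --- will be verifying that each $T_j$ is a transversal, and this is exactly where the hypothesis $r \mid q$ is essential. By definition of an $\ell$-cycle, every hyperedge is a block of $r$ consecutive nodes modulo $q$, say $\{v_i, v_{i+1}, \ldots, v_{i+r-1}\}$. Among any $r$ consecutive integers each residue class modulo $r$ occurs exactly once, and because $r \mid q$ this property survives reduction modulo $q$ (so it holds even for blocks that wrap around the cycle). Hence every hyperedge meets each $T_j$ in precisely one node, giving $T_j \in \maT(\mbH)$.

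Finally I would close the argument by contradiction: if some $\mu \in \mathscr N_2(\mbH)$ existed, then, using that $\mbH$ is $r$-uniform so that $r(\mbH)=r$, we would have $\mu(T_j) > 1/r$ for every $j$, and summing over the partition would give
\[
1 = \mu(\maV) = \sum_{j=0}^{r-1}\mu(T_j) > \sum_{j=0}^{r-1}\frac{1}{r} = 1,
\]
which is absurd. Thus $\mathscr N_2(\mbH) = \emptyset$, and Proposition \ref{prop:Ncond2} forces $\textsc{Stab}(\mbH,\Phi) = \emptyset$ for all admissible $\Phi$, which is the claim.
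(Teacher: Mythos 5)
Your proof is correct, and it rests on exactly the same construction as the paper's: the partition of $\maV$ into the $r$ residue classes modulo $r$, which is well defined precisely because $r$ divides $q$, and which every hyperedge (a block of $r$ consecutive nodes) meets in exactly one vertex. Where you diverge is in how the partition is exploited. The paper observes that this partition witnesses that $\mbH$ is $r$-partite and then simply invokes Proposition \ref{prop:Hr-partiteunstable}; that proposition in turn runs a balance argument (via the key identity \refeq{eq:base} and the SLLN) to force $\mu(V_i)=1/r$ for all $i$ before concluding that $\mu\notin\mathscr N_2(\mbH)$. You instead go straight to Proposition \ref{prop:Ncond2}: since the $T_j$ are $r$ pairwise disjoint transversals covering $\maV$, any $\mu\in\mathscr N_2(\mbH)$ would satisfy $1=\sum_j\mu(T_j)>r\cdot\frac{1}{r}=1$, so $\mathscr N_2(\mbH)=\emptyset$ outright. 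Your route is shorter and shows that the balance argument underlying Proposition \ref{prop:Hr-partiteunstable} is not needed here --- disjointness of the $r$ transversals already kills every candidate measure, not just the non-uniform ones --- at the cost of not recording the slightly stronger structural fact (that these cycles are $r$-partite) which the paper gets for free.
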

\begin{proof}
	The partition $V_1,V_2,\cdots,V_r$ of $\maV$ defined by 
	$$V_i=\left\lbrace v_{i+(j-1)r}\;;\,j\in \llbracket 1,q/r\rrbracket \right\rbrace,$$
	{satisfies Proposition \ref{prop:Hr-partiteunstable}.} 
\end{proof}

\noindent Figure \ref{fig:cycle} shows a $3$-uniform $2$-cycle of order $12$ and $3$-uniform $2$-cycle of order $6$.


\section{Stable systems}
\label{sec:stable}
We show hereafter that stable matching models on hypergraphs exist. With a view to showing how stability can be shown in concrete examples, 
we provide hereafter two case studies of simple hypergraphs, on 
which a stable stochastic matching model can be defined: complete $3$-uniform hypergraphs, and subhypergraphs of the latter where several hyperedges are erased. 

\subsection{Complete $3$-uniform hypergraphs}
\label{subsec:complete}
We first consider the case of a complete $3$-uniform hypergraph $\mbH$, an example of which for $q(\mbH)=4$ is represented in Figure \ref{fig:completeHyper} (left). 
We show that, in this case, the necessary condition given in Proposition 
\ref{prop:Ncond3} is also sufficient, 

\begin{theorem}
	\label{thm:stable3uniform}
	Let $\mbH=(\maV,\maH)$ be a complete $3$-uniform hypergraph of order $q(\mbH)\ge 4$. 
	Then, for any admissible policy $\Phi$ we have,
	\[\textsc{Stab}(\mbH,\Phi) = \mathscr N_3^{\tiny{-}}(\mbH),\]
	that is, the model $(\mbH,\Phi,\mu)$ is stable if and only if $\mu(i) < 1/3$ for any $i\in \maV$. 
\end{theorem}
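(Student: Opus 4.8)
The inclusion $\textsc{Stab}(\mbH,\Phi)\subset\mathscr N^{-}_3(\mbH)$ is precisely relation (\ref{eq:Ncondkunifcomplet}) of Proposition \ref{prop:Ncond3}, which applies because a complete $3$-uniform hypergraph is in particular $3$-uniform; so only the reverse inclusion requires work. The plan is to exploit a drastic collapse of the state space. Since every $3$-subset of $\maV$ is a hyperedge, an independent set of $\mbH$ contains at most two nodes, so by (\ref{eq-css}) every reachable class-detail $x\in\mathbb X$ has at most two non-zero coordinates. I would first record the resulting transitions: from a two-node state with support $\{i,j\}$, an arrival of class $i$ or $j$ is stored ($x\mapsto x+\gre_i$ or $x+\gre_j$), while an arrival of any third class $v$ forces the \emph{unique} matching $\{i,j,v\}$ and sends $x\mapsto x-\gre_i-\gre_j$; from a one-node state every arrival is stored. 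In particular a match is always forced and never subject to a genuine choice, so the class-detail dynamics are identical for every admissible $\Phi$, and it suffices to prove positive recurrence of this single chain.

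For the reverse inclusion I would apply the Foster--Lyapunov criterion (Theorem \ref{the:LyapunovFosterTheorem}) to $\suite{W_n}$ through the test function $h(w)=V([w])$, where
\[
V(x)=\sum_{i\in\maV} x(i)^2-\sum_{i<j} x(i)x(j).
\]
On the present state space at most one cross term survives, so $V$ equals $a^2$ on a one-node state of content $a$ and $a^2+b^2-ab=(a-\tfrac b2)^2+\tfrac34 b^2\ge0$ on a two-node state of contents $(a,b)$; hence $V\ge0$, and since $V(x)\ge\tfrac14\|x\|^2$ every sublevel set $\{h\le R\}$ contains only finitely many words. The key point is the cross term $-x(i)x(j)$: a short computation using the transitions above (valid even at the boundary where a match empties one class, because $V$ is a polynomial) gives, writing $p=\mu(i)$ and $q=\mu(j)$,
\[
\esp{V(X_{n+1})-V(X_n)\mid X_n=x}=(3p-1)\,a+(3q-1)\,b+1
\]
on a two-node state with support $\{i,j\}$ and contents $(a,b)$, and $=(3p-1)\,a+1$ on a one-node state $\{i\}$ of content $a$.

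The whole mechanism is that the coefficient $3\mu(i)-1$ is strictly negative for \emph{every} node exactly under the hypothesis $\mu\in\mathscr N^{-}_3(\mbH)$, i.e. $\mu(i)<1/3$ for all $i$. Setting $\kappa=\min_i(1-3\mu(i))>0$, both expressions are bounded above by $-\kappa\,\|x\|+1$, which is $\le-1$ as soon as $\|x\|$ is large; taking $F=\{h\le R\}$ for $R$ large then yields the drift inequality required by Theorem \ref{the:LyapunovFosterTheorem} and hence the positive recurrence of $\suite{W_n}$ for every admissible $\Phi$.

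The step I expect to be the main obstacle — and the reason the naive choice $V(x)=\|x\|$ fails — is the one-node (``axis'') states: there the content can only increase, since no match is possible while a single class is present, so $\|x\|$ has strictly positive drift $+1$ on the infinitely many one-node states, and no bounded additive correction repairs this, because the return from a boundary two-node state $(a,1)$ to a one-node state happens precisely at a matching instant. The cross term $-x(i)x(j)$ (equivalently the coefficient $\rho=-\tfrac12$ in a quadratic form $a^2+b^2+2\rho ab$) is what simultaneously produces negative drift on the axes and keeps it negative on the diagonal $a=b$, with the threshold $1/3$ falling out automatically; pinning down this term is the crux. A minor additional point is that, although only $\suite{X_n}$ is a priori Markov for class-admissible policies, the forced-matching property lets me transport the drift computation verbatim to $\suite{W_n}$ via $h=V\circ[\,\cdot\,]$ (using $[W_{n+1}]$ distributed as $x\ccc_\Phi V_{n+1}$), so the conclusion indeed holds for all admissible $\Phi$.
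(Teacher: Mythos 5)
Your argument is correct, and it reaches the sufficiency by a genuinely different route from the paper. The paper's proof introduces a uniformizing parameter $\alpha$ with $\max_{i}\mu(i)<\alpha<1/3$, builds an auxiliary planar random walk $\suite{U^\alpha_n}$ on $\N^2$ whose drift dominates that of the true chain pair by pair, and then applies the Fayolle--Malyshev--Menshikov machinery (Theorem \ref{theo:TheoremFayolMalMain} and Lemma \ref{lem:UsInProofTheorem3.3.1}) with the Lyapunov function $L^\alpha(x,y)=\sqrt{ux^2+uy^2+wxy}$, $u=\tfrac{1-3\alpha}{2}$, before transporting the negative drift back to $\suite{X_n}$ via monotonicity of $L^\alpha$. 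You bypass the auxiliary chain and the square-root form entirely: you observe that on the collapsed state space (at most two non-zero coordinates, matches forced, hence $\Phi$-independent class-detail dynamics) the plain quadratic $V(x)=\sum_i x(i)^2-\sum_{i<j}x(i)x(j)$ admits an \emph{exact} drift formula $(3\mu(i)-1)a+(3\mu(j)-1)b+1$ (and $(3\mu(i)-1)a+1$ on the axes), which I have checked and which is consistent across the boundary transitions $(a,1)\to(a-1,0)$ because $V$ is a polynomial. Your computation makes the threshold $1/3$ appear transparently as the sign of the coefficient $3\mu(i)-1$, is shorter, and needs only the elementary Foster criterion of Theorem \ref{the:LyapunovFosterTheorem} together with $V(x)\ge\tfrac14\|x\|^2$ to guarantee finiteness of the sublevel sets; the choice $w=-1$ in the quadratic form $ua^2+ub^2+wab$ with $u=1$ plays exactly the role of the paper's cross coefficient $w\in\big(3\alpha-1,\tfrac{(3\alpha-1)\alpha}{1-\alpha}\big)$, namely turning the $+1$ axis drift of the linear norm into a negative linear drift. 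What the paper's heavier route buys is conformity with the standard quarter-plane framework (whose $\sqrt{Q}$ function has bounded increments and also yields non-ergodicity and transience statements) and a single comparison chain handling all pairs $(i,j)$ at once through one parameter $\alpha$; for the ergodicity statement alone, your direct quadratic computation is sufficient and arguably cleaner. Your final remark on transporting the drift from $\suite{X_n}$ to $\suite{W_n}$ via $h=V\circ[\,\cdot\,]$ matches what the paper does implicitly and closes the argument for all admissible $\Phi$.
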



\begin{proof}
	Necessity of the condition being shown in Proposition \ref{prop:Ncond3}, only the sufficiency remains to be proven. 
	Suppose that $\mu(i)<1/3$ for any $i\in \maV$, and fix $\alpha$ such that $\max_{i\in \maV} \mu(i) < \alpha <1/3$. 
	Define the planar Markov chain $\suite{U^{\alpha}_n}$ having the following transitions on $\N^2$, 
	\[\left\{\begin{array}{llll}
	\mbox{First axis:} \quad  &P^\alpha_{(x,0),(x+1,0)} &= \alpha,&\,x\in\N^+,\\
	&P^\alpha_{(x,0),(x,1)} &= 1-\alpha,&\,x\in\N^+,\\
	\mbox{Second axis:}  \quad  &P^\alpha_{(0,y),(0,y+1)} &= \alpha,&\,y\in\N^+,\\
	&P^\alpha_{(0,y),(1,y)} &= 1-\alpha,&\,y\in\N^+,\\
	\mbox{Interior:}  \quad & P^\alpha_{(x,y),(x+1,y)} &= \alpha,&\,x,y\in\N^+,\\
	&P^\alpha_{(x,y),(x,y+1)} &= \alpha,&\,x,y\in\N^+,\\
	&P^\alpha_{(x,y),(x-1,y-1)} &= 1- 2\alpha,&\,x,y\in\N^+,
	\end{array}\right.\]
	and arbitrary transitions from $(0,0)$ to any element of $\N^2$. (These transitions are represented in Figure \ref{Fig:transU} below). 
	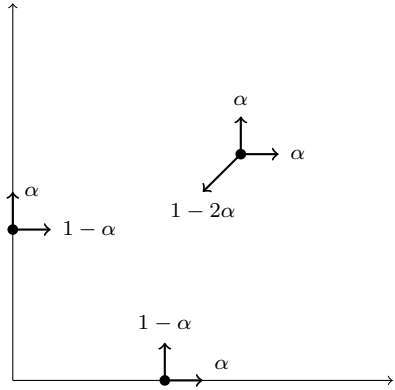
\begin{figure}[htb]
		\begin{center}
			\begin{tikzpicture}
			\draw[->] (0,0) -- (5,0) ;
			\draw[->] (0,0) -- (0,5);
			\fill (2,0) circle (2pt);
			\draw[->, thick] (2,0) -- (2.5,0) node [above right]{\scriptsize{$\alpha$}};
			\draw[->, thick] (2,0) -- (2,0.5) node [above]{\scriptsize{$1-\alpha$}};
			%
			\fill (3,3) circle (2pt);
			\draw[->, thick] (3,3) -- (3.5,3) node [right]{\scriptsize{$\alpha$}};
			\draw[->, thick] (3,3) -- (3,3.5) node [above]{\scriptsize{$\alpha$}};
			\draw[->, thick] (3,3) -- (2.5,2.5) node [below]{\scriptsize{$1-2\alpha$}};
			%
			\fill (0,2) circle (2pt);
			\draw[->, thick] (0,2) -- (0.5,2) node [right]{\scriptsize{$1-\alpha$}};
			\draw[->, thick] (0,2) -- (0,2.5) node [right]{\scriptsize{$\alpha$}};
			%
			%
			%
			
			\end{tikzpicture}
			\caption{\label{Fig:transU} Auxiliary Markov chain of the complete 3-uniform hypergraph.}
		\end{center}
		
	\end{figure}
	
	Denote by $\Delta=(\Delta_x,\Delta_y)$, $\Delta'=(\Delta'_x,\Delta'_y)$ and $\Delta''=(\Delta''_x,\Delta''_y)$, the mean (horizontal and vertical) 
	drifts of the chain $\suiten{U^\alpha_n}$, respectively on the interior, on the first and on the second axis, in a way that 
	\[\left\{\begin{array}{lll}
	\mbox{First axis:} \quad  &\Delta'_x=\alpha, \quad &\Delta'_y=1-\alpha;\\
	\mbox{Second axis:}  \quad  &\Delta''_x=1-\alpha, \quad &\Delta''_y=\alpha;\\
	\mbox{Interior:}  \quad & \Delta_x=3\alpha-1, \quad &\Delta_y=3\alpha-1.
	\end{array}\right.\]
	Thus, $\Delta_x <0$ and $\Delta_y<0$. Also, we have that 
	\[\Delta_x\Delta'_y - \Delta_y\Delta'_x = \Delta_x\Delta''_y - \Delta_y\Delta''_x = (3\alpha-1)(1-2\alpha) <0,\]
	so we can apply Theorem \ref{theo:TheoremFayolMalMain}, part (a), to claim that the Markov chain $\{U^\alpha_n\}$ is positive recurrent. 
	Specifically, it can be checked that, setting $u={1-3\alpha\over 2}>0$, for any $w$ such that ${{3\alpha-1}} < w <{(3\alpha - 1)\alpha \over 1-\alpha}<0$ we have that 
	\begin{equation}
	\label{eq:deltas}
	\begin{cases}
	2u \Delta_x + w \Delta_y &< 0,\\
	2u \Delta_y + w \Delta_x &< 0,\\
	2u \Delta'_x + w \Delta'_y &< 0,\\
	2u \Delta''_y + w \Delta_x &< 0.
	\end{cases}
	\end{equation}
	Second, as $4u^2> w^2$ the quadratic form $Q:(x,y) \mapsto ux^2 +uy^2+wxy$ is positive definite. 
	Then, in view of Lemma \ref{lem:UsInProofTheorem3.3.1}, it follows from (\ref{eq:deltas}) that, defining the mapping 
	\[L^\alpha : 
	\begin{cases}
	\N^2 &\longrightarrow \R_+ \\
	(x,y) &\longmapsto  \sqrt{Q(x,y)}=\sqrt{ux^2 +uy^2+wxy}, 
	\end{cases}\]
	we have that for some compact set $\mathcal K^\alpha\subset \N^2$, for any $(x,y) \in \overline{\mathcal K^\alpha}$, 
	\begin{equation}
	\label{eq:lyapua}
	\esp{L^\alpha\left(U^\alpha_{n+1}\right) - L^\alpha(U^\alpha_n) \mid U^\alpha_n = (x,y)} <0. 
	\end{equation}
	
	Now, as $\mbH$ is complete $3$-uniform, the states of the Markov chain $\suite{X_n}$ have at most two non-zero coordinates, 
	in other words, its state space is 
	\[\mathcal E = \Bigl\{\grx=(x_1,...,x_q) \in\N^q \,:\, x_ix_jx_k=0 \mbox{ for any distinct }i,j,k \in \llbracket 1,q \rrbracket\Bigl\}.\]
	Define the mapping 
	\[L: 
	\left\{\begin{array}{ll}
	\mathcal E &\longrightarrow \R_+\\
	\grx &\longmapsto \left\{\begin{array}{ll}
	0      &\mbox{ if } \grx=\mathbf 0,\\ 
	L^\alpha((x,0))    &\mbox{ if $\grx=x.\gre_i$, for some $x>0$, $i\in \maV$},\\
	L^\alpha((x,y))  &\mbox{ if $\grx=x.\gre_i+y.\gre_j$, for some $x,y>0$, $i\ne j$,}  
	\end{array}\right.
	\end{array}\right.\]    
	where the above definition is unambiguous due to the fact that $L^\alpha$ is a symmetric form on $\N^2$. 
	Also define the compact set 
	\[\mathcal K = \left\{\grx :=x.\gre_i+y.\gre_j\in \mathcal E\,:\, (x,y) \in\mathcal K^\alpha\right\}.\]
	Then, first, if $\grx\in\bar{\mathcal K}\cap \mathcal E$ is such that $\grx=x.\gre_i+ y.\gre_j$ for some $x,y>0$ and $i,j \in \maV$, $i\ne j$, we get that 
	\begin{multline*}
	\esp{L\left(X_{n+1}\right) - L(X_n) \mid X_n = \grx} \\
	\shoveleft{=(1-\mu(i)-\mu(j))\left(L\left(\grx-\gre_i-\gre_j\right)-L(\grx)\right)}\\
	\shoveright{+ \mu(i) \left(L\left(\grx+\gre_i\right)-L(\grx)\right) +  \mu(j) \left(L\left(\grx+\gre_j\right) - L(\grx)\right)}\\
	\shoveleft{=(1-\mu(i)-\mu(j))\left(L^\alpha\left(x-1,y-1\right)-L^\alpha(x,y)\right) }\\
	\shoveright{+ \mu(i) \left(L^\alpha\left(x+1,y\right)-L^\alpha(x,y)\right)+  \mu(j) \left(L^\alpha\left(x,y+1\right) - L^\alpha(x,y)\right)}\\
	\shoveleft{{< (1-2\alpha)}\left(L^\alpha\left(x-1,y-1\right)-L^\alpha(x,y)\right) }\\
	\shoveright{+ \alpha \left(L^\alpha\left(x+1,y\right)-L^\alpha(x,y)\right)+  \alpha \left(L^\alpha\left(x,y+1\right) - L^\alpha(x,y)\right)}\\
	= \esp{L^\alpha\left(U^\alpha_{n+1}\right) - L^\alpha(U^\alpha_n) \mid U^\alpha_n = (x,y)},
	\end{multline*}
	where, in the inequality above, we used the facts that $L^\alpha$ is non-decreasing in its first and second variables, and such that
	$L^\alpha\left(x-1,y-1\right){<}\, L^\alpha(x,y)$. 
	Likewise, if $\grx\in \bar{\mathcal K} \cap \mathcal E$ is such that $\grx=x.\gre_i$ for some $x>0$ and $i\in \maV$, we have that 
	\begin{multline*}
	\esp{L\left(X_{n+1}\right) - L(X_n) \mid X_n = \grx} \\
	\begin{aligned}
	&=\sum_{j\ne i}\mu(j)\left(L\left(\grx+\gre_j\right)-L(\grx)\right)+ \mu(i) \left(L\left(\grx+\gre_i\right)-L(\grx)\right)\\
	&=(1-\mu(i))\left(L^\alpha\left(x,1\right)-L^\alpha(x,0)\right)+ \mu(i) \left(L^\alpha\left(x+1,0\right)-L^\alpha(x,0)\right)\\
	&{< (1-\alpha)}\left(L^\alpha\left(x,1\right)-L^\alpha(x,0)\right)+ \alpha \left(L^\alpha\left(x+1,0\right)-L^\alpha(x,0)\right)\\
	&= \esp{ L^\alpha\left(U^\alpha_{n+1}\right) - L^\alpha(U^\alpha_n) \mid U^\alpha_n = (x,0)},
	\end{aligned}
	\end{multline*}
	remarking that $L^\alpha(x,1) { < }\, L^\alpha(x,0)$. Recalling that $X_n=[W_n]$ for all $n$, 
	using (\ref{eq:lyapua}) in both cases, we conclude using the Lyapunov-Foster Theorem \ref{the:LyapunovFosterTheorem} that the chain $\suite{W_n}$ is positive recurrent. 
\end{proof}


The complete 3-uniform $k$-partite hypergraphs generalize the complete $k$-partite graphs introduced in p.4 of \cite{MBM17}, also called {\em separable} graphs in 
\cite{MaiMoy16} and \cite{MoyPer17} - or {\em blow-ups} of the complete graph of order $k$ in some other references. Roughly speaking, a complete $3$-uniform $k$-partite hypergraph is a version of the complete $3$-uniform  hypergraph of order $k$, in which the $k$ nodes are replicated into several replicas, each of the $k$ sets of replicas forming an independent set $I_i$, such that all replicas of the same set do not share any 
hyperedge with each other, but all share hyperedges of size 3 with all other pairs of replicas belonging to two different other sets of replicas. 
Observe that in the particular case where all the sets $I_1,...,I_k$ are of cardinality 1 (i.e., there are no replica), the complete 3-uniform $k$-partite hypergraph is just the complete 3-uniform hypergraph of order $k$. We can then easily generalize the latter result, 

\begin{corollary}
	\label{cor:stable3unif}
	For $k\ge 4$, let $\tilde \mbH$ be a complete $3$-uniform $k$-partite hypergraph, and let $I_1,...,I_k$ be the corresponding partition into independent sets. 
	Then, for any admissible policy $\Phi$, 
	the model $(\tilde\mbH,\tilde\Phi,\tilde\mu)$ is stable if and only if $\tilde\mu(I_i) < 1/3$ for any $i\in \llbracket 1,k \rrbracket$. 
\end{corollary}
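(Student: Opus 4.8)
The plan is to \emph{lump} each independent set $I_i$ into a single super-node and thereby reduce the model on $\tilde\mbH$ to a complete $3$-uniform hypergraph model of order $k$, to which Theorem~\ref{thm:stable3uniform} applies directly (this is where the hypothesis $k\ge 4$ is used). The starting observation is the same structural fact that drives the proof of Theorem~\ref{thm:stable3uniform}: under any non-idling policy, at any time the unmatched items occupy \emph{at most two} of the sets $I_1,\dots,I_k$. Indeed, as soon as items are present in three distinct sets $I_{i_1},I_{i_2},I_{i_3}$, picking one item from each yields a hyperedge of $\tilde\mbH$, so such a configuration cannot survive the arrival that created it; by induction on the arrivals this ``at most two occupied parts'' invariant is preserved.

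Granting this, I would introduce the \emph{aggregated class detail} $\hat X_n := \bigl(X_n(I_1),\dots,X_n(I_k)\bigr)\in\N^k$, and show that $\suite{\hat X_n}$ is a Markov chain whose transitions coincide with those of the class-detail chain of the complete $3$-uniform hypergraph $\mbH_k$ of order $k$, under the arrival law $\hat\mu$ defined by $\hat\mu(i)=\tilde\mu(I_i)$ for $i\in\llbracket 1,k\rrbracket$. The verification is a short case analysis on the currently occupied parts: an arrival in an already occupied part, or in a fresh part while at most one other is occupied, simply increments the corresponding coordinate of $\hat X_n$; whereas an arrival of a class in $I_i$ while two \emph{other} parts $I_j,I_{j'}$ are occupied forces the match of the incoming item with exactly one item from $I_j$ and one from $I_{j'}$, decrementing coordinates $j$ and $j'$ by one. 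These are precisely the deterministic transitions of $\mbH_k$ described in the proof of Theorem~\ref{thm:stable3uniform}. The essential point is that this match is \emph{unique and forced}: the only available items lie in $I_j$ and $I_{j'}$, so neither the choice of $\tilde\Phi$ nor the within-part removal rule affects the evolution of the counts, hence $\suite{\hat X_n}$ is genuinely Markov and policy-independent at the aggregated level.

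It then remains to transfer recurrence between $\suite{\hat X_n}$ and the full chain $\suite{W_n}$. Since $\hat X_n=\mathbf 0$, $X_n=\mathbf 0$ and ``the buffer is empty'' are the same event, and both chains are driven by the same arrival sequence, the successive return times to the empty state coincide for $\suite{\hat X_n}$ and $\suite{W_n}$. As $\tilde\mu$ (resp.\ $\hat\mu$) has full support, both chains are irreducible with the empty state reachable; consequently positive recurrence of $\suite{W_n}$ is equivalent to the finiteness of the expected return time to $\mathbf 0$, which is equivalent to positive recurrence of $\suite{\hat X_n}$. Applying Theorem~\ref{thm:stable3uniform} to $\mbH_k$ (order $k\ge 4$), the latter holds if and only if $\hat\mu(i)<1/3$ for every $i$, that is $\tilde\mu(I_i)<1/3$ for every $i$, which is the asserted equivalence.

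The main obstacle I expect is the rigorous justification of the aggregation step, namely that $\suite{\hat X_n}$ is \emph{exactly} the complete $3$-uniform model of order $k$: one must carefully argue the ``at most two occupied parts'' invariant, the uniqueness and policy-independence of the forced match on a third part, and the fact that within-part bookkeeping is irrelevant to the counts. The transfer of positive recurrence through the coinciding empty-state return times is clean but should be stated with care, as it is what lets one import the full ``if and only if'' (including the strict inequality, and the null recurrence at the boundary $\tilde\mu(I_i)=1/3$) from Theorem~\ref{thm:stable3uniform}.
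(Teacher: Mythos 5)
Your proposal is correct and follows essentially the same route as the paper: both aggregate the $k$ independent sets into the nodes of the complete $3$-uniform hypergraph of order $k$ with arrival law $\mu(i)=\tilde\mu(I_i)$ and then invoke Theorem~\ref{thm:stable3uniform}. The only (interchangeable) difference in execution is that the paper expresses the reduction as the drift identity $\esp{L\circ\Psi(\tilde X_{n+1}) - L\circ\Psi(\tilde X_n) \mid \tilde X_n = \tilde \grx} = \esp{L(X_{n+1}) - L(X_n) \mid X_n = \Psi(\tilde \grx)}$ for the Lyapunov function of that theorem, whereas you argue exact lumpability of the class-detail chain and transfer positive recurrence through the coinciding return times to the empty state.
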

\begin{proof}
	The system has macroscopically (i.e., if we do not distinguish between items of classes that belong to the same independent set of the partition $I_1,...,I_k$) the same 
	behavior as the complete $3$-uniform hypergraph. Specifically, let $q$ be the order of the hypergraph $\tilde\mbH$, and define the mapping 
	\[\Psi:\left\{\begin{array}{ll}
	\N^q &\longrightarrow \N^k\\
	\tilde{\grx}=(\tilde x_1,...,\tilde x_q) &\longmapsto \grx=(x_1,...,x_k):\,\forall i\in\llbracket 1,k \rrbracket, x_i = \sum_{j\in \llbracket 1,q \rrbracket;\,j\in I_i} \tilde x_i.
	\end{array}\right.\]
	In words, $\Psi$ maps the detailed class content of the model, onto a class content where one puts altogether all the elements of classes belonging to the same 
	independent set of the partition $I_1,...,I_k$. Take $L$ as the Lyapunov function introduced in the previous proof. Fix an admissible policy $\tilde\Phi$ and a probability measure 
	$\tilde\mu\in\mathscr M(\tilde \maV)$, and let $\suite{\tilde X_n}$ be the class-content process of the model $(\tilde\mbH,\tilde\Phi,\tilde\mu)$. On another hand, let $\suite{X_n}$ 
	be the class-content process of the model $(\mbH,\Phi,\mu)$ defined on $\mbH=(\maV,\maH)$ the complete 3-uniform hypergraph 
	of order $k$, for an arbitrary matching policy $\Phi$ and a probability measure $\mu\in\mathscr M(\maV)$ such that $\mu(i)=\tilde\mu(I_i)$ for any $i\in\llbracket 1,k \rrbracket$. 
	Then, it is easily seen that $\suite{\tilde X_n}$ and $\suite{X_n}$ are connected by the following relation: for all $n$ and all $\tilde\grx\in\N^q$, 
	\[\esp{L\circ\Psi\left(\tilde X_{n+1}\right) - L\circ\Psi(\tilde X_n) \mid \tilde X_n = \tilde \grx} = \esp{L\left(X_{n+1}\right) - L(X_n) \mid X_n = \Psi(\tilde \grx)},\]
	and the argument in the proof of Theorem \ref{thm:stable3uniform} shows that the Markov chain $\suite{\tilde X_n}$ is positive recurrent 
	whenever $\tilde\mu(i)<1/3$, that is, $\mu(I_i)<1/3$, for all 
	$i\in\llbracket 1,k \rrbracket$. This concludes the proof.
\end{proof}


\subsection{{Incomplete $3$-uniform hypergraphs}}
As is shown in  Theorem \ref{thm:stable3uniform} and Corollary \ref{cor:stable3unif}, complete $3$-uniform hypergraphs and complete $3$-uniform $k$-partite hypergraphs are stabilizable for all matching policy $\Phi,$ for a large class of measures. 
We show hereafter that incomplete hypergraphs (in the sense defined hereafter) can also be stabilizable for a matching policy {\sc ml}, 

\begin{theorem}
	\label{thm:suff3unifincomplet} 
	Let $\mathbb H=(\maV,\maH)$ be a complete $3$-uniform hypergraph of {order} $q \ge 5$, and let 
	$\mbH'=(\maV,\maH')$ be 
	the $(3\textrm{-uniform})$ subhypergraph of $\mbH$ obtained by setting $\maH'=\maH\backslash \maJ$, where  $\maJ$ is a subset of $\maH$ containing disjoint hyperedges. Let $J$ be the union of the elements of $\maJ$.
	Then the model $(\mbH',\textsc{ml},\mu)$ is stable for any 
	$\mu$ in 
	\[\mathscr S(\mbH')=\biggl\{\mu\in\mathscr M(\maV):\left(\max\limits_{i\in J} \lambda_i(\mu)\vee \max\limits_{i\in \bar J} \nu_i(\mu)\right)<0\biggl\}\,\cap\,\mathscr N_2(\mathbb H')\, \cap\mathscr N^{\scriptsize{-}}_3(\mathbb H'),\]
	{where the $\lambda_i(\mu): i\in J$ and $\nu_{i}(\mu): i\in \bar J$ are defined respectively by (\ref{eq:Gdeflambdai}) and (\ref{eq:InCnu_i})}. 
\end{theorem}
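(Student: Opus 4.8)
The plan is to prove positive recurrence of the class-detail chain $\suite{X_n}$ via the Lyapunov-Foster Theorem \ref{the:LyapunovFosterTheorem}, extending the construction used for the complete case in Theorem \ref{thm:stable3uniform}. First I would pin down the geometry of the state space. Because $\mbH'$ is obtained from a complete $3$-uniform hypergraph by deleting the pairwise-disjoint triples of $\maJ$, every independent set of $\mbH'$ has cardinality at most $3$, and the only independent sets of size $3$ are exactly the deleted hyperedges $H\in\maJ$: indeed a $4$-set would contain four triples that pairwise share two nodes, so at most one of them could lie in the disjoint family $\maJ$, forcing the set to contain a genuine hyperedge. Consequently the support of any admissible state is either contained in a pair $\{i,j\}$, or equals a removed triple $\{i,j,k\}\in\maJ$. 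This splits $\mathcal E$ into two-dimensional \emph{faces} (indexed by pairs) and three-dimensional \emph{corners} (indexed by the elements of $\maJ$, whose nodes lie in $J$), glued along the one-dimensional axes.

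Next I would analyse the \textsc{ml} dynamics stratum by stratum. On a face $\{i,j\}$ the model behaves exactly as the complete $3$-uniform model: any arrival of a third class completes a hyperedge and is matched by \textsc{ml} with the two waiting classes, so the local drift is that of the auxiliary planar chain of Theorem \ref{thm:stable3uniform}. On a corner $\{i,j,k\}$ the three queues coexist without matching each other, arrivals of $i,j,k$ merely lengthen their own queues, and an arrival of any external class $\ell$ forms a hyperedge with a pair from $\{i,j,k\}$ and, under \textsc{ml}, drains the two longest of the three queues. The quantities $\lambda_i(\mu)$ for $i\in J$ and $\nu_i(\mu)$ for $i\in\bar J$ are precisely the one-step mean drifts of the dominating queue in, respectively, the corners and the faces; the hypothesis $\bigl(\max_{i\in J}\lambda_i(\mu)\vee\max_{i\in\bar J}\nu_i(\mu)\bigr)<0$ asserts that each of these drifts points inward.

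I would then build a Lyapunov function $L$ that agrees with the positive-definite form $\sqrt{Q}$ of Theorem \ref{thm:stable3uniform} on each face and is extended over each corner by an analogous norm-like form, chosen so that the inward drift encoded by the $\lambda_i$'s yields $\esp{L(X_{n+1})-L(X_n)\mid X_n=\grx}<-\varepsilon$ in the interior of every corner, while monotonicity of $L$ in each coordinate transfers the face estimate exactly as in the computation following (\ref{eq:lyapua}). The conditions $\mu\in\mathscr N_2(\mbH')$ and $\mu\in\mathscr N^{\scriptsize{-}}_3(\mbH')$ then control the low-dimensional strata: $\mathscr N^{\scriptsize{-}}_3$ (i.e. $\mu(i)<1/3$) guarantees negative drift along the axes, so no single class can accumulate, while $\mathscr N_2$ rules out accumulation of mass on any transversal. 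Outside a compact set a finite case check over faces, corners, axes and $\mathbf 0$ yields uniform negative drift, and Theorem \ref{the:LyapunovFosterTheorem} delivers positive recurrence of $\suite{W_n}$.

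The main obstacle will be the three-dimensional corners. Unlike the complete case, where the whole chain reduces to a planar walk amenable to the classification Theorem \ref{theo:TheoremFayolMalMain}, each corner is a genuinely three-dimensional \textsc{ml} walk in which an external arrival removes mass from the two currently longest queues; verifying that a single gluing form $L$ is simultaneously monotone in each coordinate, matches $\sqrt{Q}$ on the bounding faces, and has strictly negative drift in the corner interior \emph{precisely} under $\lambda_i(\mu)<0$ is the delicate point. This is exactly where the explicit definitions (\ref{eq:Gdeflambdai}) and (\ref{eq:InCnu_i}) of $\lambda_i$ and $\nu_i$ must be engineered so that the Lemma \ref{lem:UsInProofTheorem3.3.1}-type drift estimates close.
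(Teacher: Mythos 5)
Your reading of the state-space geometry is correct (admissible supports are pairs or removed triples, and no independent set of size $4$ exists), but your proof misassigns the roles of the hypotheses, and this is not cosmetic. In the paper's argument the quantities $\lambda_i(\mu)$ and $\nu_i(\mu)$ are \emph{not} drifts of the dominating queue on the two- and three-dimensional strata: they are the coefficients of $x_i$ in the \emph{four-step} drift of the plain quadratic $Q(\grx)=\sum_i x_i^2$ computed from the \emph{single-coordinate} states $x_i\gre_i$, for the embedded chain $Y_n=X_{4n}$ — this is where the quartic polynomials in $\mu$ in (\ref{eq:Gdeflambdai}) and (\ref{eq:InCnu_i}) come from. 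Conversely, the conditions $\mu\in\mathscr N_2(\mbH')$ and $\mu\in\mathscr N^{-}_3(\mbH')$ are what control the faces and corners, via the observation that $\maV\setminus\{i,j\}$ and $\overline H$ are transversals and hence carry mass strictly greater than $1/3>\mu(i)$; they are not used to control the axes. Your proposal inverts this allocation, so the ``engineering'' you defer to the definitions of $\lambda_i$ and $\nu_i$ cannot close as described: those formulas simply do not encode corner or face drifts.

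The second gap is the Lyapunov function itself. You propose gluing the norm-like form $\sqrt{Q}$ of Theorem \ref{thm:stable3uniform} across faces and extending it over the three-dimensional corners, and you correctly flag the corners as the delicate point — but you leave that construction entirely open. The paper's choice of the plain quadratic $Q$ dissolves this difficulty: from a corner state $x_i\gre_i+x_j\gre_j+x_k\gre_k$ with $\{i,j,k\}\in\maJ$ and $x_i,x_j\ge x_k$, the one-step drift of $Q$ is \emph{linear}, $\alpha_i x_i+\alpha_j x_j+\alpha_k x_k+\beta_H$, with $\alpha_i,\alpha_j<0$ by $\mathscr N_2$ and the single positive term $\alpha_k=2\mu(k)$ dominated thanks to the ordering $x_i,x_j\ge x_k$; no three-dimensional random-walk classification or averaging argument is needed. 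The price of the quadratic is paid on the axes, where the one-step drift is not manifestly negative — which is precisely why the paper passes to the four-step embedded chain there and \emph{assumes} $\lambda_i,\nu_i<0$ rather than deriving axis stability from $\mathscr N^{-}_3$. A further small inaccuracy: a face $\{i,j\}$ with $\{i,j\}\subset H\in\maJ$ does not behave as in the complete model, since an arrival of the third class of $H$ completes no match and pushes the state into the corner; the paper accordingly treats these faces separately from the generic ones (its cases (iii) and (iv)).
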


\begin{proof}
	Fix $\Phi=\textsc{ml}$, and let $\mu\in {\mathscr S(\mbH')}$. For such $\mbH'$ the study of $\suite{Y_n}$ does not boil down to that of a planar Markov chain. 
	Instead, we study the embedded chain $\suiten{Y_n}=\suiten{X_{4n}}$, 
	and consider the following quadratic Lyapunov function,  
	\[Q:\left\{\begin{array}{ll}
	\N^q &\longrightarrow \R_+\\
	\grx &\longmapsto \sum_{i=1}^{q} (x_i)^2. 
	\end{array}\right.\]
	Fix $n\in\N$. We have the following alternatives given the value of the embedded chain $\suiten{Y_n}$ at time $n$,
	
	(i) First, for any $i\in \overline{J}$, and any integer $x_i\geq{2}$.  It follows that 
	for any $j\neq k\neq \ell\ne m \ne i\in\maV,$ the chain $\suite{Y_n}$ makes the transitions that we will present in Appendix (\ref{eq:GtransY1}) from state $Y_n=x_i.\gre_i$, then we deduce that 
	\begin{multline}\label{eq:GLyapY1}
	\Delta_i:=\esp{Q\left(Y_{n+1}\right) - Q\left(Y_n\right) | Y_n = x_i.\gre_i}\\
	\begin{aligned}
	= & \,(8x_i+16)\mu(i)^4 + 4(6x_i+10)\mu(i)^3\sum_{j} \mu(j) + 6(4x_i+8)\mu(i)^2\sum_{j} \mu(j)^2 \\
	& + 4(2x_i+10)\mu(i)\sum_{j} \mu(j)^3 + 16\sum_{j} \mu(j)^4 + 12(2x_i+1) \mu(i)^2 \sum_{\substack{j,k}} \mu(j)\mu(k)\\
	& + 12 \mu(i) \sum_{\substack{j,k}} \mu(j)^2\mu(k) -10(4x_i+4) \sum_{\substack{j,k,\ell}} \mu(j)^2\mu(k)\mu(\ell)\\
	& - 2(2x_i+5) \sum_{\substack{j,k,\ell}} \mu(j)^2\mu(k)\mu(\ell) -6(4x_i+4) \sum_{\substack{j,k}} \mu(j)^2\mu(k)^2\\
	& - 4(2x_i+5) \sum_{\substack{j,k}} \mu(j)^3\mu(k) + 24\mu(i)\sum_{\substack{j,k}}\mu(j)\mu(k)\mu(\ell)\\
	& -24(4x_i+4) \sum_{\substack{j,k,\ell,m}} \mu(j)\mu(k)\mu(\ell)\mu(m)=\lambda_i(\mu)x_i+\beta_i(\mu), 
	\end{aligned}
	\end{multline}
	for some bounded $\beta_i(\mu)$, and for
	\begin{multline}
	\label{eq:Gdeflambdai}
	\lambda_i(\mu)=\;8\mu^4(i) + 24\mu^3(i)\sum_{j\ne i} \mu(j) + 24\mu^2(i)\sum_{j\ne i} \mu^2(j)\\ + 8\mu(i)\sum_{j\ne i} \mu^3(j)
	+24\mu^2(i) \sum_{\substack{j,k\ne i}} \mu(j)\mu(k)
	-44\sum_{\substack{j,k,\ell\ne i}} \mu^2(j)\mu(k)\mu(\ell)\\-24\sum_{\substack{j,k\ne i}} \mu^2(j)\mu^2(k)
	-8\sum_{\substack{j,k\ne i}}\mu(j)\mu^3(k)-96\sum_{\substack{j,k,\ell,m\ne i}} \mu(j)\mu(k)\mu(\ell)\mu(m). 
	\end{multline}
	Consequently, as the above is negative, there exists $a_1^{*}$ such that $\Delta_i <0$ whenever $x_i \ge a_1^{*}$. 
	
	(ii) For any $i\in J$, and any integer $x_{i}\geq{2}$, the transitions of $\suite{Y_n}$ from the state $x_{i}.\gre_i$ can be retrieved in a similar fashion to	(\ref{eq:GtransY1}). It follows that 
			  for any $j\neq k\neq \ell\ne m \ne p \ne s \ne i\in\maV$. Set $H=\{i,j,k\}\subset J$,  the transitions that will present in the Appendix (\ref{eq:InCtransY1}), then we deduce that $\Delta_i'=\nu_{i}(\mu)x_{i}+\beta'_{i}(\mu)$ 	for some bounded $\beta'_{i}(\mu)$ (see Appendix (\ref{eq:InCLyapY1})), and setting $H=\{i,j,k\}$ as the only element of $\maJ$ such that $i\in H$, we obtain that 
		
	

		\begin{multline}
		\label{eq:InCnu_i}
		\nu_i(\mu)=8\mu^4(i) + 24\mu^3(i)\sum_{\ell\in\maV\backslash\{i\}} \mu(\ell) + 24\mu^2(i)\sum_{\ell\in\maV\backslash \{i\}} \mu^2(\ell) + 8\mu(i)\sum_{\ell\in\maV\backslash\{i\}} \mu^3(\ell)\\
		-8 \sum_{\substack{\ell\in \overline{H}}} \mu(j) \mu^3(\ell)
		-4 \sum_{\substack{\ell\in\overline{H}:\\\textrm{ends with }kk}}  \mu(j)\mu^2(k)\mu(\ell)
		-20\sum_{\substack{\ell\in\overline{H}:\\\textrm{otherwise}}} \mu(j)\mu^2(k)  \mu(\ell)\qquad\qquad\qquad\;\;\\
		-48 \mu(j)\mu(k) \sum_{\substack{\ell\in\overline{H}}} \mu^2(\ell)
		-4 \sum_{\substack{\ell\in\overline{H}:\\\textrm{ends with }\ell\ell}} \mu(j)\mu^2(\ell)\mu(m)
		-40 \sum_{\substack{\ell\in\overline{H}:\\\textrm{otherwise}}} \mu(j)\mu^2(\ell)\mu(m)\qquad\quad\;\;\\
		+ 48 \mu^2(i)\mu(j)\mu(k)-8 \mu(j)\mu(k)\sum_{\substack{\ell,m\in\overline{H}:\\\textrm{ends with }jk}} \mu(\ell)\mu(m)\
		-80 \mu(j)\mu(k)\sum_{\substack{\ell,m\in\overline{H}:\\\textrm{otherwise}}} \mu(\ell)\mu(m)\qquad\;\\
		-96 \sum_{\substack{\ell,m\in\overline{H}}} \mu(j)\mu(\ell)\mu(m)\mu(p)
		+24 \mu^2(i)\sum_{\substack{\ell\in\overline{H}}} \mu(j)\mu(\ell)
		+24 \mu^2(i)\sum_{\substack{\ell,m\in\overline{H}}} \mu(\ell)\mu(m)\qquad\qquad\;\;\;\\
		-24\sum_{\substack{\ell\in\overline{H}}} \mu^2(j)\mu^2(\ell)
		-4\sum_{\substack{\ell\in\overline{H}:\\\textrm{ends with }jj}} \mu^2(j)\mu(\ell)\mu(m)
		-40\sum_{\substack{\ell\in\overline{H}:\\\textrm{otherwise}}} \mu^2(j)\mu(\ell)\mu(m)\qquad\qquad\;\;\;\;\\
		-8\sum_{\substack{\ell\in\overline{H}}} \mu^3(j)\mu(\ell)
		+24\mu(i)\sum_{\substack{j,k\in H}}\mu(j)\mu^2(k)
		-8\sum_{\substack{\ell,m\in\overline{H}}} \mu^3(\ell)\mu(m)
		-24\sum_{\substack{\ell,m\in\overline{H}}} \mu^2(\ell)\mu^2(m)\;\;\;\\
		-4\sum_{\substack{\ell,m,p\in\overline{H}:\\ \textrm{ends with }\ell\ell}} \mu^2(\ell)\mu(m)\mu(p)-40\sum_{\substack{\ell,m,p\in\overline{H}:\\\textrm{otherwise}}} \mu^2(\ell)\mu(m)\mu(p)
		-96\sum_{\substack{\ell,m,p,s\in\overline{H}}} \mu(\ell)\mu(m)\mu(p)\mu(s).
		\end{multline}
	\normalsize{Thus, there exists $a_{2}^{*}$ such that $\Delta'_{i} <0$ whenever $x_{i} \ge a_{2}^{*}$.}
	
	\medskip
	(iii) For any $i\ne j$ such that $\{i,j\}$ is not included in a hyperedge of the family $\mathcal J$, 
	for any integers $x_{i},x_{j}>0$, we obtain that 
	\begin{equation*}
	\Delta_{ij}:=\esp{Q\left(X_{n+1}\right) - Q\left(X_n\right) | X_n = x_{i}.\gre_i+x_{j}.\gre_j}=\lambda_{ij}(\mu)x_{i}+\lambda_{ji}(\mu)x_{j}+\beta_{ij}(\mu), 
	\end{equation*}
	for a bounded $\beta_{ij}(\mu)$, and for 
	\begin{equation}
	\label{eq:Gdeflambdaij,ji}
	\lambda_{ij}(\mu)=2\Big(\mu(i)-\sum\limits_{\ell \in \maV\backslash \{i,j\}}\mu(\ell)\Big)\textrm{ and }\lambda_{ji}(\mu)=2\Big(\mu(j)-\sum\limits_{\ell \in \maV\backslash \{i,j\}}\mu(\ell)\Big).
	\end{equation}
	Now observe that $\maV\backslash \{i,j\}\in\mathcal{T}(\mbH)$, so $\sum\limits_{\ell \in \maV\backslash \{i,j\}}\mu(\ell)>\displaystyle\frac{1}{3},$ then $\lambda_{ij}(\mu)<0$ and $\lambda_{ji}(\mu)<0$.
	Thus there exists $a_{3}^{*}$ such that $\Delta_{ij}<0$ whenever ${x_{i} \vee x_{j}} \ge a_{3}^{*}$. 
	
	\medskip
	
	(iv) For any $i,j$ such that $i\ne j$ and $\{i,j\}\subset H$ for some $H\in \mathcal J$, for any integers $x_{i},x_{j}>0,$ we obtain that
	\begin{equation*}
	\Delta'_{ij} :=\esp{Q\left(X_{n+1}\right) - Q\left(X_n\right) | X_n = x_{i}.\gre_{i}+x_{j}.\gre_{j}}
	= \nu_{ij}(\mu)x_i + \nu_{ji}(\mu)x_j +\beta'_{ij}(\mu)
	\end{equation*}
	for a bounded $\beta'_{ij}(\mu)$ and 
	\begin{equation}
	\label{eq:Gdefnuij,ji}
	\nu_{ij}(\mu)=2\Big(\mu(i)-\sum\limits_{\ell \in \overline{H}}\mu(\ell)\Big)\textrm{ and }\nu_{ji}(\mu)=2\Big(\mu(j)-\sum\limits_{\ell \in \overline{H}}\mu(\ell)\Big).
	\end{equation}
	
	As $\overline{H}\in\mathcal{T}(\mbH)$, so $\sum\limits_{\ell \in\overline{H}}\mu(\ell)>\displaystyle\frac{1}{3},$ then $\nu_{ij}(\mu)<0$ and $\nu_{ji}(\mu)<0$.\\
	\normalsize{Again, there exists $a_{4}^{*}$ such that $\Delta'_{ij}<0$ whenever ${x_{i} \vee x_{j}} \ge a_{4}^{*}$.}

	\medskip
	
	(v) We finally consider the case where $X_n=x_i.\gre_{i}+x_j.\gre_{j}+x_k.\gre_{k}$ for $H=\{i,j,k\}$, for some $H\in\maJ$, 
	and integers $x_i,x_j$ and $x_k$ such that $x_i,x_j \geq x_k>0$. 
	\begin{align*}
	\Delta_H &:=\esp{Q\left(X_{n+1}\right) - Q\left(X_n\right) | X_n = x_{i}.\gre_i+x_{j}.\gre_j+x_{k}.\gre_k}\\
	& =\alpha_{i}(\mu)x_{i}+\alpha_{j}(\mu)x_{j}+\alpha_{k}(\mu)x_k+\beta_H(\mu), 
	\end{align*}
	for a bounded $\beta_H(\mu)$, and for 
	\begin{equation}
	\label{eq:Gdeflaphai,j,k}
	\alpha_{i}(\mu)=2\Big(\mu(i)-\sum\limits_{\ell \in \overline{H}}\mu(\ell)\Big),\;\alpha_{j}(\mu)=2\Big(\mu(j)-\sum\limits_{\ell \in \overline{H}}\mu(\ell)\Big)\textrm{ and }
	\alpha_{k}(\mu)=2\mu(k).
	\end{equation}
	As  $\overline{H}\in\mathcal{T}(\mbH)$, so $\alpha_{i}(\mu)<0$ and $\alpha_{j}(\mu)<0$.  
	From this, we deduce as above the existence of an integer $a_5^{*}$ such that $\Delta_H<0$ whenever 
	$x_i\vee x_j \ge a_5^{*}$.

	%
	\medskip
	
	To conclude, if we let $\mathcal{K}$ be the finite set
	\[{\mathcal K=\left\lbrace \grx\in\maE\;:\;x_i\le \max\Big(a_1^*,...,a_5^*,2\Big);\; i\in \maV \right\rbrace,}\]
	then if follows from the above arguments that for any $\grx\in\maE\cap\bar{\mathcal K}$ 
	and any $n\in\N$, 
	$$\esp{Q\left(Y_{n+1}\right) - Q\left(Y_{n}\right) | Y_n=\grx}<0.$$
	We deduce from Lyapunov-Foster Theorem \ref{the:LyapunovFosterTheorem} that the chain $\suiten{Y_n}$ is positive recurrent. 
	This is the case in turn for the chain $\suiten{X_n}$.
\end{proof}

\begin{remark}
	\rm 
	Observe that the only incomplete (in the sense of Theorem \ref{thm:suff3unifincomplet}) $3$-uniform hypergraph of order 
	4 would be obtained from the complete one by deleting only one {hyperedge}. However, as easily seen the transversal number of the resulting hypergraph is 1, so the latter is non-stabilizable from Proposition \ref{prop:superstar}.
\end{remark}


In the following examples we show how the stability can be shown for various incomplete $3$-uniform hypergraphs using 
Theorem \ref{thm:suff3unifincomplet}, 


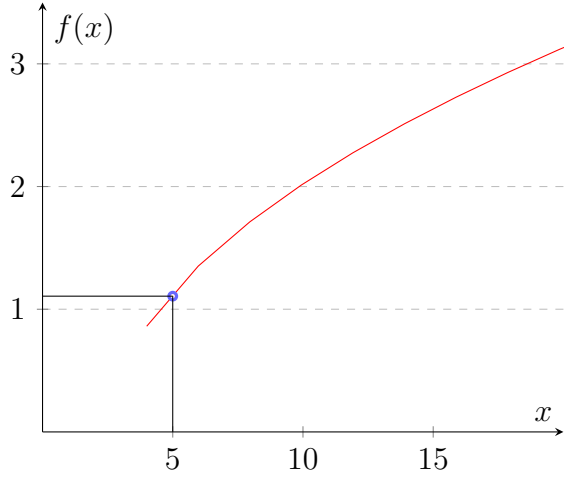
\begin{figure}
	\centering
	\begin{tikzpicture}
	\begin{axis}[
	axis lines =center,
	xlabel = $x$,
	ylabel = {$f(x)$},
	xmin=0, xmax=20,
	ymin=0, ymax=3.5,
	xtick={0,5,10,15},
	ytick={1,2,3},
	ymajorgrids=true,
	grid style=dashed,
	]
	\addplot [
	domain=4:200, 
	samples=100, 
	color=red,
	]
	{((2*x^4-9*x^3+12*x^2-13*x+12)/(6*x^2+10*x+24))^(0.25)};	
	\end{axis}
	\filldraw[color=blue!60, fill=red!5, very thick](1.713,1.8) circle (0.05);
	\draw[-] (1.713,0) -- (1.713,1.8);
	\draw[-] (0,1.8) -- (1.713,1.8);
	\end{tikzpicture}
	\caption{The curve of the function  $f(x)=\displaystyle\left(\frac{2x^4-9x^3+12x^2-13x+12}{6x^2+10x+24}\right)^\frac{1}{4}.$} 
	\label{fig:MinMaxq>=5} 
\end{figure}

\begin{corollary}
	Consider an incomplete $3$-uniform hypergraph $\mathbb{H}'$ satisfying the assumptions of Theorem \ref{thm:suff3unifincomplet}. 
	Recall (\ref{eq:defmuminmax}), and define the sets 
	\[\mathscr A(\mbH'):=\biggl\{\mu\in\mathscr M(\maV)\,:\, \displaystyle\frac{\mumaxi}{\mumini}<\left(\frac{2q^4-9q^3+12q^2-13q+12}{6q^2+10q+24}\right)^{1/4}\biggl\}\,\]
	and 
	\begin{equation*}
	\mathscr S_1(\mbH'):=\mathscr A(\mbH')\,\cap\,\mathscr N_2(\mathbb H')\, \cap\mathscr N^{\scriptsize{-}}_3(\mathbb H').	\end{equation*}
	Then the model 
	$(\mathbb{H}',\textsc{ml},\mu)$ is stable for any  $\mu\in\mathscr S_1(\mbH').$	
\end{corollary}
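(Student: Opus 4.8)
The plan is to establish the inclusion $\mathscr S_1(\mbH')\subseteq\mathscr S(\mbH')$ and then read off stability directly from Theorem \ref{thm:suff3unifincomplet}. Since the two regions share the common factor $\mathscr N_2(\mathbb H')\cap\mathscr N^{\scriptsize{-}}_3(\mathbb H')$, it suffices to prove that $\mathscr A(\mbH')$ is contained in the set $\{\mu\in\mathscr M(\maV):\max_{i\in J}\lambda_i(\mu)\vee\max_{i\in \bar J}\nu_i(\mu)<0\}$; in words, that the single scalar constraint $\mumaxi/\mumini<f(q)$ (with $f$ the function plotted in Figure \ref{fig:MinMaxq>=5}) already forces every drift coefficient $\lambda_i(\mu)$ and $\nu_i(\mu)$ arising in the proof of Theorem \ref{thm:suff3unifincomplet} to be strictly negative.

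First I would treat $\lambda_i(\mu)$ as given by (\ref{eq:Gdeflambdai}). Each summand there is a degree-$4$ monomial in the weights $\{\mu(j):j\in\maV\}$ carrying a fixed sign. Using the two-sided bound $\mumini\le\mu(j)\le\mumaxi$, valid for every $j$, I would bound each positive monomial above by $\mumaxi^{4}$ and the modulus of each negative monomial below by $\mumini^{4}$. Collecting the coefficient-weighted numbers of positive and negative monomials as functions of the order $q=q(\mbH')$ yields
\[\lambda_i(\mu)\;\le\;P(q)\,\mumaxi^{4}-N(q)\,\mumini^{4},\]
where $P(q)=6q^{2}+10q+24$ and $N(q)=2q^{4}-9q^{3}+12q^{2}-13q+12$. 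The same reduction applied to $\nu_i(\mu)$ in (\ref{eq:InCnu_i}) must then be checked to give a bound no larger than the right-hand side above, so that the pair $(P,N)$ coming from $\lambda_i$ governs all indices uniformly.

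It then remains to observe that $P(q)\,\mumaxi^{4}-N(q)\,\mumini^{4}<0$ is equivalent to $(\mumaxi/\mumini)^{4}<N(q)/P(q)$, i.e. to $\mumaxi/\mumini<\bigl(N(q)/P(q)\bigr)^{1/4}=f(q)$, which is exactly the inequality defining $\mathscr A(\mbH')$. Hence every $\mu\in\mathscr A(\mbH')$ satisfies $\lambda_i(\mu)<0$ and $\nu_i(\mu)<0$ for all relevant $i$, giving the desired inclusion and therefore $\mathscr S_1(\mbH')\subseteq\mathscr S(\mbH')$; stability of $(\mathbb H',\textsc{ml},\mu)$ for $\mu\in\mathscr S_1(\mbH')$ is then immediate from Theorem \ref{thm:suff3unifincomplet}.

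The hard part will be the exact combinatorial bookkeeping that produces the precise polynomials $P$ and $N$: one must count, with multiplicities, the monomials in (\ref{eq:Gdeflambdai}) and especially in (\ref{eq:InCnu_i}), several of whose sums range only over $\overline H=\maV\setminus H$ and are split according to the trailing letters of the associated matching words. Particular care is needed to confirm that the bound obtained for $\nu_i$ never exceeds the one for $\lambda_i$, so that $f(q)$ is genuinely the binding threshold over all $i$, and to verify that $f(q)>1$ for every $q\ge 5$, so that $\mathscr A(\mbH')$ is non-empty and in particular contains the near-uniform measures. The remaining points, namely that the factors $\mathscr N_2(\mathbb H')$ and $\mathscr N^{\scriptsize{-}}_3(\mathbb H')$ are carried over unchanged, are routine.
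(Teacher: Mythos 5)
Your proposal follows essentially the same route as the paper: the paper's proof consists of exactly the inclusion $\mathscr A(\mbH') \subset \{\mu : \max_{i\in J}\lambda_i(\mu)\vee\max_{i\in\bar J}\nu_i(\mu)<0\}$, justified only by the phrase ``a simple algebra shows,'' followed by $\mathscr S_1(\mbH')\subset\mathscr S(\mbH')$ and an appeal to Theorem \ref{thm:suff3unifincomplet}. Your bounding of each positive monomial by $\mumaxi^4$ and each negative one by $\mumini^4$, yielding the threshold $(N(q)/P(q))^{1/4}$ with $P(q)=6q^2+10q+24$ and $N(q)=2q^4-9q^3+12q^2-13q+12$, is precisely the omitted algebra, so the argument is correct and matches the paper's.
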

\begin{proof}
	Recalling (\ref{eq:Gdeflambdai}) and (\ref{eq:InCnu_i}), a simple algebra shows that 
	\[\mathscr A(\mbH') \subset \biggl\{\mu\in\mathscr M(\maV):\left(\max\limits_{i\in J} \lambda_i(\mu)\vee \max\limits_{i\in \bar J} \nu_i(\mu)\right)<0\biggl\},\]
	thus $\mathscr S_1(\mbH')\subset \mathscr S(\mbH')$. 
\end{proof}

\begin{ex}
	\label{ex:UnifDisIncomplet}
	\rm
	Observe that for any such $\mbH'=(\maV,\maH')$ satisfying the assumptions of Theorem \ref{thm:suff3unifincomplet}, 
	the model $(\mathbb{H}',\textsc{ml},\mu_{\textsc{u}})$ is stable for $\mu_{\textsc{u}}$ the uniform distribution on $\maV$. 
	Indeed, we have $\mu_{\textsc{u}}\in\mathscr S_1(\mbH')$.  
	To see this, first observe that $\forall q\geq 5,\;\frac{2q^4-9q^3+12q^2-13q+12}{6q^2+10q+24}>1,$ see Figure \ref{fig:MinMaxq>=5}. Moreover, it is immediate that 
	$\mu_{\textsc{u}}\in\mathscr N_3^-(\mbH')$. 
	It remains to show that $\mu_{\textsc{u}}\in\mathscr N_2(\mbH')$. 
	We proceed in three steps. First, for $q=5$ the only incomplete $3$-uniform hypergraph in the sense of Theorem \ref{thm:suff3unifincomplet} is the complete hypergraph on $\llbracket 1,5 \rrbracket$ minus one vertex, say $\{1,2,3\}$. 
	It is then easily seen that $\{4,5\}$ is the only minimal transversal of $\mbH'$. So $\tau(\mbH')=2$, in a way that for 
	all $T'\in\mathcal T(\mbH')$, $\mu_{\textsc{u}}(T')\ge 2/5> 1/3,$ 
	showing that $\mu_{\textsc{u}} \in \mathscr N_2(\mbH')$. 
	
	Now, if $q=6$ there are two incomplete $3$-uniform hypergraph in the sense of Theorem \ref{thm:suff3unifincomplet}: 
	the complete $3$-uniform hypergraph on $\llbracket 1,6 \rrbracket$ minus one hyperedge, say $\{1,2,3\}$; and the 
	complete $3$-uniform hypergraph on $\llbracket 1,6 \rrbracket$ minus two disjoint hyperedges, say $\{1,2,3\}$ and $\{4,5,6\}$. In both cases, $\{4,5,6\}$ is a minimal transversal of $\mbH'$, thus $\tau(\mbH')=3$, and so 
	$\mu_{\textsc{u}}(T')\ge 3/6 > 1/3,$ for all $T'\in\mathcal T(\mbH)$, 
	proving again that $\mu_{\textsc{u}} \in \mathscr N_2(\mbH')$. 
	
	We now address the case where $q>6$. First observe that 
	\begin{equation}
	\label{eq:trampo1}
	q-2-\left\lfloor {q\over 3}\right\rfloor > {q\over 3}.
	\end{equation}
	Then, let $p=|\mathcal J|$ (using the notation of Theorem \ref{thm:suff3unifincomplet}), 
	and denote 
	$\mathcal J=\{H_1,...,H_p\}$. It is easily seen that a transversal of $\mbH$ can be constructed 
	from any minimal transversal of $\mbH'$, by induction, as follows: 
	\begin{itemize}
		\item Take a minimal transversal $T'$ of $\mbH'$, 
		and set $\maH_0:=\maH'$ and $T_0:=T'$. 
		\item For any  $i=1,...,p$, set $\maH_i = \maH_{i-1} \cup \{H_i\}$ and set $T_i$, a 
		transversal of $(\maV,\maH_i)$ of minimal size among those including 
		$T_{i-1}$.  (${T_{i}}$ necessarily exists since $T_{i-1}\cup \{H_i\}$ is a transversal of $(\maV,\maH_{i})$, as easily seen by induction.) 
		\item We obtain $\maH=\maH_p$ by construction, and $T:=T_p$ is a transversal of $\mbH$. 
	\end{itemize}
	We claim that 
	\begin{equation}
	\label{eq:trampo2}
	|T| \le |T'|+p.
	\end{equation}
	To see this, observe that for any $i=1,...,p$ we have the following alternative: either $H_i \cap T_{i-1} = \emptyset$, in which case we can take $T_i $ of the form $T_{i-1}\cup \{k\}$ for any $k\in H_i$, or 
	$H_i \cap T_{i-1} \ne  \emptyset$, in which case $T_i = T_{i-1}$. In all cases we have that $|T_i| \le |T_{i-1}| + 1$, and (\ref{eq:trampo2}) follows by induction. 
	Observing that $|T| \ge \tau(H)=q-2$, that, as the $H_i$'s are disjoint, 
	$p\le \lfloor {q\over 3}\rfloor$,  and using (\ref{eq:trampo1}) and (\ref{eq:trampo2}), we finally obtain that 
	\[\mu_{\textsc{u}}(T') = {|T'| \over q} \ge {|T|-p\over q} >{1\over 3},\]
	hence, once again $\mu_{\textsc{u}} \in \mathscr N_2(\mbH')$. 
	
	To conclude, $\mu_{\textsc{u}}$ is in all cases, an element of $\mathscr S_1(\mbH')$, implying that the model $(\mathbb{H}',\textsc{ml},\mu_{\textsc{u}})$ is stable for all such $\mbH'$. 
\end{ex}

	\section{Discussion of results and conclusion} \label{discussion of results1}
In this chapter, we have studied a generalization of stochastic matching models on graphs, by allowing the matching structure to be a hypergraph. 
This class of models appears to have a wide range of applications in operations management, healthcare, and assemble-to-order systems. 
After formally introducing the model, we have proposed a simple Markovian representation, under IID assumptions. 
We have then addressed the general question of stochastic stability, viewed as the positive recurrence of the underlying Markov chain. For this class of systems, solving this elementary and central question turns out to be an intricate problem. 
As the results of Sections \ref{sec:Ncond} and \ref{sec:unstable} demonstrate, stochastic matching models on hypergraphs are in general, difficult to stabilize. Unlike the GM on graphs, the non-emptiness of the stability region with matching models on hypergraphs depends on a collection of conditions in the geometry of the compatibility hypergraph: rank, anti-rank, degree, size of the transversals, existence of cycles, and so on. \\
	
Nevertheless, we show in Section \ref{sec:stable} that the ``house'' of stable systems is not empty, but shelters models on various uniform hypergraphs that are complete, or complete up to a partition of their nodes (which is a reasonable assumption regarding kidney exchange programs with $3$-cycles, in which case, according to the compatibility of blood types and immunological characteristics, most but not all hyperedges of size $3$ appear in the compatibility graph). We provide the exact stability region of the system in the first case, and a lower bound in the second. For this, we resort to ad-hoc multi-dimensional Lyapunov techniques. 
	

	\clearpage

\pagestyle{empty}
\chapter{Multigraphs}
\label{chap5:Multigraph}
\pagestyle{fancy}
\def\I{{\mathbb I}}
\def\maV{{\mathcal V}}
\def\maE{{\mathcal E}}
\def\maI{{\mathcal I}}
\def\maM{{\mathcal M}}
\section*{Introduction}\label{chap5: intro}

In this chapter we provide a further extension of the previous chapter, we introduced a new stochastic matching model on hypergraph, that generalizes the GM model, then we provide the necessary conditions of stability for the present model, and we precise the stability region in particular cases of GM model. Such as among dating websites and peer-to-peer interfaces it is possible to assume that the items of the same class can be matched together. Hence the need to consider matching models on compatibility matching structures that are \textit{multigraph} rather than just a graph or hypergraph, i.e., an architecture of graph admitting self-loops that is, edges with permission to connect to themself. {\em En route}, by showing results for stochastic matching models on multigraphs, we show various results that have their inner interest for GM models on graphs - see in particular Propositions \ref{prop:ncond} and  \ref{prop:extppartite}. 
This stochastic matching model addressed in this chapter is formally defined as follows: items enter the system by single arrivals, and get matched by groups of 2 or possibly matched to itself in case that there exists  \textit{self-loops}, following compatibilities that are represented by a given multigraph. 
A matching policy determines the matchings to be executed in the case of a multiple-choice, and the unmatched items are stored in a buffer, waiting for a future match.\\

This chapter is organized as follows: In Section \ref{sec:results}, we present our main results for GM models on multigraphs, among which, the maximality and the explicit product form of the stationary probability for the {\sc fcfm} policy, and the maximality of Max-Weight policies. To illustrate these results, several examples
are presented in Section \ref{sec:examples}. The proofs of our main results are then presented in
Sections \ref{sec:ncond}, \ref{sec:FCFM} and \ref{sec:otherproofs}.


\section{Main results}
\label{sec:results}
We now state the main results of this chapter. 
Similarly to \cite{MaiMoy16}, we will be led to consider the set   
\begin{equation}
\label{eq:Ncond}
\textsc{Ncond}(G)= \left\{\mu \in \mathscr M\left(\maV\right):\forall\, I \in\, \I(G),\; \mu\left( I\right) < \mu\left(\maE\left( I\right)\right)\right\}. 
\end{equation}
Let us immediately observe that
\begin{lemma}
	\label{lemma:Ncond}
	For any connected multigraph $G$, we have that 
	\[\textsc{Ncond}\left(\check{G}\right) \subseteq\textsc{Ncond}(G).\]
	Where $\check{G}=(\maV,\check{\maE})$ is the maximal subgraph of $G$ (see Definition \ref{def:restricted}).
\end{lemma}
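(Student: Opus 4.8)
The plan is to exploit two elementary facts: that passing from $G$ to its maximal subgraph $\check G$ only removes self-loops, and that independent sets of $G$ never meet the self-looped nodes. These combine to show that the defining inequalities of $\textsc{Ncond}(\check G)$ are a superset of those of $\textsc{Ncond}(G)$, in the sense needed for the claimed inclusion.

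First I would observe that $\mathbb{I}(G) \subseteq \mathbb{I}(\check G)$. Indeed, by Definition \ref{def:restricted} we have $\check{\maE} = \maE \setminus \{(i,i) : i \in \maV_1\} \subseteq \maE$, so any set $I$ containing no edge of $G$ a fortiori contains no edge of $\check G$; hence every independent set of $G$ is an independent set of $\check G$. Next I would recall the observation made just after Definition \ref{def:extended}: for any $I \in \mathbb{I}(G)$ one has $I \cap \maV_1 = \emptyset$, i.e. $I \subseteq \maV_2$, since each $i \in \maV_1$ carries the self-loop $\{i\} \in \maE$ and therefore cannot belong to an independent set of $G$.

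The key step is then to verify that the two neighborhood operators agree on such sets, namely that for every $I \in \mathbb{I}(G)$,
\[
\maE(I) = \check{\maE}(I).
\]
This holds because $G$ and $\check G$ differ only by the self-loops attached to the nodes of $\maV_1$. Since $I \subseteq \maV_2$, no node $u \in I$ carries a self-loop, so every edge incident to $u$ is of the form $\{u,v\}$ with $v \neq u$, and such an edge belongs to $\maE$ if and only if it belongs to $\check{\maE}$ (whether or not the endpoint $v$ lies in $\maV_1$). Taking the union over $u \in I$ yields the identity. This is really the only point demanding care; everything else is bookkeeping.

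Finally I would combine the pieces. Let $\mu \in \textsc{Ncond}(\check G)$ and fix an arbitrary $I \in \mathbb{I}(G)$. By the first step $I \in \mathbb{I}(\check G)$, so the defining inequality of $\textsc{Ncond}(\check G)$ gives $\mu(I) < \mu(\check{\maE}(I))$; by the neighborhood identity $\check{\maE}(I) = \maE(I)$ this reads $\mu(I) < \mu(\maE(I))$. As $I \in \mathbb{I}(G)$ was arbitrary, this shows $\mu \in \textsc{Ncond}(G)$, and hence $\textsc{Ncond}(\check G) \subseteq \textsc{Ncond}(G)$, as claimed.
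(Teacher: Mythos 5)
Your proof is correct and follows exactly the paper's argument: the paper's own proof simply cites the two facts $\mathbb{I}(G)\subseteq\mathbb{I}(\check G)$ and $\maE(I)=\check{\maE}(I)$ for $I\in\mathbb{I}(G)$ (using $I\subseteq\maV_2$), which you verify in the same way and combine identically. Your write-up merely spells out the bookkeeping the paper leaves implicit.
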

\begin{proof}
	The result simply follows from the obvious facts that $\mathbb I(G)\subset \mathbb I\left(\check G\right)$ and that, for any $ I\in\mathbb I(G)$, 
	$\maE( I)=\check{\maE}( I)$, since $ I\subset\maV_2$. 
\end{proof}
It is stated in Theorem 1 of \cite{MaiMoy16} that, if $G$ is a graph, the set {\sc Ncond}$(G)$ is non-empty if and only if $G$ is not a bipartite graph. 
This result can be generalized to multigraphs: 
\begin{proposition}
	\label{prop:ncond} 
	For any connected multigraph $G$, we have that 
	\[\textsc{Ncond}(G) =\emptyset \Longleftrightarrow \mbox{ $G$ is a bipartite graph.}\] 
\end{proposition}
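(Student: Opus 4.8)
The plan is to reduce \emph{both} directions to the already-known graph case, namely Theorem~1 of \cite{MaiMoy16}, which states that for a \emph{graph} $G$ the set $\textsc{Ncond}(G)$ is empty if and only if $G$ is bipartite. The first observation is that the phrase ``$G$ is a bipartite graph'' forces $\maV_1=\emptyset$ (no self-loops), so the implication $(\Leftarrow)$ is immediate: if $G$ is a bipartite graph it is in particular an ordinary graph, and Theorem~1 of \cite{MaiMoy16} gives $\textsc{Ncond}(G)=\emptyset$.

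For the forward implication I would argue by contrapositive: assuming $G$ is \emph{not} a bipartite graph, I will exhibit a measure in $\textsc{Ncond}(G)$. There are two cases. If $\maV_1=\emptyset$, then $G$ is a graph which, not being bipartite, already satisfies $\textsc{Ncond}(G)\neq\emptyset$ by Theorem~1 of \cite{MaiMoy16}. The substantive case is $\maV_1\neq\emptyset$, i.e. $G$ carries at least one self-loop. If $|\maV|=1$, the unique node bears a self-loop, there is no nonempty independent set, the defining condition of $\textsc{Ncond}$ is vacuous, and $\textsc{Ncond}(G)=\mathscr M(\maV)\neq\emptyset$. So I may assume $|\maV|\ge 2$ and $\maV_1\neq\emptyset$, and pass to the minimal blow-up graph $\hat G=(\hat\maV,\hat\maE)$ of Definition~\ref{def:extended}.

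The first key step is to check that $\hat G$ is a connected non-bipartite \emph{graph}. Connectedness descends from that of $G$: each copy $\underline i$ ($i\in\maV_1$) attaches via $\underline{\maE_1}$ to all neighbours of $i$, and since $G$ is connected with at least two nodes, every $i\in\maV_1$ has a neighbour $j\neq i$, so $i,\underline i$ both link to the rest through $j$. Non-bipartiteness comes from the same $i,j$: the three edges $i\v\underline i$ (the former self-loop, now in $\underline{\maE_1}$), $\underline i\v j$ and $j\v i$ form a triangle, i.e.\ an odd cycle, in $\hat G$. Hence Theorem~1 of \cite{MaiMoy16} applies to $\hat G$ and yields some $\hat\mu\in\textsc{Ncond}(\hat G)$. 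I then \emph{fold} $\hat\mu$ back onto $\maV$ by setting $\mu(i)=\hat\mu(i)+\hat\mu(\underline i)$ for $i\in\maV_1$ and $\mu(i)=\hat\mu(i)$ for $i\in\maV_2$; since $\hat\mu$ has full support, so does $\mu$, whence $\mu\in\mathscr M(\maV)$.

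The final, and most delicate, step is transferring the inequalities. Every nonempty $I\in\I(G)$ satisfies $I\subset\maV_2$, and because $\hat\maE$ and $\maE$ coincide on pairs of $\maV_2$-vertices (deleting self-loops and adjoining copies does not alter adjacencies inside $\maV_2$), such $I$ is also independent in $\hat G$. One then establishes the two bookkeeping identities $\mu(I)=\hat\mu(I)$ and, using the neighbourhood description $\hat\maE(I)=\maE(I)\cup\underline{\maE(I)\cap\maV_1}$ together with the fact that for $I\subset\maV_2$ the $G$- and $\check G$-neighbourhoods of $I$ agree, the mass identity $\mu(\maE(I))=\hat\mu(\hat\maE(I))$. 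These convert the strict inequality $\hat\mu(I)<\hat\mu(\hat\maE(I))$ into $\mu(I)<\mu(\maE(I))$ for every nonempty $I\in\I(G)$, giving $\mu\in\textsc{Ncond}(G)$ and completing the argument. I expect the main obstacle to be exactly this last step: correctly unwinding $\hat\maE=\check\maE\cup\underline{\maE_1}$ to justify $\hat\maE(I)=\maE(I)\cup\underline{\maE(I)\cap\maV_1}$ and verifying that the folding preserves the neighbourhood mass exactly, so that no inequality is lost.
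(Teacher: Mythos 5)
Your proof is correct and follows the same core strategy as the paper: pass to the minimal blow-up graph $\hat G$, observe that a self-loop at $i$ together with a neighbour $j\neq i$ produces the triangle $i\v\underline i\v j\v i$ so that $\hat G$ is a connected non-bipartite graph, and then invoke Theorem~1 of \cite{MaiMoy16}. The only genuine difference is in how the measure is transported back to $\maV$. The paper routes this step through Lemma~\ref{lemma:equivalenceNcond}, which is stated for the \emph{symmetric} extension $\hat\mu_{1/2}$ (with $\hat\mu_{1/2}(i)=\hat\mu_{1/2}(\underline i)=\tfrac12\mu(i)$); as written, concluding $\textsc{Ncond}(G)\neq\emptyset$ from $\textsc{Ncond}(\hat G)\neq\emptyset$ therefore tacitly requires that $\textsc{Ncond}(\hat G)$ contain a symmetric measure (which can be arranged by averaging a measure with its image under the automorphism swapping $i$ and $\underline i$, using convexity of $\textsc{Ncond}(\hat G)$, but the paper does not say this). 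You instead take an arbitrary $\hat\mu\in\textsc{Ncond}(\hat G)$ and fold it via $\mu(i)=\hat\mu(i)+\hat\mu(\underline i)$, proving directly the one implication you need; your identities $\hat\maE(I)=\maE(I)\cup\underline{\maE(I)\cap\maV_1}$ and $\mu(\maE(I))=\hat\mu(\hat\maE(I))$ for $I\subset\maV_2$ are exactly the computation in the ``$\Longleftarrow$'' half of the paper's lemma, which indeed goes through for any extension. This is a slightly cleaner route that sidesteps the symmetrization issue; you also explicitly cover the degenerate one-node case and the connectedness of $\hat G$, which the paper leaves implicit.
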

\noindent Proposition \ref{prop:ncond} is proven in Section \ref{sec:ncond}. 
From Proposition 2 in \cite{MaiMoy16}, whenever $G$ is a graph (i.e., $\maV_1=\emptyset$), 
the set $\textsc{Stab}(G,\Phi)$ is included in $\textsc{Ncond}(G)$ for any admissible policy $\Phi$. In other words, for any measure $\mu$, 
belonging to $\textsc{Ncond}(G)$ is necessary for the stability of the system $(G,\Phi,\mu)$, for any $\Phi$. 
A similar result holds for any multigraph $G$:

\begin{proposition}
	\label{thm:mainmono} 
	For any connected multigraph $G=(\maV,\maE)$ and any admissible matching policy
	$\Phi$, we have that 
	\[\textsc{Stab}(G,\Phi) \subset \textsc{Ncond}(G).\]
\end{proposition}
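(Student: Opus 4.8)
The plan is to argue by contraposition, mirroring the proof of Proposition \ref{prop:Ncond1} in the hypergraphical case, and to exploit the fact (observed just before Definition \ref{def:restricted}) that every independent set $I \in \mathbb I(G)$ satisfies $I \subset \maV_2$, so that no node of $I$ carries a self-loop. Suppose $\mu \notin \textsc{Ncond}(G)$: there exists $I \in \mathbb I(G)$ with $\mu(I) \ge \mu(\maE(I))$. I will show that $(G,\Phi,\mu)$ is then unstable, for any admissible $\Phi$.

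First I would record the conservation identity analogous to (\ref{eq:base}). Because $I$ is independent and contained in $\maV_2$, any edge of $\maE$ meeting $I$ meets it in exactly one vertex (two vertices of $I$ cannot be adjacent, and a self-loop on a node of $I$ is excluded), and the second endpoint of such an edge necessarily lies in $\maE(I)$. Consequently every match that removes an item of class in $I$ pairs it with exactly one item of class in $\maE(I)$, and since distinct matches consume distinct arrivals, denoting by $M_n^{I}$ the number of matches up to $n$ incident to $I$, by $A_n(B)$ the number of arrivals of classes in $B$, and by $X_n=[W_n]$ the class detail,
\[ X_n(I) = A_n(I) - M_n^{I}, \qquad M_n^{I} \le A_n(\maE(I)). \]
This yields, for all $n$,
\[ \frac{X_n(I)}{n} \ge \frac{A_n(I)}{n} - \frac{A_n(\maE(I))}{n}. \]

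In the strict case $\mu(I) > \mu(\maE(I))$, I would apply the SLLN to the right-hand side: since $I$ is independent, $I$ and $\maE(I)$ are disjoint, so $A_n(I)/n \to \mu(I)$ and $A_n(\maE(I))/n \to \mu(\maE(I))$ a.s., whence $\liminf_n X_n(I)/n \ge \mu(I) - \mu(\maE(I)) > 0$. Thus $X_n(I)\to\infty$ a.s. and the chain $\suite{W_n}$ is transient, so $\mu \notin \textsc{Stab}(G,\Phi)$.

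The delicate boundary case $\mu(I) = \mu(\maE(I))$ is where I expect the main obstacle. Here I would introduce the centred random walk $Y_n = A_n(I) - A_n(\maE(I))$. As $\mu$ has full support and $I,\maE(I)$ are nonempty and disjoint, its increments take the values $+1,-1,0$ with positive probabilities and zero mean, so $\suite{Y_n}$ is a genuinely nondegenerate zero-drift random walk on $\Z$, hence null recurrent. Since $X_n(I) \ge Y_n$ for all $n$, positive recurrence of $\suite{W_n}$ — which would force $\suite{X_n}$ to return to $\mazero$ infinitely often with inter-passage times of finite expectation — would, exactly as in the conclusion of the proof of Proposition \ref{prop:Ncond1}, force $\suite{Y_n}$ to be positive recurrent, contradicting its null recurrence. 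This rules out $\mu \in \textsc{Stab}(G,\Phi)$ in the equality case as well and completes the contrapositive. The only genuinely new ingredient compared with the graph case of \cite{MaiMoy16} is the verification that self-loops play no role, which is guaranteed precisely by $I \subset \maV_2$.
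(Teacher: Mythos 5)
Your proof is correct and follows exactly the route the paper intends: the paper merely states that the proof is "analog to that of Proposition 2 in \cite{MaiMoy16}", and you have written out that analogy, with the one multigraph-specific ingredient (that any $I\in\mathbb I(G)$ satisfies $I\subset\maV_2$, so self-loops never interfere with the conservation identity) correctly identified and justified. The treatment of the boundary case via the nondegenerate zero-drift walk $Y_n$ also matches the paper's own handling of the analogous case in Proposition \ref{prop:Ncond1}.
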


\begin{proof}
	The proof is analog to that of Proposition 2 in \cite{MaiMoy16}. 
\end{proof}

\noindent Hence, the notion of maximality of a matching policy is:

\begin{definition}
	For any connected multigraph $G$ that is not a bipartite graph, a matching policy $\Phi$ is said maximal if the sets 
	$\textsc{Stab}(G,\Phi)$ and  $\textsc{Ncond}(G)$ coincide.
\end{definition}

Whenever $G$ is a graph, Theorem 1 of \cite{MBM17} shows, first, that the policy `First Come, First Matched' ({\sc fcfm}) is maximal, and second, that  the stationary probability of the chain $\suite{W_n}$ can be expressed in a remarkable product form. 
We generalize this result to multigraphs: 
\begin{theorem}
	\label{thm:FCFM}
	The matching policy `First Come, First Matched' is maximal: for any connected 
	multigraph $G$ that is not a bipartite graph, we have that {\sc Stab}$(G,\textsc{fcfm})=\textsc{Ncond}(G)$.  
	Moreover, for any $\mu\in\textsc{Ncond}(G)$ the unique stationary probability $\Pi_W$ of the chain $\left(W_n\right)_{n\in\mathbb{N}}$ is defined by 
	\begin{equation*}
	\left\{\begin{array}{ll}
	\Pi_W(\varepsilon) &=\alpha;\\
	\Pi_W(w)&=\alpha\displaystyle\prod\limits_{l=1}^q \frac{\mu(w_l)}{\mu(\mathcal{E}(\{w_1,\dots,w_l \}))},
	\mbox{ for all }w=w_1\dots w_q\in\mathbb{W}\setminus\{\varepsilon\}, 
	\end{array}\right.\end{equation*}
	where
	\begin{equation}
	\label{eq:defalpha}
	\alpha^{-1}=\!1\!+\!\sum\limits_{ I\in\mathbb{I}\left(\check G\right)}\sum\limits_{\sigma\in\mathfrak{S}_{| I|}}\prod\limits_{i=1}^{| I|} \frac{\mu\left(e_{\sigma(i)}\right)}{\mu(\mathcal{E}(\{e_{\sigma(1)},\dots,e_{\sigma(i)}\}))-\mu(\{e_{\sigma(1)},\dots,e_{\sigma(i)}\}\cap\maV_2)},
	\end{equation}
	and where we denote  $ I=\{e_1,\dots,e_{| I|}\}$ for any $ I\in\mathbb{I}\left(\check G\right)$. 
\end{theorem}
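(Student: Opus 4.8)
The plan is to prove both assertions at once by showing that the measure $\Pi_W$ displayed in the statement is a stationary measure of the chain $\suite{W_n}$ under \textsc{fcfm}, and that it is summable — hence a genuine probability — if and only if $\mu\in\textsc{Ncond}(G)$. Combined with the already-established inclusion $\textsc{Stab}(G,\Phi)\subset\textsc{Ncond}(G)$ of Proposition \ref{thm:mainmono}, this gives $\textsc{Ncond}(G)\subset\textsc{Stab}(G,\textsc{fcfm})$ on the one hand, and the explicit form of the stationary law on the other, so that the two sets coincide and \textsc{fcfm}\ is maximal. The whole argument adapts the dynamic-reversibility scheme of \cite{ABMW17,MBM17} from graphs to multigraphs, the genuinely new ingredient being the self-loops, which cap the occupancy of every node of $\maV_1$ at one item while leaving the nodes of $\maV_2$ with unbounded queues.

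First I would fix the forward dynamics from the definition of $\odot_{\textsc{fcfm}}$: an incoming class-$v$ item is appended when no compatible item is present, and otherwise removes the oldest compatible item — its own stored copy when $v\in\maV_1$ and one is present, and otherwise the oldest stored neighbour. The core step is to verify the global balance equations $\Pi_W(w)=\sum_{(w',v):\,w'\odot_{\textsc{fcfm}}v=w}\Pi_W(w')\mu(v)$. Rather than expand these head-on, I would exhibit the time-reversed chain and check reversibility in the generalized sense $\Pi_W(w)P(w,w')=\Pi_W(w')\widehat P(w',w)$, where $\widehat P$ is the analogous matching dynamics read from the opposite end of the word (an \textsc{lcfm}-type reversal on the reversed arrival stream). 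The telescoping structure of the ratios $\mu(w_l)/\mu(\maE(\{w_1,\dots,w_l\}))$ is exactly what makes this collapse: appending or deleting one letter multiplies $\Pi_W$ by a single such factor, which is to be matched against the one-step transition weight $\mu(v)$.

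Next I would sum $\Pi_W$ over $\mathbb{W}$ to recover the normalizing constant. The structural fact here is that the set of distinct letters of any admissible word is an independent set of $\check G$, since adjacent classes cannot coexist and each $\maV_1$-class occurs at most once. Grouping words by their support $I=\{e_1,\dots,e_{|I|}\}\in\mathbb{I}(\check G)$ and by the order of first occurrence (encoded by a permutation $\sigma$ of $I$), and then summing over the number and placement of the additional copies of the already-present $\maV_2$-letters — the $\maV_1$-letters contributing multiplicity at most one, hence no geometric series — produces precisely the denominators $\mu(\maE(\{e_{\sigma(1)},\dots,e_{\sigma(i)}\}))-\mu(\{e_{\sigma(1)},\dots,e_{\sigma(i)}\}\cap\maV_2)$ of \eqref{eq:defalpha}. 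Each geometric ratio is strictly below $1$ exactly when $\mu(\{e_{\sigma(1)},\dots,e_{\sigma(i)}\}\cap\maV_2)<\mu(\maE(\{e_{\sigma(1)},\dots,e_{\sigma(i)}\}))$, which holds for all independent sets precisely when $\mu\in\textsc{Ncond}(G)$; this simultaneously ensures the convergence of $\alpha^{-1}$ and, by the positive-recurrence criterion of Theorem \ref{the:LyapunovFosterTheorem}, the inclusion $\textsc{Ncond}(G)\subset\textsc{Stab}(G,\textsc{fcfm})$.

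The hard part will be the self-loop bookkeeping in the reversibility step. In the pure-graph setting of \cite{MBM17} every independent-set coordinate ranges over all of $\N$, so the reversed dynamics and the geometric summation are uniform across classes; here the transitions at a node $i\in\maV_1$ are intrinsically different, an arriving $i$-item either opening the single slot or closing it by a self-match, and this truncation at occupancy one must be shown compatible both with the claimed product form and with the $\cap\,\maV_2$ correction in \eqref{eq:defalpha}. Checking the balance equations at the boundary states — where a self-looped class is present and an incoming item of that class forces a self-match in place of the append that would violate $|w|_i\le 1$ — is the delicate point, and it is there that the generalization must be argued separately from the generic $\maV_2$ case.
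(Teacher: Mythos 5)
Your overall strategy (exhibit the stationary law, show it is summable exactly on $\textsc{Ncond}(G)$, and combine with Proposition \ref{thm:mainmono}) is the right one, and your computation of the normalizing constant by grouping words of $\mathbb{W}$ according to their support $I\in\mathbb{I}(\check G)$ and first-occurrence order, with geometric series only over the repeatable $\maV_2$-letters, does reproduce \eqref{eq:defalpha} correctly. The gap is in the first and central step: the claim that ``appending or deleting one letter multiplies $\Pi_W$ by a single such factor'' is false, and with it the proposed detailed-balance verification on $\mathbb{W}$ collapses. Under \textsc{fcfm} the matched item is the \emph{oldest} compatible one, i.e.\ a letter at an arbitrary position $i$ of $w=w_1\dots w_q$, and the product form is order-dependent through the prefix sets $\maE(\{w_1,\dots,w_l\})$: deleting $w_i$ changes every factor with $l\ge i$ for which $w_i$ does not reappear earlier in the prefix. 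Concretely, on the pendant graph with $\maE(1)=\{2\}$, $\maE(3)=\{2,4\}$, the transition $31\to 1$ (arrival of a $4$-item matching the oldest letter $3$) gives $\Pi_W(1)/\Pi_W(31)=(\mu(2)+\mu(4))^2/(\mu(2)\mu(3))$, which is not of the form $\mu(v)/\mu(\cdot)$ for any single set; there is no identification of the required reverse weights with a bona fide matching dynamics, nor any argument that they form a stochastic kernel. This is precisely the obstruction that the paper's proof is built to avoid.

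The paper instead works on an enlarged state space: it introduces the \emph{backward detailed chain} $(B_n)$ over the alphabet $\maV\cup\overline{\maV}$, in which matched items leave barred placeholders at the positions of their matches, so that the word only ever shrinks from the front and the combinatorial structure of the prefix is preserved. Reversibility is then established not on $\mathbb{W}$ but as a pairing between $(B_n)$ and the forward detailed chain $(F_n)$ through the involution $\w\mapsto\overleftarrow{\overline{\w}}$ and the product measure $\nu$ (Proposition \ref{prop:lienB_nF_n}); stationarity of $\nu_B$, finiteness of $\nu_B(\mathbb{B})$ under $\textsc{Ncond}(G)$, and irreducibility give positive recurrence of $(B_n)$, and $\Pi_W$ is finally obtained by projecting $\Pi_B$ through the restriction map $p:\mathbb{B}\to\mathbb{W}$ via Lemma \ref{lem:projCM} — the geometric sums over the placeholder sets $\A_i^*$ are what produce the factors $\mu(w_l)/\mu(\maE(\{w_1,\dots,w_l\}))$. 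Your instinct that the self-loops are the delicate new ingredient is partly right (they enter through the sets $\B_{\sigma(i)}$ and the $-\mu(\cdot\cap\maV_2)$ correction in $\alpha$), but the step your proposal cannot complete is already present in the pure-graph case: without the detailed chains, the stationarity of the product form on $\mathbb{W}$ is not accessible by a one-step balance argument.
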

\noindent Theorem \ref{thm:FCFM} is proven in section \ref{sec:FCFM}. 
\begin{remark}
	\label{rem:fini}
	If the model is finite, i.e., all nodes of $G$ have self-loops or in other words, $\maV_2=\emptyset$, then it readily follows from 
	Theorem \ref{thm:FCFM} that the unique stationary probability on the finite state space $\mathbb W$, is given by 
	\begin{equation*}
	\left\{\begin{array}{ll}
	\Pi_W(\varepsilon) &=\alpha;\\
	\Pi_W\left(e_{\sigma(1)} \cdots e_{\sigma(| I|)}\right)&=\alpha\displaystyle\prod\limits_{i=1}^{| I|} \frac{\mu\left(e_{\sigma(i)}\right)}{\mu(\mathcal{E}(\{e_{\sigma(1)},\dots,e_{\sigma(i)}\}))},\mbox{ for all } I\in \mathbb I(\check G),\,\sigma \in \mathfrak{S}_{| I|},
	\end{array}\right.\end{equation*}
	with the normalizing constant 
	\begin{equation*}
	\alpha=\left[\!1\!+\!\sum\limits_{ I\in\mathbb{I}\left(\check G\right)}\sum\limits_{\sigma\in\mathfrak{S}_{| I|}}\prod\limits_{i=1}^{| I|} \frac{\mu\left(e_{\sigma(i)}\right)}{\mu(\mathcal{E}(\{e_{\sigma(1)},\dots,e_{\sigma(i)}\}))}\right]^{-1}\cdot 
	\end{equation*}
\end{remark}

On another hand, as is shown in Theorem 5.3 of \cite{JMRS20}, all Max-Weight matching policies such that $\beta>0$ are maximal 
whenever $G$ is a graph. (In particular, the maximality of `Match the Longest' ({\sc ml}) for GM models was first proven in Theorem 2 of \cite{MaiMoy16}, as a consequence of the corresponding result for EBM models, (see Theorem 7.1 of \cite{BGM13}).  
This result can also be generalized to multigraphs: 
\begin{theorem}
	\label{thm:ML}
	Any Max-Weight policy $\Phi$ such that $\beta>0$ is maximal: for any multigraph $G$ that is not a bipartite graph, we have that 
	{\sc Stab}$(G,\Phi)=\textsc{Ncond}(G)$. 
	
\end{theorem}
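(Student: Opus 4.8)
The inclusion $\textsc{Stab}(G,\Phi)\subset\textsc{Ncond}(G)$ holds for every admissible policy by Proposition \ref{thm:mainmono}, so the whole content of the theorem is the reverse inclusion $\textsc{Ncond}(G)\subset\textsc{Stab}(G,\Phi)$. Fix a Max-Weight policy $\Phi$ with parameter $\beta>0$ and rewards $(w_S)_{S\in\maE}$, and fix $\mu\in\textsc{Ncond}(G)$; the aim is to show that the class-detail chain $\suite{X_n}$ is positive recurrent. The plan is to adapt the drift argument behind Theorem 5.3 of \cite{JMRS20} for graphs, the new ingredient being the treatment of the self-looped nodes $\maV_1$, whose queues are capped at $1$, together with the doubled draining effect of a self-loop, which removes two same-class items at once.

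First I would work with the quadratic Lyapunov function $L(x)=\sum_{i\in\maV}x(i)^2$ and estimate the one-step drift $\esp{L(X_{n+1})-L(X_n)\mid X_n=x}$. Using the update (\ref{eq:defccc}), this drift splits into the contribution of an unmatched arrival, which is bounded, and, when a match of some edge $S$ is performed, a term of order $-2x(S)$ with $x(S)=\sum_{i\in S}x(i)$ (and of order $-4x(i)$ when $S=\{i\}$ is a self-loop). The defining property of Max-Weight, namely that the chosen edge maximizes $\beta x(S)+w_S$ over $\maP(x,v)$, then yields the crucial comparison: up to a bounded additive error, the drift of the Max-Weight chain is no larger than the drift obtained by matching the incoming item according to any fixed randomized admissible rule. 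This ``Max-Weight dominates every randomized rule'' inequality is what converts a single well-chosen benchmark into negative drift for $\Phi$ itself.

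It then remains to exhibit one randomized benchmark producing strictly negative drift outside a finite set. Here the condition $\mu\in\textsc{Ncond}(G)$ should be encoded into the existence of a strictly feasible fractional matching, i.e. rates $(r_S)_{S\in\maE}$ with $r_S\ge 0$ draining every unbounded queue $i\in\maV_2$ strictly faster than its arrival rate $\mu(i)$, where a self-loop $\{i\}$ is counted with weight $2$ since it removes two items of class $i$ per match, while each queue of $\maV_1$ stays bounded. Turning these rates into a rule that, upon each arrival, matches the incoming item to a compatible edge with the appropriate frequencies yields a benchmark whose drift is strictly negative for $\|x\|$ large; since Max-Weight dominates it (modulo bounded terms), the same holds for $\Phi$, the capped queues of $\maV_1$ contributing only bounded corrections. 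Positive recurrence of $\suite{X_n}$, and thereby of $\suite{W_n}$, then follows from the Lyapunov--Foster Theorem \ref{the:LyapunovFosterTheorem}.

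The main obstacle is precisely this last combinatorial step: showing that $\mu\in\textsc{Ncond}(G)$ guarantees such a strictly feasible fractional matching, with the self-loops correctly weighted by the factor $2$. I would establish it by a max-flow/min-cut argument on an auxiliary network built from $G$ (equivalently, by passing to the minimal blow-up graph $\hat G$ of Definition \ref{def:extended}, on which the task reduces to the ordinary fractional-matching feasibility problem for graphs), and by verifying that the binding cuts correspond exactly to the independent sets $I\in\mathbb I(G)$ entering the definition (\ref{eq:Ncond}) of $\textsc{Ncond}(G)$, so that the strict inequalities $\mu(I)<\mu(\maE(I))$ translate into the strict slack required for the benchmark drift.
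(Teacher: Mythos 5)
Your argument is correct in outline, but it takes a genuinely different route from the paper. You redo the Max-Weight drift computation from scratch on the multigraph: quadratic Lyapunov function, the pointwise ``Max-Weight dominates any static randomized rule'' inequality, and a strictly feasible fractional matching extracted from $\textsc{Ncond}(G)$ (with self-loops weighted by $2$) to serve as the benchmark. The paper instead transfers the known graph result: it shows that the extension $\hat\Phi$ of $\Phi$ to the minimal blow-up graph $\hat G$ is again of Max-Weight type, that $\mu\in\textsc{Ncond}(G)$ forces an extended measure $\hat\mu$ into $\textsc{Ncond}(\hat G)$ (Lemma \ref{lemma:equivalenceNcond}), invokes Theorem 5.3 of \cite{JMRS20} to get a uniformly negative quadratic drift for $(\hat G,\hat\Phi,\hat\mu)$, and then pulls that drift back to the multigraph chain via the comparison $\Delta^\Phi_\mu Q(w)\le\hat\Delta^{\hat\Phi}_{\hat\mu}Q(w)$ of Proposition \ref{prop:extquad}. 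The trade-off: the paper's reduction is short and reuses the graph theorem as a (nearly) black box, but it requires the careful case analysis behind Proposition \ref{prop:extquad} and needs to peek into the cited proof to know that the Lyapunov function there is quadratic; your version is self-contained on $G$ but essentially duplicates the work of \cite{JMRS20} with self-loops added. Note also that the combinatorial step you flag as the main obstacle — deducing a strictly feasible fractional matching from $\textsc{Ncond}(G)$ by passing to $\hat G$ — is precisely the content of the paper's Lemma \ref{lemma:equivalenceNcond} combined with the corresponding feasibility statement for graphs, so that step is not a gap, merely work you would need to write out (taking care that independent sets of the multigraph exclude all self-looped nodes, so the nodes of $\maV_1$ enter the flow problem only as bounded auxiliary sources, not as queues to be drained).
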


\noindent Aside from {\sc fcfm} and Max-Weight policies, we can determine, or lower-bound, the stability region of the model for particular classes 
of multigraphs. 
\medskip
Given a multigraph $G=(\maV_1\cup\maV_2,\maE),$ let us define an important class of matching policy that is called \textbf{$\maV_2$-favorable} policies and we will be taken in this chapter.


\begin{definition}
	We say that an admissible matching policy $\Phi$ on $G$ is $\maV_2$-favorable if any incoming item always prioritizes a match with a compatible 
	item of class in $\maV_2$ over a compatible item of class in $\maV_1$, whenever it has the choice. Formally, if the class detail is given by $x\in\mathbb X$ and the arrival is of class $v$, it never occurs that the incoming $v$-item is matched with a $j$-item, for some $j\in\maV_1$, while $\maP(x,v) \cap \maV_2 \ne \emptyset$. 
\end{definition}
\begin{definition} 
	Let $G$ be a connected multigraph. 
	We say that $G$ is complete $p$-partite, $p\ge 2$, if its maximal subgraph $\check G$ is complete $p$-partite. 
	Then, the minimal blow-up graph $\hat G$ (see Definition \ref{def:extended}) itself is called an {\em extended} complete $p$-partite graph. 
\end{definition}

Observe that an extended complete $p$-partite graph is {\em not} complete $p$-partite whenever the construction above is non-trivial, i.e., the multigraph in the above definition is not a graph, see an example in Figure \ref{fig:p-partiteCompletToBlow-upgraph}.
\begin{figure}[htb]
	\begin{center}
		\begin{tikzpicture}

		\fill (0,2) circle (2pt)node[above]{\scriptsize{1}};
		\fill (-1,0) circle (2pt)node[left]{\scriptsize{4}};
		\fill (1,0) circle (2pt)node[right]{\scriptsize{5}};
		\fill (-0.5,0.5) circle (2pt)node[above]{\scriptsize{$\;\;\;\;\;2$}};
		\fill (1.5,0.5) circle (2pt)node[right]{\scriptsize{3}};
		
		\draw[-] (1.5,0.5) -- (0,2);
		\draw[-] (-0.5,0.5) -- (0,2);
		\draw[-] (1,0) -- (0,2);
		\draw[-] (-1,0) -- (0,2);
		\draw[-] (-1,0) -- (1,0);
		\draw[-] (-1,0) -- (1.5,0.5);
		\draw[-] (-0.5,0.5) -- (1,0);
		\draw[-] (-0.5,0.5) -- (1.5,0.5);
		
		\draw[<-] (1.8,1) -- (2.2,1);
		
		
		\fill (4,2) circle (2pt)node[above]{\scriptsize{1}};
		\fill (3,0) circle (2pt)node[left]{\scriptsize{4}};
		\fill (5,0) circle (2pt)node[right]{\scriptsize{$\;5$}};
		\fill (3.5,0.5) circle (2pt)node[above]{\scriptsize{$\;\;\;\;\;2$}};
		\fill (5.5,0.5) circle (2pt)node[right]{\scriptsize{3}};
		
		\draw[-] (5.5,0.5) -- (4,2);
		\draw[-] (3.5,0.5) -- (4,2);
		\draw[-] (5,0) -- (4,2);
		\draw[-] (3,0) -- (4,2);
		\draw[-] (3,0) -- (5,0);
		\draw[-] (3,0) -- (5.5,0.5);
		\draw[-] (3.5,0.5) -- (5,0);
		\draw[-] (3.5,0.5) -- (5.5,0.5);
		
		\draw[thick,-] (5,0) to [out=-50,in=40,distance=11mm] (5,0);
		
		\draw[->] (5.8,1) -- (6.2,1);
		

		\fill (8,2) circle (2pt)node[above]{\scriptsize{1}};
		\fill (7,0) circle (2pt)node[left]{\scriptsize{4}};
		\fill (7.5,0.5) circle (2pt)node[above]{\scriptsize{$\;\;\;\;\;2$}};
		\fill (9,0) circle (2pt)node[right]{\scriptsize{5}};
		\fill (9.5,0.5) circle (2pt)node[right]{\scriptsize{3}};
		\fill (8,-0.5) circle (2pt)node[right]{\scriptsize{$\;\,\cop{5}$}};
		
		\draw[-] (9.5,0.5) -- (8,2);
		\draw[-] (7.5,0.5) -- (8,2);
		\draw[-] (9,0) -- (8,2);
		\draw[-] (7,0) -- (8,2);
		\draw[-] (7,0) -- (9,0);
		\draw[-] (7,0) -- (9.5,0.5);
		\draw[-] (7.5,0.5) -- (9,0);
		\draw[-] (7.5,0.5) -- (9.5,0.5);
		\draw[-] (8,-0.5) -- (9,0);
		\draw[-] (8,-0.5) -- (7.5,0.5);
		\draw[-] (8,-0.5) -- (7,0);
		\draw[-] (8,-0.5) -- (8,2);
		

		
		
		\end{tikzpicture}
		\caption{Middle: A multigraph $G.$ Left: Its maximal complete $3$-partite subgraph $\check G.$ Right: Extended complete $3$-partite graph $\hat{G}.$}
		\label{fig:p-partiteCompletToBlow-upgraph}
	\end{center}
\end{figure}
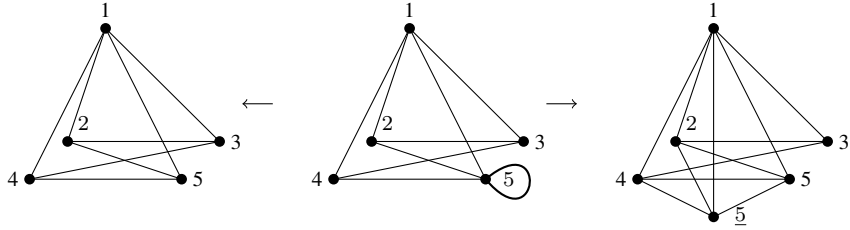

\begin{theorem}
	\label{thm:ppartite}
	Let $G$ be a complete $p$-partite multigraph, $p\ge 2$. Then, 
	\begin{enumerate}
		\item[(i)] If $p\ge 3$ or $\maV_1\neq \emptyset$, then any $\maV_2$-favorable matching policy $\Phi$ is maximal, that is, $\textsc{Stab}(G,\Phi)=\textsc{Ncond}(G).$
		\item[(ii)] If $p\ge 3$, then $\textsc{Ncond}\left(\check G\right)\subset \textsc{Stab}(G,\Phi)$, for any admissible matching policy $\Phi$. 
	\end{enumerate}  
\end{theorem}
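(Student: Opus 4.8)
The plan is to reduce the dynamics to a one‑dimensional drift analysis by exploiting the fact that, in a complete $p$‑partite multigraph, the buffer is always confined to a single part of the partition, and then to conclude with a linear Lyapunov function through the Lyapunov–Foster Theorem \ref{the:LyapunovFosterTheorem}. Write $I_1,\dots,I_p$ for the parts of the complete $p$‑partite maximal subgraph $\check G$. First I would prove the structural lemma that any admissible queue detail $w\in\mathbb W$ has all its letters in one and the same part $I_k$: two classes from distinct parts are adjacent in $\check G$, hence cannot coexist in the buffer, and the buffer, starting empty, remains within the part of its first letter (same‑part arrivals are either stored or self‑matched, different‑part arrivals immediately complete a match), while each $\maV_1$‑class occurs at most once (Remark \ref{rq:restrictionFor Multig}). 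As a consequence $\mathbb I(G)=\{I:\emptyset\ne I\subseteq I_k\cap\maV_2\}$ and $\mathbb I(\check G)=\{I:I\subseteq I_k\}$, and since every node of another part is adjacent to every node of $I_k$, any nonempty $I\subseteq I_k$ has $\maE(I)=\maV\setminus I_k$. Thus the two conditions reduce, for every $k$, to
\[
\textsc{Ncond}(G):\ \mu(I_k\cap\maV_2)<\mu(\maV\setminus I_k),\qquad \textsc{Ncond}(\check G):\ \mu(I_k)<\mu(\maV\setminus I_k).
\]

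For part (ii), fix $p\ge 3$, an admissible $\Phi$ and $\mu\in\textsc{Ncond}(\check G)$, and take the total buffer content $L(w)=|w|$ as Lyapunov function. When the buffer is nonempty, within $I_k$, an arrival from $\maV\setminus I_k$ (probability $\mu(\maV\setminus I_k)$) necessarily completes a match and lowers $L$ by one, an arrival from $I_k\cap\maV_2$ raises $L$ by one, and an arrival from $I_k\cap\maV_1$ raises $L$ by one if that class is empty and lowers it by one otherwise (self‑match). Bounding the last contribution above by $+1$ gives, for every nonempty buffer,
\[
\esp{L(W_{n+1})-L(W_n)\mid W_n=w}\ \le\ \mu(I_k)-\mu(\maV\setminus I_k)=:-\delta_k<0.
\]
With $\delta=\min_k\delta_k>0$ and $F=\{\varepsilon\}$, Theorem \ref{the:LyapunovFosterTheorem} applies (the jumps are bounded and, $\mu$ having full support, $\suite{W_n}$ is irreducible on $\mathbb W$), yielding positive recurrence, i.e. $\mu\in\textsc{Stab}(G,\Phi)$.

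For part (i), I would instead track only the $\maV_2$‑content $N_2(w)=\sum_{i\in\maV_2}|w|_i$, and here $\maV_2$‑favorability is essential. Fix $\mu\in\textsc{Ncond}(G)$ and a $\maV_2$‑favorable $\Phi$. When the buffer is nonempty with $N_2(w)>0$, within $I_k$, an external arrival $v\in\maV\setminus I_k$ is compatible with every buffered item, so a $\maV_2$‑item is among the available matches and favorability forces it to be matched, lowering $N_2$ by one; arrivals from $I_k\cap\maV_2$ raise $N_2$ by one, and arrivals from $I_k\cap\maV_1$ leave $N_2$ unchanged. Hence
\[
\esp{N_2(W_{n+1})-N_2(W_n)\mid W_n=w}\ =\ \mu(I_k\cap\maV_2)-\mu(\maV\setminus I_k)<0
\]
by $\textsc{Ncond}(G)$. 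The set $F=\{w:N_2(w)=0\}$ is finite, since on it all buffered items belong to $\maV_1$, each class appearing at most once, so $|w|\le|\maV_1|$. Theorem \ref{the:LyapunovFosterTheorem} with $h=N_2$ then gives positive recurrence and $\textsc{Ncond}(G)\subset\textsc{Stab}(G,\Phi)$; combined with Proposition \ref{thm:mainmono} this establishes the maximality $\textsc{Stab}(G,\Phi)=\textsc{Ncond}(G)$.

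The main obstacle is making the confinement lemma airtight, in particular the exact bookkeeping of $\maV_1$ self‑matches read through the class detail with the constraint $x(i)\le 1$ for $i\in\maV_1$; and, in (i), verifying that favorability genuinely applies at every external arrival, i.e. that a compatible $\maV_2$‑item is always present when $N_2>0$, which is precisely where completeness of the $p$‑partite structure enters. A last point to justify is the irreducibility of $\suite{W_n}$, which follows from the full support of $\mu$ together with the non‑bipartiteness ensured by $p\ge 3$ or $\maV_1\ne\emptyset$ (in the excluded case $p=2$, $\maV_1=\emptyset$ the graph is bipartite, $\textsc{Ncond}(G)=\emptyset$ by Proposition \ref{prop:ncond}, and there is nothing to prove).
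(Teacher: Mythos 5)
Your proposal is correct, and both parts go through; but the route differs from the paper's in instructive ways. For part (i), the paper splits into two cases: when $\maV_1=\emptyset$ it simply cites Theorem 2 of \cite{MaiMoy16}, and when $\maV_1\neq\emptyset$ it uses the weighted linear function $L_\delta(w)=\sum_{i\in\maV_1}\tfrac{\delta}{2\mu(\maV_1)}|w|_i+\sum_{i\in\maV_2}|w|_i$, whose drift it bounds by $-\delta/2$ off the finite set $\{w:\,I^w\cap\maV_2=\emptyset\}$. Your function $N_2$ is exactly the limit of $L_\delta$ as the $\maV_1$-weights are sent to zero, your exceptional set $\{N_2=0\}$ is the same finite set, and the role of $\maV_2$-favorability (guaranteeing that each arrival from $\maV\setminus I_k$ removes a full unit of $\maV_2$-mass rather than a lightly weighted $\maV_1$-item) is identical — so part (i) is essentially the paper's argument, streamlined and made uniform over both cases, with the bonus that it reproves the cited separable-graph result rather than invoking it. Part (ii) is where you genuinely diverge: the paper does \emph{not} compute a drift on $G$ directly, but instead imports the negative drift of a linear function on the reduced system $(\check G,\check\Phi,\mu)$ (again from \cite{MaiMoy16}) and transfers it to $(G,\Phi,\mu)$ via the comparison inequality ${\Delta}^{\Phi}_{\mu}L(w)\le\check{\Delta}^{\check\Phi}_{\mu}L(w)$ of Proposition \ref{prop:EspGmultiLeqEspGgrapg}, whereas you bound the drift of $|w|$ by hand, majorizing every $I_k\cap\maV_1$ arrival by $+1$ and using $\mu(I_k)<\mu(\maV\setminus I_k)$. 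Your version is shorter and self-contained, made possible by the confinement lemma (which the paper uses only implicitly through the maximal independent set $\check I\supset I^w$); the paper's version buys modularity, since the same comparison proposition is reused verbatim to prove Proposition \ref{prop:extppartite}. Your closing remarks on the finiteness of the exceptional sets, on irreducibility, and on the vacuous case $p=2$, $\maV_1=\emptyset$ correctly dispose of the remaining technical points.
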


With the above results in hands, we have the following panorama regarding the stability region of a matching model on a connected multigraph $G$: 
\begin{enumerate}
	\item[(i)] Any measure $\mu$ that does not belong to the set {\sc Ncond}($G$) makes the system unstable; 
	\item[(ii)] If $G$ is a bipartite graph, then the model cannot be stable;
	\item[(iii)] Otherwise, the region {\sc Ncond}($G$) is necessarily non-empty, and the models $(G,\textsc{fcfm},\mu)$ and $(G,\Phi,\mu)$ for any 
	Max-Weight policy $\Phi$, are stable for any $\mu\in\textsc{Ncond}(G)$;
	\item[(iv)] For any complete $p$-partite multigraph ($p\ge 2$) that is not a bipartite graph and any  $\mu\in\textsc{Ncond}\left({G}\right)$, 
	any model $(G,\Phi,\mu)$ such that $\mu\in\textsc{Ncond}\left(\check{G}\right)$ or $\Phi$ is $\maV_2$-favorable, is stable. 
\end{enumerate} 

As a by-product of Theorem \ref{thm:ppartite} we can determine, or lower-bound, the stability region of GM models on extended complete $p$-partite graphs.

\begin{definition}
	\label{def:extendsmeas}
	For any measures $\mu\in\mathscr M(\maV)$ and $\hat\mu\in\mathscr M(\hat\maV)$, we say that $\hat \mu$ extends $\mu$ on $\hat G$, and that 
	$\mu$ reduces $\hat \mu$ on $G$, if 
	\begin{equation}	
	\label{eq:MuHatMu}
	\left\{\begin{array}{ll}
	\displaystyle\hat{\mu}(i)&=\mu(i),\quad \text{for all} \; i\in\maV_2\,;\\
	\displaystyle\hat{\mu}(i) + \hat{\mu}(\underline{i}) &=\mu(i),\quad \text{for all} \; i\in\maV_1.
	\end{array}\right.
	\end{equation}
\end{definition}


\begin{definition}
	\label{def:extendspol}
	Let $\Phi$ and $\hat\Phi$ be two admissible matching policies, respectively on $G$ and $\hat G$. 
	We say that $\hat\Phi$ extends $\Phi$ on $\hat G$ if, for any $\mu\in\mathscr M({\maV})$ and $\hat\mu\in\mathscr M(\hat\maV)$, whenever both systems $(G,\Phi,\mu)$ and $(\hat G,\hat\Phi,\hat\mu)$ are in the same state $w\in \mathbb W$ and welcome the same arrival, $\Phi$ and $\hat\Phi$ induce the same choice of match, if any. 
\end{definition}

\begin{proposition}
	\label{prop:extppartite}
	Let $\hat G$ be an extended complete $p$-partite graph, $p\ge 2$, and $\check G$ be its reduced graph. 
	\begin{enumerate}
		\item[(i)] If $p\ge 3$ or $\maV_1\neq \emptyset$, then for any matching policy $\hat \Phi$ on $\hat G$ that extends a $\maV_2$-favorable policy on $G$, 
		$\textsc{Stab}(\hat G,\hat \Phi)=\textsc{Ncond}(\hat G).$
		\item[(ii)] If $p\ge 3$, then for any measure $\hat\mu$ on $\hat G$ whose reduced measure $\mu$ is an element of $\textsc{Ncond}(\check G)$,
		and any matching policy $\hat{\Phi}$ on $\hat G$, the model $(\hat G,\hat \Phi,\hat \mu)$ is stable. 
	\end{enumerate}  
\end{proposition}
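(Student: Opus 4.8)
The plan is to transfer the statements of Theorem \ref{thm:ppartite} from the complete $p$-partite multigraph $G$ to its minimal blow-up graph $\hat G$, by coupling the matching model $(\hat G,\hat\Phi,\hat\mu)$ with the reduced model $(G,\Phi,\mu)$, where $\mu$ reduces $\hat\mu$ (Definition \ref{def:extendsmeas}) and $\Phi$ is the $\maV_2$-favorable policy that $\hat\Phi$ extends (Definition \ref{def:extendspol}). First I would record the necessity half of (i): since $\hat G$ is in particular a connected multigraph with no self-loop, Proposition \ref{thm:mainmono} yields $\textsc{Stab}(\hat G,\hat\Phi)\subseteq\textsc{Ncond}(\hat G)$ for every admissible $\hat\Phi$, so only the inclusion $\textsc{Ncond}(\hat G)\subseteq\textsc{Stab}(\hat G,\hat\Phi)$ remains to be shown in (i), while (ii) is a pure stability (sufficiency) statement.

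The combinatorial core is to relate the three regions $\textsc{Ncond}(\hat G)$, $\textsc{Ncond}(G)$ and $\textsc{Ncond}(\check G)$ through the reduction map $\rho$ (sending $\underline i\mapsto i$ for $i\in\maV_1$ and fixing $\maV_2$). Using that every independent set of $G$ is contained in $\maV_2$, that $i$ and $\underline i$ are adjacent in $\hat G$ while sharing their reduced neighborhood, and splitting each $\hat I\in\mathbb{I}(\hat G)$ according to which copies it contains, I would show that $\hat\mu\in\textsc{Ncond}(\hat G)$ forces $\mu:=\rho_*\hat\mu\in\textsc{Ncond}(G)$, and that $\mu\in\textsc{Ncond}(\check G)$ forces $\hat\mu\in\textsc{Ncond}(\hat G)$. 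Note that the first inclusion is strict in general: testing $\textsc{Ncond}(\hat G)$ on the singletons $\{i\}$ and $\{\underline i\}$ produces extra constraints absent from $\textsc{Ncond}(G)$, and these are precisely what will tame the copies in the coupling.

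Next comes the coupling itself. Feeding both chains the same i.i.d. arrival stream over $\hat\maV$, read through $\rho$ on the $G$-side, the two models agree on every match across distinct parts of the partition, because $\hat\Phi$ extends the $\maV_2$-favorable $\Phi$. They differ only on the subclasses $\{i,\underline i\}$, $i\in\maV_1$: two same-subclass arrivals remain unmatched in $\hat G$, whereas the corresponding pair self-matches through the self-loop in $G$. I would therefore not identify the two chains, but instead build a Lyapunov function for $(\hat G,\hat\Phi,\hat\mu)$ by combining a Lyapunov function of the positive-recurrent reduced chain $(G,\Phi,\mu)$ — available from Theorem \ref{thm:ppartite}(i) once $\mu\in\textsc{Ncond}(G)$, resp. from Theorem \ref{thm:ppartite}(ii) once $\mu\in\textsc{Ncond}(\check G)$ — with quadratic terms controlling, for each $i\in\maV_1$, the internal excess on $\{i,\underline i\}$. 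This excess behaves as a walk with up-rate $\hat\mu(i)$ and down-rate $\hat\mu(\underline i)$ augmented by the cross-part matchings that also deplete the $i$-queue, and its negative drift is guaranteed exactly by the singleton constraints $\hat\mu(\{i\})<\hat\mu(\hat{\maE}(\{i\}))$ and $\hat\mu(\{\underline i\})<\hat\mu(\hat{\maE}(\{\underline i\}))$ carried by $\textsc{Ncond}(\hat G)$.

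The main obstacle is exactly this coupling step: because the self-loop of $G$ and the copy-edge $\{i,\underline i\}$ of $\hat G$ do not generate identical trajectories, one cannot reduce one chain to the other, and the discrepancy must be absorbed by proving the internal excess to be a positive-recurrent sub-system under the additional constraints of $\textsc{Ncond}(\hat G)$. Once this transfer lemma is in place, part (i) follows by combining the necessity above with Theorem \ref{thm:ppartite}(i), and part (ii) follows from Theorem \ref{thm:ppartite}(ii): the hypothesis $\mu\in\textsc{Ncond}(\check G)$ places the reduced model in the regime where every admissible policy is stabilizing, so the same Lyapunov construction applies to every $\hat\Phi$ on $\hat G$, yielding stability of $(\hat G,\hat\Phi,\hat\mu)$ for all $\hat\Phi$.
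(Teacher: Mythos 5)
Your overall strategy runs in the wrong direction relative to the drift comparisons that are actually available. The inequalities established in Proposition \ref{prop:EspGmultiLeqEspGgrapg} read ${\Delta}^{\Phi}_{\mu}L(w) \le \hat{\Delta}^{\hat\Phi}_{\hat\mu}L(w) \le \check{\Delta}^{\check\Phi}_{\mu}L(w)$: the multigraph model $(G,\Phi,\mu)$ has the \emph{smallest} drift, precisely because the self-loop removes pairs that the copy-edge of $\hat G$ cannot (two same-class $i$-arrivals self-match in $G$ but pile up in $\hat G$). Consequently, positive recurrence of the reduced multigraph chain — which is what Theorem \ref{thm:ppartite} gives you — cannot be lifted to $(\hat G,\hat\Phi,\hat\mu)$ by any comparison of this type; you would need a reversed inequality $\hat\Delta \le \Delta + (\text{correction})$, and your proposal only asserts, without proof, that the correction (the ``internal excess'' on $\{i,\underline i\}$) has negative drift thanks to the singleton constraints of $\textsc{Ncond}(\hat G)$. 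That assertion is the gap: the $\{i,\underline i\}$ queue is not an autonomous birth--death process, since how often it is depleted depends on how $\hat\Phi$ arbitrates between a waiting $i$-item and other compatible waiting items across the whole state, so the singleton constraints alone do not close the argument. (Your preliminary observations — necessity via Proposition \ref{thm:mainmono}, and the implication $\hat\mu\in\textsc{Ncond}(\hat G)\Rightarrow\mu\in\textsc{Ncond}(G)$ — are correct but do not repair this step.)

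The paper avoids the coupling entirely. For (i), it observes that for any $\hat w\in\hat{\mbW}$ the support $\{i:|\hat w|_i>0\}$ is still an independent set of the complete $p$-partite graph $\check G$, so the direct Lyapunov computation of Theorem \ref{thm:ppartite}(i) (the function $L_\delta$ and the case analysis on the arrival's class) applies verbatim to $\hat G$ with $\maV_1$ replaced by $\maV_1\cup\cop{\maV_1}$ — no transfer from $G$ is needed. For (ii), it uses only the \emph{right-hand} inequality $\hat{\Delta}^{\hat\Phi}_{\hat\mu}L \le \check{\Delta}^{\check\Phi}_{\mu}L$ (comparison with $\check G$, where $\mu\in\textsc{Ncond}(\check G)$ gives a uniformly negative linear drift), together with a symmetry trick: since $i$ and $\underline i$ have identical neighborhoods in $\hat G$, the permutation $\gamma$ swapping them whenever only the copy is occupied maps any $\hat w\in\hat{\mbW}$ to a state of $\mbW$ without changing the drift, provided one also replaces $\hat\mu$ by $\hat\mu\circ\gamma$, which still extends $\mu$. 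If you want to keep a transfer-style proof, the object to compare against is $\check G$, not $G$.
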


\noindent The proofs of Theorem \ref{thm:ML}, Theorem \ref{thm:ppartite} and Proposition \ref{prop:extppartite} are given in section \ref{sec:otherproofs}. 

\section{A few examples}
\label{sec:examples}

In this section, we illustrate our main results by different examples.

\begin{ex}\label{ExampleSquare}\rm Consider the multigraph $G$ of Figure \ref{fig:ExampleSquare}, made of four nodes arranged in a square, with a self-loop at each node.
	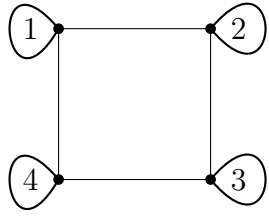
\begin{figure}[htb]
		\begin{center}	
			\begin{tikzpicture}
			\draw (-1,1)--(1,1) ;
			\draw (1,1)--(1,-1) ;
			\draw (1,-1)--(-1,-1) ;
			\draw (-1,-1)--(-1,1) ;
			
			\draw[thick,-] (-1,1) to [out=130,in=240,distance=15mm] (-1,1);
			\draw [thick,-] (1,1) to [out=50,in=-50,distance=15mm] (1,1);
			\draw[thick,-] (-1,-1) to [out=130,in=240,distance=15mm] (-1,-1);
			\draw [thick,-] (1,-1) to [out=50,in=-50,distance=15mm] (1,-1);
			
			\fill (-1,1) circle (2pt)node[left]{$1\;$} ;
			\fill (1,1) circle (2pt)node[right]{$\;2$} ;
			\fill (1,-1) circle (2pt)node[right]{$\;3$} ;
			\fill (-1,-1) circle (2pt)node[left]{$4\;$} ;
			\end{tikzpicture}
			\caption{Multigraph $G$ of Example~\ref{ExampleSquare}.}\label{fig:ExampleSquare}
		\end{center}
	\end{figure}
	Since all nodes have a self-loop, it follows from Remark \ref{rq:MultigraphSelfLoop} that any matching model on $G$ is necessarily stable, that is, for any admissible $\Phi$, we have that $\textsc{Stab}(G,\Phi)=\mathscr M(\maV).$ Let us focus on the $\textsc{fcfm}$ policy. 
	The set of admissible queue details is given by $\mathbb W = \{\varepsilon,1,2,3,4,13,24,31,42\},$ and as a consequence of Remark~\ref{rem:fini}, we can compute explicitly $\Pi_W$, obtaining the following values:
	\[\left\{\begin{array}{llll}
	& \Pi_W(\varepsilon)=\alpha && \\
	& \Pi_W(1)=\alpha\frac{\mu(1)}{1-\mu(3)} &&
	\Pi_W(2)=\alpha\frac{\mu(2)}{1-\mu(4)} \\
	& \Pi_W(3)=\alpha\frac{\mu(3)}{1-\mu(1)} &&
	\Pi_W(4)=\alpha\frac{\mu(4)}{1-\mu(2)} \\
	& \Pi_W(13)=\alpha\frac{\mu(1)}{1-\mu(3)}\mu(3) &&
	\Pi_W(24)=\alpha\frac{\mu(2)}{1-\mu(4)}\mu(4) \\
	& \Pi_W(31)=\alpha\frac{\mu(3)}{1-\mu(1)}\mu(1) &&
	\Pi_W(42)=\alpha\frac{\mu(4)}{1-\mu(2)}\mu(2),
	\end{array}\right.\]
	with
	$$
	\alpha=\left[1 + \mu(1)\frac{1+\mu(3)}{1-\mu(3)}
	+ \mu(2)\frac{1+\mu(4)}{1-\mu(4)}
	+ \mu(3)\frac{1+\mu(1)}{1-\mu(1)}
	+ \mu(4)\frac{1+\mu(2)}{1-\mu(2)} \right]^{-1},
	$$
	using the fact that $\mathbb{I}\left(\check G\right)=\{\{1\},\{2\},\{3\},\{4\},\{1,3\},\{2,4\}\}$.
\end{ex}

\begin{ex}\rm
	Consider the multigraph $G$ (at the middle) of Figure \ref{fig:GgraphGLmultigZAndGTilde}. 
	From Theorems \ref{thm:FCFM} and \ref{thm:ML}, both the stability region $\textsc{Stab}(G,\textsc{fcfm})$ under First Come, First Matched, and the stability region $\textsc{Stab}(G,\textsc{mw})$ 
	under any Max-Weight policy, coincide with the set 
	\begin{align*}
	\textsc{Ncond}(G) &=\left\{\mu\in\mathscr M(\maV):\mu(1) < \mu(2),\,\mu(\{1,3\})\vee\mu(\{1,4\})<{1\over 2}\right\}.
	\end{align*}
	Second, recall that the maximal subgraph $\check{G}$ is the pendant graph
	that studied in Lemma 3 of \cite{MaiMoy16}. For $\eta >0,$ there exists a linear Lyapunov function $L_{\eta}$ such that for any $w\in\mathbb{W},$ the drift  $\check\Delta^{\check{\Phi}}_{\mu}L_{\eta}(w)<0$ then from  Proposition \ref{prop:EspGmultiLeqEspGgrapg}, we have $\hat\Delta^{\hat{\Phi}}_{\hat\mu}L_{\eta}(w)<0$ and $\Delta^{\Phi}_{\mu}L_{\eta}(w)<0.$ Then the model $(G,\Phi,\mu)$ and $\left(\hat{G},\hat{\Phi},\hat\mu\right)$ are stable. 
\end{ex}

\begin{ex}\rm
	Consider now a multigraph $G$ whose maximal subgraph is a complete $2$-partite graph of order 3 (i.e., a string of 3 nodes), as represented in 
	Figure \ref{fig:ExGLandGTilde},
	\begin{figure}[htb]
		\begin{center}
			\begin{tikzpicture}
			\fill (0,2) circle (2pt)node[above]{\scriptsize{1}};
			\fill (0,1) circle (2pt)node[right]{\scriptsize{2}};
			\fill (-1,0) circle (2pt)node[left]{\scriptsize{3}};
			\draw[-] (0,1) -- (-1,0);
			\draw[-] (0,1) -- (0,2);
			\draw[<-] (1.8,1) -- (2.2,1);
			\fill (4,2) circle (2pt)node[above]{\scriptsize{1}};
			\fill (4,1) circle (2pt)node[right]{\scriptsize{2}};
			\fill (3,0) circle (2pt)node[left]{\scriptsize{3}};
			\draw[-] (4,1) -- (3,0);
			\draw[-] (4,1) -- (4,2);
			\draw[->] (5.8,1) -- (6.2,1);
			\fill (8,2) circle (2pt)node[above]{\scriptsize{1}};
			\fill (8,1) circle (2pt)node[right]{\scriptsize{2}};
			\fill (7,0) circle (2pt)node[left]{\scriptsize{3}};
			\fill (9,0) circle (2pt)node[right]{\scriptsize{$\cop{3}$}};
			\draw[-] (8,1) -- (9,0);
			\draw[-] (8,1) -- (7,0);
			\draw[-] (8,1) -- (8,2);
			\draw[-] (7,0) -- (9,0);
			\draw[thick,-] (3,0) to [out=50,in=140,distance=10mm] (3,0);
			\end{tikzpicture}
			\caption{Left: A multigraph whose maximal subgraph is a complete 2-partite graph of order 3. Right: Its minimal blow-up graph.}
			\label{fig:ExGLandGTilde}
		\end{center}
	\end{figure}
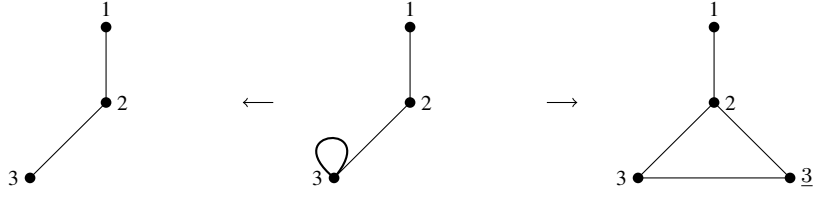
	
	\noindent We easily obtain that 
	\begin{align*}
	\textsc{Ncond}(G)&=\left\{\mu\in \mathscr M(\maV)\,:\, \mu(1) <\mu(2)<{1 \over 2}\right\},\\
	\textsc{Ncond}(\hat G) &= \left\{\hat\mu\in\mathscr M(\hat\maV):\hat\mu(1) < \hat\mu(2),\,\hat\mu(2)\vee\hat\mu(\{1,3\})\vee\hat\mu(\{1,\cop{3}\})<{1\over 2}\right\}.
	\end{align*}
	In view of Theorems \ref{thm:FCFM} and \ref{thm:ML}, the respective stability regions $\textsc{Stab}(G,\textsc{fcfm})$ and $\textsc{Stab}(G,\textsc{mw})$ under First Come, First Matched, 
	or any Max-Weight policy, coincide with $\textsc{Ncond}(G)$. 
	
	Let us first focus on the \textsc{fcfm} policy. The set of admissible queue details is given by:
	$$\mathbb{W}=\{\varepsilon\}\cup\left\{1^k : k\geq 1\right\}\cup\left\{2^k : k\geq 1 \right\}\cup\left\{1^r 3 1^{k-r} : k\geq 0, \; 0\leq r\leq k \right\}.$$
	By Theorem~\ref{thm:FCFM}, we have
	\[\left\{\begin{array}{ll}
	&\Pi_W(\varepsilon)=\alpha \\
	&\Pi_W(1^k)=\alpha\left(\frac{\mu(1)}{\mu(2)}\right)^k \qquad \Pi_W(2^k)=\alpha\left(\frac{\mu(2)}{1-\mu(2)}\right)^k \\
	&\Pi_W\left(1^r 3 1^{k-r}\right)= \alpha\left(\frac{\mu(1)}{\mu(2)}\right)^r \times \frac{\mu(3)}{1-\mu(1)} \times \left(\frac{\mu(1)}{1-\mu(1)}\right)^{k-r},
	\end{array}\right.\]
	and since $\mathbb{I}\left(\check G\right)=\{\{1\},\{2\},\{3\},\{1,3\}\}$, we can express $\alpha$ as follows,
	\begin{multline*}
	\alpha=\biggl[1 + \frac{\mu(1)}{\mu(2)-\mu(1)} + \frac{\mu(2)}{1-2\mu(2)} + \frac{\mu(3)}{1-\mu(1)} \\
	+ \frac{\mu(1)}{\mu(2)-\mu(1)}\frac{\mu(3)}{1-2\mu(1)}+\frac{\mu(3)}{1-\mu(1)}\frac{\mu(1)}{1-2\mu(1)} \biggl]^{-1}.\end{multline*}
	 
	Second, consider a matching policy $\hat\Phi$ such that a $2$-item always prioritizes a $1$-item over a $3$ or a $\cop{3}$-item. Then $\hat\Phi$ extends a $\maV_2$-favorable policy 
	$\Phi$ on $\hat G$. Thus, from Proposition \ref{prop:extppartite} (i), the stability region of the system is $\textsc{Ncond}(\hat G)$, 
	in other words $\hat\Phi$ is maximal on $\hat G$. We thereby generalize with a very simple proof, the result of Lemma 3 of \cite{MaiMoy16} to the case where $\mu(3) \ne \mu(\cop{3})$. 
	Last , in view of Theorem \ref{thm:ppartite} (i), any $\maV_2$-favorable matching policy on $G$ (i.e., such that $2$ prioritizes $1$ over $3$) is maximal, that is, has the stability region $\textsc{Ncond}(G)$. 
\end{ex}

\begin{ex}\rm
	Last, consider the multigraph $G$ represented in (the middle figure of) Figure 
	\ref{fig:p-partiteCompletToBlow-upgraph}. The maximal subgraph is complete $3$-partite, and we readily obtain that 
	\begin{align*}
	\textsc{Ncond}(\check G) &=\left\{\mu\in \mathscr M(\maV)\,:\, \mu(1) \vee \mu(\{2,4\}) \vee \mu(\{3,5\}) <{1\over 2}\right\};\\
	\textsc{Ncond}(G) &=\left\{\mu\in \mathscr M(\maV)\,:\, \mu(1) \vee \mu(\{2,4\}) <{1\over 2},\;\mu(3)<\mu(\{1,2,4\})\right\};\\
	\textsc{Ncond}(\hat G) &=\left\{\hat\mu\in \mathscr M(\hat{\maV})\,:\, \hat\mu(1) \vee \hat\mu(\{2,4\}) \vee \hat\mu(\{3,5\}) \vee \hat\mu(\{3,\cop{5}\}) <{1\over 2}\right\}.
	\end{align*}
	Then, from Theorems \ref{thm:FCFM} and \ref{thm:ML}, the respective stability regions $\textsc{Stab}(G,\textsc{fcfm})$ and $\textsc{Stab}(G,\textsc{mw})$ under First Come, First Matched, 
	or any Max-Weight policy coincide with the set $\textsc{Ncond}(G)$. From Theorem \ref{thm:ppartite} (i), for any policy $\Phi$ on $G$ according to which all items prioritize $3$-items over $5$ items is maximal, i.e., 
	$\textsc{Stab}(G,\Phi)=\textsc{Ncond}(G)$. From Theorem \ref{thm:ppartite} (ii), any policy $\Phi$ on $G$ is such that $\textsc{Ncond}(\check G)\subset \textsc{Stab}(G,\Phi)$. 
	Last, from Proposition \ref{prop:extppartite}, any policy $\hat\Phi$ on $\hat G$ giving priority to $3$-items over $5$ and $\cop{5}$-items is maximal, whereas for any matching policy 
	$\hat\Phi$ and any measure $\hat\mu$ on $\hat\maV$ extending a measure of $\textsc{Ncond}(\check G)$, the model $(\hat G,\hat\Phi,\hat\mu)$ is stable. 
\end{ex}

\section{Proof of Proposition 5.1}
\label{sec:ncond}
Throughout this section, fix a connected multigraph $G=(\maV,\maE)$, where $\maV=\maV_1 \cup \maV_2$, and denote its minimal blow-up graph by 
$\hat G=(\hat \maV, \hat \maE)$, where $\hat \maV = \maV \cup \cop{\maV_1}$.
We first have the following, 
\begin{lemma}
	\label{lemma:equivalenceNcond}
	For any $\mu\in\mathscr M(\maV)$ we have that 
	\begin{equation*}
	\mu\in\textsc{Ncond}(G)\iff\hat{\mu}_{\tiny{1/2}}\in\textsc{Ncond}(\hat{G}), 
	\end{equation*}
	where $\hat{\mu}_{\tiny{1/2}}$ is the extended measure of $\mu$ such that 
	\begin{equation}
    \label{eq:MuHat1/2=Mu1/2}
	\displaystyle\hat{\mu}_{\tiny{1/2}}(i) = \hat{\mu}_{\scriptsize{1/2}}(\underline{i}) ={1 \over 2} \mu(i)\quad \text{for all} \; i\in\maV_1.
	\end{equation}
	
	\begin{proof}
		For any $ I \in \I(G)$, set $\maE_1( I)=\maE( I)\cap \maV_1$ and $\maE_2( I)=\maE( I)\cap \maV_2$. 
		
		Let us prove first the implication \noindent $\Longleftarrow$: Let $\hat{\mu}_{\tiny{1/2}}\in\textsc{Ncond}(\hat{G})$ and $ I\in\mathbb{I}(G)$. As $ I \subset \maV_2$, we get that 
		\begin{align*}
		\mu( I)=\hat{\mu}_{\tiny{1/2}}( I)<\hat{\mu}_{\tiny{1/2}}(\hat{\maE}( I))
		&=\hat{\mu}_{\tiny{1/2}}\left(\maE( I)\cup\cop{\maE_1( I)}\right)\\
		&=\hat{\mu}_{\tiny{1/2}}(\maE_{1}( I))+\hat{\mu}_{\tiny{1/2}}(\maE_{2}( I))+\hat{\mu}_{\tiny{1/2}}\left(\cop{\maE_1( I)}\right)\\
		&=\displaystyle\mu(\maE_{1}( I))+\mu(\maE_{2}( I))\\
		&=\mu(\maE( I)),
		\end{align*}
		where the second equality is due to the fact that $\hat \maE=\maE\cup\cop{\maE_1}.$ The third follows from the fact that $\maE=\maE_1\cup\maE_2.$ The fouth follows from the fact that the equations (\ref{eq:MuHatMu}) and (\ref{eq:MuHat1/2=Mu1/2}) holds true.\\
		
		Let us prove now the opposite implication \noindent $\Longrightarrow$: Let us now fix $\mu\in\textsc{Ncond}(G)$ and $\hat{ I} \in \I(\hat G)$. 
		Clearly, $\hat I$ can be written as the union $\hat I= J _2\cup  J _1 \cup \cop{ I_1}$, where $ J _1 \cup  I_1 \subset \maV_1$, $ J _2 \subset \maV_2$ and $ J _2 \cup  J _1 \cup  I_1\in\I\left(\check G\right)$.	
		First observe that $ J _1$ and $ I_1$ are necessarily disjoint, as any element $i \in  J _1\cap  I_1$ would be such that $i\v \underline{i}$ in $\hat G$, 
		a contradiction to the fact that $\hat{ I} \in \I(\hat G)$. Thus, 
		\begin{align}
		\hat{\mu}_{\tiny{1/2}}(\hat{ I})
		=\hat{\mu}_{\tiny{1/2}}( J _2\cup J _1)+\hat{\mu}_{\tiny{1/2}}(\cop{ I_1})
		&=\hat{\mu}_{\tiny{1/2}}( J _2\cup J _1)+\hat{\mu}_{\tiny{1/2}}( I_1)\label{eq:IndSet1}\\
		&=\hat{\mu}_{\tiny{1/2}}( J _2\cup J _1\cup I_1)
		\leq\mu( J _2\cup J _1\cup I_1).\nonumber
		\end{align}
		Now, observe that 
		\begin{align*}
		\mu\left(\maE\left( J _2 \cup  J _1 \cup  I_1\right)\right) - \mu\left( J _2 \cup  J _1 \cup  I_1\right) 
		&= \mu\left(\maE\left( J _2 \cup  J _1 \cup  I_1\right)\right) - \mu\left( J _1 \cup  I_1\right)  -\mu\left( J _2\right)\\
		&= \mu\left(\maE\left( J _2 \cup  J _1 \cup  I_1\right)\cap \left( J _1 \cup  I_1\right)^c\right)  -\mu\left( J _2\right)\\
		&\ge \mu\left(\maE\left( J _2 \right)\right)  -\mu\left( J _2\right)>0,
		\end{align*}
		where the second equality is due to the fact that $ J _1 \cup  I_1 \subset \maE\left( J _2 \cup  J _1 \cup  I_1\right)$, because $ J _1\cup  I_1 \subset \maV_1$. 
		The weak inequality follows from the fact that $\maE\left( J _2 \right)$ is disjoint from $ J _1 \cup  I_1$ (because $\hat{ I}$ is an independent set of $\hat G$), and thereby, is included in 
		$\maE\left( J _2 \cup  J _1 \cup  I_1\right)\cap \left( J _1 \cup  I_1\right)^c$. The last strict inequality follows from the fact that 
		$ I_2$ is an independent set of $G$ (and of $\hat G$). This, together with (\ref{eq:IndSet1}), implies that 
		\begin{align*}
		\hat{\mu}_{\tiny{1/2}}(\hat{ I}) &<\mu(\maE( J _2\cup J _1\cup I_1))\\
		&=\mu(\maE_1( J _2\cup J _1\cup I_1))+\mu(\maE_2( J _2\cup J _1\cup I_1))\\
		&=\hat{\mu}_{\tiny{1/2}}(\maE_1( J _2\cup J _1\cup I_1))+\hat{\mu}_{\tiny{1/2}}\left(\cop{\maE_1( J _2\cup J _1\cup I_1)}\right)+\hat{\mu}_{\tiny{1/2}}(\maE_2( J _2\cup J _1\cup I_1))\\
		&=\hat{\mu}_{\tiny{1/2}}\left(\maE_1( J _2\cup J _1\cup I_1)\cup\cop{\maE_1( J _2\cup J _1\cup I_1)}\cup\maE_2( J _2\cup J _1\cup I_1)\right)\\
		&=\hat{\mu}_{\tiny{1/2}}\left(\maE( J _2\cup J _1\cup I_1)\cup\cop{\maE_1( J _2\cup J _1\cup I_1)}\right)\\
		&=\hat{\mu}_{\tiny{1/2}}\left(\hat{\maE}( J _2\cup J _1\cup I_1)\right)
		=\hat{\mu}_{\tiny{1/2}}\left(\hat{\maE}\left( J _2\cup J _1\cup\cop{ I_1}\right)\right)
		=\hat{\mu}_{\tiny{1/2}}(\hat{\maE}(\hat{ I})),
		\end{align*}
		where the first equality is due to the fact that $\maE=\maE_1\cup\maE_2.$ The second equality follows from the fact that the equations (\ref{eq:MuHatMu}) and (\ref{eq:MuHat1/2=Mu1/2}) holds true. The third follows from the disjoint sets. The fourth equality follows from the fact that $\maE=\maE_1\cup\maE_2.$ The fifth equality follows from the fact that $\hat\maE=\maE\cup\cop{\maE_1}.$ \\
		
		Which completes the proof. 
	\end{proof}
\end{lemma}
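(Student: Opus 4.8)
The plan is to prove the two implications separately, exploiting the fact that every independent set of $G$ lives inside $\maV_2$ (a self-looped node cannot be independent from itself), whereas $\hat G$ is a genuine graph whose independent sets may use nodes of $\maV_1$, $\maV_2$ and the copies $\cop{\maV_1}$. Throughout I would use the bookkeeping notation $\maE_1(I)=\maE(I)\cap\maV_1$ and $\maE_2(I)=\maE(I)\cap\maV_2$, together with the defining identities $\hat\maE=\maE\cup\cop{\maE_1}$ and $\maE=\maE_1\cup\maE_2$, and the mass-splitting relations $\hat{\mu}_{\tiny{1/2}}(i)=\mu(i)$ for $i\in\maV_2$ and $\hat{\mu}_{\tiny{1/2}}(i)=\hat{\mu}_{\tiny{1/2}}(\cop{i})=\tfrac12\mu(i)$ for $i\in\maV_1$.

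For the reverse implication ($\Longleftarrow$), I would fix $ I\in\I(G)$; since $ I\subset\maV_2$ it is also an independent set of $\hat G$, so the hypothesis yields $\mu( I)=\hat{\mu}_{\tiny{1/2}}( I)<\hat{\mu}_{\tiny{1/2}}(\hat\maE( I))$. Then I would expand $\hat\maE( I)=\maE_1( I)\cup\maE_2( I)\cup\cop{\maE_1( I)}$ and collapse the three terms back onto $\maV$: the copy mass recombines with its original via $\hat{\mu}_{\tiny{1/2}}(\maE_1( I))+\hat{\mu}_{\tiny{1/2}}(\cop{\maE_1( I)})=\mu(\maE_1( I))$, giving $\hat{\mu}_{\tiny{1/2}}(\hat\maE( I))=\mu(\maE( I))$ and hence $\mu( I)<\mu(\maE( I))$. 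This direction is essentially an accounting argument and should be routine.

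The forward implication ($\Longrightarrow$) carries the real content, and I expect it to be the main obstacle. The key structural step is the decomposition of an arbitrary $\hat I\in\I(\hat G)$ as $\hat I= J_2\cup J_1\cup\cop{ I_1}$ with $ J_2\subset\maV_2$, $ J_1, I_1\subset\maV_1$ and $ J_2\cup J_1\cup I_1\in\I(\check G)$. The crucial subtlety is that $ J_1$ and $ I_1$ must be disjoint: if some $i$ lay in both, then $i$ and $\cop{i}$ would both belong to $\hat I$, contradicting independence since $i\v\cop{i}$ in $\hat G$. Using this disjointness and the symmetry $\hat{\mu}_{\tiny{1/2}}(\cop{ I_1})=\hat{\mu}_{\tiny{1/2}}( I_1)$, I would rewrite $\hat{\mu}_{\tiny{1/2}}(\hat I)=\hat{\mu}_{\tiny{1/2}}( J_2\cup J_1\cup I_1)\le\mu( J_2\cup J_1\cup I_1)$.

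The heart of the argument is then to show $\mu(\maE( J_2\cup J_1\cup I_1))-\mu( J_2\cup J_1\cup I_1)>0$. Here I would peel off $ J_1\cup I_1\subset\maV_1$, which is automatically contained in $\maE( J_2\cup J_1\cup I_1)$ because every self-looped node lies in its own self-loop edge, and bound the remainder below by $\mu(\maE( J_2))-\mu( J_2)$, strictly positive since $ J_2$ is an independent set of $G$ and $\mu\in\textsc{Ncond}(G)$. The validity of this lower bound rests on a second disjointness claim, namely that $\maE( J_2)$ avoids $ J_1\cup I_1$: a common node $k$ would make $k$ (or $\cop{k}$, which shares $k$'s neighborhood) adjacent to a vertex of $ J_2$, again violating independence of $\hat I$. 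Finally I would reinsert the copies via $\hat\maE=\maE\cup\cop{\maE_1}$ to recover $\mu(\maE( J_2\cup J_1\cup I_1))=\hat{\mu}_{\tiny{1/2}}(\hat\maE(\hat I))$, concluding $\hat{\mu}_{\tiny{1/2}}(\hat I)<\hat{\mu}_{\tiny{1/2}}(\hat\maE(\hat I))$. The delicate points to get right are precisely the two disjointness claims and the verification that peeling off $\maV_1$-nodes neither over- nor under-counts edge mass.
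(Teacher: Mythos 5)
Your proposal is correct and follows essentially the same route as the paper's own proof: the same recombination of copy mass for the $\Longleftarrow$ direction, and for $\Longrightarrow$ the same decomposition $\hat I=J_2\cup J_1\cup\cop{I_1}$, the same two disjointness observations, and the same lower bound via $\mu(\maE(J_2))-\mu(J_2)>0$. No substantive differences to report.
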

We can now turn to the proof of Proposition \ref{prop:ncond}:

\begin{proof}[Proof of Proposition \ref{prop:ncond}] 
	If $G$ is a bipartite graph, then it follows from Theorem 1 in \cite{MaiMoy16} that \textsc{Ncond}$(G)$ is empty. 
	Regarding the converse, suppose that $G$ is not a bipartite graph. Then, $\hat G$ cannot be a bipartite graph neither. Indeed, there are two cases: 
	\begin{itemize}
		\item If $G$ is a graph (i.e., $\maV_1=\emptyset$), then $\hat G = G$ and so is not a bipartite graph. 
		\item If $G$ is not a graph (i.e., $\maV_1 \ne \emptyset$), then, for any $i\in\maV_1$ and any $j\in \maE(i)\setminus\{i\}$, 
		$\hat G$ includes the triangle 
		$i\v \underline{i} \v j \v i$.  {(Observe that $j$ necessarily exists since $G$ is supposed connected with at least two nodes.)} 
		In particular, $\hat G$ is not a bipartite graph. 
	\end{itemize}
	As a consequence, again from Theorem 1 in \cite{MaiMoy16}, \textsc{Ncond}($\hat G$) is non-empty and thus, from Lemma \ref{lemma:equivalenceNcond}, 
	the set \textsc{Ncond}($G$) is also non-empty. 
	%
\end{proof}


\section{Proof of Theorem 5.1}
\label{sec:FCFM}

Let us recall that the multigraph $G=(\mathcal{V},\mathcal{E})$ is connected but is not a bipartite graph, with $|\mathcal{V}|\geq 2$. Then, in particular, $\textsc{Ncond}(G)\neq \emptyset$ (cf. Proposition \ref{prop:ncond}). Our product form result, Theorem~\ref{thm:FCFM}, follows from a reversibility scheme that generalizes to the case of multigraphs, the one constructed in \cite{MBM17}. In fact, we propose a proof that is simpler, at some points, than the one in \cite{MBM17}. We reproduce hereafter the main steps of this construction for easy reference, and only develop exhaustively the points that are specific to the present context, or based on different arguments. 

Hereafter, we denote by $\P_W$, the transition operator of the buffer-content Markov chain, that is, for all $w,w' \in\mathbb W,$ 
we write $P_W(w,w')=\pr{W_{n+1}=w' \mid W_n =w}, $ for any $n\in \N$. 


\subsection{Two auxiliary chains}
\label{subsec:aux}
As in section 3.2 of \cite{MBM17}, we first need to define two auxiliary Markov chains. For this, let us denote by $\oV$ an independent copy of $\V$, i.e., a set with the same cardinal formed with copies of elements of $\V$. We set $\bV= \V\cup\oV,$ and we define, for $\mathbf{w}\in\bV^*$,
\begin{align*}
&\V(\w )= \{a\in\V : |\w |_a>0\},\\
&\oV(\w )=\{\overline{a}\in\oV : |\w |_{\overline{a}}>0\}.
\end{align*}
For $a\in\bV,$ we will use the notation $\overline{\overline{a}}=a.$ 
\begin{definition} We define the \emph{backward detailed chain} as the process $(B_n)_{n\in\N}$ with values in $\bV^*$ given by 
	$B_0=\varepsilon$ and, for any $n\geq 1$,
	\begin{itemize}
		\item if $W_n=\varepsilon$ (i.e., all the items arrived up to time $n$ are matched at time $n$), then $B_n=\varepsilon$,
		\item otherwise, let $i(n)\in[\![1,n]\!]$ be the arrival time of the oldest item still in the buffer, then, the word $B_n$ is the word of length $n-i(n)+1$, defined, for any $\ell \in \ll 1,n-i(n)+1 \rr$, by
		\[(B_n)_\ell=\left\{\begin{array}{ll}
		V_{i(n)+\ell-1} \,\, &\mbox{if $V_{i(n)+\ell-1}$ has not been matched up to time $n$};\\
		\td{V_{k}}\,\, &\mbox{if $V_{i(n)+\ell-1}$ is matched at or before time $n$, with item $V_k$}\\
		&\mbox{(where $1 \leq k \le n$)}.\\
		\end{array}\right.\] 
		
	\end{itemize}
	
\end{definition}

In other words, 
the word $B_n$ gathers the class indexes of all unmatched items entered up to $n$, at the places corresponding to their arrival times, 
and the copies of the class  
indexes of the items matched before $n$, but after the arrival of the oldest unmatched item at $n$, at the place corresponding to the arrival time of 
their respective match. 


Observe that by the construction of $(B_n)_{n\in\N}$, for all $n\in\N$, the word $B_n$ necessarily contains all the letters of $W_n$. More precisely, for any $n\in\mathbb{N}$, $W_n$ is the restriction of the word $B_n$ to its letters in $\V$. 
Furthermore, $(B_n)_{n\in\N}$ is also a Markov chain since for any $n\geq 0$, the value of $B_{n+1}$ can be deduced from that of $B_n$ and from the class $V_{n+1}$ of the item entered at time $n+1$.  

A state $\w \in \bV^*$ is said to be \emph{admissible for $(B_n)_{n\in\N}$} if it can be reached by the chain $(B_n)_{n\in\N}$, under the $\textsc{fcfm}$ policy. We set
$$\mathbb{B}= \{\w \in\bV^* : \w  \; \text{is admissible for} \; (B_n)_{n\in\N}\}.$$
The following result can be proven exactly as Lemma 1 in \cite{MBM17}.
\begin{lemma}\label{lem:ADM_B}
	Let $\w =\w _1\dots \w _q \in \bV^*.$ Then, $\w\in\mathbb{B}$ if and only if $\w_1\in\V$ and for $1\leq i<j\leq q$,
	\begin{itemize}
		\item if $(\w_i,\w_j)\in \V^2$, then $\w_i\pv\w_j$,
		\item if $(\w_i,\w_j)\in \V\times\oV$, then $\w_i\pv{\overline \w_j}$.
	\end{itemize}
\end{lemma}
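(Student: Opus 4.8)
The plan is to establish both implications by mirroring the proof of Lemma~1 in \cite{MBM17}, the only genuinely new feature being the presence of self-loops, i.e. of nodes in $\maV_1$; I will indicate at each step why these cause no difficulty. Throughout I will use the two defining features of \textsc{fcfm}: it is non-idling (an incoming item is matched whenever a compatible partner is present), and among the compatible partners it always selects the \emph{oldest} one.

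For necessity, fix $\w = B_n$ for some $n$. The fact that $\w_1 \in \V$ is immediate, since $\w_1 = V_{i(n)}$ is by definition the oldest item still unmatched at time $n$, hence carries a class of $\V$ rather than a copy. For the pairwise conditions, take $i<j$, so that the item recorded at position $i$ arrived strictly before the one recorded at position $j$. If $(\w_i,\w_j)\in\V^2$, both items are unmatched at time $n$; were they compatible, then at the arrival of the younger one a compatible item (the older) would already be present, and non-idling would have matched the younger immediately, contradicting that it is unmatched --- hence $\w_i\pv\w_j$. (When $\w_i=\w_j=a\in\maV_1$ this is exactly the statement that two items of a self-looped class cannot coexist in the buffer, consistent with Remark~\ref{rq:restrictionFor Multig}.) If $(\w_i,\w_j)\in\V\times\oV$, write $\w_j=\overline c$, meaning the position-$j$ item was matched with a partner of class $c$; I would split according to whether this match occurred upon the arrival of the position-$j$ item or upon the later arrival of its partner, and in each case invoke the oldest-first rule to derive a contradiction from the assumption $\w_i\v c$: if the older unmatched item $\w_i$ were compatible with class $c$, then either the class-$c$ partner would have been matched with $\w_i$ (or an even older compatible item) on its own arrival, or the incoming partner would have selected $\w_i$ rather than the position-$j$ item. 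This forces $\w_i\pv\overline{\w_j}$.

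For sufficiency I would argue by induction on $|\w|=q$, the empty word being the base case. If $\w_q\in\V$, the truncation $\w_1\cdots\w_{q-1}$ still satisfies the conditions and, by the induction hypothesis, is reachable, say $\w_1\cdots\w_{q-1}=B_m$; feeding one arrival of class $\w_q$ then produces $B_{m+1}=\w$, because the conditions $\w_i\pv\w_q$ for the $\V$-letters $\w_i$ (the only unmatched items) guarantee by non-idling that the new item stays unmatched and that the oldest unmatched item is unchanged. If $\w_q=\overline c\in\oV$, the letter records a match performed by the last arrival; I would reconstruct the pre-match state by choosing a class $b$ with $\{b,c\}\in\maE$, reverting the copy that this partner left into its own class $c$ (thereby turning it back into a $\V$-letter) and deleting the trailing $\overline c$, then checking that the resulting shorter word is still admissible so that the induction hypothesis applies; feeding a final arrival of class $b$ would then reproduce exactly the prescribed match under \textsc{fcfm}.

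The main obstacle is the $\V\times\oV$ analysis, in both directions. In necessity it is the careful case split on the time at which a recorded match took place and the verification that the oldest-first rule yields the stated incompatibility; in sufficiency it is the bookkeeping of the sliding window $[i(n),n]$ when a match removes an interior item (so that leading copies may be dropped and $i(n)$ may change), together with the proof that the chosen partner is precisely the one \textsc{fcfm} selects --- which is exactly where the pairwise conditions are used --- and the routine check that allowing $b=c$ covers self-matches at self-looped nodes.
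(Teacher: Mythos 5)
Your overall strategy is exactly the paper's: the authors give no argument at all, stating only that the result ``can be proven exactly as Lemma 1 in \cite{MBM17}'', and your plan is to transplant that proof and check that self-loops are harmless. Your necessity direction is complete and correct: $\w_1\in\V$ because position $1$ is by definition the oldest \emph{unmatched} item; two coexisting unmatched items must be incompatible by non-idling (which for $a\in\maV_1$ correctly forbids two unmatched $a$'s); and the $\V\times\oV$ case is settled by your two-case split on whether the recorded match was made by the position-$j$ item on its own arrival or by its partner's later arrival, each case contradicting the oldest-first rule if $\w_i\v\overline{\w_j}$.

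The sufficiency direction, however, has a concrete structural problem as you have set it up. Your induction on $|\w|$ with last letter $\w_q=\overline{c}$ proposes to recover the predecessor state by ``reverting the copy that this partner left'' and deleting the trailing $\overline{c}$ --- but the partner's copy need not appear in $\w$ at all. If the class-$c$ partner was the \emph{oldest} unmatched item just before the last arrival, then after the match $i(n)$ jumps forward and the partner's position (together with any leading copy-letters) is truncated from the word. Concretely, take $a\pv b$ and $a\v x$ and arrivals $a,b,x$: then $B_2=ab$ and $B_3=b\overline{a}$, so the predecessor has the \emph{same} length as $\w=b\overline{a}$ and contains no $\overline{x}$ to revert; your induction step does not produce it. You do flag the sliding window as ``the main obstacle'', but the fix is not bookkeeping within a backward induction --- it is to abandon the backward induction in favour of the forward construction of \cite{MBM17}: given $\w=b_1\overline{a_{11}}\dots\overline{a_{1k_1}}b_2\dots$, one explicitly builds an arrival sequence starting from $\varepsilon$ in which each class $a_{ij}$ arrives \emph{before} $b_1$ (or, more generally, before the oldest surviving unmatched item), stays unmatched there precisely because $a_{ij}\in\maE(\{b_1,\dots,b_i\})^c$, and is later collected by the arrival occupying the position that must display $\overline{a_{ij}}$; the oldest-first rule then guarantees that this is the match \textsc{fcfm} actually performs. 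With that replacement (and your remark that a self-looped class may serve as its own partner class), the proof goes through.
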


As a consequence of Lemma~\ref{lem:ADM_B}, any word $\w \in \mathbb{B}$ can be written as
$$
\w=b_1\overline{a_{11}}\overline{a_{12}}\dots\overline{a_{1k_1}}b_2\overline{a_{21}}\overline{a_{22}}\dots \overline{a_{2k_2}} b_3 \dots b_q \overline{a_{q1}} \dots \overline{a_{qk_q}},$$
where $q, k_1,\ldots, k_q\in\N,$ $b_1,\ldots,b_q\in\V$, $a_{ij}\in\V$ for $1\leq i\leq q, 1\leq j\leq k_i$, and
\[\left\{\begin{array}{ll}
\{ b_1, \dots, b_q \}=\V(\w )\in\mathbb{I}\left(\check G\right),\\
\forall i\in\ll 1,q\rr  , \; b_i\in\maV_1 \; \Rightarrow \left[\forall j\neq i, \; b_i\neq b_j\right], \\
\forall i \in \ll 1,q\rr  , \; \forall j\in \ll 1,k_i\rr  , \; a_{ij}\in\E(\{b_1,\dots,b_i\})^c.
\end{array}\right.\]
The transition operator of the chain $\suite{B_n}$ is denoted by $\P_B$, that is, for all $\bw,\bw' \in \mathbb B$, we write 
$\P_B(\bw,\bw')=\pr{B_{n+1}=\bw' \mid B_n=\bw}$, for all $n\in \N$. 

\begin{definition}
	We define the \emph{forward detailed chain} as the process $(F_n)_{n\in\N}$ with values in $\bV^*$ given by $F_0=\varepsilon$ (the empty word) and, for any $n\geq 1$,
	\begin{itemize}
		\item if $W_n=\varepsilon$ (i.e., all the items arrived up to time $n$ are matched at time $n$), then $F_n=\varepsilon$,
		\item otherwise, let $\mathscr U_n$ be the set of items arrived before time $n$ that are not matched at time $n$ (note that $\mathscr U_n$ is non-empty since $W_n \neq \varepsilon$). Also, set $$j(n)=\sup\left\{ m \geq n+1 : V_m \mbox{ is matched with an element of } \mathscr U_n\right\}.$$
		Observe that $j(n)$ is possibly infinite. Then, 
		if $j(n)$ is finite, $F_n$ is the word of $\bV^*$ of length $j(n)-n$ (respectively of $A^{\N}$ of length $+\infty$, if $j(n)=+\infty)$, 
		such that for any $\ell \in \ll 1,j(n)-n \rr$ (respectively $\ell \in \N_+)$, 
		\[(F_n)_\ell=\left\{\begin{array}{ll}
		V_{n+\ell} \,\, &\mbox{if $V_{n+\ell}$ is not matched with an item arrived up to $n$};\\
		\td{V_{k}}\,\, &\mbox{if $V_{n+\ell}$ is matched with item $V_k$, where $1 \leq k \le n$}.
		\end{array}\right.\] 
	\end{itemize}
\end{definition}


In other words, the word $F_n$ contains the copies of all the class indexes of the items entered up to time $n$ and matched after $n$, at the place corresponding to the arrival time of their respective match, 
together with the class indexes of all items entered after $n$ and before the last item matched with an item entered up to $n$, and not matched with an element entered before $n$, if any, at the place corresponding to their arrival time.  
Similarly to \cite{MBM17}, we make the three following simple observations: 
\begin{itemize}
	\item If $F_n \in \bV^*$ is finite, then $(F_n)_{j(n)-n} \in \td\maV$;
	\item $\suite{F_n}$ is a Markov chain; 
	\item If $F_n$ is a.s. an element of $\mathbf V^*$ for all $n\in\N$.
\end{itemize} 


As for the backward chain, we say that a state $\w \in \bV^*$ is \emph{admissible for $(F_n)_{n\in\N}$} if it can be reached by the chain $(F_n)_{n\in\N}$, under the $ \textsc{fcfm}$ policy. Then, we set 
$$\mathbb{F}=\{\w \in\bV^* : \w  \; \text{is admissible for} \; (F_n)_{n\in\N}\}$$
and we denote by $\P_F$ the transition operator of the chain $\suite{F_n}$ on $\mathbb F$.  
For any word $\w =\w _1\dots\w _n\in\bV^*$, let us define its reversed-copy by $\overleftarrow{\overline{\w }}= \overline{\w _n}\dots \overline{\w _1}\in\bV^*.$ Note that the map 
$\Psi: \bV^* \to \bV^*, \w \mapsto \overleftarrow{\overline{\w}}$ satisfies $\Psi\circ\Psi = Id_{\bV^*}$. Thus, $\Psi$ is a bijection and its inverse function is $\Psi^{-1}=\Psi$. 

\begin{lemma}\label{lem:ADMB_ADMF_iso}
	The map 
	\[\Phi:\begin{cases} {\mathbb B} \longrightarrow {\mathbb F}\\
	\w \longmapsto \Psi(\w)=\overleftarrow{\overline{\w}}\end{cases}\]
	is well-defined and bijective.
\end{lemma}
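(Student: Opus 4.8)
The plan is to exploit the fact, already noted before the statement, that $\Psi$ is an involution of $\bV^*$ (i.e. $\Psi\circ\Psi=\mathrm{Id}_{\bV^*}$) and hence a bijection of $\bV^*$ onto itself. Consequently $\Phi$, being the restriction of $\Psi$ to $\mathbb{B}$, is automatically injective, and the entire statement reduces to the single set identity $\Psi(\mathbb{B})=\mathbb{F}$. Moreover, since $\Psi$ is an involution, this identity follows at once from the two inclusions $\Psi(\mathbb{B})\subseteq\mathbb{F}$ and $\Psi(\mathbb{F})\subseteq\mathbb{B}$: the second one gives $\mathbb{F}=\Psi(\Psi(\mathbb{F}))\subseteq\Psi(\mathbb{B})$, which combined with the first yields equality. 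So the whole problem is to prove these two inclusions.

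To establish them, I would first record the analogue of Lemma~\ref{lem:ADM_B} for the forward chain, proved exactly as the corresponding lemma in \cite{MBM17}: a word $\mathbf v=\mathbf v_1\dots\mathbf v_q\in\bV^*$ belongs to $\mathbb{F}$ if and only if $\mathbf v_q\in\oV$ (which reflects the observation, made above, that a finite forward word ends with a letter of $\td{\maV}$) and, for all $1\le i<j\le q$, one has $\overline{\mathbf v_i}\pv\overline{\mathbf v_j}$ whenever $(\mathbf v_i,\mathbf v_j)\in\oV^2$, and $\mathbf v_i\pv\overline{\mathbf v_j}$ whenever $(\mathbf v_i,\mathbf v_j)\in\V\times\oV$. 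With both characterizations in hand, the verification is a direct letter-by-letter translation: writing $\mathbf v=\Psi(\w)$ so that $\mathbf v_\ell=\overline{\w_{q-\ell+1}}$, a pair of positions $a<b$ in $\mathbf v$ corresponds to the pair $i=q-b+1<j=q-a+1$ in $\w$, with $\mathbf v_a=\overline{\w_j}$ and $\mathbf v_b=\overline{\w_i}$. Using $\overline{\overline a}=a$, one checks that each of the four possible membership patterns of $(\mathbf v_a,\mathbf v_b)$ matches exactly one case of Lemma~\ref{lem:ADM_B} applied to $(\w_i,\w_j)$: the pattern $\oV^2$ reduces to the $\V^2$ case of the backward condition, the pattern $\V\times\oV$ to the $\V\times\oV$ case, and the two remaining patterns ($\V^2$ and $\oV\times\V$ for $\mathbf v$) correspond to pairs whose first letter $\w_i$ lies in $\oV$, on which neither characterization imposes any constraint. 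The boundary conditions match as well, since $\w_1\in\V$ is equivalent to $\mathbf v_q=\overline{\w_1}\in\oV$. This gives $\Psi(\mathbb{B})\subseteq\mathbb{F}$, and the symmetric computation yields $\Psi(\mathbb{F})\subseteq\mathbb{B}$.

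I expect the main obstacle to lie not in the combinatorial translation above, which is essentially mechanical once both characterizations are stated, but in the derivation of the forward characterization of $\mathbb{F}$ itself: one must argue, from the dynamic definition of $(F_n)_{n\in\N}$ together with the $\textsc{fcfm}$ rule, that these incompatibility relations describe exactly the admissible patterns. The delicate point specific to the present setting is the $\maV_1/\maV_2$ split, and in particular the fact that a self-looped class of $\maV_1$ may appear at most once in a state: this is automatically encoded by the relation $\pv$, since $i\v i$ for $i\in\maV_1$ forces the condition $\w_i\pv\w_j$ to fail when $\w_i=\w_j=i$, but it is a feature absent from the graph framework of \cite{MBM17}. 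I would therefore devote most of the effort to checking that the proof of the backward characterization transfers, time-reversed, to the forward chain on a general multigraph, after which the bijectivity of $\Phi$ follows from the reduction described in the first paragraph.
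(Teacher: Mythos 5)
Your proposal is correct and follows essentially the same route as the paper: the paper's proof simply asserts that, exactly as in Lemma~2 of \cite{MBM17}, one has $\bw\in\mathbb F$ if and only if $\Psi(\bw)\in\mathbb B$, and then deduces well-definedness and surjectivity from this equivalence and injectivity from that of $\Psi$. Your two inclusions $\Psi(\mathbb B)\subseteq\mathbb F$ and $\Psi(\mathbb F)\subseteq\mathbb B$ are precisely that equivalence unpacked, and your forward characterization of $\mathbb F$ together with the index translation (and the remark that the $\maV_1$ self-loops are encoded by the relation $\pv$) is a correct fleshing-out of the step the paper delegates to the cited reference.
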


\begin{proof}
	Exactly as in Lemma 2 in \cite{MBM17}, it can be proven that $\bw\in\mathbb F$ if and only if $\Psi(\bw)\in \mathbb B$. 
	This guarantees that the mapping $\Phi$ is well-defined and surjective. It is injective because $\Psi$ clearly is so. 
\end{proof}

Let us define a measure $\nu$ on $\bV^*$ by $\nu(\varepsilon)= 1$ and
\begin{equation}
\label{eq:defnu}
\forall \w \in \bV^*\setminus\{\varepsilon\}, \; \nu(\w ) = \prod\limits_{i=1}^{|\V|} \mu(i)^{|\w |_i + |\overline{\w}|_i}.
\end{equation}

We can use the measure $\nu$ defined above to establish the following link between the dynamics of the chains $(B_n)_{n\in\N}$ and $(F_n)_{n\in\N}$. 
The following result can be established exactly as Lemma 3 in \cite{MBM17},


\begin{proposition}\label{prop:lienB_nF_n} For any $(\w ,\mathbf{w'})\in \mathbb{B}^2$, 
	we have that 
	\begin{equation*}\label{eq:lienB_nF_n}
	\nu(\w )\P_B(\w,\w') = \nu \left(\overleftarrow{\overline{\mathbf{w'}}}\right) \P_F\left(\overleftarrow{\overline{\w' }},\overleftarrow{\overline{\mathbf{w}}}\right).
	\end{equation*}
\end{proposition}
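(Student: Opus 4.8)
The statement to prove is that for any pair of admissible backward states $(\w,\w')\in\mathbb B^2$,
\[
\nu(\w)\,\P_B(\w,\w') = \nu\!\left(\overleftarrow{\overline{\w'}}\right)\,\P_F\!\left(\overleftarrow{\overline{\w'}},\overleftarrow{\overline{\w}}\right).
\]
The plan is to proceed by a direct case analysis on the one-step transitions of the backward chain, exploiting the combinatorial bijection $\Phi=\Psi$ of Lemma~\ref{lem:ADMB_ADMF_iso} together with the explicit product form of $\nu$ in \eqref{eq:defnu}. The essential point is that the \textsc{fcfm} dynamics of $(B_n)$ and of $(F_n)$ are mirror images of one another under the reversed-copy map $\Psi$, so that each backward transition $\w\to\w'$ corresponds, letter by letter, to a forward transition $\overleftarrow{\overline{\w'}}\to\overleftarrow{\overline{\w}}$ of matching probability; the measure $\nu$ is then exactly the weight that makes the detailed-balance-type identity hold.

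First I would enumerate the possible one-step transitions of $(B_n)$ from a state $\w\in\mathbb B$. Upon the arrival of an item of class $v\in\maV$ (which occurs with probability $\mu(v)$), there are two alternatives dictated by \textsc{fcfm}: either $v$ finds no compatible unmatched item, in which case $v$ is appended and $\w'=\w v$; or $v$ is matched with the \emph{oldest} compatible item in the buffer, say the letter $b_i$, in which case that occurrence of $b_i$ is overwritten by its copy $\overline v$, yielding a word $\w'$ obtained from $\w$ by the corresponding letter substitution and possible truncation of leading copies. In each case $\P_B(\w,\w')=\mu(v)$ for the relevant $v$. The symmetric description of $\P_F$ from the definition of $(F_n)$ shows that the forward transition $\overleftarrow{\overline{\w'}}\to\overleftarrow{\overline{\w}}$ arises from the arrival of the \emph{same} class $v$, again with probability $\mu(v)$; this is precisely the content one extracts from Lemma~\ref{lem:ADM_B} and its forward analogue, guaranteeing admissibility is preserved under $\Psi$.

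The computation then reduces to checking that $\nu(\w)\mu(v)=\nu(\overleftarrow{\overline{\w'}})\mu(v')$, i.e.\ that the $\nu$-weights match after the relevant arrivals. By the product form \eqref{eq:defnu}, $\nu(\w)$ counts, with weight $\mu(i)$, every occurrence of each class $i$ counted over both $\w$ and its copies; crucially, $\nu$ is invariant under $\Psi$ because $|\Psi(\w)|_i=|\overline{\w}|_i$ and $|\overline{\Psi(\w)}|_i=|\w|_i$, so the total count of each class is unchanged. One then verifies in each transition case that appending a letter $v$ (resp.\ converting a letter to a copy) changes $\nu$ by exactly one factor $\mu(v)$, so both sides of the claimed identity carry the same product of $\mu$-factors. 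The bookkeeping of which class-index is created or consumed, and on which side of the equation the compensating $\mu(v)$ factor lands, is where the argument is delicate: one must track carefully the item that completes the match versus the item being matched, since under time-reversal their roles are swapped.

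The main obstacle I anticipate is precisely this reversal of roles in the matching step: under the backward chain an arriving $v$ \emph{matches out} an older item, whereas the corresponding forward transition reads this as an item being \emph{matched into} the configuration, and one must confirm that the truncation/prepending of leading copies in the two chains is consistent under $\Psi$ so that no spurious $\mu$-factor appears or disappears. Since the paper states this can be shown ``exactly as Lemma~3 in \cite{MBM17},'' I would organize the proof as a clean case table (arrival causes no match / arrival completes a match), invoke Lemma~\ref{lem:ADM_B} to certify admissibility on both sides, and close each case by the $\Psi$-invariance of $\nu$ noted above, so that the routine algebra collapses to $\mu(v)=\mu(v)$.
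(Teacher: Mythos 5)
Your overall strategy --- a case analysis on the one-step transitions of $(B_n)_{n\in\N}$, combined with the invariance $\nu\circ\Psi=\nu$ and the product form (\ref{eq:defnu}) --- is the intended one: the paper gives no proof of its own and simply refers to Lemma~3 of \cite{MBM17}, whose argument has exactly this shape. However, one specific claim in your outline is wrong and would derail the computation if executed literally, namely the assertion that the reversed forward transition $\overleftarrow{\overline{\mathbf{w'}}}\to\overleftarrow{\overline{\mathbf{w}}}$ ``arises from the arrival of the same class $v$, again with probability $\mu(v)$,'' so that ``the routine algebra collapses to $\mu(v)=\mu(v)$.'' The two transition probabilities are \emph{not} equal in general, and the identity is not a term-by-term cancellation of a common factor $\mu(v)$.

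Take the simplest case, where the arriving $v$ finds no match and $\mathbf{w'}=\mathbf{w}v$. Then $\P_B(\mathbf{w},\mathbf{w'})=\mu(v)$, while the product form gives $\nu\left(\overleftarrow{\overline{\mathbf{w'}}}\right)=\nu(\mathbf{w'})=\nu(\mathbf{w})\mu(v)$; the claimed identity therefore forces $\P_F\left(\overleftarrow{\overline{\mathbf{w'}}},\overleftarrow{\overline{\mathbf{w}}}\right)=1$. The factor $\mu(v)$ is entirely absorbed into the $\nu$-weight on the right-hand side, and the forward step must be a \emph{deterministic} truncation of the leading letter $\overline{v}$ --- which it is, because a leading barred letter of $F_n$ means $V_{n+1}$ is matched into the past, hence $\mathscr U_{n+1}\subset\mathscr U_n$ and $j(n+1)=j(n)$, so no new letter is appended. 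The same conclusion $\P_F=1$ holds when the match does not involve the oldest letter of $\mathbf{w}$ (there $\nu(\mathbf{w'})=\nu(\mathbf{w})\mu(v)$ again, since one letter $b$ is overwritten by $\overline{v}$ and one letter $\overline{b}$ is appended). By contrast, when the arriving $v$ matches the \emph{first} letter and a prefix of $r$ barred letters $\overline{v},\overline{c_2},\dots,\overline{c_r}$ is truncated, one finds $\nu(\mathbf{w'})=\nu(\mathbf{w})/(\mu(c_2)\cdots\mu(c_r))$, and the identity then requires $\P_F=\mu(v)\mu(c_2)\cdots\mu(c_r)$, a product of $r$ factors corresponding to the $r$ future arrivals whose classes are newly revealed when the forward word is extended at its end. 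So the correct bookkeeping is: in each case compute the ratio $\nu(\mathbf{w'})/\nu(\mathbf{w})$ explicitly from the letters created, overwritten and deleted, and verify separately that $\P_F$ equals $\mu(v)\,\nu(\mathbf{w})/\nu(\mathbf{w'})$ --- sometimes $1$, sometimes a product of several $\mu$-factors, and essentially never the single factor $\mu(v)$ you posit. You do flag this bookkeeping as ``delicate,'' but the mechanism you propose for resolving it is the wrong one, so the case table as you describe it would not close without this correction.
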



\subsection{Positive recurrence of $(B_n)_{n\in\N}$ and $(F_n)_{n\in\N}$.}
We will exploit the local balance equations of Proposition \ref{prop:lienB_nF_n} to derive stationary distributions of these two Markov chains. 
To this end, the following technical lemma will simplify the proofs.

\begin{lemma}\label{lem:nu(A*)}
	The measure $\nu$ defined by (\ref{eq:defnu}) satisfies the following properties:
	\begin{enumerate}
		\item For any $\A \subset \bV = \V\cup\oV$, we have $\nu(\A )=\mu(\V(\A )) + \mu\left(\overline{\oV(\A )}\right)$. 
		\item For any $\A _1,\ldots, \A _n\subset \bV, \; \nu(\A _1\dots\A _n)=\nu(\A _1)\dots\nu(\A _n)$. 
		In particular, $\nu(\A ^k)=\nu(\A )^k$.
		\item If $\A \subset \bV$ is such that $\nu(\A )<1$, then $\nu(\A ^*)={1\over 1-\nu(\A )}$.
	\end{enumerate}
\end{lemma}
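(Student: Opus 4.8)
The plan is to treat $\nu$ as the additive set function on $\bV^*$ obtained by summing the weights (\ref{eq:defnu}) over words, and to derive all three statements from the single structural observation that $\nu$ is \emph{multiplicative under concatenation}. Indeed, for any two words $\w,\w'\in\bV^*$ one has $|\w\w'|_i=|\w|_i+|\w'|_i$ and $|\overline{\w\w'}|_i=|\overline{\w}|_i+|\overline{\w'}|_i$ for every $i$, so (\ref{eq:defnu}) gives directly $\nu(\w\w')=\nu(\w)\nu(\w')$. In particular, viewing each letter $a\in\bV$ as a one-letter word, (\ref{eq:defnu}) yields $\nu(a)=\mu(a)$ if $a\in\V$ and $\nu(a)=\mu(\overline a)$ if $a\in\oV$.

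For property (1), I would simply sum these single-letter values over $\A$. Splitting $\A$ into its part in $\V$ and its part in $\oV$, and recalling the definitions of $\V(\A)$ and $\oV(\A)$,
\[
\nu(\A)=\sum_{a\in\A}\nu(a)=\sum_{a\in\A\cap\V}\mu(a)+\sum_{\overline a\in\A\cap\oV}\mu(a)=\mu(\V(\A))+\mu\left(\overline{\oV(\A )}\right),
\]
which is the claim.

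For property (2), the key point is that the concatenation map $(a_1,\dots,a_n)\mapsto a_1\cdots a_n$ from $\A_1\times\cdots\times\A_n$ onto the product set $\A_1\cdots\A_n$ is a bijection, since a word of length $n$ determines each of its letters; hence no word is counted twice. Using the additivity of $\nu$ over this set, then the concatenation-multiplicativity established above, and finally distributivity of the finite sum over the product, I obtain
\[
\nu(\A_1\cdots\A_n)=\sum_{a_1\in\A_1}\cdots\sum_{a_n\in\A_n}\nu(a_1\cdots a_n)=\sum_{a_1\in\A_1}\cdots\sum_{a_n\in\A_n}\prod_{\ell=1}^n\nu(a_\ell)=\prod_{\ell=1}^n\nu(\A_\ell).
\]
Taking $\A_1=\cdots=\A_n=\A$ gives $\nu(\A^k)=\nu(\A)^k$.

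For property (3), I would note that since every element of $\A$ is a single letter, $\A^k$ consists exactly of the words of length $k$ in $\A^*$; hence the sets $\{\A^k\}_{k\ge 0}$ are pairwise disjoint and $\A^*=\bigsqcup_{k\ge0}\A^k$. Countable additivity of $\nu$ together with property (2) then gives $\nu(\A^*)=\sum_{k\ge0}\nu(\A)^k$, and the geometric series converges to $1/(1-\nu(\A))$ precisely because $\nu(\A)<1$. There is no serious obstacle in this lemma: the only points demanding care are the injectivity of the concatenation map (which justifies additivity without overcounting in (2)) and the length-grading that makes the union in (3) disjoint, both of which become immediate once the single-letter nature of the elements of $\A$ is used.
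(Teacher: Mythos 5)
Your proof is correct and follows essentially the same route as the paper's (much terser) argument: single-letter evaluation of $\nu$ for point (1), multiplicativity of $\nu$ under concatenation for point (2), and the disjoint decomposition $\A^*=\bigcup_{k\ge 0}\A^k$ with a geometric series for point (3). The extra care you take with the injectivity of the concatenation map and the length-grading is a legitimate filling-in of details the paper leaves implicit.
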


\begin{proof} The first point follows from the definition of $\nu$ and the second point is a direct consequence of its multiplicative structure. 
	Regarding the third point, observe that $\A^*=\cup_{k\in\N} \A^k$, so that 
	\[\nu(\A^*)=\sum_{k\in\N}\nu\left(\mathcal{A}^k\right)=\sum_{k\in\N}\nu(\mathcal{A})^k.\]
\end{proof}
\noindent We can now state the following result,
\begin{proposition}\label{prop:B_nF_n_rec_pos}
	Suppose that $\mu\in\textsc{Ncond}(G)$. Then, 
	the chains $(B_n)_{n\in\N}$ and $(F_n)_{n\in\N}$ are positively recurrent and admit respectively the restrictions on $\mathbb B$ and on $\mathbb F$ 
	of $\nu$ (that is, $\nu_B(\w ) = \nu_F(\Phi(\w )) =\nu(\w)$, for any $\w\in\mathbb B)$  as unique stationary measure (up to a multiplicative constant), respectively on $\mathbb B$ and $\mathbb F$. 
\end{proposition}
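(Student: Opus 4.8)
The plan is to establish positive recurrence of both auxiliary chains by exhibiting an explicit stationary measure and checking that it has finite total mass, exploiting the reversibility-type identity from Proposition \ref{prop:lienB_nF_n}. Since $\Phi$ is a bijection from $\mathbb{B}$ to $\mathbb{F}$ (Lemma \ref{lem:ADMB_ADMF_iso}) and $\nu\circ\Psi=\nu$ because $\Psi$ only reverses and copies letters, it suffices to treat one chain, say $(B_n)_{n\in\N}$, and transfer the conclusion to $(F_n)_{n\in\N}$ through $\Phi$.

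First I would verify that the restriction of $\nu$ to $\mathbb{B}$ is a stationary measure for $(B_n)_{n\in\N}$. The natural route is to use Proposition \ref{prop:lienB_nF_n}: summing the identity $\nu(\w)\P_B(\w,\w')=\nu(\Psi(\w'))\P_F(\Psi(\w'),\Psi(\w))$ over $\w\in\mathbb{B}$ and using that $\Psi$ is a bijection together with the fact that $\P_F$ is a stochastic kernel (its rows sum to $1$), one recovers $\sum_{\w}\nu(\w)\P_B(\w,\w')=\nu(\w')\sum_{\mathbf{u}}\P_F(\Psi(\w'),\mathbf{u})=\nu(\w')$. This is the standard argument that a pair of kernels linked by such detailed-balance-like relations share $\nu$ as a stationary measure; it is essentially the content of the reversibility scheme of \cite{MBM17}, and I would reproduce it verbatim in the multigraph setting since the identity of Proposition \ref{prop:lienB_nF_n} already holds here. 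The same computation run the other way shows $\nu$ restricted to $\mathbb{F}$ is stationary for $(F_n)_{n\in\N}$.

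The main obstacle, and the step deserving genuine care, is proving that $\nu(\mathbb{B})<\infty$, i.e. that the stationary measure is finite and hence can be normalized into a stationary probability, which (together with irreducibility and the fact that the chains are clearly recurrent once a finite invariant measure exists) yields positive recurrence. Here I would use the explicit description of admissible words following Lemma \ref{lem:ADM_B}: any $\w\in\mathbb{B}$ factorizes as $b_1\overline{a_{11}}\cdots\overline{a_{1k_1}}b_2\cdots b_q\overline{a_{q1}}\cdots\overline{a_{qk_q}}$ with $\{b_1,\dots,b_q\}\in\mathbb{I}(\check G)$ and each $a_{ij}\in\maE(\{b_1,\dots,b_i\})^c$. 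Using the multiplicativity of $\nu$ (Lemma \ref{lem:nu(A*)}, point 2), I would factor $\nu(\w)$ as a product over the $q$ blocks, where the $i$-th block contributes $\mu(b_i)$ times a geometric-type sum over all finite words on the copied letters of $\maE(\{b_1,\dots,b_i\})^c$. By Lemma \ref{lem:nu(A*)} point 3, each such sum equals $\bigl(1-\nu(\overline{\maE(\{b_1,\dots,b_i\})^c})\bigr)^{-1}$, and by point 1 this denominator equals $1-\mu(\maE(\{b_1,\dots,b_i\})^c)=\mu(\maE(\{b_1,\dots,b_i\}))-\mu(\{b_1,\dots,b_i\}\cap\maV_2)$, accounting for the fact that self-looped nodes appear in their own neighborhood. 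Summing over all independent sets $\{b_1,\dots,b_q\}\in\mathbb{I}(\check G)$ and all orderings $\sigma\in\mathfrak{S}_{|I|}$ then gives exactly the normalizing constant $\alpha^{-1}$ of \eqref{eq:defalpha}.

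The hard part is ensuring that every geometric sum converges, i.e. that $\mu(\maE(\{b_1,\dots,b_i\})^c)<1$, equivalently $\mu(\maE(\{b_1,\dots,b_i\}))>\mu(\{b_1,\dots,b_i\}\cap\maV_2)$, for each prefix $\{b_1,\dots,b_i\}$ of an independent set of $\check G$. This is precisely where the hypothesis $\mu\in\textsc{Ncond}(G)$ enters: I would argue that a prefix of an independent set of $\check G$ decomposes into its $\maV_2$-part, which is itself an independent set $I$ of $G$ satisfying $\mu(I)<\mu(\maE(I))$, plus at most singletons of $\maV_1$ whose self-loops force them into $\maE(\{b_1,\dots,b_i\})$, so the required strict inequality follows from the $\textsc{Ncond}(G)$ condition applied to the $\maV_2$-component together with the self-loop contributions. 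Establishing this inequality for all prefixes, and checking the bookkeeping that the block-product of finite factors sums to the finite constant $\alpha^{-1}$, is the crux; once it is in place, $\nu(\mathbb{B})=\alpha^{-1}<\infty$, both chains are positive recurrent, and the normalized restrictions of $\nu$ are their unique stationary distributions by irreducibility.
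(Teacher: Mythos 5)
Your proposal follows essentially the same route as the paper: stationarity of $\nu$ via the balance identity of Proposition \ref{prop:lienB_nF_n} together with the bijection $\Phi$ and the invariance $\nu\circ\Psi=\nu$, finiteness of $\nu(\mathbb B)$ via the block decomposition of admissible words and Lemma \ref{lem:nu(A*)}, the strict inequality supplied by $\textsc{Ncond}(G)$ applied to the $\maV_2$-part of each prefix, and irreducibility to conclude. One bookkeeping point to fix: the identity $1-\mu(\maE(\{b_1,\dots,b_i\})^c)=\mu(\maE(\{b_1,\dots,b_i\}))-\mu(\{b_1,\dots,b_i\}\cap\maV_2)$ is false as written (the left side equals $\mu(\maE(\{b_1,\dots,b_i\}))$); the correction term $-\mu(\{b_1,\dots,b_i\}\cap\maV_2)$ arises only after you regroup the words by their \emph{distinct} unmatched classes, so that each block absorbs both the barred letters outside the neighborhood \emph{and} the repeated unbarred occurrences of already-seen $\maV_2$-classes — this regrouping into finitely many sets $\C_{I,\sigma}$, $I\in\mathbb I(\check G)$, $\sigma\in\mathfrak{S}_{|I|}$, is what makes the total sum finite, and it is the one genuinely multigraph-specific step. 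Finally, irreducibility of both chains should be argued (the paper does so constructively via \textsc{fcfm} emptying the buffer), not merely asserted.
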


\begin{proof}
	Let $\mu\in\textsc{Ncond}(G)$.\\
	\underline{Step 1}: we first prove that $\nu_B$ is a stationary measure for the chain $(B_n)_{n\in\N}.$\\
	For this, let us fix $\mathbf{w'}\in \mathbb{B}$. Then we have that 
	\begin{align*}
	\displaystyle\sum\limits_{\w \in \mathbb{B}} \frac{\P_B(\w,\w')\nu_B(\w )}{\nu_B(\mathbf{w'})}
	&= \sum\limits_{\w \in \mathbb{B}}
	\frac{\P_F\left(\overleftarrow{\overline{\w'}},\overleftarrow{\overline{\mathbf{w}}}\right)\nu_B \left(\overleftarrow{\overline{\mathbf{w'}}}\right)}{\nu_B(\mathbf{w'})}\\
	&= \sum\limits_{\w \in \mathbb{B}}
	\P_F\left(\overleftarrow{\overline{\w'}},\overleftarrow{\overline{\mathbf{w}}}\right)= 1,
	\end{align*}
	where the first equality follows from Proposition ~\ref{prop:lienB_nF_n}, 
	the second from the fact that 
	$\nu_B \left(\overleftarrow{\overline{\mathbf{w'}}}\right)=\nu_B(\mathbf{w'})$ and the last, from Lemma~\ref{lem:ADMB_ADMF_iso}.  
	Thus, for all $\bw'\in\mathbb B$, we have that \[\nu_B(\mathbf{w'})=\sum_{\w \in \mathbb{B}} \P_B (\mathbf{w},\w')\nu_B(\w ),\]
	which means exactly that $\nu_B$ is a stationary measure for the chain $(B_n)_{n\in\N}$.
	
	\bigskip
	
	\noindent
	\underline{Step 2}: we now prove that $\nu_B(\mathbb{B})<\infty$.
	
	By Lemma \ref{lem:ADM_B}, we know that 
	$\w\in\mathbb B\setminus\{\varepsilon\}$ if and only if $\w$ belongs to a set 
	\begin{multline}
	b_1\,\A^*_1\,b_2\,\A^*_2\dots b_q\,\A^*_q\\
	= \left\{w\in \mathbf V^* : w=b_1w^1b_2w^2\dots b_qw^q;\,w^i\in \A^*_i\mbox{, for all } i \in \llbracket 1,q \rrbracket \right\},\quad q\ge 1,
	\label{eq:defwords}
	\end{multline}
	where $b_1,\ldots, b_q$ are elements of 
	$\V$ such that $\{ b_1, \dots, b_q \}\in\mathbb{I}\left(\check G\right)$ and such that for all distinct $i, j$ in $\llbracket 1,q \rrbracket$, 
	$b_i\in\maV_1$ implies that $ b_i\neq b_j$, and where we denote $$\A_i=\overline{\E(\{b_1,\dots,b_i\})^c},\quad i\in\llbracket 1,q \rrbracket.$$
	
	Equivalently, by highlighting only the first occurrence of each letter of $\V$ appearing in $\w$ and employing a similar notation to (\ref{eq:defwords}) 
	we obtain that $\w\in\mathbb B\setminus\{\varepsilon\}$ if and only if $\w$ belongs to some set of the form 
	$$\C_{ I,\sigma}= e_{\sigma(1)}\,\B^*_{\sigma(1)}\,e_{\sigma(2)}\,\B^*_{\sigma(2)}\dots e_{\sigma(| I|)}\,\B^*_{\sigma(| I|)},$$ 
	where $ I = \left\{e_1,...,e_{| I|}\right\}\in\mathbb{I}\left(\check G\right)$, 
	$\sigma \in\mathfrak{S}_{| I|}$, 
	and where we denote \[\B_{\sigma(i)}=\overline{\E(\{e_{\sigma(1)},\dots,e_{\sigma(i)}\})^c}\,\cup\, (\{e_{\sigma(1)},\ldots,e_{\sigma(i)}\}\cap \mathcal{V}_2),\quad i\in\llbracket 1,| I| \rrbracket.\]
	In view of assertion (1) of Lemma~\ref{lem:nu(A*)}, we have that for all $i\in\llbracket 1,k \rrbracket$, 
	\begin{align*}
	\nu_B(\B_{\sigma(i)})&=\mu(\E(\{e_{\sigma(1)},\dots,e_{\sigma(i)}\}^c)+\mu(\{e_{\sigma(1)},\dots,e_{\sigma(i)}\}\cap \mathcal{V}_2)\\
	&=1-\mu(\E(\{e_{\sigma(1)},\dots,e_{\sigma(i)}\}))+\mu(\{e_{\sigma(1)},\dots,e_{\sigma(i)}\}\cap \mathcal{V}_2).
	\end{align*}
	Since $\{e_{\sigma(1)},\dots,e_{\sigma(i)}\}\in\mathbb{I}(\check G)$, we have, by definition, that 
	$\{e_{\sigma(1)},\dots,e_{\sigma(i)}\}\cap\mathcal{V}_2\in\mathbb{I}(G)$ and since the measure $\mu$ satisfies $\textsc{Ncond}(G)$, it follows that 
	\begin{align*}
	\mu\left(\{e_{\sigma(1)},\dots,e_{\sigma(i)}\}\cap\mathcal{V}_2\right) &< \mu\left(\maE\left(\{e_{\sigma(1)},\dots,e_{\sigma(i)}\}\cap\mathcal{V}_2\right)\right)\\
	&\leq \mu\left(\maE\left(\{e_{\sigma(1)},\dots,e_{\sigma(i)}\}\right)\right)\end{align*}
	and thereby, that 
	$\nu_B(\mathcal{B}_i)<1$. As a conclusion, applying successively all assertions {of} Lemma~\ref{lem:nu(A*)}, we obtain that for all such $ I$ and $\sigma$, 
	\begin{equation*}
	\nu_B(\C_{ I,\sigma})=\prod\limits_{i=1}^{| I|} \frac{\mu(e_{\sigma(i)})}{\mu(\mathcal{E}(\{e_{\sigma(1)},\dots,e_{\sigma(i)}\}))-\mu(\{e_{\sigma(1)},\dots,e_{\sigma(i)}\}\cap\maV_2)}.
	\end{equation*}
	
	The set $\mathbb B$ is the disjoint union of the sets $\C_{ I,\sigma}$, for $ I$ in the finite set $\mathbb I\left(\check G\right)$, and $\sigma$ 
	in the finite set $\mathfrak{S}_{| I|}$.  
	It follows that $\nu_B(\mathbb{B})$ is finite, and given by 
	\begin{align}
	\nu_B(\mathbb B)&=\nu_B(\varepsilon) + \sum\limits_{ I\in\mathbb{I}\left(\check G\right)}\sum\limits_{\sigma\in\mathfrak{S}_{| I|}} 
	\nu_B(\C_{ I,\sigma})\nonumber\\
	&=1+ \sum\limits_{ I\in\mathbb{I}\left(\check G\right)}\sum\limits_{\sigma\in\mathfrak{S}_{| I|}}\prod\limits_{i=1}^{| I|} \frac{\mu(e_{\sigma(i)})}{\mu(\mathcal{E}(\{e_{\sigma(1)},\dots,e_{\sigma(i)}\}))-\mu(\{e_{\sigma(1)},\dots,e_{\sigma(i)}\}\cap\maV_2)}.\label{eq:defalpha0}
	\end{align}
	
	\bigskip
	
	\noindent
	\underline{Step 3}: we conclude with the positive recurrence of the two chains. 
	
	By the results above, the chain $(B_n)_{n\in\N}$ has a stationary probability distribution on $\mathbb{B}$, which is given by the measure $\nu_B$ normalized by $\nu_B(\mathbb{B})$. 
	
	Observe that the chain is irreducible on $\mathbb{B}$. To see this, let $\bw\in\mathbb B$ and first observe that the empty word $\varepsilon$ leads to $\bw$ with positive probability for the transitions of $\suite{B_n}$ 
	(this is the constructive argument proving Lemma \ref{lem:ADM_B} - see the proof of Lemma 1 in \cite{MBM17}). Conversely, denoting by $b_1,\dots,b_q$ the elements of $\maV(\bw)$, it is easy to see that the word $\bw$ leads to the empty word with positive probability for the transitions of $\suite{B_n}$ : 
	indeed, by the definition of the policy {\sc fcfm}, if the chain is in the state $\bw$, then it will reach the empty state after exactly $q$ steps, 
	by seeing the successive arrivals of $q$ elements of respective classes in $\maE(b_1)$, $\maE(b_2), \dots , \maE(b_q)$, which concludes the proof 
	of irreducibility. 
	
	It then follows that the chain $\suite{B_n}$ is positively recurrent on $\mathbb{B}$ and that its stationary probability distribution is unique. Consequently, $\nu_B$ is the unique stationary measure (up to a multiplicative constant) of the chain $(B_n)_{n\in\N}$.
	
	Now, 
	as in step 1, we obtain that for all $\w'\in \mathbb B$, 
	\begin{equation*}
	\nu_F(\Phi(\mathbf{w'}))=\sum\limits_{\w \in \mathbb{B}} \P_F (\Phi(\mathbf{w}),\Phi(\w' ))\nu_F(\Phi(\w )).
	\end{equation*}
	Using Lemma~\ref{lem:ADMB_ADMF_iso}, we deduce that $\nu_F$ is a stationary measure for the chain $(F_n)_{n\in\N}$. 
	Then, step 2 shows equivalently that $\nu_F(\mathbb{F}) <\infty$. 
	So, the chain $(F_n)_{n\in\N}$ has a stationary probability distribution on $\mathbb{F}$, which is given by the measure $\nu_F$ normalized by $\nu_F(\mathbb{F})$. 
	
	Similarly as above, we can check that the chain $\suite{F_n}$ is irreducible on $\mathbb{F}$. First, the empty word leads with positive probability 
	to any element $\bw\in\mathbb F$, as can be checked using the same constructive argument as in the proof of Lemma \ref{lem:ADM_B}. 
	Conversely, suppose that the chain $\suite{F_n}$ is at time $n$ in a state 
	\[\bw = a_{qk_q} \dots a_{q1}\td{a_q}\dots\td{a_3} a_{2k_2}\dots a_{21}\td{a_2}a_{1k_1}\dots a_{11} \td{a_1}\in\mathbb F\]
	and let $r= q+\sum_{i=1}^q k_i$ be the length of $\bw$. Then, going forward in time, perform the {\sc fcfm} matching of 
	the `unmatched' elements of respective classes in $\maV(\bw)$. Say there remains in the system, at time $n+r$, $\ell$ unmatched elements denoted $c_1,c_2,\dots,c_\ell$ in their order of arrivals. Then, the chain can return to the empty state in particular if the first $\ell$ arrivals 
	after time $n+r$ (excluded) are of respective classes in $\maE(c_1)$, $\maE(c_2),\dots ,\maE(c_\ell)$. This concludes the proof of irreducibility. 
	
	As a consequence, the chain $\suite{F_n}$  is positively recurrent on $\mathbb{F}$ and its stationary probability distribution is unique. 
	Consequently, $\nu_F$ is the unique stationary measure (up to a multiplicative constant) of the chain $(F_n)_{n\in\N}$, which concludes the proof. 
\end{proof}

\subsection{Positive recurrence of $(W_n)_{n\in\N}$} 
The Markov chain $(W_n)_{n\in\N}$ can be seen as the projection of the chain $(B_n)_{n\in\N}$ on $\V^*$. In order to obtain the stationary probability distribution of $(W_n)_{n\in\N}$ from the one of $(B_n)_{n\in\N}$, we will use the following lemma:
\begin{lemma}\label{lem:projCM} Let $\P_Y$ and $\P_{Y'}$ be the transition matrices of two homogeneous 
	Markov chains $\suite{Y_n}$ and $\suite{Y'_n}$ with values in some countable sets $S$ and $S'$ respectively, and consider a map 
	$p:S \to S'$ satisfying 
	\begin{equation*}
	\forall a',b'\in S', \; \forall a\in p^{-1}(\{a'\}), \P_Y(a,p^{-1}(\{b'\})) = \P_{Y'}(a',b').
	\end{equation*} 
	Then, if a measure $\mu$ is invariant for $\P_Y$, the measure $\mu'$ defined by $\mu'(a')= \mu(p^{-1}(\{a'\}))$ for all $a'\in S'$, 
	is an invariant measure for $\P_{Y'}$ on $S'$.
\end{lemma}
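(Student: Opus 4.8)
The plan is to verify directly the global balance equation $\mu'=\mu'\P_{Y'}$ on $S'$, by unfolding the definition $\mu'(a')=\mu\bigl(p^{-1}(\{a'\})\bigr)$ and combining the hypothesis with the invariance of $\mu$ for $\P_Y$. First I would fix $b'\in S'$ and expand the left-hand side of the balance equation at $b'$, writing $\sum_{a'\in S'}\mu'(a')\P_{Y'}(a',b')=\sum_{a'\in S'}\Bigl(\sum_{a\in p^{-1}(\{a'\})}\mu(a)\Bigr)\P_{Y'}(a',b')$.

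Next, for each $a'$ and each $a\in p^{-1}(\{a'\})$, the hypothesis asserts precisely that $\P_{Y'}(a',b')=\P_Y\bigl(a,p^{-1}(\{b'\})\bigr)$, so I would substitute this expression inside the double sum. The key structural point is that the fibres $\{p^{-1}(\{a'\})\,:\,a'\in S'\}$ form a partition of $S$ (with empty blocks for those $a'$ lying outside the image of $p$); hence the double summation over $a'\in S'$ and over $a\in p^{-1}(\{a'\})$ collapses into a single summation over $a\in S$, giving $\sum_{a\in S}\mu(a)\P_Y\bigl(a,p^{-1}(\{b'\})\bigr)$.

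Finally I would expand $\P_Y\bigl(a,p^{-1}(\{b'\})\bigr)=\sum_{b\in p^{-1}(\{b'\})}\P_Y(a,b)$, interchange the two summations (legitimate by Tonelli, all summands being nonnegative), and apply the invariance of $\mu$ for $\P_Y$ in the form $\sum_{a\in S}\mu(a)\P_Y(a,b)=\mu(b)$. This produces $\sum_{b\in p^{-1}(\{b'\})}\mu(b)=\mu\bigl(p^{-1}(\{b'\})\bigr)=\mu'(b')$, which is exactly the desired invariance of $\mu'$ under $\P_{Y'}$.

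The argument is elementary and I do not anticipate any genuine obstacle: it amounts to reading the hypothesis as the statement that $\P_{Y'}$ is the lumping of $\P_Y$ along the projection $p$, under which invariant measures push forward to invariant measures. The only minor points requiring care are the interchange of the order of summation—harmless here since every term is nonnegative, so no absolute-convergence issue arises—and the bookkeeping guaranteeing that summing over the fibres $p^{-1}(\{a'\})$ reconstitutes the sum over all of $S$ without omission or double counting.
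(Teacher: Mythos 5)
Your proposal is correct and follows exactly the same computation as the paper's proof: expand $\mu'(a')$ over the fibre, substitute the lumping hypothesis, collapse the double sum over the partition $\{p^{-1}(\{a'\})\}$ into a sum over $S$, and invoke the invariance of $\mu$ to land on $\mu(p^{-1}(\{b'\}))=\mu'(b')$. The only difference is cosmetic — you expand $\P_Y(a,p^{-1}(\{b'\}))$ into individual terms and cite Tonelli before applying invariance, whereas the paper applies invariance to the set directly in one step.
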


\begin{proof} Let $\mu$ be an invariant measure for $\P_Y$, and let $b'\in S'$. We have
	\begin{align*}
	\sum_{a'\in S'} \mu'(a')\P_{Y'}(a',b')
	&= \sum_{a'\in S'} \left(\sum_{a\in p^{-1}(\{a'\})} \mu(a)\right)\P_{Y'}(a',b') \\
	&= \sum_{a'\in S'} \sum_{a\in p^{-1}(\{a'\})} \mu(a) \P_Y(a,p^{-1}(\{b'\}))\\
	&= \sum_{s\in S} \mu(s) \P_Y(s,p^{-1}(\{b'\}))=\mu(p^{-1} (\{b'\}))=\mu'(b'),
	\end{align*}
	meaning that $\mu'$ is invariant for $\P_{Y'}$.
\end{proof}

For $\mu\in\textsc{Ncond}(G)$, let us denote by $\Pi_B$ the unique stationary probability law associed to the chain $(B_n)_{n\in\N}$ (cf. Prop.~\ref{prop:B_nF_n_rec_pos}). It is defined by
\begin{equation*}
\forall \w \in \mathbb{B}, \; \Pi_B(\w )= \alpha \nu_B(\w), 
\end{equation*}
where $\alpha=(\nu_B(\mathbb B))^{-1}$ is given by (\ref{eq:defalpha}) in view of (\ref{eq:defalpha0}). 
Let us now introduce the projection 
\[p:\begin{cases}
\mathbb{B} \longrightarrow \mathbb{W}\\
\bw \longmapsto \bw|_{\V},
\end{cases}
\]
which is well-defined from Lemma~\ref{lem:ADM_B}. We have the following result: 

\begin{proposition}\label{prop:W_nRec}
	Let $\mu\in\textsc{Ncond}(G)$. Then, the Markov chain $(W_n)_{n\in\N}$ is positively recurrent, and its unique stationary probability distribution is the measure $\Pi_W$ defined on $\mathbb W$ by:
	\begin{equation*}
	\forall w\in\mathbb{W}, \; \Pi_W(w)= \Pi_B(p^{-1}(w))=\sum_{\w \in \mathbb{B} \; : \; \w |_{\V}=w} \Pi_B(\w). 
	\end{equation*}
\end{proposition}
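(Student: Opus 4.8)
The plan is to realize $(W_n)_{n\in\N}$ as a \emph{lumping} of the already-understood backward detailed chain $(B_n)_{n\in\N}$, and to transport its stationary law $\Pi_B$ through the projection $p$ by means of Lemma~\ref{lem:projCM}. Recall from the construction of $(B_n)_{n\in\N}$ that, for every $n$, the word $W_n$ is exactly the restriction of $B_n$ to its letters in $\V$; that is, $W_n = p(B_n)$ almost surely. Consequently, for any $w'\in\mathbb W$ one has the identity of events $\{B_{n+1}\in p^{-1}(\{w'\})\} = \{W_{n+1}=w'\}$, which is the bridge between the two transition operators.

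First I would verify the projection hypothesis of Lemma~\ref{lem:projCM}, namely that for all $w,w'\in\mathbb W$ and every $\bw\in p^{-1}(\{w\})$,
\[
\P_B\bigl(\bw,\,p^{-1}(\{w'\})\bigr) = \P_W(w,w').
\]
This is the heart of the argument. Under {\sc fcfm} the updated queue detail is the deterministic image $W_{n+1}=W_n\odot_{\textsc{fcfm}}V_{n+1}$ of the current queue detail and of the incoming class, as prescribed by the map $\odot_{\textsc{fcfm}}$; the overlined letters recorded in $B_n$ keep track only of already matched items and therefore play no role in determining which items are currently in line, nor in the {\sc fcfm} match triggered by $V_{n+1}$. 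Since $V_{n+1}$ is drawn from $\mu$ independently of the past, and since $p(\bw)=w$, the event identity above together with $W_n=p(B_n)$ yields
\[
\P_B\bigl(\bw,\,p^{-1}(\{w'\})\bigr) = \pr{W_{n+1}=w'\mid B_n=\bw} = \pr{w\odot_{\textsc{fcfm}}V_{n+1}=w'} = \P_W(w,w'),
\]
which confirms the hypothesis.

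With the projection condition in hand, Lemma~\ref{lem:projCM} applied to $\P_Y=\P_B$, $\P_{Y'}=\P_W$ and the invariant law $\mu=\Pi_B$ (available from Proposition~\ref{prop:B_nF_n_rec_pos}) shows at once that the measure $\Pi_W(w):=\Pi_B(p^{-1}(\{w\}))$ is invariant for $\P_W$. Since $p$ is onto $\mathbb W$ and the family $\{p^{-1}(\{w\}):w\in\mathbb W\}$ partitions $\mathbb B$, summing gives $\Pi_W(\mathbb W)=\Pi_B(\mathbb B)=1$, so $\Pi_W$ is a genuine probability measure. Then I would note that $(W_n)_{n\in\N}$ is irreducible on $\mathbb W$: the empty word $\varepsilon$ reaches any $w=w_1\cdots w_q\in\mathbb W$ with positive probability, and conversely any such $w$ returns to $\varepsilon$ after the successive arrivals of $q$ items of respective classes in $\maE(w_1),\dots,\maE(w_q)$, exactly as in the irreducibility argument for $(B_n)_{n\in\N}$. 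An irreducible chain possessing a stationary probability distribution is positive recurrent with that distribution as its unique stationary law, which yields the claim; the displayed expression $\Pi_W(w)=\sum_{\bw\in\mathbb B:\,\bw|_{\V}=w}\Pi_B(\bw)$ is merely the definition of $\Pi_B(p^{-1}(\{w\}))$.

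The step I expect to require the most care is the verification of the projection/lumpability identity: one must argue convincingly that the \emph{entire} conditional law of $W_{n+1}$ given $B_n$ depends on $B_n$ only through $p(B_n)$, i.e. that the bookkeeping of matched items encoded by the overlined letters of $B_n$ never influences the {\sc fcfm} dynamics of the genuine queue detail. Everything else (the transfer of invariance, the normalization, and irreducibility) is routine once this lumping property is established.
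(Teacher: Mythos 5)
Your proposal is correct and follows essentially the same route as the paper: verify the lumping hypothesis of Lemma~\ref{lem:projCM} using the fact that $W_n$ is the restriction of $B_n$ to its letters in $\V$, transfer the invariant law $\Pi_B$ through $p$, check normalization, and conclude by irreducibility of $(W_n)_{n\in\N}$ on $\mathbb W$. Your added care on the lumpability identity and the explicit irreducibility argument are just more detailed renditions of the same steps.
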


\begin{proof} Let $\mu\in\textsc{Ncond}(G)$. We can apply Lemma~\ref{lem:projCM} to $\P_B$ and $\P_W$ to prove that $\Pi_W$ is a stationary distribution for $(W_n)_{n\in\N}$. 
	Indeed, using the fact that for any $n\in\mathbb{N}$, $W_n$ is the restriction of the word $B_n$ to its letters in $\V$, we have that 
	\begin{equation*}
	\forall w,w'\in \mathbb{W}, \; \forall \w\in p^{-1}(\{w\}), \; \P_B(\w,p^{-1}(\{w'\})) = \P_W(w,w').
	\end{equation*} 
	The measure $\Pi_W$ is a probability distribution on $\mathbb{W}$, since $\Pi_W(\mathbb{W})=\Pi_B(p^{-1}(\mathbb{W}))=\Pi_B(\mathbb{B})=1$.
	The chain $(W_n)_{n\in\N}$ being irreducible on $\mathbb{W}$, it follows that $\Pi_W$ is its unique stationary probability distribution.
\end{proof}

\subsection{Concluding the proof}
We first show that $\textsc{Stab}(G,\fcfm)=\textsc{Ncond} (G)$. 
First, we know from Proposition \ref{thm:mainmono} that $\textsc{Stab}(G,\fcfm) \subset \textsc{Ncond}(G).$ Also, 
from Proposition \ref{prop:ncond}, $ \textsc{Ncond}(G)\neq \emptyset$, since $G$ is not a bipartite graph. 
Then, for all $\mu \in \textsc{Ncond}(G)$, by Proposition~\ref{prop:W_nRec}, the chain $(W_n)_{n\in\N}$ is positively recurrent on $\mathbb{W}$. 
So, $\textsc{Ncond}(G)\subset \textsc{Stab}(G,\fcfm)$, and therefore $\textsc{Stab}(G,\fcfm)=\textsc{Ncond} (G)$. 

\bigskip

We now fix $\mu\in \textsc{Ncond}(G)$, and compute explicitly the unique stationary probability distribution $\Pi_W$ of the chain 
$(W_n)_{n\in\N}$. 
First, if $w=\varepsilon$, then $p^{-1}(\{w\})=\varepsilon$ and $\Pi_W(\varepsilon)=\alpha$, given by (\ref{eq:defalpha}). 
Now, fix $w\neq\varepsilon$ in $\mathbb W$. 
By (\ref{eq:defwords}), we know that if $w=w_1\dots w_q\in \mathbb{W}$, $q\ge 1$, then
$p^{-1}(\{w\})=w_1\,\A^*_1\,w_2\,\A^*_2\dots w_q\,\A^*_q,$ with $\A_i=\overline{\E(\{w_1,\dots,w_i\})^c}$, for all $i\in\llbracket 1,q \rrbracket$. 
Applying Lemma \ref{lem:nu(A*)} and observing that 
for all $i$, $\mu\left(\overline{\A_i}\right)<1$ since $\A_i\varsubsetneq\oV$, it follows that 
\begin{align*}
\Pi_W(w)&=\Pi_B(w_1\,\A^*_1\,w_2\,\A^*_2\dots w_q\,\A^*_q)\\
&=\alpha \nu_B(w_1\,\A^*_1\,w_2\,\A^*_2\dots w_q\,\A^*_q)\\
&=\alpha \prod_{i=1}^q{\mu(w_i)\over 1-\mu\left(\overline{\A_i}\right)} 
=\alpha \prod_{i=1}^q{\mu(w_i)\over \mu(\E(\{w_1,\dots,w_i\}))}\cdot
\end{align*}
The proof is complete. 


\section{Remaining proofs}\label{sec:otherproofs}

Throughout the section $G$ is a connected multigraph, $\check G$ is its maximal subgraph and $\hat G$ denotes its minimal blow-up graph. 
To simply compare a $(G,\Phi,\mu)$ system with the two corresponding matching models on graphs 
$(\hat G,\hat \Phi,\hat\mu)$ and $(\check G,\check \Phi,\mu)$, let us add a ``hat'' (resp. a ``check'') 
to all characteristics of the second (resp. the third) system: in particular, we denote, for all $n$, by $\hat V_n$ (resp. $\check V_n$), the class of the item entering in the 
$(\hat G,\hat\Phi,\hat\mu)$ \big(resp. $(\check G,\check\Phi,\mu)$\big) system at time $n$. 
The natural Markov chain of the system is then denoted by $(\hat W_n)_{n\in\mathbb{N}}$ \big(resp. $(\check W_n)_{n\in\mathbb{N}}$\big) 
and its state space, by $\hat{\mbW}$ \big(resp. $\check{\mbW}$\big). Specifically, 
\begin{align*}
\hat{\mathbb W} &=\Bigl\{ w\in \left(\maV\cup \underline{\maV_1}\right)^*\; : \; \forall  i\neq j \; \text{s.t.} \; (i,j) \in \hat\maE, \; |w|_i|w|_j=0 \Bigr\};\\
\check{\mathbb W} &=\Bigl\{ w\in \maV^*\; : \; \forall  i\neq j \; \text{s.t.} \; (i,j) \in \check\maE, \; |w|_i|w|_j=0 \Bigr\}.
\end{align*} 
Observe that we have $\mathbb W \subset \check{\mathbb W} \subset \hat{\mathbb W}$. 

For any measurable mapping $F:\mbW \to \R$ (resp. $\check{\mbW} \to \R$, $\hat{\mbW} \to \R$) and any given $w\in\mbW$ (resp. $\hat{w}\in\hat{\mbW}$, $\check{w}\in\check{\mbW}$), we denote by 
$\Delta F^\Phi_\mu(w)$ (resp. $\hat{\Delta} F^{\hat\Phi}_{\hat\mu}(\hat w)$, $\check{\Delta} F^{\check\Phi}_{\check\mu}(\check w)$) the drift of the chain $\suite{W_n}$ (resp. $\suite{\hat{W}_n}$, 
$\suite{\check{W}_n}$) starting from $w$ (resp. $\hat{w}$, $\check{w}$) for a $(G,\Phi,\mu)$  (resp. $(\hat G,\hat \Phi,\hat \mu)$, $(\check G,\check\Phi,\check\mu)$) system. 
In other words, for any $n\in\N$ we denote 
\begin{align*}
\Delta^\Phi_\mu F(w)&=\esp{F(W_{n+1})-F(W_n)\,\big|\,W_n=w} ;\\
\hat{\Delta}^{\hat\Phi}_{\hat\mu}F(\hat w)&=\esp{F(\hat W_{n+1})-F(\hat W_{n})\,\big|\,\hat W_n=\hat w};\\ 
\check{\Delta}^{\check\Phi}_{\check\mu}F(\check w)&=\esp{F(\check W_{n+1})-F(\check W_{n})\,\big|\,\check W_n=\check w}. 
\end{align*}

\subsection{Drift inequalities}
\label{sec:drift}


Consider the following  mappings, 

\begin{equation}
\label{eq:QuadraticFunction}
Q:\begin{cases}
\hat{\mathbb W} &\longrightarrow\R^+\\
\hat w &\longmapsto \sum\limits_{i=1}^{|\maV|}\left(|\hat w|_i\right)^2 + \sum\limits_{i=1}^{|\maV_1|}\left(|\hat w|_{\underline i}\right)^2;
\end{cases}\end{equation}
\begin{equation}
\label{eq:LinearFunction}
L: \begin{cases}
\hat{\mathbb W} &\longrightarrow \R^+\\
\hat w &\longmapsto \sum\limits_{i=1}^{|\maV|} \alpha_i|\hat w|_i + \sum\limits_{i=1}^{|\maV_1|}\alpha_{\underline{i}}|\hat w|_{\underline i},\;\;
\end{cases}
\end{equation}
\qquad \qquad \qquad\qquad\qquad\qquad \qquad\qquad\qquad with $\alpha_i,\alpha_{\underline{i}}\in\R^+$ and  $\alpha_i=\alpha_{\underline{i}}$ for all $i\in\maV_1.$\\

Where it follows from the observation above that $Q$ and $L$ are well defined also on $\mathbb W$ and $\check{\mathbb W}$.

\medskip

\begin{definition}
	\label{def:extendspol}
	Let $\Phi$ and $\hat\Phi$ be two admissible matching policies, respectively on $G$ and $\hat G$. 
	We say that $\hat\Phi$ extends $\Phi$ on $\hat G$ if, for any $\mu\in\mathscr M({\maV})$ and $\hat\mu\in\mathscr M(\hat\maV)$, whenever both systems $(G,\Phi,\mu)$ and $(\hat G,\hat\Phi,\hat\mu)$ are in the same state $w\in \mathbb W$ and welcome the same arrival, $\Phi$ and $\hat\Phi$ induce the same choice of match, if any. 
\end{definition}

\noindent We have the following result,

\begin{proposition}
	\label{prop:extquad}
	Let $\Phi$ be an admissible policy on $G$ and $\mu\in\mathscr M(\maV)$. 
	Let $\hat{\Phi}$ be a matching policy extending $\Phi$ on $\hat G$ and $\hat \mu$ be a measure extending $\mu$ on $\hat\maV$. 
	Then, 
	for all $w \in\mbW$ we have that 
	$\Delta^\Phi_\mu Q(w)
	\leq 
	\hat\Delta^{\hat\Phi}_{\hat\mu} Q(w). $
\end{proposition}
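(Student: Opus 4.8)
The plan is to expand both drifts over their arrival distributions and reduce the claim to a separate comparison at each self-looped node. Since in either model the incoming class is drawn independently of the current buffer, I would write
$$\Delta^\Phi_\mu Q(w)=\sum_{v\in\maV}\mu(v)\bigl(Q(w\odot_\Phi v)-Q(w)\bigr),$$
$$\hat\Delta^{\hat\Phi}_{\hat\mu} Q(w)=\sum_{v\in\maV_2}\hat\mu(v)\bigl(Q(w\odot_{\hat\Phi} v)-Q(w)\bigr)+\sum_{i\in\maV_1}\Bigl(\hat\mu(i)\bigl(Q(w\odot_{\hat\Phi} i)-Q(w)\bigr)+\hat\mu(\cop i)\bigl(Q(w\odot_{\hat\Phi}\cop i)-Q(w)\bigr)\Bigr).$$
First I would dispose of the classes $v\in\maV_2$: as $w\in\mathbb{W}$ contains no copy, the neighbourhood of $v$ seen in $\hat G$ coincides with the one seen in $G$, and because $\hat\Phi$ extends $\Phi$ the two systems perform the same match (or both store); using $\hat\mu(v)=\mu(v)$, these contributions cancel in $\hat\Delta^{\hat\Phi}_{\hat\mu}Q(w)-\Delta^\Phi_\mu Q(w)$. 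It then suffices to prove, for each $i\in\maV_1$,
$$\hat\mu(i)\bigl(Q(w\odot_{\hat\Phi} i)-Q(w)\bigr)+\hat\mu(\cop i)\bigl(Q(w\odot_{\hat\Phi}\cop i)-Q(w)\bigr)\ \ge\ \mu(i)\bigl(Q(w\odot_\Phi i)-Q(w)\bigr),$$
since summing over $\maV_1$ and recalling $\mu(i)=\hat\mu(i)+\hat\mu(\cop i)$ finishes the argument.

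The key structural fact I would isolate is that, because $w\in\mathbb{W}$ and $\{i\}\in\maE$ for $i\in\maV_1$, one has $|w|_i\le 1$, and $|w|_i=1$ forces $|w|_c=0$ for every $c\in\check\maE(i)$ (otherwise the edge $\{i,c\}$ would violate the definition of $\mathbb{W}$). This makes the matches available to an arriving $i$-item in $G$ and to an arriving $\cop i$-item in $\hat G$ correspond exactly: matching a buffered $i$-item through the self-loop in $G$ corresponds to matching it through the edge $\{i,\cop i\}$ in $\hat G$, removing one buffered $i$-item in each case, while a match with a genuine neighbour $c\in\check\maE(i)$ is available identically in both. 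Hence $Q(w\odot_{\hat\Phi}\cop i)-Q(w)=Q(w\odot_\Phi i)-Q(w)$. For the $i$-arrival in $\hat G$ the only change is the absence of the self-loop: when $|w|_i=0$ the matches available to $i$ in $\hat G$ are those in $G$, and extension yields the same increment; when $|w|_i=1$ the $i$-item in $\hat G$ finds no match and is stored, raising coordinate $i$ from $1$ to $2$.

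With these identifications the per-node inequality is a short case split on $|w|_i$. If $|w|_i=0$ with no neighbour present, all three increments equal $+1$ and the inequality is an equality; if $|w|_i=0$ with some neighbour present, the two $i$-increments agree and the $\cop i$-increment equals the $G$-increment, so equality again holds. The only strict branch is $|w|_i=1$: there $Q(w\odot_\Phi i)-Q(w)=-1$ and $Q(w\odot_{\hat\Phi}\cop i)-Q(w)=-1$, while $Q(w\odot_{\hat\Phi} i)-Q(w)=+3$, so the left-hand minus right-hand side equals $4\,\hat\mu(i)\ge 0$. Summing over $\maV_1$ and reinstating the cancelled $\maV_2$-terms gives $\Delta^\Phi_\mu Q(w)\le\hat\Delta^{\hat\Phi}_{\hat\mu}Q(w)$.

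I expect the main obstacle to be the bookkeeping that legitimises the identity $Q(w\odot_{\hat\Phi}\cop i)-Q(w)=Q(w\odot_\Phi i)-Q(w)$ together with the equality of the $i$-increments, i.e. making precise that extending $\Phi$ by $\hat\Phi$, combined with the self-loop/$\{i,\cop i\}$ correspondence, forces the two models to discard a buffered item of the same class. The single configuration where this could \emph{a priori} fail — a policy selecting the self-loop while a neighbour of larger queue is simultaneously present — is exactly the one ruled out by the constraint ``$|w|_i=1\Rightarrow |w|_c=0$ for all $c\in\check\maE(i)$'', so the care concentrates on invoking this $\mathbb{W}$-constraint correctly in every branch.
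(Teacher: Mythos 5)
Your proposal is correct and follows essentially the same route as the paper: both arguments condition on the class of the arrival, observe that the $\maV_2$-arrivals and the empty self-looped nodes (with or without compatible neighbours present) contribute identically to the two drifts, and isolate the occupied self-looped nodes as the only source of discrepancy, yielding the exact identity $\Delta^\Phi_\mu Q(w)=\hat\Delta^{\hat\Phi}_{\hat\mu}Q(w)-4\hat\mu(\mathcal O_w)$ where $\mathcal O_w=\{i\in\maV_1:|w|_i=1\}$. Your per-node bookkeeping of the three increments ($-1$, $-1$, $+3$) in the occupied case reproduces precisely the paper's computation grouped over the set $\mathcal O_w$.
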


\begin{proof}
	Fix $w \in\mbW$ throughout the proof. Recall that for all $i\in\maV_1$ (if any), we have that $|w|_i\in\{0,1\}$, and let us set  
	\begin{align*}
	\mathcal O_{w} &=\{i\in \maV_1:|w|_i = 1\},\\
	\mathcal Z_{w} &=\{i\in \maV_1: |w|_i = 0\mbox{ and } |w|_j = 0\mbox{, for any }j \in \maE(i)\}.
	\end{align*}
	
	
	\medskip 
	
	First, for any $i \in\maV_2$, an incoming item of class $i$ finding the system $(G,\Phi,\mu)$ in a state $w$ finds the same possible matches (if any) as an incoming item of class $i$ finding the system 
	$(\hat G,\hat \Phi,\hat \mu)$ in a state $w$. 
	As $\hat\Phi$ extends $\Phi$, the choice of the match (if any) of the incoming item of class $i$ is then the same, or follows the same distribution in case of a draw, in both systems. 
	Thus, for all $n\in\mathbb{N}$, as $|\hat W_{n+1}|_{\underline{\maV_1}}=0$ the conditional distribution of $W_{n+1}\ind_{\{V_{n+1}=i\}}$ given $\{W_n= w\}$ equals the conditional distribution of 
	$\hat W_{n+1}\ind_{\left\{\hat V_{n+1}= i\right\}}$ given $\left\{\hat W_n= w\right\}$. 
	Therefore, we obtain that for all $n\in\mathbb{N}$,  
	\begin{multline}\label{eq:compML1}
	\esp{\left(Q(W_{n+1})-Q(W_n)\right)\ind_{\{V_{n+1} \in \maV_2\}}\,\big|\,W_n= w}\\
	\begin{aligned}
	= &\sum_{i\in\maV_2}\esp{Q(W_{n+1})\ind_{\{V_{n+1} =i\}}\,\big|\,W_n= w}-\sum_{i\in\maV_2}\esp{Q(W_{n})\ind_{\{V_{n+1} =i\}}\,\big|\,W_n= w}\\
	= &\sum_{i\in\maV_2}\esp{Q(W_{n+1})\ind_{\{V_{n+1} =i\}}\,\big|\,W_n= w} - \mu(\maV_2)Q(w)\\
	= &\sum_{i\in\maV_2}\esp{Q\left(\hat W_{n+1}\right)\ind_{\left\{\hat V_{n+1}=i\right\}}\,\big|\,\hat W_n= w} - \hat\mu(\maV_2)Q(w)\\
	= &\;\esp{\left(Q\left(\hat W_{n+1}\right)-Q\left(\hat W_{n}\right)\right)\ind_{\left\{\hat V_{n+1} \in \maV_2\right\}}\,\big|\,\hat W_n=w},\end{aligned}
	\end{multline}
	where the third equality due to the equality of the conditional distribution of $W_{n+1}\ind_{\{V_{n+1}=i\}}$ given $\{W_n= w\}$ and $\hat W_{n+1}\ind_{\left\{\hat V_{n+1}= i\right\}}$ given $\left\{\hat W_n= w\right\}.$ 
	\medskip
	
	Likewise, if a system $(G,\Phi,\mu)$ is in state $w$, then, for any $i \in \mathcal V_1\cap (\mathcal O_{w})^c \cap (\mathcal Z_{w})^c$, an incoming of class $i$ 
	finds the same possible matches (if any) as an incoming item of class $i$ or of class $\underline{i}$ finding the system $(\hat G,\hat \Phi,\hat \mu)$ in the state $w$. 
	Again, the choice of the match of the latter is the same in both systems, or follows the same distribution in case of a draw. Like in (\ref{eq:compML1}), we obtain that, for all $n\in\mathbb{N}$, 
	\begin{multline}\label{eq:compML2}
	\esp{\left(Q(W_{n+1})-Q(W_n)\right)\ind_{\{V_{n+1} \in \mathcal V_1\cap (\mathcal O_{w})^c \cap (\mathcal Z_{w})^c\}}\,\big|\,W_n= w}\\
	= \mathbb E\Biggl[\!\left(\!Q\left(\hat W_{n+1}\right)-Q\left(\hat W_{n}\right)\!\right)\!\ind_{\left\{\hat V_{n+1} \in \left(\mathcal V_1\cap (\mathcal O_{w})^c \cap (\mathcal Z_{w})^c\right) \cup \left(\underline{\mathcal V_1\cap (\mathcal O_{w})^c \cap (\mathcal Z_{w})^c}\right)\right\}}\big|\,\hat W_n=w\!\Biggl],
	\end{multline}
	where we also use the fact that 
	\[\hat{\mu}\left(\left(\mathcal V_1\cap (\mathcal O_{w})^c \cap (\mathcal Z_{w})^c\right) \cup \left(\underline{\mathcal V_1\cap (\mathcal O_{w})^c \cap (\mathcal Z_{w})^c}\right)\right) = \mu(\mathcal V_1\cap (\mathcal O_{w})^c \cap (\mathcal Z_{w})^c).\]
	Now, if a system $(G,\Phi,\mu)$ is in state $w$, then, for any $i\in \mathcal Z_{w}$, and incoming item of class $i$ finds no possible match. 
	So, it is stored in line and the coordinate $i$ of the chain increases from 0 to 1. Consequently, for any $n\in\mathbb{N}$, conditional on $\{W_n= w\}$ and for any such $i$, we get that 
	\begin{equation}
	\label{eq:compML3bis}
	\left(Q(W_{n+1})-Q(W_n)\right)\ind_{\{V_{n+1} =i\}}
	= \left(Q(wi)-Q(w)\right)\ind_{\{V_{n+1} =i\}}= \ind_{\{V_{n+1} =i\}}.
	\end{equation}
	Similarly, if the system $(\hat G,\hat\Phi,\hat\mu)$ is in the state $w$ and the entering item is of class $i\in \mathcal Z_{w}$ or of class $\underline i \in \underline{\mathcal Z_{w}}$, then, in both cases, the entering item does not find any possible match in $(\hat G,\hat\Phi,\hat\mu)$ and so the coordinate $i$ or $\underline i$ of the Markov chain increases from 0 to 1. Thus, given that $\left\{\hat W_n= w\right\}$, we get that 
	\begin{multline}
	 \label{eq:compML3ter}
	\left(Q\left(\hat W_{n+1}\right)-Q\left(\hat W_{n}\right)\right)\ind_{\left\{\hat V_{n+1} \in \{i,\underline i\}\right\}}\\
	= \left(Q(wi)-Q(w)\right)\ind_{\left\{\hat V_{n+1} =i\right\}}
	+  \left(Q\left(w\underline{i}\right)-Q(w)\right)\ind_{\left\{\hat V_{n+1} = \underline i\right\}}
	=\ind_{\left\{\hat V_{n+1} =i\right\}}+\ind_{\left\{\hat V_{n+1} = \underline i\right\}}.\end{multline}
	This, together with (\ref{eq:compML3bis}), entails that
	\begin{multline}\label{eq:compML3}
	\esp{\left(Q(W_{n+1})-Q(W_n)\right)\ind_{\{V_{n+1} \in\mathcal Z_{w}\}}\,|\,W_n= w}\\
	\begin{aligned}
	=& \sum_{i\in \mathcal Z_{w}} \esp{ \ind_{\{V_{n+1} =i\}}\,|\,W_n= w}\\
	=& \;\mu(\maZ_{w})\\
	=& \;\hat\mu\left(\maZ_{w}\right) + \hat\mu\left(\underline{\maZ_{w}}\right)\\
	=& \sum_{i\in \mathcal Z_{w}} \esp{ \ind_{\{\hat V_{n+1} =i\}}+\ind_{\{\hat V_{n+1} = \underline i\}}\,|\,\hat W_n= w}\\
	=& \;\esp{\left(Q(\hat W_{n+1})-Q(\hat W_{n})\right)\ind_{\{\hat V_{n+1} \in \mathcal Z_{w}\cup \underline{\maZ_{w}}\}}\,|\,\hat W_n=w},
	\end{aligned}\end{multline}
	where the first equality due to the equation (\ref{eq:compML3bis}). The third equality follows from the equation (\ref{eq:MuHatMu}) as $\maZ_{w}\subset\maV_1.$ The fourth equality follows from the equation (\ref{eq:compML3ter}).\\
	
	\medskip
	\noindent
	At last, if the system $\left(G,\Phi,\mu\right)$ is in the state $w$, then, the arrival of a class $i$-item, for $i\in \mathcal O_{w}$, leads to the matching of two items 
	of class $i$. Therefore, as $|w|_i=1$ we obtain  
	\begin{equation}
	\left(Q(W_{n+1})-Q(W_n)\right)\ind_{\{V_{n+1} \in \mathcal O_{w}\}}
	= -\ind_{\{V_{n+1} \in \mathcal O_{w}\}}.\label{eq:compML4bis}
	\end{equation}
	Now, suppose that the system $\left(\hat G,\hat\Phi,\hat\mu\right)$ is in the state $\hat W_n =w$. 
	Then, if an item of class $i\in \mathcal O_{w}$ enters in the system, the corresponding item is not matched and the number of $i$-items in the system increases from 1 to 2. 
	Therefore we get that 
	\begin{equation}
	\label{eq:compML4ter}
	\left(Q\left(\hat W_{n+1}\right)-Q\left(\hat W_{n}\right)\right)\ind_{\left\{\hat V_{n+1} \in \mathcal O_{w}\right\}}
	= 3\ind_{\left\{\hat V_{n+1} \in \mathcal O_{w}\right\}}.
	\end{equation}
	\noindent
	If on the other hand, an item of class $\underline i\in \underline{\mathcal O_{w}}$ enters in the same system $(\hat G,\hat\Phi,\hat\mu)$, 
	then, the corresponding item match with the stored class $i$-item and so the coordinate $i$ of the chain decreases to 0. Thus, 
	\begin{equation*}
	\left(Q\left(\hat W_{n+1}\right)-Q\left(\hat W_{n}\right)\right)\ind_{\left\{\hat V_{n+1} \in \underline{\mathcal O_{w}}\right\}}
	= -\ind_{\left\{\hat V_{n+1} \in \underline{\mathcal O_{w}}\right\}}.
	\end{equation*}
	Gathering this with (\ref{eq:compML4bis}) and (\ref{eq:compML4ter}) and then taking expectations, we obtain that 
	\begin{multline}\label{eq:compML4}
	\esp{\left(Q(W_{n+1})-Q(W_n)\right)\ind_{\{V_{n+1} \in\mathcal O_{w}\}}\,\big|\,W_n= w}\\
	= \esp{\left(Q\left(\hat W_{n+1}\right)-Q\left(\hat W_{n}\right)\right)\ind_{\left\{\hat V_{n+1} \in \mathcal O_{w}\cup \underline{\mathcal O_{w}}\right\}}\,\big|\,\hat W_n=w} -4\hat\mu\left(\mathcal O_{w}\right).
	\end{multline}
	Finally, (\ref{eq:compML1}) together with (\ref{eq:compML2}), (\ref{eq:compML3}) and (\ref{eq:compML4}) give that
	\begin{equation} \label{eq:compMLfinal}
	\Delta^\Phi_\mu Q(w)
	=\hat\Delta^{\hat\Phi}_{\hat\mu} Q(w)- 4\hat\mu\left(\mathcal O_{w}\right),
	\end{equation}
	which concludes the proof. 
\end{proof}

\begin{definition}
	\label{def:reducesspol}
	Let $G$ be a connected multigraph and $\Phi$ be an admissible matching policy on $G$. We say that $\check\Phi$ reduces $\Phi$ if, 
	for any ${\mu}\in\mathscr M({{\maV}})$, whenever the two systems  $(\check G,\check\Phi,\mu)$ and $(G,\Phi,\mu)$ are in the 
	same state ${w} \in {\mbW}$ and welcome the same arrival, then $\check\Phi$ and $\Phi$ induce the same choice of match, if any.
\end{definition}



\begin{proposition}
	\label{prop:EspGmultiLeqEspGgrapg}
	Let $G=(\maV,\maE)$ be a connected multigraph and $\Phi$ be a class admissible policy on $G$ and $\mu\in\mathscr M(\maV)$. 
	Let $\hat{\Phi}$ be a matching policy extending $\Phi$ on $\hat G$, $\hat \mu$ a measure extending $\mu$ on $\hat\maV$ 
	and $\check \Phi$ be a policy that reduces $\Phi$ on $\check G$. 
	Then the drift of the respective Markov chains are such that for all $w\in\mbW$, 
	\begin{equation}
	\label{eq:driftL}
	{\Delta}^{\Phi}_{\mu}L(w) \le \hat{\Delta}^{\hat\Phi}_{\hat\mu}L(w) \le \check{\Delta}^{\check\Phi}_{\mu}L(w).
	\end{equation}
\end{proposition}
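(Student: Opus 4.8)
The plan is to follow closely the argument of Proposition~\ref{prop:extquad}, simply replacing the quadratic test function $Q$ by the linear one $L$, and adding the third system $(\check G,\check\Phi,\mu)$ to the comparison. Fix $w\in\mbW$; recall that every coordinate $|w|_i$ with $i\in\maV_1$ lies in $\{0,1\}$ and that $|w|_{\underline i}=0$, so the starting state is ``nice'' and only its image under one transition matters. As in that proof I would partition the possible arrivals according to the class $v$ of the incoming item into the four groups $\maV_2$, $\mathcal Z_w$, $\maV_1\cap(\mathcal O_w)^c\cap(\mathcal Z_w)^c$ and $\mathcal O_w$, where $\mathcal O_w=\{i\in\maV_1:|w|_i=1\}$ and $\mathcal Z_w=\{i\in\maV_1:|w|_i=0\text{ and }|w|_j=0\ \forall j\in\maE(i)\}$, and decompose each of the three drifts $\Delta^\Phi_\mu L(w)$, $\hat\Delta^{\hat\Phi}_{\hat\mu}L(w)$ and $\check\Delta^{\check\Phi}_\mu L(w)$ into the corresponding partial sums. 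The natural pairing of arrivals across systems is: a class-$i$ arrival in $G$ (probability $\mu(i)$) against the pair of a class-$i$ and a class-$\underline i$ arrival in $\hat G$ (probabilities $\hat\mu(i)$ and $\hat\mu(\underline i)$, whose sum equals $\mu(i)$ by~(\ref{eq:MuHatMu})) against a class-$i$ arrival in $\check G$ (probability $\mu(i)$).

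The core observation, exactly as in (\ref{eq:compML1})--(\ref{eq:compML3}), is that on the three groups $\maV_2$, $\mathcal Z_w$ and $\maV_1\cap(\mathcal O_w)^c\cap(\mathcal Z_w)^c$ all three drifts coincide. Indeed, for such arrivals no self-loop can be used from the state $w$ (either the node carries no item, or it has no present compatible partner), so an incoming item of class $i$ and its copy $\underline i$ see exactly the same present neighbours, namely items of classes in $\check\maE(i)=\maE(i)\setminus\{i\}$; since $\hat\Phi$ extends $\Phi$ and $\check\Phi$ reduces $\Phi$, all three policies perform the same match (or the same randomised match), and this changes $L$ by the same amount because the matched or stored class is an original node and $L$ weighs a copy $\underline i$ and its original $i$ with the same coefficient $\alpha_{\underline i}=\alpha_i$. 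Summing the copy and original contributions in $\hat G$ and using $\hat\mu(i)+\hat\mu(\underline i)=\mu(i)$ then makes the $\hat G$ partial sums equal to the $G$ and $\check G$ ones on each of these groups.

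All the discrepancy is therefore concentrated on $\mathcal O_w$, which is the one genuinely computational, hence the main, step. There, from $|w|_i=1$: in $(G,\Phi,\mu)$ the class-$i$ arrival triggers a self-match, so $L$ drops by $\alpha_i$; in $(\check G,\check\Phi,\mu)$ there is no self-loop and all neighbours are absent, so the item is stored and $L$ rises by $\alpha_i$; in $(\hat G,\hat\Phi,\hat\mu)$ a class-$i$ arrival is stored ($+\alpha_i$) while a class-$\underline i$ arrival matches the present $i$-item ($-\alpha_i$). Collecting the partial sums I expect to obtain the two identities
\begin{equation*}
\Delta^\Phi_\mu L(w)=\hat\Delta^{\hat\Phi}_{\hat\mu}L(w)-2\sum_{i\in\mathcal O_w}\alpha_i\,\hat\mu(i)
\quad\text{and}\quad
\check\Delta^{\check\Phi}_\mu L(w)=\hat\Delta^{\hat\Phi}_{\hat\mu}L(w)+2\sum_{i\in\mathcal O_w}\alpha_i\,\hat\mu(\underline i),
\end{equation*}
and since $\alpha_i\ge 0$, $\hat\mu(i)\ge 0$ and $\hat\mu(\underline i)\ge 0$, the chain of inequalities (\ref{eq:driftL}) follows at once. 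The only delicate point is the bookkeeping for copy arrivals: one must ensure that a $\underline i$-arrival in $\hat G$ is matched consistently with an $i$-arrival (which is legitimate because $i$ and $\underline i$ share the neighbourhood $\check\maE(i)$), so that outside $\mathcal O_w$ the contributions really cancel. This is precisely where the definitions of extending and reducing policies, together with $\alpha_i=\alpha_{\underline i}$, are used.
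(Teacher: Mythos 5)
Your proposal is correct and follows essentially the same route as the paper: the same partition of arrivals into $\maV_2$, $\mathcal Z_w$, $\maV_1\cap(\mathcal O_w)^c\cap(\mathcal Z_w)^c$ and $\mathcal O_w$, equality of the partial drifts off $\mathcal O_w$, and an explicit signed correction term on $\mathcal O_w$ coming from self-loop matches versus storage. Your two displayed identities agree with the paper's (your factor $2$ in the first one is in fact the correct constant; the paper omits it, but the sign — and hence the inequality — is unaffected).
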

\begin{proof}
	Fix $w\in\mbW$. 
	The only case in which the proof of the left inequality of (\ref{eq:driftL}) differs from that of Proposition \ref{prop:extquad} is when 
	an item of class $i\in \mathcal O_{w}$ enters the $(\hat G,\hat\Phi,\hat\mu)$ system in a state $w$ and we multiply each transition by it's convenient $\alpha_i$. 
	Then, we now get that for all $n$, 
	\begin{equation*}
	\left(L(\hat W_{n+1})-L(\hat W_{n})\right)\ind_{\{\hat V_{n+1} \in \mathcal O_{w}\}}
	=\sum_{i\in \mathcal O_{w}}\alpha_i\ind_{\{\hat V_{n+1} =i\}},
	\end{equation*}	
	\noindent which, taking expectations and reasoning as in (\ref{eq:compMLfinal}), leads to 
	\begin{equation*} 
	{\Delta}^{\Phi}_{\mu}L(w) 
	= \hat{\Delta}^{\hat\Phi}_{\hat\mu}L(w)  
	- \sum_{i\in\mathcal{O}_w}\alpha_i\hat\mu\left(i\right).
	\end{equation*} 
	\medskip
	We now turn to the proof of the right inequality of (\ref{eq:driftL}). Denote 
	\begin{equation*}
	\mathcal P_{w} =\{i\in \maV_1\,:\,|w|_i > 0\}. 
	\end{equation*}
	Fix also $n\in\N$, and denote by $\check V_{n}$, the class of the incoming item at time $n$ 
	in the $(\check G,\check\Phi,\mu)$ system. First, similarly to (\ref{eq:compML1}), (\ref{eq:compML2}) and (\ref{eq:compML3}) we clearly get that 
	\begin{multline}\label{eq:compsep1}
	\esp{\left(L(\check W_{n+1})-L(\check W_{n})\right)\ind_{\{\check V_{n+1} \in \maV_2 \cup \left(\mathcal V_1\cap (\mathcal P_{w})^c \right)\}}
		\,|\,\check W_n= w}\\
	=\esp{\left(L(\hat W_{n+1})-L(\hat W_{n})\right)\ind_{\left\{\hat V_{n+1} \in \maV_2\cup \left(\mathcal V_1\cap (\mathcal P_{w})^c \cup \left(\underline{\mathcal V_1\cap (\mathcal P_{w})^c }\right)\right)\right\}}|\hat W_n=w}.
	\end{multline}
	Now, we also clearly have that 
	\begin{align*}
	\esp{\left(L(\check W_{n+1})-L(\check W_{n})\right)\ind_{\{\check V_{n+1} \in \mathcal P_{w}\}}
		\,|\,\check W_n= w} &=\sum_{i\in\mathcal{P}_w}\alpha_i\mu\left(i\right);\\
	\esp{\left(L(\hat W_{n+1})-L(\hat W_{n})\right)\ind_{\{\hat V_{n+1} \in \mathcal P_{w}\}}
		\,|\,\hat W_n= w} &=  \sum_{i\in\mathcal{P}_w}\alpha_i\hat\mu\left(i\right);\\
	\esp{\left(L(\hat W_{n+1})-L(\hat W_{n})\right)\ind_{\{\hat V_{n+1} \in \underline{\mathcal P_{w}}\}}
		\,|\,\hat W_n= w} &= -\sum_{i\in\underline{\mathcal P_{w}}}\alpha_i\hat\mu\left(i\right),
	\end{align*}
	which, together with (\ref{eq:compsep1}), implies that 
	\begin{equation*} 
	\hat{\Delta}^{\hat\Phi}_{\hat\mu}L(w) 
	= \check{\Delta}^{\check\Phi}_{\mu}L(w)
	-2\sum_{i\in\underline{\mathcal P_{w}}}\alpha_i\hat\mu\left(i\right).
	\end{equation*}
\end{proof}


\subsection{Proofs of the remaining main results}
\label{subsec:proofs}

We are now in position to prove Theorem \ref{thm:ML}, Theorem \ref{thm:ppartite} and Proposition \ref{prop:extppartite}. 

\begin{proof}[Proof of Theorem \ref{thm:ML}]
	Let $\mu\in \textsc{Ncond}(G)$. From Lemma \ref{lemma:equivalenceNcond}, the measure $\hat\mu$ belongs to {\sc Ncond}$(\hat G)$. 
	Let $\Phi$ be a matching policy of the Max-Weight class on $G$, with $\beta >0$. Clearly, its extension $\hat\Phi$ is also 
	of the Max-Weight class on $\hat G$. Then, we know from Theorem 5.3 in \cite{JMRS20} that the model $(\hat G,\hat\Phi,\hat\mu)$ is stable. 
	In particular, 
	we see in the proof of Theorem 5.3 in \cite{JMRS20} that 
	the Lyapunov-Foster Theorem \ref{the:LyapunovFosterTheorem} can be applied to the chain $\left(\hat W_n\right)_{n\in\mathbb{N}}$ for the quadratic function $Q$. 
	Specifically, there exist $\eta >0$ and a finite set $\hat{\mathcal { K}} \subset \hat{\mbW}$ such that 
	$\hat\Delta^{\hat\Phi}_{\hat\mu} Q(\hat{w}) <-\eta$ for all $\hat{w} \not\in \hat{\mathcal K}$. 
	Thus, in view of Proposition \ref{prop:extquad}, we have that $\Delta^{\Phi}_{\mu} Q({w}) <-\eta$ 
	for any $w$ that lies outside the finite subset $\mathcal K=\hat{\mathcal K}\cap \mbW$. 
	We conclude by applying the Lyapunov-Foster Theorem to the mapping $Q$ and the compact set $\mathcal K$. 
\end{proof}

\begin{proof}[Proof of Theorem \ref{thm:ppartite}]
	(i)  Fix $\mu\in \textsc{Ncond}(G)$. 
	First, if $G$ is a graph ($\maV_1 =\emptyset$) and $G=\check G$ is complete $p$-partite for $p\ge 3$, then 
	the result follows from Theorem 2, Assertion (16) in \cite{MaiMoy16}: specifically, we have that for some $\eta>0$, 
	for any $w\in\mbW\setminus\{\varepsilon\},$ 
	\begin{equation}
	\label{eq:majoreG}
	{\Delta}^{\Phi}_{\mu}L(w)
	< -\eta,
	\end{equation}
	and the Lyapunov-Foster criterion applies. 
	Now, if $G$ is not a graph, i.e. $\maV_1\ne\emptyset$, then let 
	\[\delta=\min\left\{\mu(\maE( I))-\mu( I)\,:\, I\in\I(G)\right\},\]
	which is strictly positive since $\mu\in \textsc{Ncond}(G)$, and the mapping 
	\[L_\delta : \left\{\begin{array}{ll}
	\mbW &\longrightarrow \R^+\\
	w&\longmapsto \displaystyle\sum_{i\in\maV_1} {\delta \over 2\mu(\maV_1)} |w|_i + \sum_{i\in\maV_2} |w|_i.  
	\end{array}\right.
	\]
	Then, for any $w\in\mbW$ the set $ I^{w}=\{i\in\maV\,:\,|w|_i>0\}$ is an independent set of $\check G$, so by the very definition of a complete $p$-partite graph, there exists a unique maximal independent set 
	$\check{ I}$ of $\check G$ such that $ I^{w} \subset \check{ I}$. 
	Then, for all $w\in\mbW$ such that $ I^{w}\cap\maV_2\ne \emptyset$, for any $n$, if $\{W_n=w\}$ the Markov chain can make two types of moves upon the arrival of $V_{n+1}$: 
	\begin{itemize}
		\item either one coordinate of $W_n$ decreases from 1 if $V_{n+1}$ is of a class in $\check{ I}^c=\check{\maE}\left(\check{ I}\right)$, or of a class in $ I^{w}\cap \maV_1$;
		\item or one coordinate of $W_n$ increases from 1, if $V_{n+1}$ is of a class in $\check{ I}\cap\left(( I^{w})^c\cup \maV_2\right)$. 
	\end{itemize} 
	Therefore, for any $\maV_2$-favorable matching policy $\Phi$ we have that 
	\begin{multline} \label{eq:vinci1}
	{\Delta}^{\Phi}_{\mu}L_\delta(w)\\
	=-{\delta \over 2\mu(\maV_1)} \mu\left(\maV_1 \cap  I^{w}\right)\ind_{\{\maV_1 \cap  I^{w}\ne \emptyset\}}+ {\delta \over 2\mu(\maV_1)}\mu\left(\maV_1 \cap \check{ I}\cap( I^{w})^c\right)
	+\mu\left(\check{ I}\cap \maV_2\right) - \mu\left(\check{ I}^c\right).
	\end{multline} 
	Observe that $\check{ I}\cap \maV_2$ is an independent set of $G$, and that $\check{ I}^c=\maE\left(\check{ I}\cap \maV_2\right)$. Hence (\ref{eq:vinci1}) implies that 
	\begin{equation*} 
	{\Delta}^{\Phi}_{\mu}L_\delta(w)
	\le  {\delta \over 2\mu(\maV_1)}\mu\left(\maV_1 \cap \check{ I}\cap( I^{w})^c\right)
	+\mu\left(\check{ I}\cap \maV_2\right) - \mu\left(\maE\left(\check{ I}\cap \maV_2\right)\right)
	\le  {\delta \over 2} - \delta =-{\delta \over 2}.
	\end{equation*} 
	As this is true for any $w$ outside the finite set $\{w\in\mbW\,:\,  I^{w}\cap\maV_2 =\emptyset\}$, we conclude again using the Lyapunov-Foster Theorem that $\textsc{Stab}(G,\Phi)=\textsc{Ncond}(G).$

	(ii) Fix $\mu\in\textsc{Ncond}(\check G),$ and an admissible matching policy $\Phi$. Applying (\ref{eq:majoreG}) to $\check G$, we obtain that for any $\check\Phi$ that reduces $\Phi$, 
	for some $\eta>0$ we have 
	${\check\Delta}^{\check\Phi}_{\mu}L(\check{w})<-\eta$ 
	for all $\check{w}\in\check{\mbW}\setminus\{\varepsilon\}$. 
	Combining this with (\ref{eq:driftL}), and recalling that $\mbW\subset \check{\mbW}$ we obtain that ${\Delta}^{\Phi}_{\mu}L({w}) <-\eta$ for all $w\in\mbW\setminus\{\varepsilon\}$, 
	which concludes the proof. 
\end{proof}
\begin{proof}[Proof of Proposition \ref{prop:extppartite}]
	(i) Remark that for any $\hat w\in\hat\mbW$, the set $\{i\in\maV\,:\,|\hat w|_i>0\}$ is again an independent set of $\check G$. So we can apply, for any  
	$\hat{\mu}\in\textsc{Ncond}(\hat{G})$, the exact same argument as for assertion (i) in Theorem \ref{thm:ppartite}, by replacing $\maV_1$ 
	by $\maV_1\cup\cop{\maV_1}$. 
	
	(ii) 
	Let $\hat\mu$ be an element of $\mathscr M(\hat{\maV})$ whose reduced measure $\mu$ belongs to $\textsc{Ncond}(\check G)$. Let $\hat\Phi$ be an admissible policy on $\hat{\maV}$, 
	$\Phi$ be a policy on $\maV$ such that $\hat\Phi$ extends $\Phi$, and $\check\Phi$ be a policy reducing $\Phi$ on $\check G$. 
	
	First, as in (\ref{eq:majoreG}) there exists $\eta>0$ such that $\check\Delta^{\check\Phi}_{\mu} L(w)<-\eta$ for any $w\in \mbW\setminus\{\varepsilon\}$. 
	
	Fix $\hat w$ in $\hat{\mbW}\setminus\{\varepsilon\}$. 
	Then define the permutation $\gamma$ of $\hat{\maV}$ by
	\[\begin{cases}
	\gamma(i) &=\underline i \mbox{ and } \gamma(\underline i) =i \mbox{ if } |\hat w|_{\underline i}>0\mbox{ and } |\hat w|_{i}=0,\,i\in \maV_1,\\
	\gamma(j) &=j\,\mbox{ else, for any }j\in \hat{\maV}.
	\end{cases}\]
	Let us also denote by $\gamma(\hat w)$, the word obtained from $w$ by replacing the letters of $\hat w$ by their image through $\gamma$, in other words for all $i\in \llbracket 1,|\hat w|\rrbracket$, 
	$\gamma(\hat w)_i=\gamma(\hat w_i)$. Observe that $\gamma(\hat w)$ is clearly an element of $\mbW\setminus\{\varepsilon\}$, so in view of the above observation we have that 
	\begin{equation}
	\label{eq:finala}
	\check\Delta^{\check\Phi}_{\mu} L(\gamma(\hat w))<-\eta.
	\end{equation}
	Now, as $i$ and $\underline i$ have the same connectivity in $\hat G$ for any $i\in\maV_1$, for all $n$ the conditional distribution of $\hat W_{n+1}$ given $\{\hat W_n = \hat w\}$ in the $(\hat G,\hat\Phi,\hat\mu)$ 
	system equals that of  $\hat W_{n+1}$ given $\{\hat W_n = \gamma(\hat w)\}$ in the $(\hat G,\hat\Phi,\hat\mu\circ\gamma)$ system. In particular, we have that 
	\begin{equation}
	\label{eq:finalb}
\hat{\Delta}^{\hat\Phi}_{\hat\mu}(\hat w) = \hat{\Delta}^{\hat\Phi}_{\hat\mu\circ\gamma}(\gamma(\hat w)).
	\end{equation}
	On the other hand, as $\gamma(\hat w)$ is an element of $\mbW$ and 
	the measure $\hat\mu\circ\gamma \in \mathscr M(\hat{\maV})$ clearly extends the measure $\mu$, the right inequality of (\ref{eq:driftL}) implies that
	\begin{equation*}
	\hat{\Delta}^{\hat\Phi}_{\hat\mu\circ\gamma}L(\gamma(\hat w)) \le \check{\Delta}^{\check\Phi}_{\mu}L(\gamma(\hat w)),
	\end{equation*}
	and it follows from (\ref{eq:finala}-\ref{eq:finalb}) that $\hat{\Delta}^{\hat\Phi}_{\hat\mu}(\hat w)<-\eta$. As this is true for any $\hat w$ in $\hat{\mbW}\setminus\{\varepsilon\}$, the proof is complete. 
\end{proof}


\section{Discussion of results and conclusion} \label{discussion of results2}
In this chapter, we have studied a generalization of stochastic matching models on graphs by allowing the self-loops matching. Different applications appear to this class of models such as dating sites or collaborative sites, that is, individuals of the same class can be married. \\

For a given multigraph $G$, we build its maximal subgraph $\check{G}$ that is obtained by deleting all self-loops in $G,$ and its minimal blow-up graph $\hat{G}$  that is obtained by duplicating each node having a self-loop by two nodes having the same neighborhood and replacing each self-loop by an edge between the node and its copy. Taken into consideration the graphs $\check G$ and $\hat G$ which are built above, we can transmit and generalize different results to $G.$

The multigraph $G$ under the matching policies {\sc fcfm} and Max-Weight such that $\beta >0$ are maximal. Also, if $G$ is a complete $p$-partite multigraph, $(p\geq 2)$, then for $p\geq 3$ or $\maV_1\neq 0,$ any $\maV_2$-favorable matching policy is maximal. \\

  

\pagestyle{empty}
\chapter{Fluid limits techniques for stability}
\label{chap6:FluidLimits}
\pagestyle{fancy}

\section*{Introduction}\label{chap6: intro}


In the previous chapters, we studied the stability of stochastic matching models using Lyapunov techniques. In this chapter, we present a new approach that allows us to retrieve, and complete these results in continuous-time settings. This technique consists of speeding up time and scaling the process appropriately to obtain a deterministic and continuous approximation of the original process, which allows, among other features,  to study the ergodicity properties of the process at hand. In the limit, one gets a sort of caricature of the initial stochastic process which is defined as its \textbf{fluid limit}.\\

This chapter is organized as follows: In Section \ref{sec:NCStab} we start by providing necessary conditions of stability for graphical matching models in continuous time. In Section \ref{sec:FluideStability} we derive fluid approximations of matching models on multigraphs. In Section \ref{sec:CasesStudy} we present different case studies. Last, in Section \ref{sec:ExamplesFluidHypergraph} we elaborate the fluid limit technique to study matching models on complete $3$-uniform hypergraphs and complete $3$-uniform $k$-partite hypergraphs. We conclude and discuss this chapter in Section \ref{sec:discussion of results3}.


\section{Necessary conditions of stability}
\label{sec:NCStab}
The necessary condition of stability for discrete-time stochastic matching models on graphs was recalled in Section \ref{sec:matching model}. For all $\mathbb{G}=(\maV,\maE)$ 
	it reads as follows,
	\begin{equation}
	\label{eq:NcondDuGraph}
	\textsc{Ncond}(\mathbb{G}):=\big\lbrace \mu \textrm{ with support }\maV\;:\mu(I)<\mu(\maE(I))\;\textrm{ for all } I\in\mathbb{I}(\mathbb{G})\big\rbrace.
	\end{equation}
	The first question that arises is to find an analog of (\ref{eq:NcondDuGraph}) for continuous-time models, that is, in the context of matching queues, as defined in section \ref{sec:MatchingQueuContiounsTime} (recall the notation therein). 
	The following condition was introduced in \cite{MoyPer17}: for any matching graph $\mathbb{G},$ 
	\begin{equation}
	\label{eq:NcondCDuGraph}
	\textsc{Ncond}_C(\mathbb G):=\left\lbrace \lambda\in (\R^{++})^{|\maV|}\;:\;\bar{\lambda}_{I }<\bar{\lambda}_{\maE(I )}\textrm{ for all }I \in\mathbb{I}(G)\right\rbrace.
	\end{equation}

Fix a graph $\mbG=(\maV,\maE)$, a class-admissible matching policy $\Phi,$ and an arrival vector $\lambda := (\lambda_1, \cdots , \lambda_{|\maV|})$. Let 
for all $i\in\maV$, 
\[\mu_\lambda(i)={\lambda_i \over \bar \lambda}={\lambda_i \over\sum\limits_{i\in\maV}\lambda_i}\cdot\]
Then, it is easily seen that $\mu_\lambda$ defines a probability measure on 
$\maV$. Further, if we denote for all $n\geq 1$, by $A(n):=\{A_n(i),\;i\in\maV\}$ 
the vector tracking the number of items in the buffers of all nodes up to time 
$n$ in the discrete-time system associated to $\mathbb{G},$ $\Phi$ and $\mu_\lambda$. 
On another hand, let $N$ be the superposition of the arrival Poisson processes in the continuous-time matching system associated with $\mathbb{G}$, $\Phi$ and $\lambda$. Then, it is immediate that we have the identity in distribution $Q_t=A(N(t))$.  As there are finitely many Poisson processes in the continuous-time model, the sojourn time of the corresponding CTMC is of a rate that is bounded away from zero. This implies that $Q$ is positive recurrent if and only if $A$ is so, see \cite[Theorem 6.18]{Kulkarni17}. Consequently, relating (\ref{eq:NcondDuGraph}) to (\ref{eq:NcondCDuGraph}) we get that the stability region of the continuous-time model is included in $\textsc{Ncond}_C(\mathbb G)$.

We now use fluid-limit technique to derive precisely the stability region of continuous-time models. A throughout presentation of the following section can be found e.g. in \cite{MoyPer17}.

\section{Fluid Stability}
\label{sec:FluideStability}

Fix a multigraph $G=(\maV_1\cup\maV_2,\maE)$, a matching policy 
	$\Phi$ of the priority type and an arrival vector $\lambda$. 
	Observe that for all $t\geq 0$, for all $i\in\maV,\;Q_i(t)$ increases by 1 for each  class-$i$ arrivals such that $Q_j(t)=0$ for any $j\in\maE(i)$. On another hand, $Q_i(t)$ decreases by 1 (when it is positive) for any class-$j$ arrival  with $j\in\maE(i)$, such that $Q_k(t)$ are empty for any $k\in\maE(j)$ such that $j$ gives a higher priority to $k$ over $i.$

The state space in CTMC is then as follows:
\begin{multline}
\label{eq:StateSpaceConTime}
\mathbb{E}=\biggl\lbrace z\in(\Z^{+})^{|\maV|}\;:\;z_iz_j=0, \textrm{ for any }i\in\maV,\\
\textrm{ and } j\in\maE(i)\textrm{ with }|z_k|\leq 1\textrm{ for any } k\in\maV_1\biggl\rbrace.
\end{multline}


For each $i\in\maV,$ we introduce the following sets,
\begin{equation}
\label{eq:NOP}
\begin{array}{rrll}
&\mathcal{N}_i&:=&\{z\in\mathbb{E}\;:\; z_i>0\};\\	
&\mathcal{O}_i&:=&\{z\in\mathbb{E}\;:\;z_j=0\;\;\textrm{for all }j\in\mathcal{E}(i)\};\\
&\Phi_j(i)&:=&\left\lbrace k\in\mathcal{E}(j);\;j\textrm{ gives priority to }k\textrm{ over }i\textrm{ according to }\Phi \right\rbrace;\\
&\mathcal{P}_j(i)&:=&\{z\in\mathbb{E}\;:\;z_k=0\;\;\textrm{for all }k\in\Phi_j(i)\},\;j\in\mathcal{E}(i).
\end{array}
\end{equation}
\medskip
\textbf{\textit{Marginal process corresponding to a particular node}.} 
Fix a matching node $i_0$ of $G$. Let
\[\mathcal{R}:=\mathcal R^{i_0}=\maV\backslash\big({i_0}\cup\maE(i_0)\big)=\left\lbrace i_1,\cdots,i_{|\mathcal{R}|}\right\rbrace.\]
For $x,y\in\mathbb{E},$ denote by $\maA(x,y)$ the infinitesimal generator of the queue process $Q.$
For any $z\in \mathbb{E}$ such that $z_{i_0}>0,$ the only positive terms $\mathcal{A}(z,y),\;y\in \mathbb{E},$ are given by\\
\begin{equation}
\label{eq:InfiInitial}
\left\lbrace\begin{array}{llccc}
\mathcal{A}(z,z+\gre_{i_0})=\lambda_{i_0};\\
\\
\mathcal{A}(z,z-\gre_{i_0})=\sum_{j\in\maE(i_0)}\left(\lambda_{j}{\ind_\mathcal{P}}_j(i_0)(z)\right);\\
\\
\mathcal{A}(z,z+\gre_{i_{\ell}})=\lambda_{i_{\ell}}{\ind_\mathcal{O}}_{i_{\ell}}(z),\;\ell\in\llbracket 1,|\mathcal R|\rrbracket;\\
\\
\mathcal{A}(z,z-\gre_{i_{\ell}})={\ind_\mathcal{N}}_{i_{\ell}}(z)\sum\limits_{\substack{j\in\maE(i_0)\\
		i_0\notin\Phi_j(i_{\ell})}}\left(\lambda_{j}{\ind_\mathcal{P}}_j(i_0)(z)\right),\;\ell\in\llbracket 1,|\mathcal R|\rrbracket.
\end{array}\right.
\end{equation}




\medskip

Let $R:=R^{i_0} = \{R(t) \;:\; t \geq 0\}$ denote the restriction of the process $Q$ to the nodes of $\mathcal{R}$, i.e.,
\begin{equation}
\label{eq:Rrestriction}
R=(R_1,R_2,\cdots,R_{|\mathcal R|}):=\left(Q_{i_1},\cdots,Q_{i_{|\mathcal R|}}\right).
\end{equation}

Define the set
\begin{multline}
\label{eq:StateSpaceConTimeMarg}
\mathbb{E}^\mathcal R=\biggl\lbrace z\in(\Z^{+})^{|\mathcal R|}\;:\;z_{i_k}z_{i_l}=0, \textrm{ for any }i_k\in\mathcal R,\\
\textrm{ and } i_l\in\maE(i_k)\textrm{ with }|z_{i_m}|\leq 1\textrm{ for any } i_m\in\maV_1\biggl\rbrace.
\end{multline}

Conditionally on the ${i_0}$-th coordinate of $Q$ being positive, 
	the process $R$ clearly coincides in distribution with a Markov process 
	$\mathcal X$ on $\mathbb{E}^{\mathcal R}$. The latter will be termed {\em marginal Markov process} associated to node $i_0$. 
	The idea is as follows: using a stochastic averaging principle, as in 
	\cite{Kurtz92}, showing that the marginal process reaches its stationary state 
	immediately at fluid scale. The drifts of the fluid limit of $Q$ will then be a function of this stationary measure.

\textbf{
	\textit{The FWLLN}}. We consider the sequence of fluid-scaled processes $\{\bar{Q}_n\; :\; n \geq 1\}$, defined via
\begin{equation*}
\label{eq:QbarEqQOvern}
\bar{Q}^n(t)=\frac{Q^n(t)}{n}:=\frac{Q(nt)}{n},\;t\geq 0,\;\;n\geq 1.
\end{equation*}
We also denote by $\mathcal{X}^n$ the $n$-th marginal process corresponding to $i_0,$ defined by
\begin{equation*}
\label{eq:Xn=X(nt)}
\mathcal{X}^n(t)=\mathcal{X}(nt),\;t\geq 0,
\end{equation*}
and define 
\begin{equation*}
\label{eq:Xbarn=X(nt)Overn}
\bar{\mathcal{X}}^n(t)=\frac{\mathcal{X}(nt)}{n},\;t\geq 0,\;\;n\geq 1.
\end{equation*}
\medskip
For the fluid analysis, we make two assumptions below,  

\textit{ ASSUMPTION 1}. 
$Q^n(0)\in\mathbb{E},$ for any $n\geq 1,$ and $\bar{Q}^n(0)\Rightarrow\bar{Q}(0)$ as $n\longrightarrow\infty,$ where $\bar{Q}(0)$ is a deterministic element of $\R^{|\maV|},$ with $\bar{Q}_{i_0}(0)>0$ and $\bar{Q}_i(0)=0,\;i\in\maV\backslash\{i_0\}.$\\

\textit{ ASSUMPTION 2}. For all $n\geq 1,$ the $\mathbb{E}^{\mathcal{R}}-$valued process $\mathcal{X}^n$ is ergodic with stationary probalility $\pi^n.$

For $n\geq 1,$ let 
\begin{equation}
\label{eq:Rhon}
\rho^n:=\rho^n(Q^n(0)):=\inf\{t\geq 0\;:\;Q_{i_0}^n(t)=0\},\;\textrm{ with }\inf\emptyset:=\infty.
\end{equation}
\begin{sloppypar}
	In the case where $\mathbb G$ is a graph, the following result was given as Theorem 4 of \cite{MoyPer17}. Hereafter, for a sequence of random variables 
	$\{\rho^n\}$, we write $\rho^n\Rightarrow\rho$ whenever $\pr{\rho^n>M}\underset{n\to\infty}{\longrightarrow} 1$ for any $M\in\R$.
	\end{sloppypar}
\begin{theorem}
	\label{the:FWLLN}
	(FWLLN) Let $\mathbb G$ be a graph, and $(\mathbb{G},\Phi,\lambda)_C$ be a matching queue such that $\Phi$
	is class-admissible. If, for some node $i_0$
	\begin{equation}
	\label{eq:Lambdai0-SumPi}
	\lambda_{i_0}-\sum\limits_{j\in\maE(i_0)}\lambda_j\pi\left(\mathcal{P}_j^{\mathcal{R}}(i_0)\right)<0,
	\end{equation}
	for $\pi$ stationary probability 
	and $\mathcal{P}_j^{\mathcal{R}}(i_0),\;j\in\maE(i_0)$
	, then $\rho^n\Rightarrow\rho$ in $\R$ as $n\longrightarrow\infty,$ for $\rho^n$ in (\ref{eq:Rhon}), where 
	\begin{equation}
	\label{eq:Rho}
	\rho=\displaystyle\frac{\bar{Q}_{i_0}(0)}{\sum_{j\in\maE(i_0)}\lambda_j\pi\left(\mathcal{P}_j^{\mathcal{R}}(i_0)\right)-\lambda_{i_0}}.
	\end{equation}
	Otherwise, $\rho^n\Rightarrow\infty.$ In either case, $\bar{Q}^n\Rightarrow\bar{Q}$ in $\mathbb{D}^{|\maV|}[0,\rho)$ as $n\longrightarrow\infty,$ where
	\begin{equation}
	\label{eq:FormeGneralOfQ}
	\left\lbrace\begin{aligned}
	\bar{Q}_{i_0}(t)&=\bar{Q}_{i_0}(0)+\left(\lambda_{i_0}-\sum\limits_{j\in\maE(i_0)}\lambda_j\pi\left(\mathcal{P}_j^{\mathcal{R}}(i_0)\right)\right)t,\\
	\bar{Q}_i(t)&=0,\qquad i\in\maV\backslash\{i_0\}.
	\end{aligned}\right.
	\end{equation}
\end{theorem}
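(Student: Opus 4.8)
The plan is to adapt the stochastic-averaging proof of Theorem 4 in \cite{MoyPer17} (valid for graphs) to the multigraph setting, the key new feature being the presence of self-looped nodes in $\maV_1$, which are constrained to queue sizes in $\{0,1\}$ (cf.\ the state space (\ref{eq:StateSpaceConTime})) and enter the infinitesimal generator (\ref{eq:InfiInitial}) only through the indicator sets $\mathcal O, \mathcal N, \mathcal P$ defined in (\ref{eq:NOP}). First I would verify that, conditionally on $\{Q_{i_0}(t)>0\}$, the restricted process $R$ of (\ref{eq:Rrestriction}) genuinely coincides in law with the marginal Markov process $\mathcal X$ on the truncated state space $\mathbb E^{\mathcal R}$ of (\ref{eq:StateSpaceConTimeMarg}); this is where the multigraph constraint $|z_{i_m}|\le 1$ for $i_m\in\maV_1$ must be carried through, but since $i_0$ is held positive, the neighbours $\maE(i_0)$ are frozen at $0$ and the dynamics of $\mathcal R=\maV\setminus(\{i_0\}\cup\maE(i_0))$ close up exactly as in the graph case.

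Second, under ASSUMPTION 2 (ergodicity of $\mathcal X^n$ with stationary law $\pi^n$, converging to $\pi$) I would invoke the stochastic averaging principle of Kurtz \cite{Kurtz92}: on the fast time scale $t\mapsto nt$, the fluid-scaled marginal $\bar{\mathcal X}^n$ equilibrates instantaneously, so that the empirical time-average of any bounded functional of $R$ over $[0,t]$ converges to its $\pi$-average. The drift of the slow coordinate $\bar Q_{i_0}$ is then obtained by averaging the jump rates in (\ref{eq:InfiInitial}): the up-rate is $\lambda_{i_0}$ and the down-rate is $\sum_{j\in\maE(i_0)}\lambda_j\ind_{\mathcal P_j(i_0)}$, whose $\pi$-average is $\sum_{j\in\maE(i_0)}\lambda_j\pi(\mathcal P_j^{\mathcal R}(i_0))$. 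This yields the net drift $\lambda_{i_0}-\sum_{j\in\maE(i_0)}\lambda_j\pi(\mathcal P_j^{\mathcal R}(i_0))$ appearing in (\ref{eq:Lambdai0-SumPi}) and fixes the linear trajectory (\ref{eq:FormeGneralOfQ}).

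Third, I would establish the hitting-time dichotomy. As long as $\bar Q_{i_0}>0$ the neighbouring coordinates stay null at fluid scale (they are the fast, null queues), so $\bar Q_i(t)=0$ for $i\ne i_0$ throughout $[0,\rho)$. If the net drift is negative, $\bar Q_{i_0}$ decreases linearly from $\bar Q_{i_0}(0)$ and hits $0$ exactly at time $\rho$ given by (\ref{eq:Rho}), with $\rho^n\Rightarrow\rho$; if the drift is non-negative, $Q_{i_0}$ never empties at fluid scale and $\rho^n\Rightarrow\infty$. The convergence $\bar Q^n\Rightarrow\bar Q$ in $\mathbb D^{|\maV|}[0,\rho)$ then follows from the continuous-mapping / martingale-FCLT machinery, using ASSUMPTION 1 to pin down the deterministic initial condition.

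\textbf{The main obstacle} I anticipate is the rigorous justification of the averaging step, specifically the uniform integrability and tightness estimates needed to pass from the prelimit generator to the averaged drift, together with controlling the exchange of limits $\pi^n\to\pi$. One must show that the fast marginal process mixes fast enough that its occupation measure concentrates on $\pi$ before the slow coordinate $\bar Q_{i_0}$ moves appreciably; in the multigraph case I would check that adding self-loops does not destroy the irreducibility or the bounded-sojourn-time property of $\mathcal X^n$ (the self-looped coordinates live in a finite $\{0,1\}$ set, which if anything \emph{helps} ergodicity). The remainder is a faithful transcription of the graph argument of \cite[Theorem 4]{MoyPer17}, so I would state the averaging lemma, cite \cite{Kurtz92} for the principle, and concentrate the detailed work on confirming that the multigraph-specific indicators in (\ref{eq:InfiInitial}) average correctly against $\pi$.
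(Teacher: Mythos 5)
You should first note that the paper does not actually prove Theorem \ref{the:FWLLN}: it is quoted as Theorem 4 of \cite{MoyPer17}, and the only new content in this section of the thesis is Corollary \ref{cor:FwllnMulti}, which transfers the statement to multigraphs via the elementary observation that a self-looped coordinate $Q_i$, $i\in\maV_1$, is confined to $\{0,1\}$ and hence vanishes identically at fluid scale. Your outline of the averaging-principle argument --- the fast marginal process $\mathcal X$ on $\mathbb E^{\mathcal R}$ equilibrating instantaneously under Assumption 2, the slow coordinate $\bar Q_{i_0}$ picking up the $\pi$-averaged drift $\lambda_{i_0}-\sum_{j\in\maE(i_0)}\lambda_j\pi\left(\mathcal P_j^{\mathcal R}(i_0)\right)$ from the rates in (\ref{eq:InfiInitial}), and the dichotomy on the sign of that drift giving either $\rho^n\Rightarrow\rho$ or $\rho^n\Rightarrow\infty$ --- is a faithful description of how the cited proof proceeds, so in spirit you are reconstructing the right argument.

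Two remarks. First, you have partly misread the division of labour: Theorem \ref{the:FWLLN} as stated concerns a \emph{graph} $\mathbb G$, so all of your multigraph-specific bookkeeping (the constraint $|z_{i_m}|\le 1$ for $i_m\in\maV_1$, irreducibility of $\mathcal X^n$ in the presence of self-loops) belongs to Corollary \ref{cor:FwllnMulti}, where the paper needs only the one-line boundedness remark rather than a re-run of the averaging machinery. Second, and more substantively, your proposal explicitly defers the core analytic content --- the averaging lemma itself, the tightness and uniform-integrability estimates needed to pass from the prelimit generator to the averaged drift, and the control of the exchange of limits $\pi^n\to\pi$ --- to an ``anticipated obstacle.'' Since that is precisely where the difficulty of a FWLLN of this type lies, what you have written is a correct roadmap to the proof in \cite{MoyPer17} rather than a self-contained proof; as a treatment of this statement it amounts to a citation plus sketch, which is in fact exactly what the thesis itself does.
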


Notice that $\rho^n$ going to infinity in the sense specified above readily  
entails that the process $Q$ cannot be positive recurrent, as is shown 
in Lemma 1 of \cite{MoyPer17}, following Proposition 9.9 in \cite{R13}.
In other words,

\begin{corollary}
	\label{cor:unstabilityfluid}
	If $\rho^n \Rightarrow\infty$, for $\rho^n$ in (\ref{eq:Rhon}), then $(\mbG, \Phi,\lambda)_C$ is unstable.
\end{corollary}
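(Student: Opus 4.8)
The plan is to obtain the statement as the contrapositive of the standard fluid stability criterion: for a positive recurrent jump process, the rescaled hitting times of a fixed bounded set must stay bounded in probability, so that the hypothesis $\rho^n\Rightarrow\infty$ precludes positive recurrence. First I would recall that, by the scaling $Q^n(t)=Q(nt)$ together with Assumption~1, the sequence of initial states satisfies $\|Q^n(0)\|\sim n\,\bar Q_{i_0}(0)$, with the $i_0$-queue of order $n$ and every other coordinate negligible at fluid scale; thus $\rho^n$ as defined in (\ref{eq:Rhon}) is exactly the fluid-scaled first instant at which the dominant $i_0$-queue empties, started from a state whose norm tends to infinity.

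Next I would translate $\rho^n\Rightarrow\infty$, i.e. $\pr{\rho^n>M}\to 1$ for every $M\in\R$, into a statement about the genuine return time of $Q$ to the empty state $\mathbf 0$. Writing $\sigma^n=\inf\{s\ge 0:Q_{i_0}(s)=0\}$ for the unscaled hitting time of $0$ by the $i_0$-coordinate, the relation $Q^n(t)=Q(nt)$ gives $\sigma^n=n\,\rho^n$, and if $\tau^n_{\mathbf 0}=\inf\{s\ge 0:Q(s)=\mathbf 0\}$ denotes the unscaled hitting time of $\mathbf 0$ from $Q^n(0)$, then $\tau^n_{\mathbf 0}\ge \sigma^n=n\,\rho^n$, since the system cannot be empty before its $i_0$-coordinate is. Hence $\{\rho^n>M\}\subset\{\tau^n_{\mathbf 0}>nM\}$, so the rescaled global hitting times are themselves pushed to infinity in probability. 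This is the step where I would lean on the fluid picture of Theorem~\ref{the:FWLLN}: under the present hypotheses the drift in (\ref{eq:FormeGneralOfQ}) is nonnegative, so $\bar Q_{i_0}(\cdot)$ never decreases to $0$ and the fluid limit started from a norm-one initial condition never reaches the origin.

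I would then invoke Lemma~1 of \cite{MoyPer17}, which, following Proposition~9.9 in \cite{R13}, records precisely the converse implication: were the CTMC $Q$ positive recurrent (equivalently ergodic, as it is irreducible on the countable state space $\mathbb E$), the family of rescaled hitting times of a fixed bounded set would be tight, i.e. for every $\varepsilon>0$ there would exist $M$ with $\sup_n\pr{\rho^n>M}<\varepsilon$. This is flatly incompatible with $\pr{\rho^n>M}\to 1$ for all $M$. Therefore $Q$ is not positive recurrent, and by Definition~\ref{def:stabConti} the matching queue $(\mbG,\Phi,\lambda)_C$ is unstable.

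The main obstacle I anticipate is not the probabilistic core --- which is a direct citation of the fluid instability criterion --- but making the reduction from the one-dimensional marginal hitting time $\rho^n$ to the full-state return time clean enough that the criterion of \cite{R13} applies verbatim. In particular I would need to verify that the initial-condition scaling imposed in Assumption~1 is exactly the one required by that criterion, and that ``$\rho^n\Rightarrow\infty$'' in the paper's sense ($\pr{\rho^n>M}\to 1$) is genuinely the negation of the boundedness-in-probability delivered by positive recurrence, rather than merely of almost-sure boundedness.
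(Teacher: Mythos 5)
Your proposal is correct and follows essentially the same route as the paper, which disposes of this corollary in one sentence by citing Lemma~1 of \cite{MoyPer17} (itself following Proposition~9.9 of \cite{R13}): positive recurrence would force the fluid-scaled hitting times to be bounded in probability, contradicting $\pr{\rho^n>M}\to 1$ for all $M$. The only thing you add is the explicit reduction $\tau^n_{\mathbf 0}\ge n\,\rho^n$ from the marginal emptying time to the full return time, which the paper leaves implicit but which is exactly the right bookkeeping.
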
 

Now, if $G=(\maV_1\cup\maV_2,\maE)$ is a multigraph, 
for any $i\in\maV_1$, $Q_{i}(0)$ is always zero or one, so 
the $i$-th coordinate of the fluid limit $\bar{Q}$ is necessarily null at all times. 
Moreover, it is immediate to observe the following,

\begin{corollary}
	\label{cor:FwllnMulti} 
	For any class-admissible policy $\Phi$, the conclusions of Theorem \ref{the:FWLLN} and Corollary \ref{cor:unstabilityfluid} remain valid 
	for a continuous-time model $(\mathbb{G},\Phi,\lambda)_C$ on a multigraph 
	$\mathbb G$ if we assume that (\ref{eq:Lambdai0-SumPi}) holds 
	for some $i_0\in\maV_2$.
\end{corollary}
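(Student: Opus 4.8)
The plan is to verify that the stochastic-averaging argument underlying Theorem \ref{the:FWLLN} — established for graphs in \cite{MoyPer17} — survives the passage to multigraphs without modification, once we have fixed $i_0\in\maV_2$. The crucial structural fact is the one recorded just above the statement: every self-looped node $i\in\maV_1$ can hold at most one item, since the state space $\mathbb E$ in (\ref{eq:StateSpaceConTime}) imposes $|z_i|\le 1$ for $i\in\maV_1$. First I would record the immediate consequence that the fluid-scaled coordinates of such nodes vanish uniformly: for every $i\in\maV_1$ and every $n\ge 1$ we have $\bar Q_i^n(t)=Q_i(nt)/n\le 1/n$, so that $\bar Q_i^n\Rightarrow 0$ in $\mathbb D[0,\infty)$. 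Thus at fluid scale the self-looped nodes contribute nothing to the limit trajectory, in full agreement with (\ref{eq:FormeGneralOfQ}).

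Next I would argue that the marginal Markov process $\mathcal X$ associated with $i_0$, living on $\mathbb E^{\mathcal R}$ as defined in (\ref{eq:StateSpaceConTimeMarg}) and governed by the generator (\ref{eq:InfiInitial}), is already built to accommodate self-loops: the constraint $|z_{i_m}|\le 1$ for $i_m\in\maV_1$ is part of the definition of $\mathbb E^{\mathcal R}$, and the transition rates in (\ref{eq:InfiInitial}) are written purely in terms of the indicator sets $\mathcal N_i,\mathcal O_i,\mathcal P_j(i)$ of (\ref{eq:NOP}), whose definitions make no use of $G$ being simple. Hence Assumption 2 applies unchanged, furnishing an ergodic stationary law $\pi$ for $\mathcal X$, and the averaged drift of the $i_0$-coordinate is precisely the quantity appearing in (\ref{eq:Lambdai0-SumPi}). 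Because $i_0\in\maV_2$, its queue $Q_{i_0}$ is genuinely unbounded and can carry the linear fluid trajectory, which is exactly why the hypothesis restricts $i_0$ to $\maV_2$: a node in $\maV_1$ would have a bounded, hence fluid-null, queue and could never witness instability.

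With these two observations in hand, the remainder of the proof of Theorem \ref{the:FWLLN} can be replayed line by line. One establishes the averaging principle, in the spirit of \cite{Kurtz92}, showing that $\bar{\mathcal X}^n$ relaxes instantaneously to $\pi$ on the fluid time scale; one identifies the drift of $\bar Q_{i_0}$ as $\lambda_{i_0}-\sum_{j\in\maE(i_0)}\lambda_j\pi(\mathcal P_j^{\mathcal R}(i_0))$; and one reads off (\ref{eq:Rho})--(\ref{eq:FormeGneralOfQ}) together with the dichotomy on $\rho^n$ defined in (\ref{eq:Rhon}). Finally, Corollary \ref{cor:unstabilityfluid} transfers verbatim: if (\ref{eq:Lambdai0-SumPi}) fails so that $\rho^n\Rightarrow\infty$, the same coupling with Proposition 9.9 of \cite{R13} (as in Lemma 1 of \cite{MoyPer17}) yields the non-positive-recurrence of $Q$.

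The main obstacle I anticipate is not any isolated algebraic step but making the averaging principle rigorous in the presence of the $\maV_1$-coordinates: one must confirm that the bounded self-looped queues do not disrupt the time-scale separation between the ``fast'' null coordinates and the ``slow'' diverging coordinate $Q_{i_0}$. Concretely, I would need to check that the generator (\ref{eq:InfiInitial}) still yields a well-defined, ergodic fast process on $\mathbb E^{\mathcal R}$, and that the tightness and relative-compactness estimates invoked in \cite{MoyPer17,Kurtz92} remain uniform in $n$ despite the hybrid (bounded versus unbounded) nature of the buffer. Everything else is a direct transcription of the graphical case.
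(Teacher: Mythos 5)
Your proposal is correct and follows essentially the same route as the paper, which simply observes (in the paragraph preceding the corollary) that for any $i\in\maV_1$ the queue $Q_i$ is bounded by $1$, so its fluid-scaled coordinate is identically null, and then declares the transfer of Theorem \ref{the:FWLLN} and Corollary \ref{cor:unstabilityfluid} to be immediate for $i_0\in\maV_2$. Your additional remarks on the marginal generator and the averaging principle are a more careful spelling-out of the same observation, not a different argument.
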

In the Section \ref{sec:CasesStudy} below, we will study examples of multigraphs for which the stationary probability of the marginal process can be explicitly computed, and so an explicit fluid limit and an explicit necessary condition of stability can be derived, respectively using Theorem \ref{the:FWLLN} and Corollary \ref{cor:FwllnMulti}. 
Moreover, using fluid stability arguments as in \cite{JD95} will also prove that the latter conditions are also sufficient, thereby providing the exact stability region of the models under consideration.

\section{Multigraphical cases study}
\label{sec:CasesStudy}
In this section we present different examples of multigraph $G=(\maV=\maV_1\cup\maV_2,\mathcal{E})$ and the corresponding matching queues $(G,\Phi,\lambda)_C,$ for the arrival-rate vector $\lambda:=(\lambda_1,\cdots,\lambda_{|\maV|})$ and a priority policy $\Phi$ that is \textit{depicted by the arrows} on each dedicated figure. We deduce the precise stability regions of the corresponding stochastic matching models using the above fluid limit results. 

\subsection{Pendant graphs with a self-loop}
In the subsection below, we present various multigraphs that consist of a pendant graph with a self-loop on a given vertex.

\subsubsection{Pendant graph with a self-loop on the vertex 2}
\medskip
Consider the multigraph $G$ depicted on Figure \ref{fig:PendantLoopNode2} such that $\maV_1=\{2\}$, $\maV_2=\{1,3,4\}$ and $\mathcal{E}=\{\{1,2\},\{2\},\{2,3\},\{2,4\},\{3,4\}\}$.

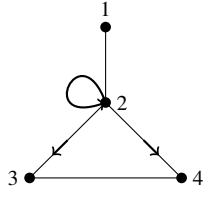
\begin{figure}[htb]
	\begin{center}
		\begin{tikzpicture}

		\fill (8,2) circle (2pt)node[above]{\scriptsize{1}};
		\fill (8,1) circle (2pt)node[right]{\scriptsize{2}};
		\fill (7,0) circle (2pt)node[left]{\scriptsize{3}};
		\fill (9,0) circle (2pt)node[right]{\scriptsize{4}};
		
		\draw[-] (8,1) -- (9,0);
		\draw[-] (8,1) -- (7,0);
		\draw[-] (8,1) -- (8,2);
		\draw[-] (7,0) -- (9,0);
		\draw[thick,->] (8.5,0.5) to (8.7,0.3);
		\draw[thick,->] (7.5,0.5) to (7.3,0.3); 
		\draw[thick,->] (8,1) to [out=110,in=200,distance=10mm] (8,1);
		\end{tikzpicture}
		\caption{Multigraph $G$ with a self-loop on the vertex 2.}
		\label{fig:PendantLoopNode2}
	\end{center}
\end{figure}

\begin{proposition}
	\label{prop:PendantLoopNode2}
	Let $G$ be the pendant graph with a self-loop on the vertex 2 and $\Phi$ the matching policy depicted on Figure \ref{fig:PendantLoopNode2}. Consider an arrival-rate vector $\lambda\in\textsc{Ncond}_C(G),$ i.e., 
	\begin{equation*}
	\lambda_1<\lambda_2,\qquad\lambda_1+\lambda_3<\lambda_2+\lambda_4\;\;\textrm{ and }\;\;\lambda_1+\lambda_4<\lambda_2+\lambda_3.
	\end{equation*}
	If $\bar{Q}^n(0) \Rightarrow x\gre_1$ in $\R^4$ for some $x\in\R^{++}$, then $\bar{Q}^n\Rightarrow\bar{Q}$ in $\D^4[0,\rho)$ as $n\to +\infty$, where
	\begin{equation*}
	\bar{Q}(t)=(x+(\lambda_1-\lambda_2\alpha)t,0,0,0),\qquad 0\leq t <\rho,
	\end{equation*}
	for $\rho:=x/(\lambda_2\alpha-\lambda_1)$ if $\alpha>\lambda_1/\lambda_2$ and $\rho:=\infty$ otherwise, and for 
	\begin{equation}
	\label{eq:alphaPendantVertex2}
	\alpha:=\left[1+\frac{\lambda_3}{\lambda_2+\lambda_4-\lambda_3}+\frac{\lambda_4}{\lambda_2+\lambda_3-\lambda_4}\right]^{-1}=\frac{(\lambda_2)^2-(\lambda_3-\lambda_4)^2}{\lambda_2(\lambda_2+\lambda_3+\lambda_4)}.
	\end{equation}
	\begin{proof}
		The result follows from Corollary \ref{cor:FwllnMulti} by considering $i_0=1$. In that case the marginal process $\mathcal{X}$ has the following infinitesimal generator $A^{\mathcal{R}}$, with $\mathcal{R}=\{3,4\},$ 
		
		$$\left\lbrace\begin{array}{llc}
		\mathcal{A}^\mathcal{R}((i,0),(i+1,0))&=\lambda_3;\\
		\mathcal{A}^\mathcal{R}((i,0),(i-1,0))&=(\lambda_2+\lambda_4)\ind_{[1,+\infty)}(i);\\
		\mathcal{A}^\mathcal{R}((i,0),(i,0))&=-\left(\lambda_3+(\lambda_2+\lambda_4)\ind_{[1,+\infty)}(i)\right);\\
		\mathcal{A}^\mathcal{R}((0,j),(0,j+1))&=\lambda_4;\\
		\mathcal{A}^\mathcal{R}((0,j),(0,j-1))&=(\lambda_2+\lambda_3)\ind_{[1,+\infty)}(j);\\
		\mathcal{A}^\mathcal{R}((0,j),(0,j))&=-\left(\lambda_4+(\lambda_2+\lambda_3)\ind_{[1,+\infty)}(j)\right);\\
		\end{array}\right.$$
		
		then we get,
		\begin{equation*}\left\lbrace\begin{array}{lll}
		\displaystyle-\lambda_3\pi(0,0)+(\lambda_2+\lambda_4)\pi(1,0)=0	\\
		\;\;\;\displaystyle\lambda_3\pi(0,0)-(\lambda_2+\lambda_3+\lambda_4)\pi(1,0)+(\lambda_2+\lambda_4)\pi(2,0)=0	\\
		\;\;\;\displaystyle\lambda_3\pi(1,0)-(\lambda_2+\lambda_3+\lambda_4)\pi(2,0)+(\lambda_2+\lambda_4)\pi(3,0)=0	\\
		\;\;\;\qquad\ldots\quad\qquad\qquad\qquad\qquad\ldots\quad\qquad\qquad\qquad\ldots=0
		\end{array}\right.
		\end{equation*}

		\begin{equation*}\Longleftrightarrow\left\lbrace\begin{array}{lll}
		\displaystyle\pi(1,0)=\left(\frac{\lambda_3}{\lambda_2+\lambda_4}\right)\pi(0,0)	\\
		\\
		\displaystyle\pi(2,0)=\left(\frac{\lambda_3}{\lambda_2+\lambda_4}\right)\pi(1,0)	\\
		\\
		\displaystyle\pi(3,0)=\left(\frac{\lambda_3}{\lambda_2+\lambda_4}\right)\pi(2,0)	\\
		\qquad\quad\;\vdots
		\end{array}\right.
		\qquad\Longleftrightarrow\begin{array}{lll}
		\displaystyle\pi(i,0)=\left(\frac{\lambda_3}{\lambda_2+\lambda_4}\right)\pi(i-1,0)&;&i\geq 1.	
		\end{array}
		\end{equation*}
		Set $\pi(0,0)=\alpha,$ so, for any state $x\in\mathbb{E}^{\mathcal{R}}$ we have,\\
		
		$$\pi(x)=\left\lbrace\begin{array}{cccc}
		\alpha\left(\displaystyle\frac{\lambda_3}{\lambda_2+\lambda_4}\right)^i&x=(i,0),\;i\ge1\\
		&\\
		\alpha\left(\displaystyle\frac{\lambda_4}{\lambda_2+\lambda_3}\right)^j&x=(0,j),\;j\ge1.\\
		\end{array}\right.$$
		
		Then we have,
		\begin{align*}
		1&=\sum\limits_{i\geq 1}\pi(i,0)+\sum\limits_{j\geq 1}\pi(0,j)+\pi(0,0)\\
		&=\pi(0,0)\left[\sum\limits_{i\geq 1}\pi(i,0)+\sum\limits_{j\geq 1}\pi(0,j)+1\right]\\
		&=\pi(0,0)\left[\frac{\lambda_3}{\lambda_2+\lambda_4-\lambda_3}+\frac{\lambda_4}{\lambda_2+\lambda_3-\lambda_4}+1\right].
		\end{align*}

		The stationary distribution $\pi$ of this reversible CTMC is unique, so Assumption 2 holds. The stated convergence of $Q^n$ to the fluid limits follows from Corollary \ref{cor:FwllnMulti}. 
	\end{proof}
\end{proposition}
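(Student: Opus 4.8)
The plan is to invoke the fluid-limit machinery already established, namely Theorem \ref{the:FWLLN} together with its multigraph version Corollary \ref{cor:FwllnMulti}, applied to the node $i_0 = 1 \in \maV_2$. Since $\maE(1) = \{2\}$, the set of remote nodes is $\mathcal{R} = \maV \setminus (\{1\} \cup \maE(1)) = \{3,4\}$, so the whole argument reduces to understanding the marginal Markov process $\mathcal{X}$ on $\mathbb{E}^{\mathcal R}$ that governs the queues at nodes $3$ and $4$ conditionally on $Q_1 > 0$.

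First I would write down the generator of $\mathcal{X}$ from the general formula (\ref{eq:InfiInitial}). The key observation is that conditioning on $Q_1 > 0$ forces $Q_2 = 0$ (because $\{1,2\}$ is an edge), so node $2$ stores no item; and since $\{3,4\}$ is an edge, the admissible states of $(Q_3,Q_4)$ form the two half-axes $\{(i,0):i\ge 0\}\cup\{(0,j):j\ge 0\}$. On the arm $(i,0)$ the chain moves up at rate $\lambda_3$ (an incoming $3$ finds both neighbours $2$ and $4$ empty) and, for $i\ge 1$, down at rate $\lambda_2+\lambda_4$: a class-$4$ arrival matches the waiting $3$, and --- this is exactly where the policy $\Phi$ enters --- a class-$2$ arrival also consumes a $3$, because the arrows make node $2$ give priority to $3$ and $4$ over $1$. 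The symmetric description holds on $(0,j)$, giving a birth-and-death chain on each arm.

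Next I would solve the balance equations, obtaining the geometric form $\pi(i,0)=\alpha\left(\lambda_3/(\lambda_2+\lambda_4)\right)^i$ and $\pi(0,j)=\alpha\left(\lambda_4/(\lambda_2+\lambda_3)\right)^j$. The two ratios are strictly less than $1$ precisely because $\lambda\in\textsc{Ncond}_C(G)$: the hypotheses $\lambda_1+\lambda_3<\lambda_2+\lambda_4$ and $\lambda_1+\lambda_4<\lambda_2+\lambda_3$, together with $\lambda_1>0$, yield $\lambda_3<\lambda_2+\lambda_4$ and $\lambda_4<\lambda_2+\lambda_3$. This both makes the geometric sums converge --- producing the stated value of $\alpha$ after the elementary simplification $\alpha^{-1}=\lambda_2(\lambda_2+\lambda_3+\lambda_4)/\bigl(\lambda_2^2-(\lambda_3-\lambda_4)^2\bigr)$ --- and verifies Assumption 2, the ergodicity of $\mathcal{X}$. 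Assumption 1 is exactly the hypothesis $\bar{Q}^n(0)\Rightarrow x\gre_1$, which supplies the deterministic initial condition with $\bar{Q}_1(0)=x>0$ and all other coordinates null.

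Finally I would identify the matching set $\mathcal{P}_2^{\mathcal{R}}(1)$: a class-$2$ arrival matches the class-$1$ item only when the higher-priority classes $3$ and $4$ are both empty, so $\mathcal{P}_2^{\mathcal{R}}(1)=\{(0,0)\}$ and hence $\pi(\mathcal{P}_2^{\mathcal{R}}(1))=\pi(0,0)=\alpha$. Condition (\ref{eq:Lambdai0-SumPi}) then becomes $\lambda_1-\lambda_2\alpha<0$, i.e. $\alpha>\lambda_1/\lambda_2$, and the conclusion --- the linear fluid trajectory (\ref{eq:FormeGneralOfQ}) with slope $\lambda_1-\lambda_2\alpha$ on the first coordinate and the hitting time (\ref{eq:Rho}) $\rho=x/(\lambda_2\alpha-\lambda_1)$, or $\rho=\infty$ otherwise --- follows directly from Theorem \ref{the:FWLLN} through Corollary \ref{cor:FwllnMulti}. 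The only genuinely delicate step is the first one: reading the priority structure off $\Phi$ correctly, so that the down-rates are $\lambda_2+\lambda_4$ and $\lambda_2+\lambda_3$ (rather than merely $\lambda_4$ and $\lambda_3$) and so that $\mathcal{P}_2^{\mathcal{R}}(1)$ is truly the single state $(0,0)$; the remaining balance-equation computation is then routine.
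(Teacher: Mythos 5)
Your proposal is correct and follows essentially the same route as the paper: apply Corollary \ref{cor:FwllnMulti} with $i_0=1$ and $\mathcal R=\{3,4\}$, write the generator of the marginal birth--death process on the two arms, solve the balance equations to get the geometric stationary distribution, normalize to obtain $\alpha=\pi(0,0)$, and read off the drift $\lambda_1-\lambda_2\alpha$. Your added care in justifying the down-rates $\lambda_2+\lambda_4$ and $\lambda_2+\lambda_3$ from the priority arrows, and in identifying $\mathcal P_2^{\mathcal R}(1)=\{(0,0)\}$ explicitly, only makes explicit steps the paper leaves implicit.
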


\begin{proposition}
	The matching queue $(G,\Phi,\lambda)_C$ corresponding to the pendant graph $G$ with a self-loop on the vertex 2 and the priority type $\Phi$ depicted on Figure \ref{fig:PendantLoopNode2}, is stable if and only if $\textsc{Ncond}_C(G)$ holds together with 
	\begin{equation*}
	\lambda_1<\alpha\lambda_2\;\;\;\textrm{ for } \alpha \textrm{ in } (\ref{eq:alphaPendantVertex2}).
	\end{equation*} 
\end{proposition}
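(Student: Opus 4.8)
The plan is to prove the two implications separately, using the fluid machinery of Section~\ref{sec:FluideStability} for necessity and a fluid-stability argument in the spirit of \cite{JD95} for sufficiency.

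\emph{Necessity.} Suppose that $(G,\Phi,\lambda)_C$ is stable. First, the embedding of the continuous-time model into its discrete-time counterpart described in Section~\ref{sec:NCStab} shows that the stability region is contained in $\textsc{Ncond}_C(G)$, hence $\lambda\in\textsc{Ncond}_C(G)$. Second, I would apply Corollary~\ref{cor:FwllnMulti} with the marginal node $i_0=1$. Since $\maE(1)=\{2\}$ and, by the priority rule depicted on Figure~\ref{fig:PendantLoopNode2}, node $2$ gives priority to both $3$ and $4$ over $1$, we have $\Phi_2(1)=\{3,4\}$ and thus $\mathcal{P}_2^{\mathcal{R}}(1)=\{(0,0)\}$, so that $\pi\!\left(\mathcal{P}_2^{\mathcal{R}}(1)\right)=\pi(0,0)=\alpha$, with $\alpha$ given by (\ref{eq:alphaPendantVertex2}) exactly as computed in the proof of Proposition~\ref{prop:PendantLoopNode2}. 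Consequently the drift of $\bar Q_1$ in (\ref{eq:Lambdai0-SumPi}) equals $\lambda_1-\alpha\lambda_2$; were $\lambda_1\ge\alpha\lambda_2$, Corollary~\ref{cor:FwllnMulti} would give $\rho^n\Rightarrow\infty$, and then Corollary~\ref{cor:unstabilityfluid} would force instability, a contradiction. Hence $\lambda_1<\alpha\lambda_2$.

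\emph{Sufficiency.} Assume now $\lambda\in\textsc{Ncond}_C(G)$ and $\lambda_1<\alpha\lambda_2$. I would show that every fluid limit of $\{\bar Q^n\}$ is absorbed at the origin in finite time, and conclude positive recurrence by the fluid-stability criterion of \cite{JD95}. Since $2\in\maV_1$ we have $\bar Q_2\equiv 0$ along any fluid limit, and the state-space constraint (\ref{eq:StateSpaceConTime}) forces $\bar Q_3\bar Q_4\equiv 0$; a fluid trajectory therefore lives on the faces where at most one of $\{3,4\}$ is positive, while $\bar Q_1$ may be positive as well. The drifts on each face are obtained, as in Proposition~\ref{prop:PendantLoopNode2}, by averaging the fast marginal process against its stationary law: on $\{\bar Q_3>0\}$ node $3$ drains at rate $\lambda_2+\lambda_4-\lambda_3>0$ (using $\lambda_1+\lambda_3<\lambda_2+\lambda_4$, hence $\lambda_3<\lambda_2+\lambda_4$), on $\{\bar Q_4>0\}$ node $4$ drains at rate $\lambda_2+\lambda_3-\lambda_4>0$, and on the face where only $1$ is positive $\bar Q_1$ decreases at rate $\alpha\lambda_2-\lambda_1>0$.

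The remaining work, and the main obstacle, is to glue these piecewise drifts into a global statement that $\|\bar Q(t)\|$ reaches $0$ in finite time from any bounded initial condition. The delicate regime is the one where $\bar Q_1>0$ coexists with $\bar Q_3>0$ (or $\bar Q_4>0$): there the class-$2$ arrivals consumed by node $3$ momentarily starve node $1$, so $\bar Q_1$ grows at rate $\lambda_1$ until node $3$ empties. A clean way to package the argument is to exhibit a piecewise-linear Lyapunov function $L(x)=w_1x_1+w_3x_3+w_4x_4$ and tune the weights so that $\tfrac{d}{dt}L(\bar Q(t))\le-\varepsilon$ on every face: during the draining of node $3$ one needs $w_1\lambda_1<w_3(\lambda_2+\lambda_4-\lambda_3)$, i.e. $w_1/w_3$ small, and likewise for node $4$, whereas on the single-overload face $\{\bar Q_3=\bar Q_4=0\}$ the decrease $w_1(\lambda_1-\alpha\lambda_2)<0$ holds outright by the extra hypothesis. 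Establishing the existence, absolute continuity and the averaged drift expressions of the fluid limits in the multi-overload regime, together with the validity of the stochastic averaging principle when two macroscopic queues coexist, is precisely where the technical effort concentrates; once these and the Lyapunov decrease are in place, finite absorption and hence positive recurrence follow from \cite{JD95}.
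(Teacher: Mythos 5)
Your proposal is correct and follows essentially the same route as the paper: necessity via Corollary~\ref{cor:FwllnMulti} applied at $i_0=1$ (where $\pi(\mathcal P_2^{\mathcal R}(1))=\pi(0,0)=\alpha$), and sufficiency by checking that every fluid face drains under $\textsc{Ncond}_C(G)$ and $\lambda_1<\alpha\lambda_2$, concluding with the fluid-stability criterion of \cite[Theorem 4.2]{JD95}. The paper simply concatenates the draining phases (first $\bar Q_3$ or $\bar Q_4$ empties at rate $\lambda_2+\lambda_4-\lambda_3$ resp.\ $\lambda_2+\lambda_3-\lambda_4$ while $\bar Q_1$ grows at rate $\lambda_1$, then Proposition~\ref{prop:PendantLoopNode2} takes over) instead of tuning a weighted linear Lyapunov function, and the averaging subtlety you flag for the two-macroscopic-queue regime does not actually arise, since $Q_3Q_4\equiv 0$ forces at most one of nodes $3,4$ to be positive and the drifts on that face are then deterministic.
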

\begin{proof}
	It follows from Proposition \ref{prop:PendantLoopNode2} that, for any initial condition of the form $(x,0,0,0),\;x>0$, the fluid limit $\bar{Q}$ will hit the origin if and only if $\lambda_1<\alpha\lambda_2$.
	
	
Assume that $\bar{Q}_1(0)>0$. Then at most one of $\bar{Q}_3(0)$ or $\bar{Q}_4(0)>0$. Say $\bar{Q}_3(0)>0.$ According to the priority of 3 over 1, then
	\begin{equation*}
	\bar{Q}_3(t)=\bar{Q}_3(0)+(\lambda_3-\lambda_2-\lambda_4)t,\;\;0\le t\le\frac{\bar{Q}_3(0)}{\lambda_2+\lambda_4-\lambda_3}.
	\end{equation*}
	In particular, the fluid process $\bar{Q}_3$ will hit $0$ in finite time, so that $\bar{Q}_3$ will hit
	the origin in finite time by Proposition \ref{prop:PendantLoopNode2}. A similar argument applies to the case where $\bar{Q}_4(0)>0.$

	Assume now that $\bar{Q}_2(0)>0,$  there exists $t_0$ such that $\forall t>t_0,\;\bar{Q}_2(t)=0$ (self-loop on vertex 2.)
	Now, since the prelimit processes $Q_i,\; i = 2, 3, 4$ have drifts towards 0 whenever any of them is strictly positive, the fluid limit must remain in state 0 after hitting this state, and Proposition \ref{prop:PendantLoopNode2} shows that $\bar{Q}_1$ will also remain fixed at 0 after hitting that state. 
	Thus, the ergodicity of the system follows from 
	\cite[Theorem 4.2]{JD95}.
\end{proof}

\begin{proposition}
	\label{prop:IncStricteVerX2}
	We have the strict inclusion
	\begin{equation*}
	\left\lbrace\lambda_1<\alpha\lambda_2\right\rbrace\cap\textsc{Ncond}_C(G)\subsetneq\textsc{Ncond}_C(G).
	\end{equation*}	
	\begin{proof}
		
		Fix $\varepsilon\in(0,2/5]$ and set \\
		$$\left\lbrace\begin{array}{cl}
		\lambda_1=&\varepsilon/2;\\
		\lambda_2=&\varepsilon;\\
		\lambda_3=&\lambda_4=1/2-3\varepsilon/4.\\
		\end{array}\right.$$\\
		Clearly, $\lambda$ belong to the set $\textsc{Ncond}_C(G),$ but not to $\{\lambda_1<\alpha\lambda_2\}$, since
		\begin{equation*}
		\lambda_1-\alpha\lambda_2=\varepsilon/2-\frac{\varepsilon^2}{\varepsilon+1-3\varepsilon/2}=\varepsilon/2-\frac{2\varepsilon^2}{2-\varepsilon}\geq 0.
		\end{equation*} 
	\end{proof}
\end{proposition}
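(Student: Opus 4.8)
The inclusion $\{\lambda_1<\alpha\lambda_2\}\cap\textsc{Ncond}_C(G)\subseteq\textsc{Ncond}_C(G)$ holds automatically, being an intersection with $\textsc{Ncond}_C(G)$. Hence the entire content of the statement lies in the strictness, and the plan is to exhibit a single explicit arrival-rate vector $\lambda$ that belongs to $\textsc{Ncond}_C(G)$ but fails the supplementary condition $\lambda_1<\alpha\lambda_2$. Producing one such witness certifies that $\{\lambda_1<\alpha\lambda_2\}\cap\textsc{Ncond}_C(G)$ is a proper subset of $\textsc{Ncond}_C(G)$.

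The guiding idea is to choose the two ``leaf'' intensities symmetrically, $\lambda_3=\lambda_4$, for two reasons. First, this collapses the two nontrivial inequalities defining $\textsc{Ncond}_C(G)$, namely $\lambda_1+\lambda_3<\lambda_2+\lambda_4$ and $\lambda_1+\lambda_4<\lambda_2+\lambda_3$, into the single requirement $\lambda_1<\lambda_2$, which then suffices for membership in $\textsc{Ncond}_C(G)$. Second, it simplifies the constant $\alpha$ of (\ref{eq:alphaPendantVertex2}): since $\lambda_3-\lambda_4=0$, we get $\alpha=\lambda_2/(\lambda_2+\lambda_3+\lambda_4)$, so that $\alpha\lambda_2=\lambda_2^2/(\lambda_2+\lambda_3+\lambda_4)$ becomes a transparent rational function of the intensities. (Note that both $\textsc{Ncond}_C(G)$ and the condition $\lambda_1<\alpha\lambda_2$ are scale-invariant in $\lambda$, so only positivity of the coordinates, not any normalization, is truly needed.)

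Concretely, I would fix a parameter $\varepsilon\in(0,2/5]$ and set $\lambda_1=\varepsilon/2$, $\lambda_2=\varepsilon$ and $\lambda_3=\lambda_4=1/2-3\varepsilon/4$; these are all strictly positive for $\varepsilon$ in that range (and, as it happens, sum to $1$). Membership in $\textsc{Ncond}_C(G)$ is then immediate from $\lambda_1=\varepsilon/2<\varepsilon=\lambda_2$ together with the symmetry just described. To establish the violation, I would compute $\lambda_2+\lambda_3+\lambda_4=1-\varepsilon/2$, whence $\alpha\lambda_2=2\varepsilon^2/(2-\varepsilon)$, and examine the sign of $\lambda_1-\alpha\lambda_2=\varepsilon/2-2\varepsilon^2/(2-\varepsilon)$. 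Clearing denominators and cancelling the factor $\varepsilon>0$ reduces the inequality $\lambda_1-\alpha\lambda_2\ge 0$ to $2-\varepsilon\ge 4\varepsilon$, i.e. to $\varepsilon\le 2/5$, which holds by the choice of $\varepsilon$. Thus $\lambda\notin\{\lambda_1<\alpha\lambda_2\}$ while $\lambda\in\textsc{Ncond}_C(G)$, and the inclusion is strict.

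There is no genuine obstacle here: the argument is a single explicit counterexample, and the only thing requiring care is the bookkeeping, namely verifying that the chosen $\lambda$ is a legitimate positive intensity vector lying in $\textsc{Ncond}_C(G)$, and correctly handling the boundary case $\varepsilon=2/5$, where $\lambda_1-\alpha\lambda_2=0$ so that the strict inequality $\lambda_1<\alpha\lambda_2$ fails exactly. The symmetric choice $\lambda_3=\lambda_4$ is precisely what renders every one of these verifications routine.
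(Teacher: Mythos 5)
Your proposal is correct and uses exactly the same witness as the paper ($\varepsilon\in(0,2/5]$, $\lambda_1=\varepsilon/2$, $\lambda_2=\varepsilon$, $\lambda_3=\lambda_4=1/2-3\varepsilon/4$) with the same computation $\lambda_1-\alpha\lambda_2=\varepsilon/2-2\varepsilon^2/(2-\varepsilon)\ge 0$. The only difference is that you spell out more explicitly why the symmetric choice $\lambda_3=\lambda_4$ makes membership in $\textsc{Ncond}_C(G)$ and the evaluation of $\alpha$ routine, which the paper leaves implicit.
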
 
\subsubsection{Pendant graph with a self-loop on the  vertex $3$}
\medskip
Consider the multigraph $G$ depicted on Figure \ref{fig:PendantLoopNode3} such that $\maV_1=\{3\}$, $\maV_2=\{1,2,4\}$ and $\mathcal{E}=\{\{1,2\},\{2,3\},\{2,4\},\{3\},\{3,4\}\}$.
\begin{figure}[htp]
	\begin{center}
		\begin{tikzpicture}

		\fill (8,2) circle (2pt)node[above]{\scriptsize{1}};
		\fill (8,1) circle (2pt)node[right]{\scriptsize{2}};
		\fill (7,0) circle (2pt)node[below]{\scriptsize{3}};
		\fill (9,0) circle (2pt)node[below]{\scriptsize{4}};
		
		\draw[-] (8,1) -- (9,0);
		\draw[-] (8,1) -- (7,0);
		\draw[-] (8,1) -- (8,2);
		\draw[-] (7,0) -- (9,0);
		\draw[thick,->] (8.5,0.5) to (8.7,0.3);
		\draw[thick,->] (7.5,0.5) to (7.3,0.3);
		\draw[thick,->] (7,0) to [out=110,in=200,distance=10mm] (7,0);
		\end{tikzpicture}
		\caption{Multigraph $G$ with a self-loop on the vertex 3.}
		\label{fig:PendantLoopNode3}
	\end{center}
\end{figure}
\begin{proposition}
	\label{prop:PendantLoopNode3}
	Let $G$ be the pendant graph with a self-loop on the vertex 3 and $\Phi$ the matching policy depicted on Figure \ref{fig:PendantLoopNode3} above. Consider an arrival-rate vector $\lambda\in\textsc{Ncond}_C(G),$ i.e.,  
	\begin{equation*}
	\lambda_1<\lambda_2<\lambda_1+\lambda_3+\lambda_4,\quad\textrm{ and }\quad\lambda_1+\lambda_4<\lambda_2+\lambda_3.
	\end{equation*}
	If $\bar{Q}^n(0) \Rightarrow x\gre_1$ in $\R^4$ for some $x\in\R^{++}$, then $\bar{Q}^n\Rightarrow\bar{Q}$ in $\D^4[0,\rho)$ as $n\to +\infty$, where
	\begin{equation*}
	\bar{Q}(t)=(x+(\lambda_1-\lambda_2\alpha)t,0,0,0),\qquad 0\leq t <\rho,
	\end{equation*}
	for $\rho:=x/(\lambda_2\alpha-\lambda_1)$ if $\alpha>\lambda_1/\lambda_2$ and $\rho:=\infty$ otherwise, and for 
	\begin{equation}
	\label{eq:alphaPendantVertex3}
	\alpha:=\left[1+\frac{\lambda_3}{\lambda_2+\lambda_3+\lambda_4}+\frac{\lambda_4}{\lambda_2+\lambda_3-\lambda_4}\right]^{-1}=\displaystyle\frac{(\lambda_2+\lambda_3)^2-\lambda_4^2}{\lambda_2^2+2\lambda_3^2+3\lambda_2\lambda_3+\lambda_2\lambda_4}.
	\end{equation}
	\begin{proof}
		We prove with the same argue for Proposition \ref{prop:PendantLoopNode2}.
		Set $\pi(0,0)=\alpha,$ we get that \\
		$$\displaystyle\pi(x)=\left\lbrace\begin{array}{lll}
		\displaystyle\frac{\alpha\lambda_3}{\lambda_2+\lambda_3+\lambda_4}&x=(1,0);\\
		&\\
		\alpha\left(\displaystyle\frac{\lambda_4}{\lambda_2+\lambda_3}\right)^j&x=(0,j),\;j\ge1.\\
		\end{array}\right.$$
		Again, the stationary distribution $\pi$ of this reversible CTMC is unique, so Assumption 2 holds. The stated convergence of $Q^n$ to the fluid limits follows from  Corollary \ref{cor:FwllnMulti}. 
	\end{proof}
\end{proposition}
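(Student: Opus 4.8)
The plan is to mimic the proof of Proposition \ref{prop:PendantLoopNode2}, applying Corollary \ref{cor:FwllnMulti} with the distinguished node $i_0 = 1 \in \maV_2$. Since $\maE(1) = \{2\}$, the complementary set is $\mathcal{R} = \maV \setminus (\{1\} \cup \maE(1)) = \{3,4\}$, so it suffices to identify the marginal Markov process $\mathcal{X}$ on $\mathbb{E}^{\mathcal{R}}$, compute its stationary law $\pi$, check Assumption 2, and then read off the drift of $\bar{Q}_1$ from (\ref{eq:Lambdai0-SumPi})--(\ref{eq:FormeGneralOfQ}). Assumption 1 is exactly the hypothesis $\bar{Q}^n(0) \Rightarrow x\gre_1$.

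The only structural novelty compared with Proposition \ref{prop:PendantLoopNode2} is that the self-loop now sits on the node $3 \in \mathcal{R} \cap \maV_1$, so that the coordinate $z_3$ is constrained to $\{0,1\}$ and the ``$3$-axis'' of $\mathbb{E}^{\mathcal{R}}$ collapses to the two states $(0,0)$ and $(1,0)$. Working conditionally on $z_1 > 0$ (hence $z_2 = 0$), I would write the generator $\mathcal{A}^{\mathcal{R}}$ by inspection of (\ref{eq:InfiInitial}) and (\ref{eq:NOP}): from $(0,0)$ a class-$3$ (resp. class-$4$) arrival sends the chain to $(1,0)$ (resp. $(0,1)$); from $(1,0)$ any of the three arrivals of class $2$, $3$ or $4$ matches the stored $3$-item (via the self-loop for class $3$, via the edge $\{3,4\}$ for class $4$, and via the priority of $3$ over $1$ for class $2$), returning the chain to $(0,0)$ at total rate $\lambda_2 + \lambda_3 + \lambda_4$; and on the $4$-axis each state $(0,j)$, $j\ge 1$, goes up at rate $\lambda_4$ and down at rate $\lambda_2 + \lambda_3$ (class $3$ and class $2$ both depleting a $4$-item). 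This chain has a reversible birth-and-death structure, so detailed balance yields directly $\pi(1,0) = \tfrac{\lambda_3}{\lambda_2+\lambda_3+\lambda_4}\pi(0,0)$ and $\pi(0,j) = \big(\tfrac{\lambda_4}{\lambda_2+\lambda_3}\big)^{j}\pi(0,0)$; normalization then gives the constant $\alpha$ in (\ref{eq:alphaPendantVertex3}). The geometric sum converges precisely because $\lambda_4 < \lambda_2 + \lambda_3$, which is implied by the condition $\lambda_1 + \lambda_4 < \lambda_2 + \lambda_3$ of $\textsc{Ncond}_C(G)$ together with $\lambda_1 > 0$; this also secures the ergodicity required by Assumption 2.

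Finally, I would compute the matching drift of node $1$. Since $\Phi_2(1) = \{3,4\}$, the set $\mathcal{P}_2^{\mathcal{R}}(1) = \{z \in \mathbb{E}^{\mathcal{R}} : z_3 = z_4 = 0\}$ reduces to the single state $(0,0)$, so $\pi(\mathcal{P}_2^{\mathcal{R}}(1)) = \alpha$ and the drift in (\ref{eq:Lambdai0-SumPi}) equals $\lambda_1 - \lambda_2\alpha$. Theorem \ref{the:FWLLN}, invoked through Corollary \ref{cor:FwllnMulti}, then delivers the announced fluid limit $\bar{Q}(t) = (x + (\lambda_1 - \lambda_2\alpha)t, 0, 0, 0)$ together with the stated value of $\rho$, according to the sign of $\lambda_1 - \lambda_2\alpha$. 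The delicate point --- and the only place where care beyond Proposition \ref{prop:PendantLoopNode2} is needed --- is the bookkeeping of the generator on the truncated $3$-axis: one must verify that the self-loop match, the edge match with $4$, and the priority of $3$ over $1$ all act identically on the coordinate $z_3$, so that the three arrival streams coalesce into a single exit rate $\lambda_2+\lambda_3+\lambda_4$ out of $(1,0)$, which is exactly what produces the first fraction in (\ref{eq:alphaPendantVertex3}).
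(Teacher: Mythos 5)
Your proposal is correct and follows essentially the same route as the paper: identify the marginal process on $\mathcal{R}=\{3,4\}$ conditionally on $Q_1>0$, solve the reversible birth-and-death generator (with the $3$-axis truncated to $\{0,1\}$ by the self-loop) to obtain $\pi(1,0)=\tfrac{\lambda_3}{\lambda_2+\lambda_3+\lambda_4}\pi(0,0)$ and $\pi(0,j)=\bigl(\tfrac{\lambda_4}{\lambda_2+\lambda_3}\bigr)^{j}\pi(0,0)$, normalize to get $\alpha$, and apply Corollary \ref{cor:FwllnMulti} with $\pi(\mathcal{P}_2^{\mathcal{R}}(1))=\alpha$. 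The paper merely says ``same argument as Proposition \ref{prop:PendantLoopNode2}'' and states the stationary law, so your write-up supplies exactly the bookkeeping it omits, including the correct observation that classes $2$, $3$ and $4$ all deplete the stored $3$-item at combined rate $\lambda_2+\lambda_3+\lambda_4$.
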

\begin{proposition}
	The matching queue $(G,\Phi,\lambda)_C$ corresponding to the pendant graph $G$ with a self-loop on the vertex 3 and the priority type $\Phi$ depicted on Figure \ref{fig:PendantLoopNode3}, is stable if and only if $\textsc{Ncond}_C(G)$ holds together with 
	\begin{equation*}
	\lambda_1<\alpha\lambda_2\;\;\;\textrm{ for } \alpha \textrm{ in } (\ref{eq:alphaPendantVertex3}).
	\end{equation*} 
	\begin{proof}
		Assume that $\bar{Q}_1(0)>0.$ It follows from the Proposition \ref{prop:PendantLoopNode3} that, for any initial condition of the form $(x,0,0,0),\;x>0$, the fluid limit $\bar{Q}$ will hit the origin if and only if $\lambda_1<\alpha\lambda_2$.
		
		Assume now that $\bar{Q}_2(0)>0,$  then we have 
		\begin{equation*}
		\bar{Q}_2(t)=\bar{Q}_2(0)+(\lambda_2-\lambda_1-\lambda_3-\lambda_4)t,\;\;0\le t\le\frac{\bar{Q}_2(0)}{\lambda_1+\lambda_3+\lambda_4-\lambda_2}.
		\end{equation*}
		According to $\lambda\in\textsc{Ncond}_C(G)$
		the fluid queue hits the origin in finite time.	
		
		Assume that $\bar{Q}_1(0)>0,$ with at most one of $\bar{Q}_3(0)$ or $\bar{Q}_4(0)>0$. Set $\bar{Q}_3(0)>0$ then it is equal to 1. So there exists $t_0>0$ such that  
		\begin{equation*}
		\bar{Q}_3(t)=0,\;\;\forall t>t_0.
		\end{equation*}
		Now assume that $\bar{Q}_1(0)>0,$ and $\bar{Q}_4(0)>0,$ then according to the priority of 4 over 1, then
		\begin{equation*}
		\bar{Q}_4(t)=\bar{Q}_4(0)+(\lambda_4-\lambda_2-\lambda_3)t,\;\;0\le t\le\frac{\bar{Q}_4(0)}{\lambda_2+\lambda_3-\lambda_4}.
		\end{equation*}
		According to $\lambda\in\textsc{Ncond}_C(G),$ in particular, the fluid process $\bar{Q}_4$ will hit $0$ in finite time, so that $\bar{Q}_4$ will hit
		the origin in finite time by Proposition \ref{prop:PendantLoopNode3}.\\ 
		
		Now, since the prelimit processes $Q_i,\; i = 2, 3, 4$ have drifts towards 0 whenever any of them is strictly positive, and Proposition \ref{prop:PendantLoopNode3} shows that $\bar{Q}_1$ will also remain fixed at 0 after hitting that state. Thus, the ergodicity of the system follows again from \cite[Theorem 4.2]{JD95}.
	\end{proof}
\end{proposition}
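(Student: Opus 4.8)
The plan is to establish both implications through the fluid-limit machinery of Section~\ref{sec:FluideStability}, separating a sufficiency part (fluid stability yields positive recurrence) from a necessity part (violation of either condition forces instability). Throughout I would work with the rescaled process $\bar Q^n$ and its fluid limits, using that the marginal process attached to node $1$ equilibrates instantaneously at fluid scale, as quantified by Proposition~\ref{prop:PendantLoopNode3}.

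For sufficiency I would assume $\lambda\in\textsc{Ncond}_C(G)$ together with $\lambda_1<\alpha\lambda_2$, and show that every fluid limit $\bar Q$ reaches the origin in finite time and remains there. Starting from an arbitrary fluid initial state, observe that at most one of $\bar Q_1,\bar Q_2$ can be positive (since $1 \v 2$) and likewise at most one of $\bar Q_3,\bar Q_4$ (since $3 \v 4$); I would then drain the auxiliary coordinates one at a time. If $\bar Q_2(0)>0$ its drift is $\lambda_2-\lambda_1-\lambda_3-\lambda_4<0$ by the inner inequality $\lambda_2<\lambda_1+\lambda_3+\lambda_4$ of $\textsc{Ncond}_C(G)$; if $\bar Q_4(0)>0$ its drift is $\lambda_4-\lambda_2-\lambda_3<0$, which follows from $\lambda_1+\lambda_4<\lambda_2+\lambda_3$; and since vertex $3$ carries a self-loop, $Q_3$ is bounded by $1$, so $\bar Q_3\equiv 0$ at fluid scale. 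Once all of $\bar Q_2,\bar Q_3,\bar Q_4$ have hit $0$, the dynamics reduce to the single coordinate $\bar Q_1$, and here the crucial ingredient imported from Proposition~\ref{prop:PendantLoopNode3} (an instance of the FWLLN, Theorem~\ref{the:FWLLN}, via Corollary~\ref{cor:FwllnMulti} with $i_0=1$) applies: conditionally on $Q_1>0$, the marginal process on $\{3,4\}$ is at its reversible stationary law $\pi$, so the effective matching rate offered to node $1$ by class-$2$ arrivals equals $\lambda_2\,\pi(0,0)=\alpha\lambda_2$, with $\alpha$ as in (\ref{eq:alphaPendantVertex3}). Hence $\bar Q_1$ drifts at rate $\lambda_1-\alpha\lambda_2<0$ and reaches $0$ in finite time, while the prelimit coordinates $Q_2,Q_3,Q_4$ all drift toward $0$ whenever positive, so the fluid limit stays fixed at the origin thereafter. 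With every fluid trajectory shown to reach and stay at $\mathbf 0$, I would invoke the fluid-stability criterion \cite[Theorem 4.2]{JD95}, exactly as in the companion proposition for the self-loop on vertex $2$, to conclude that $Q$ is positive recurrent, that is, $(G,\Phi,\lambda)_C$ is stable.

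For necessity I would argue the contrapositive. If $\lambda\notin\textsc{Ncond}_C(G)$, the general necessary condition recalled in Section~\ref{sec:NCStab} already precludes stability. If instead $\lambda\in\textsc{Ncond}_C(G)$ but $\lambda_1\ge\alpha\lambda_2$, then Corollary~\ref{cor:FwllnMulti} applied with $i_0=1$ yields $\rho^n\Rightarrow\infty$, since the drift $\lambda_1-\alpha\lambda_2$ of $\bar Q_1$ is non-negative; Corollary~\ref{cor:unstabilityfluid} then forbids positive recurrence, so the model is unstable.

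The step I expect to be the main obstacle is the patching of regimes in the sufficiency part: one must verify that an arbitrary fluid trajectory genuinely funnels into the single-node regime governed by $\bar Q_1$, so that the averaging rate $\alpha\lambda_2$ remains the correct drift throughout the drainage of $\bar Q_1$ rather than only at the special initial conditions $x\gre_1$ treated in Proposition~\ref{prop:PendantLoopNode3}. This requires checking the sign of the drift of each of $Q_2,Q_3,Q_4$ against the corresponding inequality in $\textsc{Ncond}_C(G)$, and confirming that the boundary behavior of the fluid limit meets the hypotheses of \cite[Theorem 4.2]{JD95}.
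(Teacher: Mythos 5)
Your proposal is correct and follows essentially the same route as the paper: the same coordinate-by-coordinate drift computations for $\bar Q_2,\bar Q_3,\bar Q_4$ under $\textsc{Ncond}_C(G)$, the same use of Proposition~\ref{prop:PendantLoopNode3} to obtain the drift $\lambda_1-\alpha\lambda_2$ for $\bar Q_1$, and the same appeal to \cite[Theorem 4.2]{JD95} for sufficiency. Your treatment is in fact slightly more explicit than the paper's on the necessity direction, where you invoke Corollary~\ref{cor:unstabilityfluid} directly rather than relying on the ``if and only if'' phrasing of Proposition~\ref{prop:PendantLoopNode3}.
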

\begin{proposition}
	\label{prop:IncStricte}
	We have the strict inclusion
	\begin{equation*}
	\left\lbrace\lambda_1<\alpha\lambda_2\right\rbrace\cap\textsc{Ncond}_C(G)\subsetneq\textsc{Ncond}_C(G).
	\end{equation*}	
	\begin{proof}
		
		Fix $\varepsilon\in(0,1/3]$ and set \\
		$$\left\lbrace\begin{array}{cl}
		\lambda_1=&\varepsilon/2;\\
		\lambda_2=&\varepsilon.\\
		\lambda_3=&\lambda_4=1/2-3\varepsilon/4;\\
		\end{array}\right.$$\\
		Clearly, $\lambda$ belongs to $\textsc{Ncond}_C(G),$ but 
		not to $\{\lambda_1<\alpha\lambda_2\}$, since
		\begin{equation*}
		\lambda_1-\alpha\lambda_2=\varepsilon/2-\frac{\left(\varepsilon+1/2-3\varepsilon/4\right)^2-\left(1/2-3\varepsilon/4\right)^2}{\varepsilon^2+2\left(1/2-3\varepsilon/4\right)^2+4\varepsilon\left(1/2-3\varepsilon/4\right)}\left(\varepsilon\right)=\varepsilon/2-\frac{8\varepsilon^2-4\varepsilon^3}{-7\varepsilon^2+4\varepsilon+4}\geq 0.
		\end{equation*} 
	\end{proof}
\end{proposition}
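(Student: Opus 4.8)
The plan is to establish the inclusion by exhibiting a single arrival-rate vector that satisfies $\textsc{Ncond}_C(G)$ yet violates $\lambda_1 < \alpha\lambda_2$. Since the reverse inclusion $\{\lambda_1<\alpha\lambda_2\}\cap\textsc{Ncond}_C(G)\subseteq\textsc{Ncond}_C(G)$ is automatic (it is an intersection), this is all that the strictness requires. First I would fix a small parameter $\varepsilon>0$ and set $\lambda_1=\varepsilon/2$, $\lambda_2=\varepsilon$, $\lambda_3=\lambda_4=1/2-3\varepsilon/4$, noting that these coordinates sum to $1$ and are all strictly positive as long as $\varepsilon<2/3$, so that $\lambda$ is a legitimate element of $(\R^{++})^{4}$.

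Next I would verify membership in $\textsc{Ncond}_C(G)$, that is, the three inequalities recalled in Proposition \ref{prop:PendantLoopNode3}. The condition $\lambda_1<\lambda_2$ reduces to $\varepsilon/2<\varepsilon$; the condition $\lambda_1+\lambda_4<\lambda_2+\lambda_3$ reduces again to $\varepsilon/2<\varepsilon$; and $\lambda_2<\lambda_1+\lambda_3+\lambda_4$ reads $\varepsilon<1-\varepsilon$, i.e.\ $\varepsilon<1/2$. Hence the entire family lies in $\textsc{Ncond}_C(G)$ whenever $\varepsilon<1/2$, which is compatible with the range $\varepsilon\in(0,1/3]$ to be imposed below.

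The crux is then to show that $\lambda_1-\alpha\lambda_2\ge 0$ for this choice, where $\alpha$ is the explicit constant of (\ref{eq:alphaPendantVertex3}). Substituting the values and clearing the denominator of $\alpha$ — which, after scaling numerator and denominator by $8$, becomes $-7\varepsilon^2+4\varepsilon+4$ and stays strictly positive on $(0,1/3]$ — I would reduce the target inequality $\varepsilon/2\ge (8\varepsilon^2-4\varepsilon^3)/(-7\varepsilon^2+4\varepsilon+4)$ to the polynomial inequality $\varepsilon(\varepsilon^2-12\varepsilon+4)\ge 0$. Since $\varepsilon>0$, this is equivalent to $\varepsilon^2-12\varepsilon+4\ge 0$, whose smallest root is $6-4\sqrt 2\approx 0.343$. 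Restricting to $\varepsilon\in(0,1/3]$ then guarantees $\varepsilon<6-4\sqrt 2$ and hence the inequality, so that $\lambda\notin\{\lambda_1<\alpha\lambda_2\}$, which completes the argument.

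The only delicate point is this final algebraic reduction: one must track the scaling by $8$ consistently in the numerator and denominator of $\alpha$, confirm the sign of the cleared denominator on $(0,1/3]$ before multiplying through, and choose the range of $\varepsilon$ precisely so that it falls below the threshold $6-4\sqrt 2$. The restriction $\varepsilon\le 1/3$ is exactly what makes this margin hold, and plugging in the endpoint $\varepsilon=1/3$ gives a convenient numerical sanity check.
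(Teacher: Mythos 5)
Your proposal is correct and follows exactly the paper's argument: the same counterexample $\lambda_1=\varepsilon/2$, $\lambda_2=\varepsilon$, $\lambda_3=\lambda_4=1/2-3\varepsilon/4$ on $\varepsilon\in(0,1/3]$, the same verification of $\textsc{Ncond}_C(G)$, and the same computation showing $\lambda_1-\alpha\lambda_2\ge 0$. Your reduction to $\varepsilon(\varepsilon^2-12\varepsilon+4)\ge 0$ with threshold $6-4\sqrt{2}\approx 0.343$ is a valid (and welcome) elaboration of the final inequality that the paper merely asserts.
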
 

\subsection{Complete bipartite graphs with a self-loop}
In this subsection, we present a different type of multigraphs of the form of complete bipartite graph with a self-loop on a such vertex.
\subsubsection{Complete bipartite graph of order 3 with a self-loop on the vertex 2}
Consider the multigraph $G$ depicted on Figure \ref{fig:BipQ3Verx2} such that $\maV_1=\{2\}$, $\maV_2=\{1,3\}$ and $\mathcal{E}=\{\{1,2\},\{2,3\},\{2\}\}$.
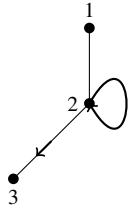
\begin{figure}[htp]
	\begin{center}
		\begin{tikzpicture}
		\fill (8,2) circle (2pt)node[above]{\scriptsize{1}};
		\fill (8,1) circle (2pt)node[left]{\scriptsize{2}};
		\fill (7,0) circle (2pt)node[below]{\scriptsize{3}};
		
		\draw[-] (8,1) -- (7,0);
		\draw[-] (8,1) -- (8,2);
		\draw[thick,->] (7.5,0.5) to (7.3,0.3);
		\draw[thick,->] (8,1) to [out=60,in=300,distance=13mm] (8,1);
		\end{tikzpicture}
		\caption{Complete bipartite graph  of order 2  with a self-loop on the vertex 2.}
		\label{fig:BipQ3Verx2}
	\end{center}
\end{figure}
\begin{proposition}
	\label{prop:BipQ3Verx2}
	Let $G$ be the bipartite graph with a self-loop on the vertex 2 and $\Phi$ the matching policy depicted on Figure \ref{fig:BipQ3Verx2} above. Consider an arrival-rate vector $$\lambda\in\textsc{Ncond}_C(G)=\{\lambda_1+\lambda_3<\lambda_2\}.$$ 
	If $\bar{Q}^n(0) \Rightarrow x\gre_1$ in $\R^3$ for some $x\in\R^{++}$, then $\bar{Q}^n\Rightarrow\bar{Q}$ in $\D^3[0,\rho)$ as $n\to +\infty$, where
	\begin{equation*}
	\bar{Q}(t)=(x+(\lambda_1-\alpha\lambda_2)t,0,0),\qquad 0\leq t <\rho,
	\end{equation*}
	for $\rho:=x/(\lambda_2\alpha-\lambda_1)$ if $\alpha>\lambda_1/\lambda_2$ and $\rho:=\infty$ otherwise, and for 
	
	\begin{equation}
	\label{eq:BipQ3Verx2}
	\alpha:=\left[1+\frac{\lambda_3}{\lambda_2-\lambda_3}\right]^{-1}=\displaystyle\frac{\lambda_2-\lambda_3}{\lambda_2}.
	\end{equation}
	\begin{proof}
		We argue as above. Setting $\pi(0)=\alpha,$ for any $i\geq 1,$ we get that 
		$$\pi(i)=\alpha\left(\displaystyle\frac{\lambda_3}{\lambda_2}\right)^i,$$
		and the result follows again from Corollary \ref{cor:FwllnMulti}.
	\end{proof}
\end{proposition}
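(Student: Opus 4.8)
The final statement is Proposition \ref{prop:BipQ3Verx2}, which derives the fluid limit for the complete bipartite graph of order~2 with a self-loop on vertex~2, under the priority policy depicted on Figure \ref{fig:BipQ3Verx2}. The plan is to invoke Corollary \ref{cor:FwllnMulti}, exactly as in the proofs of Propositions \ref{prop:PendantLoopNode2} and \ref{prop:PendantLoopNode3}, by choosing the tracked node to be $i_0 = 1$. With this choice the residual set is $\mathcal R = \maV \setminus (\{1\} \cup \maE(1)) = \{3\}$, since $\maE(1) = \{2\}$. The marginal Markov process $\mathcal X$ then lives on the one-dimensional state space $\mathbb E^{\mathcal R} = \N$ tracking the content of node~$3$, conditionally on $Q_1 > 0$.

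First I would write down the infinitesimal generator $\mathcal A^{\mathcal R}$ of this marginal process using the recipe (\ref{eq:InfiInitial}). When $Q_1>0$, a class-$3$ arrival (rate $\lambda_3$) increases $Q_3$ by one since $3$ has no compatible item to match with (its only neighbour $2$ being empty whenever $Q_3>0$), while a class-$2$ arrival (rate $\lambda_2$) matches with the waiting class-$3$ item, by the priority of $3$ over $1$ indicated by the arrow; this decreases $Q_3$ by one when positive. Thus the marginal process is a birth-death chain on $\N$ with birth rate $\lambda_3$ and death rate $\lambda_2$, i.e.\ an M/M/1-type chain with load $\lambda_3/\lambda_2$. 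Its stationary distribution is geometric: setting $\pi(0)=\alpha$, one gets $\pi(i) = \alpha(\lambda_3/\lambda_2)^i$ for all $i\ge 1$, which is summable precisely because $\lambda_3 < \lambda_2$ (a consequence of $\lambda\in\textsc{Ncond}_C(G)$, where $\lambda_1+\lambda_3<\lambda_2$). The normalization then yields
\[
\alpha = \left[\,\sum_{i\ge 0}\left(\frac{\lambda_3}{\lambda_2}\right)^i\right]^{-1} = \left[1 + \frac{\lambda_3}{\lambda_2-\lambda_3}\right]^{-1} = \frac{\lambda_2-\lambda_3}{\lambda_2},
\]
matching (\ref{eq:BipQ3Verx2}). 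Uniqueness of this stationary distribution for the irreducible reversible chain gives Assumption~2.

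Next I would identify the drift of the tracked coordinate. In the notation of Theorem \ref{the:FWLLN} and Corollary \ref{cor:FwllnMulti}, the only neighbour of $i_0=1$ is node $2$, and a class-$2$ arrival takes a class-$1$ item away precisely when $2$ has no higher-priority match available, that is, when $Q_3 = 0$; the relevant event set is $\mathcal P_2^{\mathcal R}(1) = \{z : z_3 = 0\} = \{0\}$, so $\pi(\mathcal P_2^{\mathcal R}(1)) = \pi(0) = \alpha$. The net drift of $\bar Q_1$ is therefore $\lambda_1 - \lambda_2\pi(\mathcal P_2^{\mathcal R}(1)) = \lambda_1 - \alpha\lambda_2$. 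Applying Corollary \ref{cor:FwllnMulti} with the initial condition $\bar Q^n(0)\Rightarrow x\,\gre_1$ then gives directly the stated fluid trajectory $\bar Q(t) = (x + (\lambda_1 - \alpha\lambda_2)t, 0, 0)$ on $[0,\rho)$, with the hitting time $\rho = x/(\lambda_2\alpha - \lambda_1)$ when $\alpha > \lambda_1/\lambda_2$ and $\rho=\infty$ otherwise, as in (\ref{eq:Rho}) and (\ref{eq:FormeGneralOfQ}).

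The proof is essentially routine given the machinery already developed, so there is no serious obstacle; the only point demanding a little care is the correct reading of the priority structure from the figure, namely verifying that node~$2$ indeed prioritizes a match with node~$3$ over node~$1$, so that $\mathcal P_2^{\mathcal R}(1)$ is the event $\{Q_3=0\}$ rather than something larger. This determines which Poisson flow actually depletes the $Q_1$ queue and hence produces the factor $\alpha$ in the drift; the rest follows mechanically from Corollary \ref{cor:FwllnMulti}, in complete analogy with the two preceding propositions.
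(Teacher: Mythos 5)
Your proposal is correct and follows essentially the same route as the paper, which simply says ``we argue as above'' and records the geometric stationary law $\pi(i)=\alpha(\lambda_3/\lambda_2)^i$ before invoking Corollary \ref{cor:FwllnMulti}; you have merely made explicit the choice $i_0=1$, $\mathcal R=\{3\}$, the birth--death structure of the marginal chain, and the identification $\pi(\mathcal P_2^{\mathcal R}(1))=\pi(0)=\alpha$, all of which agree with the paper's computation.
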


\begin{proposition}
	The matching queue $(G,\Phi,\lambda)_C$ corresponding to the complete bipartite graph with a self-loop on the vertex 2 depicted on Figure \ref{fig:BipQ3Verx2} and for all matching policy $\Phi$, $(G, \textsc{fcfm},\mu)_C$ is stable if and only if $\mu$ belongs to the set $\textsc{Ncond}_C(G)$.
	\begin{proof}
		First, consider that the priority type policy that depicted on Figure \ref{fig:BipQ3Verx2}, we have:			
		

		Assume that that $\bar{Q}_1(0)>0,$ and $\bar{Q}_3(0)>0,$ then according to the priority of 3 over 1, then
		\begin{equation*}
		\bar{Q}_3(t)=\bar{Q}_3(0)+(\lambda_3-\lambda_2)t,\;\;0\le t\le\frac{\bar{Q}_3(0)}{\lambda_2-\lambda_3}.
		\end{equation*}
		According to $\lambda\in\textsc{Ncond}_C(G),$ in particular, the fluid process $\bar{Q}_3$ will hit $0$ in finite time, so that $\bar{Q}_3$ will hit
		the origin in finite time by Proposition \ref{prop:BipQ3Verx2}.

		Now assume that $\bar{Q}_1(0)>0,$ it follows from the proposition \ref{prop:BipQ3Verx2} that, for any initial condition of the form $(x,0,0),\;x>0$, the fluid limit $\bar{Q}$ will hit the origin if and only if $\lambda_1<\alpha\lambda_2.$
		Indeed, we have 	
		\begin{align*}
		\lambda_1&<\alpha\lambda_2\\
		\iff\lambda_1&<\left(\frac{\lambda_2-\lambda_3}{\lambda_2}\right)\lambda_2\\
		\iff\lambda_1&<\lambda_2-\lambda_3.
		\end{align*}
		For this, it suffices that $\lambda\in\textsc{Ncond}_C(G).$ 
		However, by symmetry between the vertices 1 and 3, we conclude that for all class-admissible matching policy $\Phi,$ we have 
		$\textsc{Stab}_C(G,\Phi)=\textsc{Ncond}_C(G).$ See Theorem \ref{thm:ppartite} (i).
	\end{proof}
\end{proposition}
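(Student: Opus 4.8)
The plan is to establish maximality by proving both inclusions $\textsc{Stab}_C(G,\Phi) \subseteq \textsc{Ncond}_C(G)$ and $\textsc{Ncond}_C(G) \subseteq \textsc{Stab}_C(G,\Phi)$ for every class-admissible $\Phi$. The first inclusion is the necessary condition: for any admissible policy it follows from the discrete-to-continuous correspondence recalled at the beginning of Section \ref{sec:NCStab} (namely that $Q$ is positive recurrent if and only if the embedded discrete-time chain $A$ is), combined with the discrete-time necessary condition of Proposition \ref{thm:mainmono} for multigraphs. So the substance of the proof is the converse: assuming $\lambda \in \textsc{Ncond}_C(G)$, i.e. $\lambda_1 + \lambda_3 < \lambda_2$, show that the CTMC $Q$ is positive recurrent under any $\Phi$.

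First I would treat the priority policy drawn on Figure \ref{fig:BipQ3Verx2} via fluid limits. The key structural observation is that, vertex $2$ carrying a self-loop, its queue is bounded by $1$, so $\bar{Q}_2 \equiv 0$ on $(0,\infty)$; moreover the state constraint $z_2 = 1 \Rightarrow z_1 = z_3 = 0$ forces the fluid trajectory to live on the coordinates $1$ and $3$. I would then verify that every fluid limit reaches the origin in finite time and stays there, distinguishing the positive initial coordinates. If $\bar{Q}_3(0) > 0$, the priority of $3$ over $1$ gives $\bar{Q}_3(t) = \bar{Q}_3(0) + (\lambda_3 - \lambda_2)t$, which hits $0$ in finite time since $\lambda_3 < \lambda_2$; once $\bar{Q}_3$ is drained, the state is of the form $x\gre_1$ and Proposition \ref{prop:BipQ3Verx2} applies with $i_0 = 1$, showing that $\bar{Q}_1$ decreases to $0$ precisely when $\lambda_1 < \alpha\lambda_2$. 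The decisive algebraic step is the equivalence $\lambda_1 < \alpha\lambda_2 \iff \lambda_1 < \lambda_2 - \lambda_3$, obtained from the explicit constant (\ref{eq:BipQ3Verx2}), which is exactly $\lambda \in \textsc{Ncond}_C(G)$.

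Having shown that all fluid limits are absorbed at $0$, I would invoke the fluid-stability criterion \cite[Theorem 4.2]{JD95} to conclude positive recurrence of $Q$ under the priority policy of Figure \ref{fig:BipQ3Verx2}. The symmetric priority policy favouring $1$ over $3$ is handled identically, as the binding constraint $\lambda_1 + \lambda_3 < \lambda_2$ is invariant under exchanging nodes $1$ and $3$. To reach \emph{every} admissible policy, I would observe that the constraint $z_2 = 1 \Rightarrow z_1 = z_3 = 0$ forbids a waiting class-$2$ item from coexisting with waiting class-$1$ or class-$3$ items; hence an incoming class-$2$ item never has to choose between a self-loop match and a match with $\maV_2 = \{1,3\}$, so every admissible $\Phi$ on $G$ is vacuously $\maV_2$-favorable. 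Since $\maV_1 = \{2\} \ne \emptyset$ and the maximal subgraph $\check G$ is complete $2$-partite, the continuous-time counterpart of Theorem \ref{thm:ppartite} (i), valid through the correspondence of Section \ref{sec:MatchingQueuContiounsTime}, then yields $\textsc{Stab}_C(G,\Phi) = \textsc{Ncond}_C(G)$ for all $\Phi$.

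The main obstacle is the fluid analysis in the interior regime, where the drift of the tagged queue at node $1$ is governed by the stationary law of the fast marginal chain on node $3$, producing the averaged coefficient $\alpha$ in the drift $\lambda_1 - \alpha\lambda_2$. Justifying this stochastic averaging requires checking Assumptions~1 and~2 and the ergodicity and uniqueness of the marginal stationary measure; this is precisely the content of Proposition \ref{prop:BipQ3Verx2}, so the real care lies in invoking it correctly and in patching the two boundary regimes (drain $\bar{Q}_3$ first, then $\bar{Q}_1$) into a single trajectory that remains at $0$ once it arrives there.
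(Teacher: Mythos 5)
Your proposal is correct and follows essentially the same route as the paper: drain $\bar{Q}_3$ first using the priority of $3$ over $1$, then apply Proposition \ref{prop:BipQ3Verx2} to $\bar{Q}_1$ and reduce $\lambda_1<\alpha\lambda_2$ to $\lambda_1<\lambda_2-\lambda_3$ via the explicit constant \eqref{eq:BipQ3Verx2}, concluding for general policies through Theorem \ref{thm:ppartite}~(i). Your added observations (the necessity direction via the embedded chain, and the remark that every admissible policy is vacuously $\maV_2$-favorable because a waiting class-$2$ item cannot coexist with waiting class-$1$ or class-$3$ items) make explicit what the paper leaves to its appeal to symmetry, but do not change the argument.
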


\subsubsection{Complete bipartite graph of order $q=4$ with a self-loop on the vertex 1}

Consider the multigraph $G$ depicted on Figure \ref{fig:BipQ4Verx1} such that $\maV_1=\{1\}$, $\maV_2=\{1,2,3\}$ and $\mathcal{E}=\{\{1\},\{1,2\},\{1,4\},\{2,3\},\{3,4\}\}$.
\begin{figure}[htp]
	\begin{center}
		\begin{subfigure}[b!]{0.3\textwidth}
		\begin{tikzpicture}
		
		\fill (4,3) circle (2pt)node[above]{\scriptsize{2}};
		\fill (4,1) circle (2pt)node[above]{\scriptsize{4}};
		\fill (0,3) circle (2pt)node[above]{\scriptsize{1}};
		\fill (0,1) circle (2pt)node[above]{\scriptsize{3}};
		
		\draw[-] (0,1) -- (4,1);
		\draw[-] (0,1) -- (4,3);
		\draw[-] (0,3) -- (4,1);
		\draw[-] (0,3) -- (4,3);
		
		\draw[thick,->] (0.5,3) -- (1,3);
		\draw[thick,->] (0.5,1.25) -- (1,1.5);
		
		\draw[thick,->] (3.5,1.25) -- (3,1.5);
		\draw[thick,->] (3.5,3) -- (3,3);
		\draw[thick,->] (0,3) to [out=30,in=150,distance=17mm] (0,3);
			\end{tikzpicture}
			\caption{Non $\maV_2$-favorable policy.}
		\end{subfigure}
	$\qquad\qquad\qquad$
		\begin{subfigure}[b!]{0.3\textwidth}
		\begin{tikzpicture}
		
			\fill (15,3) circle (2pt)node[above]{\scriptsize{2}};
		\fill (15,1) circle (2pt)node[above]{\scriptsize{4}};
		\fill (11,3) circle (2pt)node[above]{\scriptsize{1}};
		\fill (11,1) circle (2pt)node[above]{\scriptsize{3}};
		
		\draw[-] (11,1) -- (15,1);
		\draw[-] (11,1) -- (15,3);
		\draw[-] (11,3) -- (15,1);
		\draw[-] (11,3) -- (15,3);
		
		\draw[thick,->] (11.5,3) -- (12,3);
		\draw[thick,->] (11.5,1.25) -- (12,1.5);
		
		\draw[thick,->] (14.5,1) -- (14,1);
		\draw[thick,->] (14.5,2.75) -- (14,2.5);
		\draw[thick,->] (11,3) to [out=30,in=150,distance=17mm] (11,3);
		\end{tikzpicture}
		\caption{$\maV_2$-favorable policy.}
	\end{subfigure}
		\caption{Complete bipartite graph  of order 4 with a self-loop on the vertex 1.}
		\label{fig:BipQ4Verx1}

	\end{center}
\end{figure}
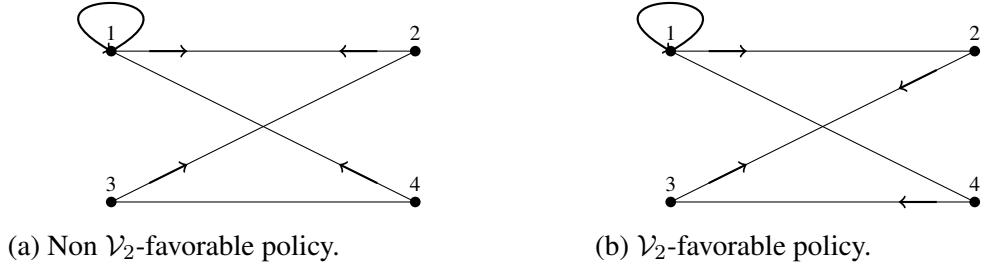

\begin{proposition}
	\label{prop:BipQ4Verx1}
	Let $G$ be the bipartite graph with a self-loop on the vertex 1 and $\Phi$ the matching policy depicted on Figure \ref{fig:BipQ4Verx1} (a). Consider an arrival-rate vector $$\lambda\in\textsc{Ncond}_C(G)=\{\lambda_2+\lambda_4<\lambda_1+\lambda_3,\;\textrm{ and }\;\lambda_3<\lambda_2+\lambda_4\}.$$
	If $\bar{Q}^n(0) \Rightarrow x\gre_3$ in $\R^4$ for some $x\in\R^{++}$, then $\bar{Q}^n\Rightarrow\bar{Q}$ in $\D^4[0,\rho)$ as $n\to +\infty$, where
	\begin{equation*}
	\bar{Q}(t)=(0,0,x+(\lambda_3-\alpha_1(\lambda_2+\lambda_4))t,0),\qquad 0\leq t <\rho,
	\end{equation*}
	for $\rho:=x/((\lambda_2+\lambda_4)\alpha_1-\lambda_3)$ if $\alpha_1>\lambda_3/(\lambda_2+\lambda_4)$ and $\rho:=\infty$ otherwise, and for 
	
	\begin{equation}
	\label{eq:BipQ4Verx1}
	\alpha_1=\left[1+\displaystyle\frac{\lambda_1}{\lambda_1+\lambda_2+\lambda_4}\right]^{-1}=\displaystyle\frac{\lambda_1+\lambda_2+\lambda_4}{2\lambda_1+\lambda_2+\lambda_4}.
	\end{equation}
	\begin{proof}
		We argue as above, by computing explicitly the unique stationary distribution of the reversible marginal process. 
	\end{proof}
\end{proposition}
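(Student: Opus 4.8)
The plan is to invoke Corollary \ref{cor:FwllnMulti}, which requires me to identify a node $i_0\in\maV_2$ whose associated marginal process is ergodic with an explicitly computable stationary distribution. The natural choice here is $i_0=3$, since removing node $3$ and its neighborhood $\maE(3)=\{2,4\}$ leaves $\maR=\maV\setminus(\{3\}\cup\{2,4\})=\{1\}$, a single-coordinate marginal process. This is the simplest possible setting: the marginal Markov process $\mathcal X$ lives on a one-dimensional state space (the queue-length of node $1$, which is self-looped so it is capped at $1$), and should reduce to a birth-death chain on $\{0,1\}$ or a slightly larger finite/countable set depending on whether node $1$'s self-loop forces the cap.

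First I would write down the infinitesimal generator $\mathcal A^{\maR}$ of the marginal process restricted to node $1$, using the template \eqref{eq:InfiInitial} and the priority structure $\Phi$ depicted in Figure \ref{fig:BipQ4Verx1}(a). Concretely, conditional on $Q_3>0$: node $1$ receives arrivals at rate $\lambda_1$ (when empty and no neighbor blocks it), and because node $1$ carries a self-loop, any $1$-arrival when a $1$-item is already present triggers a self-match, so the content of node $1$ stays in $\{0,1\}$; the departures of $1$-items occur through matches with arrivals of classes in $\maE(1)=\{2,4\}$ subject to the priority sets $\Phi_j(3)$. I would then solve the (finite) balance equations to obtain $\pi(0)$ and $\pi(1)$ in closed form, setting $\pi(0)=\alpha_1$ and verifying that the resulting $\alpha_1$ matches the expression \eqref{eq:BipQ4Verx1}, namely $\alpha_1=(\lambda_1+\lambda_2+\lambda_4)/(2\lambda_1+\lambda_2+\lambda_4)$. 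The ratio $\pi(1)/\pi(0)=\lambda_1/(\lambda_1+\lambda_2+\lambda_4)$ should emerge directly from the detailed-balance relation of this reversible two-state chain.

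Having the stationary measure $\pi$, I would verify Assumption 2 (uniqueness of $\pi$ for the reversible CTMC is immediate on a finite irreducible state space) and Assumption 1 (the hypothesis $\bar Q^n(0)\Rightarrow x\gre_3$ gives the required deterministic initial fluid state supported on node $3$). Then I would apply Corollary \ref{cor:FwllnMulti} with $i_0=3$: the drift of the surviving fluid coordinate $\bar Q_3$ is $\lambda_3-\sum_{j\in\maE(3)}\lambda_j\pi(\maP_j^{\maR}(3))=\lambda_3-(\lambda_2+\lambda_4)\alpha_1$, which is exactly the slope appearing in the stated fluid trajectory, while all other coordinates are pinned at $0$. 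The formula for $\rho$ and the dichotomy ($\rho<\infty$ iff $\alpha_1>\lambda_3/(\lambda_2+\lambda_4)$, else $\rho=\infty$) then follow verbatim from \eqref{eq:Rho} and \eqref{eq:FormeGneralOfQ}.

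The main obstacle I expect is the bookkeeping of the priority sets $\maP_j(3)$ and $\Phi_j(3)$ for $j\in\{2,4\}$ under the specific non-$\maV_2$-favorable policy of Figure \ref{fig:BipQ4Verx1}(a): I must confirm which classes a $2$-arrival or a $4$-arrival prioritizes over node $3$, because only the arrivals that are \emph{not} diverted to a higher-priority neighbor actually contribute to depleting node $3$'s fluid. Getting these indicator sets right is what determines whether the effective depletion rate is genuinely $(\lambda_2+\lambda_4)\alpha_1$ rather than some other combination; in particular the self-loop at node $1$ interacts with the priority ordering, since a $2$- or $4$-item may first be consumed by a waiting $1$-item before it can match node $3$. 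Once the reduction to the two-state reversible marginal chain is justified and $\alpha_1$ is confirmed, the remainder is a direct citation of Corollary \ref{cor:FwllnMulti} and poses no further difficulty.
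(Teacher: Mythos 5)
Your proposal is correct and follows essentially the same route as the paper: take $i_0=3$, note $\maR=\{1\}$ so the marginal process is the two-state reversible chain on the self-looped node $1$ with up-rate $\lambda_1$ and down-rate $\lambda_1+\lambda_2+\lambda_4$ (since under policy (a) both $2$- and $4$-arrivals prioritize node $1$ over node $3$), giving $\pi(0)=\alpha_1$ and the drift $\lambda_3-(\lambda_2+\lambda_4)\alpha_1$ via Corollary \ref{cor:FwllnMulti}. The paper's proof is exactly this computation, stated tersely as ``arguing as above.''
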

\begin{proposition}
	The matching queue $(G,\Phi,\lambda)_C$ corresponding to the complete bipartite graph $G$ with a self-loop on the vertex 1 and the priority type $\Phi$ depicted on Figure \ref{fig:BipQ4Verx1} (a), is stable if and only if $\textsc{Ncond}_C(G)$ holds together with
	\begin{equation*}
	\lambda_3<(\lambda_2+\lambda_4)\alpha_1
	\end{equation*} 
	for $\alpha_1$ in (\ref{eq:BipQ4Verx1}).

	\begin{proof}
		
		
		
		
		Assume that $\bar{Q}_2(0)>0,$  then according to the priority of 2 over 4, then  \begin{equation*}
		\bar{Q}_2(t)=\bar{Q}_2(0)+(\lambda_2-(\lambda_1+\lambda_3))t\qquad 0\le t<\frac{\bar{Q}_2(0)}{\lambda_1+\lambda_3-\lambda_2}.
		\end{equation*}  
		
		Now assume that $\bar{Q}_4(0)>0,$ so we have 
		\begin{equation*}
		\bar{Q}_4(t)=\bar{Q}_4(0)+(\lambda_4-(\lambda_1+\lambda_3)\alpha_2)t\qquad 0\le t<\frac{\bar{Q}_4(0)}{(\lambda_1+\lambda_3)\alpha_2-\lambda_4},
		\end{equation*}  
		for a simple calculs we get, $\alpha_2=\displaystyle\frac{\lambda_1+\lambda_3-\lambda_2}{\lambda_1+\lambda_3}.$
		
		Observe that $\lambda_4-(\lambda_1+\lambda_3)\alpha_2=\lambda_2+\lambda_4-(\lambda_1+\lambda_3)$, which is negative according to  $\textsc{Ncond}_C(G).$
		
		Now assume that $\bar{Q}_3(0)>0,$ so we have
		\begin{equation*}
		\bar{Q}_3(t)=\bar{Q}_3(0)+(\lambda_3-(\lambda_2+\lambda_4)\alpha_1)t\qquad 0\le t<\frac{\bar{Q}_3(0)}{(\lambda_2+\lambda_4)\alpha_1-\lambda_3},
		\end{equation*}  
		it follows from the Proposition \ref{prop:BipQ4Verx1} that, for any initial condition of the form $(x,0,0,0),\;x>0$, the fluid limit $\bar{Q}$ will hit the origin if and only if $\lambda_3<\alpha_1(\lambda_2+\lambda_4).$
	\end{proof}
\end{proposition}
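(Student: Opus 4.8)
The statement to prove is a stability characterization for the matching queue on the complete bipartite graph of order $4$ with a self-loop on vertex $1$, under the priority policy of Figure~\ref{fig:BipQ4Verx1}~(a). Let me reconstruct what this final proposition asserts and how its proof should go.

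Let me examine the structure. We have $\maV_1=\{1\}$ (self-looped), $\maV_2=\{2,3,4\}$, edges $\{1\},\{1,2\},\{1,4\},\{2,3\},\{3,4\}$. The claim is that the queue $(G,\Phi,\lambda)_C$ is stable iff $\textsc{Ncond}_C(G)$ holds together with $\lambda_3<(\lambda_2+\lambda_4)\alpha_1$.

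My proof plan:

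The plan is to apply the fluid-limit machinery exactly as in the preceding propositions: show that from every initial fluid state the fluid limit $\bar Q$ reaches the origin in finite time and stays there, then invoke \cite[Theorem 4.2]{JD95} for ergodicity, while instability off the region follows from Corollary~\ref{cor:FwllnMulti}. The key is to treat each coordinate that can be initially positive and verify its fluid queue drains.

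\begin{proof}
Assume first that $\lambda\in\textsc{Ncond}_C(G)$ and $\lambda_3<(\lambda_2+\lambda_4)\alpha_1$, with $\alpha_1$ as in (\ref{eq:BipQ4Verx1}). By the structure of $\mathbb E$ (see (\ref{eq:StateSpaceConTime})), at most coordinates forming an independent set of $\check G$ are simultaneously positive, and $Q_1(t)\in\{0,1\}$ at all times since $1\in\maV_1$; thus $\bar Q_1\equiv 0$. We check that each coordinate of the fluid limit drains in finite time.

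If $\bar Q_2(0)>0$, then according to the priority of $2$ over $4$,
\begin{equation*}
\bar{Q}_2(t)=\bar{Q}_2(0)+(\lambda_2-(\lambda_1+\lambda_3))t,\qquad 0\le t<\frac{\bar{Q}_2(0)}{\lambda_1+\lambda_3-\lambda_2},
\end{equation*}
so $\bar Q_2$ hits $0$ in finite time, since $\lambda_2<\lambda_1+\lambda_3$ by $\textsc{Ncond}_C(G)$ (the independent set $\{2,4\}$ giving $\lambda_2+\lambda_4<\lambda_1+\lambda_3$). If $\bar Q_4(0)>0$, then
\begin{equation*}
\bar{Q}_4(t)=\bar{Q}_4(0)+(\lambda_4-(\lambda_1+\lambda_3)\alpha_2)t,\qquad 0\le t<\frac{\bar{Q}_4(0)}{(\lambda_1+\lambda_3)\alpha_2-\lambda_4},
\end{equation*}
where $\alpha_2=(\lambda_1+\lambda_3-\lambda_2)/(\lambda_1+\lambda_3)$ is the stationary mass computed from the associated reversible marginal process. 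Since $\lambda_4-(\lambda_1+\lambda_3)\alpha_2=\lambda_2+\lambda_4-(\lambda_1+\lambda_3)<0$ by $\textsc{Ncond}_C(G)$, the queue $\bar Q_4$ drains. Finally, if $\bar Q_3(0)>0$, Proposition~\ref{prop:BipQ4Verx1} gives
\begin{equation*}
\bar{Q}_3(t)=\bar{Q}_3(0)+(\lambda_3-(\lambda_2+\lambda_4)\alpha_1)t,\qquad 0\le t<\frac{\bar{Q}_3(0)}{(\lambda_2+\lambda_4)\alpha_1-\lambda_3},
\end{equation*}
which hits the origin in finite time precisely under the hypothesis $\lambda_3<(\lambda_2+\lambda_4)\alpha_1$.

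Since the prelimit processes $Q_i$, $i=2,3,4$, all have drifts toward $0$ whenever strictly positive, the fluid limit remains at the origin once it is reached, and Proposition~\ref{prop:BipQ4Verx1} shows $\bar Q_3$ (and hence the whole vector) stays fixed at $0$. The ergodicity of the system then follows from \cite[Theorem 4.2]{JD95}. Conversely, if $\lambda\notin\textsc{Ncond}_C(G)$ the embedded discrete-time chain is transient by Proposition \ref{thm:mainmono}; and if $\lambda_3\ge(\lambda_2+\lambda_4)\alpha_1$, then by Proposition~\ref{prop:BipQ4Verx1} $\rho^n\Rightarrow\infty$ for the initial condition $x\gre_3$, so instability follows from Corollary~\ref{cor:unstabilityfluid} via Corollary~\ref{cor:FwllnMulti}. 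This completes the proof.
\end{proof}

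The main obstacle I anticipate is the correct bookkeeping of the auxiliary stationary constants $\alpha_1$ and $\alpha_2$: each positive coordinate induces a \emph{different} marginal Markov process (conditioning on that coordinate being positive), so one must recompute the reversible stationary measure for the $\bar Q_4$ case separately from the $\bar Q_3$ case and verify that the resulting drift $\lambda_2+\lambda_4-(\lambda_1+\lambda_3)$ reduces, after substituting $\alpha_2$, to a quantity controlled purely by $\textsc{Ncond}_C(G)$. The delicate point is ensuring the priority structure $\Phi$ of Figure~\ref{fig:BipQ4Verx1}~(a) yields exactly these generators, so that the only genuinely new stability constraint beyond $\textsc{Ncond}_C(G)$ is the single inequality $\lambda_3<(\lambda_2+\lambda_4)\alpha_1$ governing the drainage of the bottleneck node $3$.
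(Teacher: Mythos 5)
Your proposal is correct and follows essentially the same route as the paper's proof: coordinate-by-coordinate drainage of the fluid limit using the marginal stationary constants $\alpha_1$ and $\alpha_2$, with stability concluded from \cite[Theorem 4.2]{JD95} and instability from Corollary \ref{cor:unstabilityfluid}. You are somewhat more complete than the paper in spelling out the converse direction and in justifying $\lambda_2<\lambda_1+\lambda_3$ via the independent set $\{2,4\}$, but these are refinements of the same argument rather than a different approach.
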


\begin{proposition}
\label{prop:IncStrict}
We have the strict inclusion
\begin{equation*}
\left\lbrace\lambda_3<(\lambda_2+\lambda_4)\alpha_1\right\rbrace\cap\textsc{Ncond}_C(G)\subsetneq\textsc{Ncond}_C(G).
\end{equation*}	
\begin{proof}
	Fix $\varepsilon\in(0,4/7]$ and set \\
	$$\left\lbrace\begin{array}{cl}
	\lambda_1=&1-5\varepsilon/4;\\
	\lambda_2=&\lambda_4=\varepsilon/2;\\
	\lambda_3=&3\varepsilon/4.\\
	\end{array}\right.$$\\
	Clearly, $\lambda$ belong to $\textsc{Ncond}_C(G),$ but not to the set $\{\lambda_3<(\lambda_2+\lambda_4)\alpha_1\}$, since
	\begin{equation*}
	\lambda_3-(\lambda_2+\lambda_4)\alpha_1=\frac{3\varepsilon}{4}-(\varepsilon)\frac{1-5\varepsilon/4+\varepsilon}{2-5\varepsilon/2+\varepsilon}=\frac{3\varepsilon}{4}-(\varepsilon)\frac{4-\varepsilon}{8-6\varepsilon}\geq 0.
	\end{equation*} 
\end{proof}
\end{proposition}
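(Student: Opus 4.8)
The plan is to observe first that the claimed inclusion is, in its non-strict form, completely automatic: the left-hand set is the intersection of $\textsc{Ncond}_C(G)$ with the half-space $\{\lambda_3 < (\lambda_2+\lambda_4)\alpha_1\}$, so it is trivially contained in $\textsc{Ncond}_C(G)$. All the content of the statement therefore lies in the \emph{strictness}, that is, in producing a single arrival-rate vector $\lambda$ that belongs to $\textsc{Ncond}_C(G)$ but fails the additional stability constraint $\lambda_3 < (\lambda_2+\lambda_4)\alpha_1$. Equivalently, I would exhibit a point of $\textsc{Ncond}_C(G)$ satisfying the reverse inequality $\lambda_3 \ge (\lambda_2+\lambda_4)\alpha_1$, with $\alpha_1$ as in (\ref{eq:BipQ4Verx1}).

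The construction I would use is a one-parameter family indexed by a scalar $\varepsilon>0$, chosen so that the self-loop rate $\lambda_1$ is large while the remaining rates are of order $\varepsilon$; intuitively, a dominant self-loop at node $1$ depresses the factor $\alpha_1$, which is the stationary weight of those states of the marginal process from which node $3$ can actually be matched, and this tightens the effective stability condition enough to violate $\lambda_3 < (\lambda_2+\lambda_4)\alpha_1$ while the two defining inequalities of $\textsc{Ncond}_C(G)$ still hold. Concretely I would set $\lambda_1 = 1 - 5\varepsilon/4$, $\lambda_2 = \lambda_4 = \varepsilon/2$ and $\lambda_3 = 3\varepsilon/4$, and then verify the two membership conditions separately.

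The verification then splits into two short computations. For membership in $\textsc{Ncond}_C(G)$ one checks $\lambda_2+\lambda_4 = \varepsilon < 1 - \varepsilon/2 = \lambda_1+\lambda_3$ (valid for $\varepsilon < 2/3$) and $\lambda_3 = 3\varepsilon/4 < \varepsilon = \lambda_2+\lambda_4$ (valid for all $\varepsilon>0$), so both inequalities hold on the chosen range. For the failure of the extra condition, I would substitute the rates into (\ref{eq:BipQ4Verx1}), obtaining $\alpha_1 = (1-\varepsilon/4)/(2-3\varepsilon/2) = (4-\varepsilon)/(8-6\varepsilon)$, and then reduce the sign of
\begin{equation*}
\lambda_3 - (\lambda_2+\lambda_4)\alpha_1 = \frac{3\varepsilon}{4} - \varepsilon\,\frac{4-\varepsilon}{8-6\varepsilon} = \varepsilon\,\frac{8-14\varepsilon}{4(8-6\varepsilon)}
\end{equation*}
to the sign of the numerator $8 - 14\varepsilon$. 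This is non-negative precisely when $\varepsilon \le 4/7$, so any $\varepsilon \in (0,4/7]$ produces a witness lying in $\textsc{Ncond}_C(G)\setminus\{\lambda_3 < (\lambda_2+\lambda_4)\alpha_1\}$, which establishes the strict inclusion.

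The one genuinely non-mechanical step is the design of the witness family: one must balance three constraints at once, namely the two inequalities cutting out $\textsc{Ncond}_C(G)$ together with the reversed third inequality, so that a nonempty range of $\varepsilon$ simultaneously satisfies all of them. Everything after that fixed choice is elementary algebra; the only point requiring a little care is keeping track of the denominators $2-3\varepsilon/2$ and $8-6\varepsilon$, which must remain strictly positive on the admissible range so that the sign analysis of the affine numerator $8-14\varepsilon$ is legitimate.
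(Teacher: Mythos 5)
Your proposal is correct and uses exactly the same witness family as the paper ($\lambda_1=1-5\varepsilon/4$, $\lambda_2=\lambda_4=\varepsilon/2$, $\lambda_3=3\varepsilon/4$ for $\varepsilon\in(0,4/7]$), with the same sign computation $\lambda_3-(\lambda_2+\lambda_4)\alpha_1=\varepsilon(8-14\varepsilon)/(4(8-6\varepsilon))\ge 0$. You are in fact slightly more explicit than the paper in verifying the two defining inequalities of $\textsc{Ncond}_C(G)$, which the paper dismisses as clear.
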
 
\begin{remark}
\rm
Obseve that the multigraph $G=(\maV,\maE)$ mentioned above is formed by $\maV_1=\{1\}$ and $\maV_2=\{2,3,4\}.$ Then, if the matching policy $\Phi$ is $\maV_2$-favorable depicted on Figure \ref{fig:BipQ4Verx1} (b), we get that $$\bar{Q}_3(t)=\bar{Q}_3(0)+(\lambda_3-\lambda_2-\lambda_4)t\qquad 0\le t<\frac{\bar{Q}_3(0)}{\lambda_2+\lambda_4-\lambda_3},$$
which satisfies the Theorem \ref{thm:ppartite}. Thus for all $\Phi=\maV_2$-favorable $(G, \maV_2-\textrm{favorable},\mu)_C$ is stable if and only if $\mu$ belongs to the set $\textsc{Ncond}_C(G)$.  
\end{remark}

\section{Hypergraphical cases study}
\label{sec:ExamplesFluidHypergraph}
In this section, we develop an example of a hypergraph in which we apply the fluid limits technique presented in Section \ref{sec:FluideStability} with some adaptation for hypregraphs and we find the results proved in Theorem \ref{thm:stable3uniform} for complete $3$-uniform hypergraph of order 4.  



\subsection{Complete $3$-uniform hypergraph of order 4}
First of all, given a complete $3$-unifom hypergraph $\mbH=(\maV,\maH)$ of order $q\geq 4,$ then observe that  $\mathcal{N}^-_3(\mbH)=\left\lbrace\mu(i)<\frac{1}{3},\;\forall i\in\maV\right\rbrace=\left\lbrace\mu(i)<\frac{1}{2}\sum_{j\in\maV\backslash\{i\}}\mu(j),\;i\in\maV\right\rbrace.$
\begin{proposition}
\label{prop:Comp3uniHyperConti}
Let $\mbH=(\maV,\maH)$ be a complete $3$-uniform hypergraph of order 4 and  for all  matching policy $\Phi$. Consider an arrival-rate vector $\lambda\in\left(\mathscr{N}^{-}_3\right)_C(\mbH)$ i.e.\\
$$\left\lbrace\lambda_i<\frac{1}{2}\sum_{j\in\maV\backslash\{i\}}\lambda(j),\;i\in\maV\right\rbrace.$$

\begin{enumerate}
	
	\item If $\bar{Q}^n(0) \Rightarrow x\gre_1$ in $\R^4$ for some $x\in\R^{++}$, then $\bar{Q}^n\Rightarrow\bar{Q}$ in $\D^4[0,\rho)$ as $n\to +\infty$, where
	\begin{equation*}
	\bar{Q}(t)=(x+(\lambda_1-\alpha_2(\lambda_3+\lambda_4)-\alpha_3(\lambda_2+\lambda_4)-\alpha_4(\lambda_2+\lambda_3))t,0,0,0),\qquad 0\leq t <\rho,
	\end{equation*}
	for $\rho:=x/(\alpha_2(\lambda_3+\lambda_4)+\alpha_3(\lambda_2+\lambda_4)+\alpha_4(\lambda_2+\lambda_3)-\lambda_1),$	with
	\begin{equation}
	\left\lbrace\begin{array}{cccc}
	\alpha_2&=\displaystyle\frac{(\lambda_3-\lambda_4)^2+\lambda_2(\lambda_3+\lambda_4)}{\lambda_2^2-(\lambda_3-\lambda_4)^2};\\
	&\\
	\alpha_3&=\displaystyle\frac{(\lambda_2-\lambda_4)^2+\lambda_3(\lambda_2+\lambda_4)}{\lambda_3^2-(\lambda_2-\lambda_4)^2};\\
	&\\
	\alpha_4&=\displaystyle\frac{(\lambda_2-\lambda_3)^2+\lambda_4(\lambda_2+\lambda_3)}{\lambda_4^2-(\lambda_2-\lambda_3)^2}.
	\end{array}\right.
	\end{equation}
	
	\item If $\bar{Q}^n(0) \Rightarrow x\gre_{1}+y\gre_{2}$ in $\R^4$ for some $x,y\in\R^{++}$, then $\bar{Q}^n\Rightarrow\bar{Q}'$ in $\D^4[0,\rho)$ as $n\to +\infty$, where
	\begin{equation*}
	\bar{Q}'(t)=(x+(\lambda_{1}-\lambda_3-\lambda_4))t,y+(\lambda_{2}-\lambda_3-\lambda_4))t,0,0),\qquad 0\leq t <\rho,
	\end{equation*}
	for $\rho:=\min\left(x/(\lambda_3+\lambda_4-\lambda_1),y/(\lambda_3+\lambda_4-\lambda_2)\right).$
\end{enumerate}
\begin{proof}
	\begin{enumerate}
		\item The marginal process is $\mathcal{R}=\{2,3,4\}.$ Then, similarly to the above resolution, we deduce that
		$$\displaystyle\pi(x)=\left\lbrace\begin{array}{ccccc}
		\pi(0,0,0)\left(\displaystyle\frac{\lambda_2}{\lambda_3+\lambda_4}\right)^i,&x=(i,0,0)&i\geq 1;\\
		&\\
		\pi(0,0,0)\left(\displaystyle\frac{\lambda_3}{\lambda_2+\lambda_4}\right)^j,&x=(0,j,0)&j\geq 1;\\
		&\\
		\pi(0,0,0)\left(\displaystyle\frac{\lambda_4}{\lambda_2+\lambda_3}\right)^k,&x=(0,0,k)&k\geq 1.
		\end{array}\right.$$
		
		Set $\pi(0,0,0)=\pi(0),$ and for any $i,j,k\geq 1,$ we set the following, \[\pi(i,0,0)=\pi_2(i),\quad\pi(0,j,0)=\pi_3(j)\textrm{  and  }\pi(0,0,k)=\pi_3(k).\]
		\begin{align*}
		1&=\sum\limits_{i\geq 1}\pi_1(i)+	\sum\limits_{j\geq 1}\pi_2(j)+	\sum\limits_{k\geq 1}\pi_3(k)+\pi(0)\\
		&=\pi(0)\left[\sum\limits_{i\geq 1}\left(\frac{\lambda_2}{\lambda_3+\lambda_4}\right)^i+\sum\limits_{j\geq 1}\left(\frac{\lambda_3}{\lambda_2+\lambda_4}\right)^j+	\sum\limits_{k\geq 1}\left(\frac{\lambda_4}{\lambda_2+\lambda_3}\right)^k+1\right]\\
		&=\pi(0)\left[\frac{\lambda_2}{\lambda_3+\lambda_4-\lambda_2}+\frac{\lambda_3}{\lambda_2+\lambda_4-\lambda_3}+	\frac{\lambda_4}{\lambda_2+\lambda_3-\lambda_4}+1\right]\\
		&=\pi(0)\left[\displaystyle\frac{4\lambda_2\lambda_3\lambda_4}{(\lambda_3+\lambda_4-\lambda_2)(\lambda_2+\lambda_4-\lambda_3)(\lambda_2+\lambda_3-\lambda_4)}\right].
		\end{align*}
		Then we have,
		\begin{equation*}
		\pi(0)=\displaystyle\frac{(\lambda_3+\lambda_4-\lambda_2)(\lambda_2+\lambda_4-\lambda_3)(\lambda_2+\lambda_3-\lambda_4)}{4\lambda_2\lambda_3\lambda_4}.
		\end{equation*}
		Then the queue $\bar{Q}_1$ be as follow, 
		\begin{align*}
		\bar{Q}_1(t)&=\bar{Q}_1(0)+\left(\lambda_{1}-\lambda_2\left(\sum\limits_{j\geq 1}\pi_3(j)+\sum\limits_{k\geq 1}\pi_4(k)\right)\right.\\
		&\qquad\qquad\qquad\;\;\;\left.-\lambda_3\left(\sum\limits_{i\geq 1}\pi_2(i)+\sum\limits_{k\geq 1}\pi_4(k)\right)-\lambda_4\left(\sum\limits_{i\geq 1}\pi_2(i)+\sum\limits_{j\geq 1}\pi_3(j)\right)\right)t\\
		&=\bar{Q}_1(0)+\left(\lambda_1-\pi(0)\left[\lambda_2\left(\frac{\lambda_3}{\lambda_2+\lambda_4-\lambda_3}+\frac{\lambda_4}{\lambda_2+\lambda_3-\lambda_4}\right)\right.\right.\qquad\qquad\;\;\\
		&\qquad\qquad\qquad\qquad\qquad+\lambda_3\left(\frac{\lambda_2}{\lambda_3+\lambda_4-\lambda_2}+\frac{\lambda_4}{\lambda_2+\lambda_3-\lambda_4}\right)\qquad\\
		&\qquad\qquad\qquad\qquad\qquad+\left.\left.\lambda_4\left(\frac{\lambda_3}{\lambda_2+\lambda_4-\lambda_3}+\frac{\lambda_2}{\lambda_3+\lambda_4-\lambda_2}\right)\right]\right)t\\
		&=\bar{Q}_1(0)+\left[\lambda_1-\displaystyle\frac{(\lambda_3+\lambda_4-\lambda_2)(\lambda_2+\lambda_4-\lambda_3)(\lambda_2+\lambda_3-\lambda_4)}{4\lambda_2\lambda_3\lambda_4}\right.\qquad\quad\\
		&\qquad\qquad\qquad\qquad\qquad\left.\left(\frac{2\lambda_2\lambda_3\lambda_4(\lambda_2+\lambda_3+\lambda_4)}{(\lambda_3+\lambda_4-\lambda_2)(\lambda_2+\lambda_4-\lambda_3)(\lambda_2+\lambda_3-\lambda_4)}\right)\right]t\\
		&=\bar{Q}_1(0)+\left(\lambda_1-\displaystyle\frac{1}{2}\left(\lambda_2+\lambda_3+\lambda_4\right)\right)t,\quad \forall t>0.\qquad\qquad\qquad\qquad\quad\;
		\end{align*}
		Therefore, similarly we obtain that for any $i\in\maV,\;\exists t_0>0,\;\textrm{any }\bar{Q}_i(t)<0,\;\forall t>t_0.$
		
		\item Observe that, for any $i,j\in\maV$, we have $\maV\backslash\{i,j\}$ be a transversal of $\mathbb{H}$ and  
		$\sum_{k\in\maV\backslash\{i,j\}}\mu(k)>1/3,$ then $\sum_{k\in\maV\backslash\{i,j\}}\mu(k)>\mu(i)$ and $\sum_{k\in\maV\backslash\{i,j\}}\mu(k)>\mu(j).$ In other words, for any $i,j\in\maV,$ we have $\displaystyle\lambda_{\maV\backslash\{i,j\}}>\lambda_i$  and $\displaystyle\lambda_{\maV\backslash\{i,j\}}>\lambda_j.$
		
		Now, for all $t\ge 0$ we have, $\bar{Q}_1(t)=\bar{Q}_1(0)+\left(\lambda_1-\lambda_3-\lambda_4\right)t,\quad \forall t>0.$ 
		Then, as a result of the observation above, there exists $t_0>0$ such that 
		$\bar{Q}_1(t)<0$ for all $t \ge t_0$. Similarly, we deduce that there exists $t_1>0$ such that $\bar{Q}_2(t)<0$ for all $t \ge t_1$. We conclude that there exists $t\geq t_0\vee t_1$ the prelimit process $Q_i,\;i=1,2$ have drifts towards 0 whenever any them is striclty positive, then $\bar{Q}_1$ and $\bar{Q}_2$ will remain fixed at 0 after hitting that state. Thus the ergodicity of the sytem follows again from \cite[Theorem 4.2]{JD95}.

	\end{enumerate}
	
	
\end{proof}

\end{proposition}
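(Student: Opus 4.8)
The plan is to handle the two initial-condition regimes separately, invoking in each the fluid-limit machinery of Section \ref{sec:FluideStability} adapted to the ternary (hypergraphical) matchings, and then to deduce positive recurrence from \cite[Theorem 4.2]{JD95}. The guiding structural remark is that, since $\mathbb{H}$ is complete $3$-uniform of order $4$, the state space $\mathbb{E}$ of (\ref{eq:StateSpaceConTime}) imposes $z_iz_jz_k=0$ for all distinct $i,j,k$, so at most two coordinates of $Q$ are ever positive. Hence every scaled initial condition is of one of the two types covered by parts (1) and (2), and it suffices to identify the fluid limit in each case and check that it drains to the origin. A further simplification is that in every configuration reached, at most one hyperedge through the entering class is ever completable, so the matching is forced and the fluid limit does not depend on $\Phi$, in agreement with the statement.

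For part (1) I would apply Corollary \ref{cor:FwllnMulti} with $i_0=1$, the only subtle point being the marginal (fast) process. The hypergraphical nature shows up here: a node co-occurring with $1$ in a hyperedge need \emph{not} be empty while $Q_1>0$, since completing a hyperedge $\{1,a,b\}$ requires two partners present simultaneously. I therefore take $\mathcal{R}=\{2,3,4\}$ and verify that, conditionally on $Q_1>0$, the restricted process $R$ of (\ref{eq:Rrestriction}) is confined to the three half-axes $\{(i,0,0)\}$, $\{(0,j,0)\}$, $\{(0,0,k)\}$ meeting at the origin: any state with two of $Q_2,Q_3,Q_4$ positive (and $Q_1>0$) would complete a hyperedge and is thus transient. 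From $(i,0,0)$ with $i\ge 1$, a class-$2$ arrival (rate $\lambda_2$) raises $Q_2$ while a class-$3$ or class-$4$ arrival (total rate $\lambda_3+\lambda_4$) completes $\{1,2,3\}$ or $\{1,2,4\}$ and lowers it, with symmetric transitions on the other axes. Solving the balance equations gives the geometric law $\pi(i,0,0)=\pi(0)\bigl(\lambda_2/(\lambda_3+\lambda_4)\bigr)^{i}$ and its analogues, $\pi(0)$ being fixed by normalization; the coefficients $\alpha_2,\alpha_3,\alpha_4$ of the statement are read off as the stationary masses of the three axes. Ergodicity of this chain (Assumption 2) is not automatic but follows from the hypothesis: $\lambda\in(\mathscr{N}_3^-)_C(\mathbb{H})$ means $\lambda_i<\tfrac13\bar\lambda$ for every $i$, which forces the triangle inequalities $\lambda_2<\lambda_3+\lambda_4$, $\lambda_3<\lambda_2+\lambda_4$, $\lambda_4<\lambda_2+\lambda_3$, i.e. positive recurrence of each half-axis. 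Inserting these masses into (\ref{eq:Lambdai0-SumPi})--(\ref{eq:FormeGneralOfQ}), the departure rate of class-$1$ items being obtained by summing over each arriving class $j\in\{2,3,4\}$ the rate $\lambda_j$ times the stationary mass of the configurations in which that arrival completes a hyperedge through $1$, produces the announced slope, which collapses to $\lambda_1-\tfrac12(\lambda_2+\lambda_3+\lambda_4)$.

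Part (2), with two positive nodes $1$ and $2$, needs no averaging. As long as $Q_1>0$ and $Q_2>0$, any class-$3$ (resp. class-$4$) arrival instantly completes $\{1,2,3\}$ (resp. $\{1,2,4\}$), so $Q_3$ and $Q_4$ remain null and the configuration stays $(Q_1,Q_2,0,0)$; the pair then evolves as a two-dimensional walk with constant drifts $\lambda_1-\lambda_3-\lambda_4$ and $\lambda_2-\lambda_3-\lambda_4$, whose fluid limit is the affine path of the statement up to the first hitting time of an axis. To conclude, I would observe that $(\mathscr{N}_3^-)_C(\mathbb{H})$ renders all the slopes above strictly negative, so from any starting point the fluid limit reaches the origin in finite time and, the prelimit coordinates having negative drift whenever positive, remains there; \cite[Theorem 4.2]{JD95} then yields positive recurrence of $Q$.

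The step I expect to be the main obstacle is the rigorous transfer of the stochastic-averaging argument to the hypergraphical dynamics. Theorem \ref{the:FWLLN} and Corollary \ref{cor:FwllnMulti} are proved for (multi)graphs, where the separation of time scales rests on neighbours of the slow node being empty; here one must re-examine that proof with ``neighbour empty'' replaced by ``a complementary pair within a hyperedge present'', confirm that the fast marginal process still equilibrates instantaneously at fluid scale, and that its reversible product-form stationary law is the one computed above. Establishing the transience of the two-positive-coordinate states of $\mathcal{R}$ and the ergodicity condition of Assumption 2 under $(\mathscr{N}_3^-)_C(\mathbb{H})$ are the accompanying technical checks.
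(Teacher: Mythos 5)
Your proposal follows essentially the same route as the paper's proof: the same marginal process $\mathcal{R}=\{2,3,4\}$ supported on three half-axes with geometric stationary law, the same normalization yielding the drift $\lambda_1-\tfrac12(\lambda_2+\lambda_3+\lambda_4)$ for part (1), the same constant-drift two-dimensional fluid path for part (2), and the same conclusion via \cite[Theorem 4.2]{JD95}. Your added checks (transience of two-positive states of $\mathcal{R}$, ergodicity of the marginal chain from the triangle inequalities, and the adaptation of the averaging principle from graphs to hyperedges) are points the paper leaves implicit, so they strengthen rather than diverge from the argument.
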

Observe that, the complete 3-uniform $k$-partite hypergraph is just the complete 3-uniform hypergraph of order $k,$ where all the sets $I_1,...,I_k$ are of cardinality 1 (i.e., there are no replica). In particular case the complete 3-uniform $4$-partite hypergraph is the complete 3-uniform hypergraph of order $4.$

\section{Discussion of results and conclusion} \label{sec:discussion of results3}

In this chapter, we have generalized stochastic matching models to the continuous-time settings. Items of different classes arrive at the system according to an independent Poisson process of intensity $\lambda.$ We addressed a new technique that allows us to explicitly derive the stability region of the models at hand, by using fluid limits techniques, to prove the existence of a local steady-state, rather than applying Lyapunov techniques.  The scaling consists of speeding up time by the norm of the initial state. The behavior of such rescaled stochastic
processes is analyzed when the scaling parameter goes to infinity. In Corollary \ref{cor:FwllnMulti}, we generalized the \cite[Theorem 4]{MoyPer17} to multigraphs. The main difference of graphs, that is, for any $i\in\maV,\; Q_i(0)$ is always zero or one, so the $i$-th coordinate of the fluid limit $\bar{Q},$ that is, necessarily null at all times.\\

Moreover, we have studied several examples of multigraphs for which the stationary probability of the marginal process can be explicitly computed. We have studied some cases of pendant graph with a self-loop on a vertex and we distinguish that the stability region is strictly included in the necessary condition $\textsc{Ncond}_C.$ However, we retrieve the results of Theorem \ref{thm:ppartite} dedicated to the exact stability of the model by considering the complete bipartite graphs with a self-loop on a vertex with the $\maV_2$-favorable matching policy. 

In Section \ref{sec:ExamplesFluidHypergraph}, also by using fluid limit arguments, we provided the exact stability region to the model for complete 3-uniform hypergraphs of order 4.




%

\pagestyle{empty}
\chapter{Comparison of models for organ transplant applications}
\label{chap7:Apllication}
\pagestyle{fancy}

\section{Introduction}
\label{sec:IntroChap7}
A blood type (also called a blood group) is defined as the classification of blood-based on the presence or absence of inherited antigenic substances on the surface of red blood cells (RBCs). A series of related blood types constitute a blood group ABO system, see \cite{W66}. \\

We set the blood types as follows:
\begin{itemize}
	\item \textit{Blood Type A} - If the red blood cell contains only "A" molecules.
	\item \textit{Blood Type B} - If the red blood cell contains only "B" molecules.
	\item \textit{Blood Type AB} - If the red blood cell contains a mixture of molecules "A" and "B".
	\item \textit{Blood Type O} - If the red blood cell has neither "A" nor "B" molecules.
\end{itemize}
\textbf{Donating Blood by Compatible Type}



Blood groups are very important when a transfusion is required. During the transfusion, the patient must 
receive a blood that is compatible with his/her own. If the blood types do not match, the red blood cells will clump together and form clots that can block blood vessels and cause death.\\
If two different blood groups are mixed, the blood cells can clump together in the blood vessels, causing potentially fatal diseases. Therefore, it is important to match blood types before performing transfusions. In emergencies, blood group O may be given because all blood groups will accept it. However, there is always a risk. See \cite{HD19}.\\


The compatibilities of Blood Types Donors are described as follows:
\begin{center}
	\begin{tabular}{|c|c|c|}
		\hline 
		\textrm{Blood Type}	&\textrm{Donate Blood To}&\textrm{Receive Blood From}\\
		\hline
		A&A,\;AB&A,\;O\\
		\hline
		B&B,\;AB&B,\;O\\
		\hline
		AB&AB&A,\;B,\;AB,\;O\\
		\hline
		O&A,\;B,\;AB,\;O&O\\
		\hline
	\end{tabular}
\end{center}

\medskip

\textbf{Kidney Transplantation:}
Kidneys for transplantation come from two different sources: 
a \textit{living donor} or a \textit{deceased donor}. See \cite{HJS19}.

\textit{The Living Donor:}
In most cases, the donor is a family member. The donor must be in excellent health, well informed about transplantation, and able to give informed consent. Any healthy person can donate a kidney safely.

\textit{Deceased Donor:}
It is a person who has suffered brain death. The kidneys are removed and stored until a recipient has been selected.

Regardless of the type of kidney transplant-living donor or deceased donor-special blood tests are needed to find out what type of blood and tissue is present. These test results help to match a donor kidney to the recipient.

In every two above cases, special blood tests and tissue are needed to find which help to match a donor's kidney. To receive a kidney where the recipient's markers and the donor's markers all are the same is a "perfect match" kidney. Perfect match transplants have the best chance of working for many years. Most perfectly matched kidney transplants come from siblings.\\


\textit{Crossmatch:}
Throughout life, the body makes substances called \textit{antibodies} that act to destroy foreign materials. 
The crossmatch is done by mixing the recipient's blood with cells from the donor. If the crossmatch is positive, it means that there are antibodies against the donor. The recipient should not receive this particular kidney unless special treatment is done before transplantation to reduce the antibody levels. If the crossmatch is negative, it means the recipient does not have antibodies to the donor and that they are eligible to receive this kidney. See \cite{OYO18}.\\
Cross matches are performed several times during preparation for a living donor transplant, and a final crossmatch is performed within 48 hours before this type of transplant. The cross matches is a particular organization of kidney transplant with living donors authorized by the law of bioethics of July 7, 2011, and its decree of application published in September 2012. This donation is governed by three principles laid down by the law:
\begin{itemize}
	\item The information of the donor,
	\item anonymity between the two pairs,
	\item simultaneity of surgical interventions.
\end{itemize} 

This solution can be considered when the loved one who wishes to donate is not compatible with the patient.\\



\section{The models}
In all the aforementioned applications, elements (donors, receivers, or couples donor/receiver) arrive into the healthcare system at random times, and with various specificities.  {\em Compatibilities} between elements (donors with receiver, or couples donor/receiver with other couples) need to be taken into account when performing the matches between them.\\

Thereafter, we will address the particular case of {\em living donors}: we assume that the elements entering the system are {\em couples} of family members consisting of a giver and a receiver and that the giver and the receiver may not be compatible amongst them. This system is modeled as a stochastic matching model in which items (i.e.,  couples) are gathered into classes. Here, we say for instance that the class of a given item $i$ is $(A,B)$ if the giver of the couple $i$ has blood type $A$ and the receiver has blood type $B$. Items of the various classes enter the system following independent Poisson processes of designated intensities and using a simple homogenization argument, we focus on the embedded chain of the corresponding continuous-time system, namely, we work with a {\em discrete-time stochastic matching model}. Three types of matching can be taken into account: 
	\begin{enumerate}
	\item In the {\bf one by one} matching model, elements can be matched within couples, if possible. This corresponds equivalently, to an extended bipartite matching model (EBM) as defined in Chapter \ref{chap2: state of art}, in which items are single individuals rather than couples, and couples of items (giver/receiver) enter the system simultaneously. Then, the compatibility graph is bipartite, between givers and receivers. 
	\item The {\bf two by two} matching model corresponds to the cross-transplant system, namely, couples of items, says of classes $i$ and $j$, can be matched if and only if the 
	giver class of class $i$ is compatible with the receiver class of class $j$, and the giver class of class $j$ is compatible with the receiver class of class $i$. Then the two transplants are performed simultaneously. In our framework, this corresponds to a general stochastic matching model (GM) in which the compatibility graph is general, and represents the compatibilities between {\em couples}, in the sense specified above. A {\em matching} between two couples is made whenever the two matchings are performed together. 
	\item The {\bf three by three} matching model allows couples to be matched by groups of three. Then, a match between the three couples of respective classes $i$, $j$ and $k$ is possible if and only if the giver of $i$ is compatible with the receiver of $j$, the giver of $j$ is compatible with the receiver of $k$ and the giver of $k$ is compatible with the receiver of $i$. Then, the three transplants can be performed simultaneously. 
	These settings thus correspond to a general stochastic model on a hypergraph that is $3$-uniform, namely, matches are performed between groups of three items (i.e., couples) only. 
\end{enumerate}

\begin{figure}	[htp!]
	\centering
	\includegraphics[width=.55\linewidth]{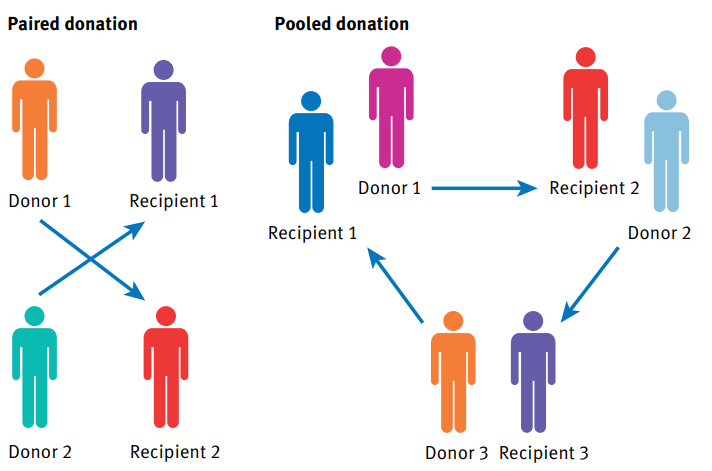}
	\caption{Paired and pooled organ donation - Cross matching Two-by-Two and Three-by-Three \cite{BEC16}.}
	\label{fig:2x2And3x3}
\end{figure}
To compare these three concurrent matching architectures, we consider a toy example in which only four couple classes are present: 
\begin{equation}
\label{eq:defclasses}
1:=(A,AB),\,2:=(O,AB),\,3:=(B,AB)\,\mbox{ and }\,4:=(A,B).
\end{equation}
This situation may occur if we address only a designated part of the whole transplant network, or if we consider an access control, for instance. 
We then set $\maV=\{1,2,3,4\}$, and consider the various matching structure $\mathbb{S}=(\maV,\mathcal S),$ on $\maV$ that correspond, respectively, to the various types of matchings introduced above.

\begin{remark} 
	\label{remark}\rm 
	As will appear clearly below, similar matching models are obtained if, instead of 
	(\ref{eq:defclasses}), we consider for instance arrivals of the following 
	couples,
	\begin{itemize}
		\item $1=(A,AB),$  $2=(O,AB),$  $3=(B,A)$ and $4=(A,B);$
		\item $1=(A,AB),$  $2=(O,A),$  $3=(B,A)$ and $4=(A,B);$
		\item $1=(B,AB),$  $2=(O,B),$  $3=(A,B)$ and $4=(B,A);$
		\item $1=(A,AB),$  $2=(B,AB),$  $3=(A,B)$ and $4=(O,A);$
		\item  And so on....
	\end{itemize}
\end{remark}

\section{Matching one by one}
In the above case, the matching one-by-one is preferable for items of classes $1$, $2$, and $3$, since the transplants between the giver and the receiver of each of these compatible couples can be made between family members. However, it is easily seen that, if incoming items of classes $1$, $2$ and $3$ are matched `with themselves' in a systematic way, then the elements of the couples of class $4$, which are not compatible, will never be matched since a $A$-giver cannot give to a $B$-receiver. Then the resulting system is unstable since 
class $4$-items will accumulate to infinity.

\section{Matching two by two}
\label{sec:Matching2X2}
Now, consider the case of the two by two matching. In this type of matching, 
the matching structure $\mathbb{S}$ is a graph such that 
$\mathbb{S}=\mathbb{G}=(\maV,\maE)$ and whose edges are defined as 
$$\maE=\left\lbrace\{1,2\},\;\{1,3\},\;\{2,3\},\;\{2,4\},\;\{3,4\}\right\rbrace,$$ 
as is easily seen. The graph is thus a complete $3$-partite graph of order 4, as depicted in  Figure \ref{fig:3-partite complete matching donate} below. In other words, $\mathbb{G}$ is analog to separable graph of order 3 and $I _1=\{2\},\;I _2=\{3\}$ and $I _3=\{1,4\}$ are the maximal independent sets partitionning $\maV$,

\begin{figure}[htb]
	\begin{center}
		\begin{tikzpicture}

		\fill (2,3) circle (2pt)node[above]{1};
		\fill (2,1.5) circle (2pt)node[right]{2};
		\fill (0.5,0) circle (2pt)node[left]{3};
		\fill (3.5,0) circle (2pt)node[right]{4};
		
		\draw[-] (0.5,0) -- (3.5,0);
		\draw[-] (0.5,0) -- (2,1.5);
		\draw[-] (3.5,0) -- (2,1.5);
		\draw[-] (2,1.5) -- (2,3);
		\draw[-] (2,3) -- (0.5,0);
		\end{tikzpicture}
		\caption{The compatible complete $3$-partite graph $\mathbb{G}$ of order 4.}
		\label{fig:3-partite complete matching donate}
	\end{center}
\end{figure}
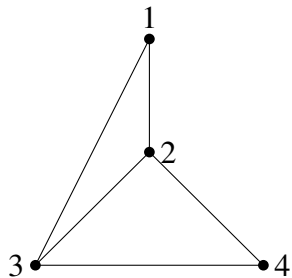


From \cite[Section 3]{MaiMoy16}, we get the following necessary condition of stability 
for any matching model on $\mathbb{G}$, 
\begin{equation}
\label{eq:NcondG}
\textsc{Ncond}(\mathbb{G})=\left\lbrace\begin{array}{lc}
\mu(2)&<1/2;\\
\mu(3)&<1/2;\\
\mu(1)+\mu(4)&<1/2.
\end{array}\right.
\end{equation}
\begin{proposition}
	\label{prop:Stab=NondG}
	Consider the graph $\mathbb{G}=(\maV, \maE).$ Then, for all matching policy $\Phi$ the sets $\textsc{Stab}(\mathbb{G},\Phi)$ and $\textsc{Ncond}(\mathbb{G})$ coincide, that is, the general stochastic matching model $(\mathbb{G},\Phi,\mu)$ is stable if and only if
	$\mu$ satisfies condition (\ref{eq:NcondG}).
	In other words we have 	
	\begin{equation}
	\label{eq:NcondGS}
	\textsc{Stab}(\mathbb{G},\Phi)=\left\lbrace\begin{array}{lc}
	\mu(2)&<1/2;\\
	\mu(3)&<1/2;\\
	\mu(1)+\mu(4)&<1/2.
	\end{array}\right.
	\end{equation}
\end{proposition}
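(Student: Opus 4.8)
The plan is to recognize that the compatibility graph $\mathbb{G}$ in Figure \ref{fig:3-partite complete matching donate} is precisely a complete $3$-partite (separable) graph of order $3$, with the three independent sets given by $I_1=\{2\}$, $I_2=\{3\}$ and $I_3=\{1,4\}$. Once this structural observation is in place, the result follows almost immediately from the theory already recalled in the excerpt, so the proof is essentially a verification that the hypotheses of the relevant maximality theorem are met, plus an explicit computation of $\textsc{Ncond}(\mathbb{G})$.

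First I would verify the claimed edge set and the claimed partition: one checks that every pair of vertices lying in two distinct sets among $\{2\},\{3\},\{1,4\}$ is joined by an edge of $\maE$, and that the only non-edge inside a common independent set is $\{1,4\}$ (since a couple of class $1=(A,AB)$ and a couple of class $4=(A,B)$ cannot cross-match, both givers being of type $A$). This identifies $\mathbb{G}$ as a complete $3$-partite graph, i.e., a separable graph of order $3$ in the terminology of \cite{MaiMoy16}. Then I would invoke the result recalled in Section \ref{sec:matching model}: for a complete $p$-partite graph with $p\ge 3$, one has $\textsc{Stab}(\mathbb{G},\Phi)=\textsc{Ncond}(\mathbb{G})$ for \emph{every} admissible matching policy $\Phi$. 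This is exactly the statement that any $\mu\in\textsc{Ncond}(\mathbb{G})$ stabilizes the model irrespective of $\Phi$, combined with the universal necessary condition $\textsc{Stab}(\mathbb{G},\Phi)\subset\textsc{Ncond}(\mathbb{G})$.

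Next I would compute $\textsc{Ncond}(\mathbb{G})$ explicitly from its definition in (\ref{eq:NcondGr}), namely $\mu(I)<\mu(\maE(I))$ for all independent sets $I\in\mathbb{I}(\mathbb{G})$. Since $\mathbb{G}$ is complete $3$-partite, the maximal independent sets are exactly $I_1=\{2\}$, $I_2=\{3\}$ and $I_3=\{1,4\}$, and it suffices to test the condition on maximal independent sets (the inequality for a smaller set being implied by that for a maximal set containing it, because the neighborhood only grows). For $I_1=\{2\}$ we have $\maE(\{2\})=\{1,3,4\}$, giving $\mu(2)<\mu(1)+\mu(3)+\mu(4)=1-\mu(2)$, i.e.\ $\mu(2)<1/2$. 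Symmetrically $\maE(\{3\})=\{1,2,4\}$ yields $\mu(3)<1/2$, and $\maE(\{1,4\})=\{2,3\}$ yields $\mu(1)+\mu(4)<\mu(2)+\mu(3)=1-(\mu(1)+\mu(4))$, i.e.\ $\mu(1)+\mu(4)<1/2$. This reproduces (\ref{eq:NcondG}), and hence (\ref{eq:NcondGS}).

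The main (and essentially only) obstacle is the bookkeeping in the last step: one must argue carefully that checking the three maximal independent sets is sufficient, i.e.\ that no additional constraint arises from the non-maximal independent sets such as $\{1\}$, $\{4\}$, or the singletons; this follows from the monotonicity of the neighborhood map $\maE(\cdot)$ under inclusion, so that $I\subset I'$ with $\maE(I)\subset\maE(I')$ makes the inequality for $I'$ the binding one. Beyond this, the proof is a direct appeal to the already-cited maximality of all policies on separable graphs of order at least $3$, so no new probabilistic argument is required.
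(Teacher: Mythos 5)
Your proof is correct and follows essentially the same route as the paper: the paper's own proof simply observes that $\mathbb{G}$ is a separable (complete $3$-partite) graph of order $3$ and invokes the result of \cite{MaiMoy16} that all admissible policies are maximal on such graphs, which is exactly your argument. Your additional explicit verification of the partition into independent sets and of the three inequalities defining $\textsc{Ncond}(\mathbb{G})$ is correct and only makes the argument more self-contained.
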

\begin{proof}
	As $\mathbb{G}$ is a separable graph of order 3, then from \cite[Proposition 2]{MaiMoy16}, we get that 
	$$\forall \Phi,\;\;\mu\in\textsc{Stab}(\mathbb{G},\Phi)\iff\mu\in\textsc{Ncond}(\mathbb{G}).$$ 
\end{proof}	
\begin{proposition}
	\label{prop:StabStationary}
	Consider the graph $\mathbb{G}=(\maV, \maE),$ and any admissible matching policy $\Phi$. Then the stationary probability of the natural Markov chain $(X_n)_{n\in\N}$ reads as follows: for all $x\in\mathbb{X}$, 
	\begin{equation}
	\label{eq:Pi}
	\pi(x)=\left\lbrace\begin{array}{lllc}
	\alpha\left(\displaystyle\frac{\mu(2)}{1-\mu(2)}\right)^i&x=(0,i,0,0)&;\;i\geq 1\\
	\\
	\alpha\left(\displaystyle\frac{\mu(3)}{1-\mu(3)}\right)^j&x=(0,0,j,0)&;\;j\geq 1\\
	\\
	\alpha\left(\displaystyle\frac{\mu(1)+\mu(4)}{1-(\mu(1)+\mu(4))}\right)^{k+\ell}&x=(k,0,0,\ell)&;\;k\vee\ell\geq 1
	\end{array}\right.
	\end{equation}
	where the normalizing constant is given by 
	\begin{equation}
	\label{eq:alpha}
	\alpha=\pi(0,0,0,0)=\displaystyle\frac{(1-2\mu(2))(1-2\mu(3))(1-2(\mu(1)+\mu(4)))}{4\mu(2)\mu(3)(\mu(1)+\mu(4))}.
	\end{equation}
	
	\begin{proof}

		
		
		
		Whatever $\Phi$ is, the transition matrix $P$ of the class-content is defined as follows,

		$$\left\lbrace\begin{array}{llc}
		P((0,i,0,0),(0,i+1,0,0))&=\mu(2);\\
		P((0,i,0,0),(0,i-1,0,0))&=1-\mu(2);\\
		P((0,0,j,0),(0,0,j+1,0))&=\mu(3);\\
		P((0,0,j,0),(0,0,j-1,0))&=1-\mu(3);\\
		P((k,0,0,\ell),(k+1,0,0,\ell))&=\mu(1);\\
		P((k,0,0,\ell),(k,0,0,\ell+1))&=\mu(4);\\
		P((k,0,0,\ell),(k-1,0,0,\ell))&=\mu(2)+\mu(3)&\textrm{ choice } 1;\\
		P((k,0,0,\ell),(k,0,0,\ell-1))&=\mu(2)+\mu(3)&\textrm{ choice } 4.
		\end{array}\right.$$

		Then, we get 
		
		$$\pi(x)=\left\lbrace\begin{array}{lllc}
		\pi(0,0,0,0)\left(\displaystyle\frac{\mu(2)}{1-\mu(2)}\right)^i&x=(0,i,0,0)&;\;i\geq 1\\
		\\
		\pi(0,0,0,0)\left(\displaystyle\frac{\mu(3)}{1-\mu(3)}\right)^j&x=(0,0,j,0)&;\;j\geq 1\\
		\\
		\pi(0,0,0,0)\left(\displaystyle\frac{\mu(1)+\mu(4)}{1-(\mu(1)+\mu(4))}\right)^{k+\ell}&x=(k,0,0,\ell)&;\;k\vee\ell\geq 1.
		\end{array}\right.$$
		
		Set $\pi(0,0,0,0)=\pi(0),$ and for any $i,j,k\geq 1$ we set the following, \[\pi(0,i,0,0)=\pi_2(i),\quad\pi(0,0,j,0)=\pi_3(j)\textrm{  and  }\pi(k,0,0,\ell)=\pi_{1,4}(k+\ell).\]
		Then, we have 
		\begin{align*}
		1&=\sum\limits_{i\geq 1}\pi_2(i)+	\sum\limits_{j\geq 1}\pi_3(j)+	\sum\limits_{k\vee\ell\geq 1}\pi_{1,4}(k+\ell)+\pi(0)\\
		&=\pi(0)\left[\sum\limits_{i\geq 1}\left(\frac{\mu(2)}{1-\mu(2)}\right)^i+\sum\limits_{j\geq 1}\left(\frac{\mu(3)}{1-\mu(3)}\right)^j+\sum\limits_{k\vee\ell\geq 1}\left(\frac{\mu(1)+\mu(4)}{1-(\mu(1)+\mu(4))}\right)^{k+\ell}+1\right]\\
		&=\pi(0)\left[\frac{\mu(2)}{1-2\mu(2)}+\frac{\mu(3)}{1-2\mu(3)}+	\frac{\mu(1)+\mu(4)}{1-2(\mu(1)+\mu(4))}+1\right]\\
		&=\pi(0)\left[\displaystyle\frac{4\mu(2)\mu(3)(\mu(1)+\mu(4))}{(1-2\mu(2))(1-2\mu(3))(1-2(\mu(1)+\mu(4)))}\right], 
		\end{align*}
		and we conclude that 
		\begin{equation}
		\label{eq:ProStatiOfGraphOnzero}
		\pi(0,0,0,0)=\displaystyle\frac{(1-2\mu(2))(1-2\mu(3))(1-2(\mu(1)+\mu(4)))}{4\mu(2)\mu(3)(\mu(1)+\mu(4))}.
		\end{equation}
	\end{proof}
\end{proposition}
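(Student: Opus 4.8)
The plan is to compute the unique stationary distribution directly from the balance equations of the Markov chain $\suite{X_n}$, exploiting the fact that the state space $\mbX$ decomposes into three independent ``arms'' hanging off the empty state $\mazero=(0,0,0,0)$. First I would establish the transition matrix $P$ explicitly. The key structural observation is that, whatever the admissible policy $\Phi$, the compatibility graph $\mathbb{G}$ is complete $3$-partite with independent sets $I_1=\{2\}$, $I_2=\{3\}$ and $I_3=\{1,4\}$; consequently any reachable class-content $x\in\mbX$ has support contained in a single independent set. This means the reachable states are precisely $(0,i,0,0)$, $(0,0,j,0)$ and $(k,0,0,\ell)$, and the chain can only move from one arm to another by passing through $\mazero$. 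The only subtlety is the arm corresponding to $I_3=\{1,4\}$: from a state $(k,0,0,\ell)$, an arriving class-$2$ or class-$3$ item must be matched with either a waiting $1$-item or a waiting $4$-item (both compatible), and it is the policy $\Phi$ that dictates the choice. I would record this as the two transitions labelled ``choice $1$'' and ``choice $4$'' in the displayed transition table, each occurring with the aggregate probability $\mu(2)+\mu(3)$.

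Next I would solve the balance equations arm by arm. On each of the arms indexed by $2$ and by $3$, the chain behaves exactly like the embedded walk of an $M/M/1$-type birth–death chain: from $(0,i,0,0)$ it moves up with probability $\mu(2)$ and down with probability $1-\mu(2)$, giving the geometric form $\pi_2(i)=\pi(0)\left(\mu(2)/(1-\mu(2))\right)^i$, and symmetrically for the arm indexed by $3$. For the two-dimensional arm $(k,0,0,\ell)$ the crucial point is that the \emph{total} population $k+\ell$ performs a birth–death walk: it increases by $1$ with probability $\mu(1)+\mu(4)$ (an arrival of class $1$ or $4$ with no compatible match present) and decreases by $1$ with probability $\mu(2)+\mu(3)$ (an arrival completing a match, regardless of whether $\Phi$ chose to remove a $1$- or a $4$-item). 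Because the stationary weight depends only on $k+\ell$, the policy-dependent splitting between the ``choice $1$'' and ``choice $4$'' transitions cancels out of the balance equation for $\pi_{1,4}(k+\ell)$, which is exactly why formula (\ref{eq:Pi}) is valid for every admissible $\Phi$. This yields $\pi_{1,4}(k+\ell)=\pi(0)\left((\mu(1)+\mu(4))/(1-(\mu(1)+\mu(4)))\right)^{k+\ell}$, establishing (\ref{eq:Pi}).

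Finally I would fix the normalizing constant. Using Proposition \ref{prop:Stab=NondG}, the measure $\mu$ lies in $\textsc{Ncond}(\mathbb{G})$, so each of the three ratios $\mu(2)/(1-\mu(2))$, $\mu(3)/(1-\mu(3))$ and $(\mu(1)+\mu(4))/(1-(\mu(1)+\mu(4)))$ is strictly less than $1$, guaranteeing that the three geometric series converge. Summing $\sum_{i\ge 1}\pi_2(i)$ and $\sum_{j\ge 1}\pi_3(j)$ gives the terms $\mu(2)/(1-2\mu(2))$ and $\mu(3)/(1-2\mu(3))$; for the two-dimensional arm I would group the sum by the value $m=k+\ell\ge 1$, noting there are $m+1$ pairs $(k,\ell)$ with $k\vee\ell\ge 1$ summing to $m$, which after a short computation contributes $(\mu(1)+\mu(4))/(1-2(\mu(1)+\mu(4)))$. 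Adding the empty-state contribution $1$, inverting, and simplifying over the common denominator $4\mu(2)\mu(3)(\mu(1)+\mu(4))$ produces (\ref{eq:alpha}).

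I expect the main obstacle to be the two-dimensional arm $(k,0,0,\ell)$: one must verify carefully that the policy-dependent transitions really do cancel in the balance equations, and the counting argument in the normalization (how many pairs $(k,\ell)$ give a fixed total $m$, and whether one groups by $k+\ell$ or sums the double geometric series directly) requires attention to avoid an off-by-one error in the exponent. Everything else is the routine birth–death computation and the convergence check supplied by $\textsc{Ncond}(\mathbb{G})$.
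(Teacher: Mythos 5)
Your route is the same as the paper's: write down the policy-independent transition table, solve each arm hanging off $(0,0,0,0)$ as a birth--death chain, and normalize. The two one-dimensional arms are fine (a cut argument across $\{x(2)\ge i\}$ gives $\pi_2(i)\mu(2)=\pi_2(i+1)(1-\mu(2))$, and likewise for class $3$). The genuine gap is exactly where you flagged it, in the two-dimensional arm, and your proposed resolution does not close it. The level-crossing argument you invoke --- $k+\ell$ goes up with probability $\mu(1)+\mu(4)$ and down with probability $\mu(2)+\mu(3)$, whatever $\Phi$ does --- only shows that the \emph{aggregate} probability $\Sigma_m=\sum_{k+\ell=m}\pi(k,0,0,\ell)$ is geometric, $\Sigma_m=\pi(0,0,0,0)\,r^m$ with $r=(\mu(1)+\mu(4))/(1-\mu(1)-\mu(4))$. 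It does not show that each individual state at level $m$ carries weight $\alpha r^m$; your sentence ``because the stationary weight depends only on $k+\ell$, the policy-dependent splitting cancels'' assumes the conclusion it is meant to prove. In fact the pointwise product form fails for some admissible policies: writing $p=\mu(1)+\mu(4)$, global balance at $(1,0,0,1)$ under \textsc{ml} reads $rp+2r^3(1-p)=r^2$, which forces $p=0$; under the strict priority of $1$ over $4$, balance at $(0,0,0,\ell)$ forces $\mu(4)=p(1-2p)/(1-p)$, which is generically false.

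Your normalization is also internally inconsistent with your own state-by-state formula. If each of the $m+1$ states at level $m$ had weight $\alpha r^m$, the arm would contribute $\sum_{m\ge1}(m+1)r^m=r(2-r)/(1-r)^2$, not $r/(1-r)=(\mu(1)+\mu(4))/(1-2(\mu(1)+\mu(4)))$; you announce the multiplicity $m+1$ and then report the answer obtained by counting each level once. The stated value of $\alpha$ is recovered precisely when $\pi_{1,4}(m)$ is read as the \emph{total} weight of level $m$, i.e., when one sums $\sum_{m\ge 1}\Sigma_m=\pi(0,0,0,0)\,r/(1-r)$. So what your argument (which is also the paper's) actually establishes, uniformly over admissible policies, is the geometric law for $x(2)$, for $x(3)$, and for the level $x(1)+x(4)$, together with the value of $\pi(0,0,0,0)$ in (\ref{eq:alpha}); the pointwise formula $\pi(k,0,0,\ell)=\alpha r^{k+\ell}$ in the third line of (\ref{eq:Pi}) requires a policy-specific verification and does not hold for all admissible $\Phi$. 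To repair the proof you should either restate the third case in aggregate form, or restrict to a class of policies for which you can verify the within-level balance directly.
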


\begin{proposition}
\label{ex:ExForFCFMProbStatiGraph}
Consider the graph $\mathbb{G}.$ 
If $\Phi=${\sc fcfm}, then we retrieve the results of 
\cite[Theorem 1]{MBM17}.\end{proposition}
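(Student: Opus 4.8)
The plan is to obtain Proposition~\ref{ex:ExForFCFMProbStatiGraph} as a direct specialization of Theorem~\ref{thm:FCFM} to the case of a genuine graph, and then to read off the resulting expression as the product form of \cite[Theorem 1]{MBM17}. First I would observe that the graph $\mathbb{G}=(\maV,\maE)$ of Figure~\ref{fig:3-partite complete matching donate} is a multigraph with no self-loops, i.e. $\maV_1=\emptyset$ and $\maV_2=\maV=\{1,2,3,4\}$; consequently its maximal subgraph satisfies $\check G=\mathbb{G}$ and its minimal blow-up graph satisfies $\hat G=\mathbb{G}$. Since $\mathbb{G}$ is complete $3$-partite of order $4$ it contains the triangle $\{1,2,3\}$, hence is not bipartite, so by Proposition~\ref{prop:ncond} the set $\textsc{Ncond}(\mathbb{G})$ is non-empty, consistently with Proposition~\ref{prop:Stab=NondG}.

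Next I would invoke Theorem~\ref{thm:FCFM} for the multigraph $\mathbb{G}$: for every $\mu\in\textsc{Ncond}(\mathbb{G})$ the buffer-content chain $(W_n)_{n\in\mathbb{N}}$ under \textsc{fcfm} is positive recurrent with unique stationary law $\Pi_W$ given by the stated product form. The key step is then to specialize the two formulas of Theorem~\ref{thm:FCFM} to $\maV_1=\emptyset$. In the expression for $\alpha^{-1}$ the correction term degenerates, since $\mu(\{e_{\sigma(1)},\dots,e_{\sigma(i)}\}\cap\maV_2)=\mu(\{e_{\sigma(1)},\dots,e_{\sigma(i)}\})$ because $\maV_2=\maV$; the sum ranges over $I\in\mathbb{I}(\check G)=\mathbb{I}(\mathbb{G})$ and $\sigma\in\mathfrak{S}_{|I|}$, while the factor in $\Pi_W(w)$ is unchanged. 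These are exactly the product form and the normalizing constant of \cite[Theorem 1]{MBM17}, which is what ``retrieving the result'' means.

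To make the statement concrete for this $\mathbb{G}$, I would compute the relevant neighbourhoods, $\mu(\maE(\{2\}))=1-\mu(2)$, $\mu(\maE(\{3\}))=1-\mu(3)$, and $\mu(\maE(\{1\}))=\mu(\maE(\{4\}))=\mu(\maE(\{1,4\}))=\mu(2)+\mu(3)$, using $\mathbb{I}(\mathbb{G})=\{\emptyset,\{1\},\{2\},\{3\},\{4\},\{1,4\}\}$. Plugging these into the product form gives $\Pi_W(2^i)=\alpha\,(\mu(2)/(1-\mu(2)))^i$, $\Pi_W(3^j)=\alpha\,(\mu(3)/(1-\mu(3)))^j$, and $\Pi_W(w)=\alpha\,\mu(1)^k\mu(4)^\ell/(\mu(2)+\mu(3))^{k+\ell}$ for any word $w$ over $\{1,4\}$ having $k$ occurrences of $1$ and $\ell$ of $4$, since every prefix of such a word has distinct-letter set contained in $\{1,4\}$ and hence neighbourhood $\{2,3\}$.

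Finally, as a consistency check I would project $\Pi_W$ onto the class content and compare with $\pi$ of Proposition~\ref{prop:StabStationary}. For the singleton independent sets $\{2\}$ and $\{3\}$ this is immediate, the only admissible word with content $(0,i,0,0)$ (resp. $(0,0,j,0)$) being $2^i$ (resp. $3^j$). The main obstacle is the independent set $\{1,4\}$: there, many queue details share the same commutative image, and one first gets $\sum_{[w]=(k,0,0,\ell)}\Pi_W(w)=\binom{k+\ell}{k}\alpha\,\mu(1)^k\mu(4)^\ell/(\mu(2)+\mu(3))^{k+\ell}$; summing these over all $(k,\ell)$ with $k+\ell=n$ fixed and applying the binomial theorem yields $\alpha\left((\mu(1)+\mu(4))/(1-(\mu(1)+\mu(4)))\right)^n$, which is precisely the value attributed to the merged queue on the super-node $\{1,4\}$ in Proposition~\ref{prop:StabStationary}. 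Clarifying that $\pi$ there indexes this merged content, the classes $1$ and $4$ being indistinguishable for the matching dynamics, is the only genuinely non-mechanical point; everything else is a direct specialization of Theorem~\ref{thm:FCFM}.
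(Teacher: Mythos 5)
Your proposal is correct and follows essentially the same route as the paper: specialize the \textsc{fcfm} product form (Theorem~\ref{thm:FCFM} with $\maV_1=\emptyset$, i.e.\ \cite[Theorem 1]{MBM17}) to this complete $3$-partite graph, compute the neighbourhoods of the prefix sets, and sum over $\mathbb{I}(\mathbb{G})$ to get $\alpha$; your explicit values of $\Pi_W$ and of the normalizing constant match the paper's. Your closing consistency check against Proposition~\ref{prop:StabStationary} (with the binomial regrouping over words of fixed commutative image) is an extra step the paper does not carry out, and it correctly identifies that the formula there should be read on the merged $\{1,4\}$ content.
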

\begin{proof}
The set of admissible queue details $\mathbb{W}$ is given by, 
\begin{align*}
\mathbb{W}=&\{\varepsilon\}\cup\left\{2^k : k\geq 1\right\}\cup\left\{3^k : k\geq 1 \right\}\\
&\cup\left\{1^{r_1} 4^{\ell_1} 1^{r_2}4^{\ell_2}... : k,k'\geq 0, \; 0\leq r_i\leq k,\;0\leq \ell_j\leq k',\;i=\{1,2,...\},\;j=\{1,2,...\} \right\}.
\end{align*}
We compute explicitly $\Pi_W$, as the following values:
\[\left\{\begin{array}{llll}
\Pi_W(\varepsilon)&=\alpha  \\
\Pi_W(2^k)&=\alpha\left(\frac{\mu(2)}{1-\mu(2)}\right)^k \\
\Pi_W(3^k)&=\alpha\left(\frac{\mu(3)}{1-\mu(3)}\right)^k  \\
\Pi_W(1^{r_1} 4^{\ell_1} 1^{r_2}4^{\ell_2}...)&=\alpha\left(\frac{\mu(1)}{\mu(2)+\mu(3)}\right)^{r_1}\times\left(\frac{\mu(4)}{\mu(2)+\mu(3)}\right)^{\ell_1}\times\left(\frac{\mu(1)}{\mu(2)+\mu(3)}\right)^{r_2}\times\left(\frac{\mu(4)}{\mu(2)+\mu(3)}\right)^{\ell_2}...\\
&=\alpha\left(\frac{\mu(1)}{\mu(2)+\mu(3)}\right)^k\times\left(\frac{\mu(4)}{\mu(2)+\mu(3)}\right)^{k'}
\end{array}\right.\]
with
\begin{align*}
\alpha=\left[1 +\frac{\mu(1)}{\mu(2)+\mu(3)-\mu(1)}+\frac{\mu(2)}{1-2\mu(2)}+\frac{\mu(3)}{1-2\mu(3)}+\frac{\mu(4)}{\mu(2)+\mu(3)-\mu(4)}\right.
\\
+ \left(\frac{\mu(1)}{\mu(2)+\mu(3)-\mu(1)}\right)\left(\frac{\mu(4)}{\mu(2)+\mu(3)-\mu(1)-\mu(4)}\right)\qquad\\
+ \left.\left(\frac{\mu(4)}{\mu(2)+\mu(3)-\mu(4)}\right)\left(\frac{\mu(1)}{\mu(2)+\mu(3)-\mu(1)-\mu(4)}\right)\right]^{-1},
\end{align*}
where we use the fact that the set of independent sets reads 
$\mathbb{I}\left(\mathbb G\right)=\{\{1\},\{2\},\{3\},\{4\},\{1,4\}\}$.
\end{proof}


\section{Matching three by three}
\label{sec:Matching3X3}
We now consider the three by three matching procedure: for any triplet of items of respectives classes $(X_1,Y_1),\;(X_2,Y_2)$ and $(X_3,Y_3),$ then $X_1$ donates to $Y_2,$ $X_2$ donates to $Y_3$  and  $X_3$ donates to $Y_1.$ In this type of matching, the matching structure $\mathbb{S}$ is an hypergraph. Specifically, 
we set $\mathbb{S}=\mathbb{H}=(\maV,\maH)$, where the hyperedges are defined as $H_1=\{1,2,3\},\;  H_2=\{1,4,2\},\; H_3=\{4,3,1\},$ and $H_4=\{4,2,3\}$. In other words, the hypergraph is a complete $3$-uniform hypergraph as depicted in Figure \ref{fig:completeHyper}.

From Proposition \ref{prop:Ncond3} the necessary condition of stability for this matching model reads 
\begin{equation}
\label{eq:N3H}
\mathscr N^{-}_3(\mathbb H)=\left\lbrace\mu\in\mathscr{M}(\maV)\;:\;\mu(i)<1/3,\;\forall i\in\maV\right\rbrace.
\end{equation}
\begin{proposition}
\label{prop:Stab=NondH}
Consider the hypergraph $\mathbb{H}=(\maV, \maH)$ . Then, for all matching policy $\Phi$ the sets $\textsc{Stab}(\mathbb{H},\Phi)$ and $\mathscr N^-_3(\mathbb{H})$ coincide, in other words the general stochastic matching model $(\mathbb{H},\Phi,\mu)$ is stable if and only if
$\mu$ satisfies condition (\ref{eq:N3H}).
In other words we have 	
\begin{equation}
\label{eq:N3HS}
\textsc{Stab}(\mathbb{H},\Phi)=\left\lbrace
\mu\in\mathscr{M}(\maV)\;:\;\mu(i)<1/3,\;i\in\maV.\right\rbrace\end{equation}
\end{proposition}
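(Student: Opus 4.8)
The plan is to recognize that the matching structure $\mathbb{H}=(\maV,\maH)$ introduced in Section \ref{sec:Matching3X3} is nothing but the complete $3$-uniform hypergraph of order $4$, and then to invoke Theorem \ref{thm:stable3uniform} essentially verbatim.

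First I would check that $\mathbb{H}$ fits the hypotheses of Theorem \ref{thm:stable3uniform}. Here $\maV=\{1,2,3,4\}$, so $q(\mathbb{H})=4\ge 4$, and the hyperedge set is
\[\maH=\bigl\{\{1,2,3\},\{1,2,4\},\{1,3,4\},\{2,3,4\}\bigr\},\]
which is exactly the collection of all subsets of $\maV$ of cardinality $3$. Hence every hyperedge has size $3$, so $\mathbb{H}$ is $3$-uniform (i.e. $a(\mathbb{H})=r(\mathbb{H})=3$), and since all $3$-subsets appear, it is complete; it is also connected, since any two of its hyperedges intersect, so its representative graph $L(\mathbb{H})$ is complete. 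Thus $\mathbb{H}$ is precisely the complete $3$-uniform hypergraph of order $4$ already studied in Chapter \ref{chap4:Hypergraph}.

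Then I would apply Theorem \ref{thm:stable3uniform} directly: for any admissible matching policy $\Phi$,
\[\textsc{Stab}(\mathbb{H},\Phi)=\mathscr{N}_3^{-}(\mathbb{H})=\bigl\{\mu\in\mathscr{M}(\maV):\mu(i)<1/3,\ \forall i\in\maV\bigr\},\]
which is exactly the claimed identity (\ref{eq:N3HS}). In particular, the necessity of the condition $\mu(i)<1/3$ comes from Proposition \ref{prop:Ncond3}, via the $r$-uniform case (\ref{eq:Ncondkunifcomplet}) with $r=3$, while its sufficiency is the Lyapunov argument carried out inside the proof of Theorem \ref{thm:stable3uniform}, resting on the auxiliary planar Markov chain $\suite{U^\alpha_n}$ and the classification result Theorem \ref{theo:TheoremFayolMalMain}.

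Since the proposition is an immediate specialization of Theorem \ref{thm:stable3uniform}, there is no genuine obstacle to overcome; the only points requiring care are the bookkeeping identification of the application's hypergraph with the abstract object of Chapter \ref{chap4:Hypergraph}, and recalling that the rewriting $\mathscr{N}_3^{-}(\mathbb{H})=\{\mu:\mu(i)<1/3,\ \forall i\}$ relies on $a(\mathbb{H})=3$ in the definition (\ref{eq:Ncond3-}).
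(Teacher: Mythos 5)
Your proposal is correct and matches the paper's own proof, which likewise observes that $\mathbb{H}$ is the complete $3$-uniform hypergraph of order $4$ and invokes Theorem \ref{thm:stable3uniform} directly. The additional verification you provide (that all four $3$-subsets appear, that the hypergraph is connected, and that $a(\mathbb{H})=3$ in the definition of $\mathscr N_3^{-}$) is sound bookkeeping that the paper leaves implicit.
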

\begin{proof}
As $\mathbb{H}$ is a complete 3-uniform hypergraph of order 4, the result follows from Theorem \ref{thm:stable3uniform}.
\end{proof}	

\section{Comparaison between $\textsc{Stab}(\mathbb G,\Phi)$ and $\textsc{Stab}(\mathbb H,\Phi)$}
\label{sec:ComparaisonConclusion}
To summarize, we obtain that the stability region of the two-by-two matching system 
is $\textsc{Stab}(\mathbb G,\Phi)$ given by (\ref{eq:NcondGS}), whereas the stability region of the three-by-three 
matching system is $\textsc{Stab}(\mathbb H,\Phi)$, given by (\ref{eq:N3HS}).

Let us observe that the two regions are not included in one another. Indeed, it is easily checked that 
the probability measure $\mu(1)=0.25,\;\mu(2)=0.35,\;\mu(3)=0.2$ and $\mu(4)=0.2$,
is an element $\textsc{Stab}(\mathbb G,\Phi)$ but not of 
$\textsc{Stab}(\mathbb H,\Phi).$ On the other hand, the probability measure 
$\mu(1)=0.3,$ $\mu(2)=0.2,$ $\mu(3)=0.2$ and	$\mu(4)=0.3$ clearly belongs to 
$\textsc{Stab}(\mathbb H,\Phi)$, but is not an element of  $\textsc{Stab}(\mathbb G,\Phi).$ \\

{\bf Conclusion 1:} Take the stability of the system as a primary performance criterion. 
Then, the one-by-one matching is never stable. Second, it is preferable to perform matchings two-by-two in some cases, and matchings three-by-three in other cases. \\

The intersection between the two stability regions is given by 
\begin{equation}
\label{eq:intersect}
\textsc{Stab}(\mathbb G,\Phi)\bigcap\textsc{Stab}(\mathbb H,\Phi):=\left\lbrace\begin{array}{lccc}
\mu(i)<1/3&i\in\maV;\\
\mu(1)+\mu(4)<1/2.\end{array}\right.
\end{equation}

The question arising now is the following: suppose that $\mu$ satisfies (\ref{eq:intersect}), that is, it belongs to both $\textsc{Stab}(\mathbb G,\Phi)$ and $\textsc{Stab}(\mathbb H,\Phi)$, implying that both systems are stable. Considering a secondary performance criterion, the frequency of construction points, that is, of visits to the zero state, {\bf what is the best matching procedure} between two by two and three by three matchings? 

We saw in Section \ref{sec:Matching2X2} that the probability of finding an empty system in steady-state can be given in closed form for the two-by-two matching system. However, such a result is, to date, not available for the three-by-three system. We then resort to simulations to compare the two procedures. 

We have simulated one thousand trajectories of the three-by-three system, each one starting from an empty system and consisting of one million arrivals. Then, 
\begin{itemize}
\item[\textbf{Step 1:}] We count the total number of construction points over the million arrivals, and we get the average over the thousand trajectories.
\item[\textbf{Step 2:}] We count the total number of empty buffers at the instance 999999 for each of one million arrivals and get the average over the thousand trajectories.
\end{itemize}

In Table \ref{tab:SimulationAppli} hereafter we present our results for different distributions $\mu$, applying for each distribution the two aforementioned steps. We denote the first average by `Trajectorial Average' and the average of the empty buffers at time 999999 by `Av. EB.' It is obvious that the three-by-three matching system is 3-periodic. Thus, starting from the empty state, it can be empty only at times that are multiples of three. So, it is only pertinent to compare the average of construction points for the whole trajectories (fourth column) to the third of the number of construction points seen at time 999999, a multiple of three (see the fifth column).

\begin{table}[htp]
	\centering
	\begin{tabular}{|c|c|c|c|c|l|l|}
		\hline
		\textbf{$\mu(1)$}&\textbf{$\mu(2)$}&\textbf{$\mu(3)$}&\textbf{$\mu(4)$}& \textbf{Trajectorial Average}&\textbf{Av. EB}&\textbf{$\pi(0,0,0,0)$}\\
		\hline
		\hline
		0.25&0.27&0.25&0.23&0.05137131
		&0.155
		
		&0.07098765
		\\
		\hline
		0.25&0.26&0.25&0.24& 0.05348423
		&0.160
		
		&0.03767661
		\\
		\hline
		0.25&0.264&0.25&0.236& 0.05279672
		&0.17
		
		&0.05150268
		\\
		\hline
		0.25&0.263&0.25&0.237&0.05294198
		&0.156
		
		&0.04811018
		\\
		\hline
		
		
		
		0.25&0.265&0.25&0.235&0.05260348
		&0.157
		
		&0.05485314
		\\
		\hline
		0.19&0.26&0.25&0.3&0.03445104
		&0.115
		
		&0.03767661
		\\
		\hline
		
		0.25&0.3&0.21&0.24&0.03740660
		&0.109
		
		&0.03757694
		\\
		\hline
		0.25&0.32&0.19&0.24&0.01779171
		&0.062
		
		&0.03745972
		\\
		\hline
		0.17&0.26&0.25&0.32&0.01639216
		&0.035
		
		&0.03767661
		\\
		\hline
		0.18&0.32&0.32&0.18&0.00542774
		&0.018
		
		&0.24609380
		\\
		\hline
		0.197&0.253&0.25&0.3&0.03558512
		&0.111
		
		&0.01178613
		\\
		\hline
	\end{tabular}
	\caption{The comparison of the stationary probability on $\mathbb{G}$ and the simulated results of the matching model on $\mathbb{H}.$}
	\label{tab:SimulationAppli}
\end{table}

%
	
	
	
%
	







{\bf Conclusion 2:} In Table \ref{tab:SimulationAppli} we emphasize, first, the speed of convergence to the steady-state (approximated by the final state - column 5), as the results of columns 4 and 5 tend to coincide. Second, when comparing the frequency of construction points for the three-by-three system to the stationary probability of an empty two-by-two system, we see that the first one seems to perform better in some cases, while the second performs better for some other values of $\mu$. 


\section{Discussion of results and conclusion}  
In this chapter, we have proposed simple modeling of kidney transplant systems using stochastic matching models. We have shown that models on various matching structures (bipartite models, general models on graphs, and models on hypergraphs) are suitable to various contexts. In the context of cross-transplants, a simple case study has shown that there is no clear hierarchy between cross-transplants by pairs of couples (matchings two-by-two) and cross-transplants by triplets of couples (matching three by three) when it comes to comparing the stability regions.
In some cases, one system is stable while the other is not. 
Second, we show that the same remark holds if we compare the frequency of construction points in simulated three-by-three systems to the (exact) value of the steady-state probability of the two-by-two system being empty: the first performs better for some values of $\mu$, while the second performs better otherwise. Moreover, we do not understand which is the best matching procedure.



\clearpage
\pagestyle{empty}

\addcontentsline{toc}{chapter}{Conclusion and Perspectives}
\clearpage
\chapter*{Conclusion and perspectives}
\pagestyle{fancy}
\fancyhf{}
\fancyfoot[CE,CO]{\thepage}
\fancyhead[RE,LO]{{\it CONCLUSIONS AND PERSPECTIVES}}

In this thesis, we have studied a generalization of stochastic matching models on the graph, by allowing the matching structure to be a hypergraph or multigraph. 
This class of models appears to have a wide range of applications in operations management, healthcare, and assemble-to-order systems. 
After formally introducing the model, we have proposed a simple Markovian representation, under IID assumptions. 
We have then addressed the general question of stochastic stability, viewed as the positive recurrence of the underlying Markov chain. For this class of systems, solving this elementary and central question turns out to be an intricate problem.

In this thesis, the stochastic matching model in discrete-time is formally defined as follows: items enter the system by a single. On other hand, in continuous-time items enter the system according to an independent Poisson process of intensity $\lambda>0.$ The arrivals  get matched by groups of 2 or more (hypergraphical cases), following compatibilities that are represented by a given hypergraph and by groups of 2 with possible compatibility with itself (multigraphical cases). A matching policy determines the matchings to be executed in the case of a multiple-choice, and the unmatched items are stored in a buffer, waiting for a future match.

\addcontentsline{toc}{section}{1. Conclusion}
\section*{1. Conclusion}
\label{sec:conclu}
Stochastic matching models on hypergraphs are in general, difficult to stabilize. Unlike the GM on graphs, the non-emptiness of the stability region on matching models on hypergraphs depends on a collection of conditions on the geometry of the compatibility hypergraph: rank, anti-rank, degree, size of the transversals, existence of cycles, and so on. 

Nevertheless, we showed that the `house' of stable systems is not empty, but shelters models on various uniform hypergraphs that are complete, or complete up to a partition of their nodes. 
 We have provided the \textit{exact} stability region of the system  $(\mathbb{H},\Phi,\mu)$ where $\mathbb{H}=(\maV,\maH)$ is a complete 3-uniform hypergraphs and for all admissible matching policy $\Phi,$ i.e. $\textsc{Stab}(\mathbb{H},\Phi)=\left\lbrace\mu\;;\;\mu(i)<1/3,\textrm{ for all }i\in\maV\right\rbrace.$ Also, we extended the exact stability region for complete 3-uniform $k$-partite hypergraph.  Moreover, we demonstrated a lower bound for the stability region for the incomplete 3-uniform hypergraphs for a matching policy {\sc ml}. For this, we resorted to ad-hoc multi-dimensional Lyapunov techniques in discrete-time (each step an item enters the system). \\

In this thesis, we have also studied a generalization of stochastic matching models on graphs by allowing the self-loops matching, that is, a stochastic matching model on multigraphs. Given a multigraph $G,$ its maximal subgraph $\check{G}$ obtained by deleting all self-loops in $G$ and the minimal blow-up graph $\hat{G}$ obtained from $G$ by duplicating each node having a self-loop by two nodes having the same neighborhood  and replacing each self-loop by an edge between the node and its copy.  From that, we transmit and generalize several results that are known for $\check{G}$ and $\hat{G}$ to their multigraphs. 


Also, in this context, we have provided the \textit{exact} stability region under {\sc fcfm} and {\sc MW} policy such that $\beta>0.$ Given a multigraph $G=(\maV_1\cup\maV_2,\maE),$ we introduced a new matching policy called $\maV_2$-favorable, that is, for any arrival items always prioritizes to match with an item in $\maV_2$ over an item in $\maV_1.$ 

In addition, we have proved that if $\check{G}$ is a complete $p$-partite graph, $(p\geq 2)$, then for $p\geq 3$ or $\maV_1\neq 0,$ any $\maV_2$-favorable matching policy is maximal. \\


In this thesis, we have also studied a new technique that allows us to find the stability of the model by using the fluid limits in continuous-time (items of the various classes enter the system following the independent Poisson process of designated intensities). This new method consists of speeding up the time and rescaling the process to get a sort of caricature of the initial stochastic process. We provide further results that are in agreement with the previous results. The advantage of the continuous-time setting is that powerful fluid-limit techniques can be employed, which greatly facilitate the stability analysis.

Indeed, we have also studied several cases of multigraphs such as pendant graphs with a self-loop on such vertex, we proved a lower bound of stability region. In other words, the model is stable if and only if the necessary condition for stability on continuous-time $\textsc{Ncond}_C$ holds together with certain conditions. Moreover, we retrieved the results for some specific complete bipartite graphs with a self-loop on such vertex for $\maV_2$-favorable. Further, using the fluid limits technique, we provided the exact stability region of the model on complete 3-uniform hypergraphs of order 4.\\

To illustrate the practical relevance of our results, we also have studied an application of the stochastic matching model that addressed particular cases of living donors in the context of cross-matching. In that case, the items enter the system by couples $(X,Y)$ of family members, the first component $X$ represents the `giver' and the second component $Y$ represents the `receiver'. The problem consists of studied a special case in three types of matching on various matching structures:
\begin{itemize}
	\item bipartite graphs (matching one-by-one);
	\item general graphs (matching two-by-two);
	\item complete 3-uniform hypergraphs (matching three-by-three).
\end{itemize}
We studied the performance criterion between the two-by-two matching system and three-by-three matching system of the frequency of visits to zero state, to distinguish what is the best matching procedure. Motivated by a simulation of one thousand trajectories of three-by-three, we see that the two-by-two seems to perform better in some cases, while three-by-three performs better for some other values of $\mu$. In this instance, we cannot conclude which is the best.


\addcontentsline{toc}{section}{2. Perspectives}
\section*{2. Perspectives}

All the results obtained in this thesis, are according to some hypotheses: stochastic matching model on hypergraphs and multigraphs, stability region, steady-state, and so on. 
There is still much to do regarding this class of systems. Let us give a few directions of research that we are currently following, or aim to follow in a near future:
\begin{itemize}
	\item Finding an explicit form of the stationary probability of the chain $\{W_n;\;n\in\N\}$ of the model $(\mathbb H,\textsc{fcfm},\Phi).$ 
	\item Comparing the necessary conditions of stability of the model $(\mathbb H,\textsc{fcfm},\Phi)$ between them.
	\item Applying the proposed matching model on hypergraphs and multigraphs for several domains of applications.
	\item Comparing the simple hypergraphs, of the optimal policy with those obtained by the “Greedy Primal-Dual” optimization algorithms (Nazari and Stolyar, 2017), a dynamic control strategy introduced to maximize the utility of queue networks waiting subject to stability, which turns out to be (asymptotically) optimal in this case.
	\item Comparing the matching policies for the models on the hypergraphs and the determination of an optimal matching policy: \textsc{mw}? In which way? 
	\item Determining the exact zones of stability of the random pairing models on particular cases of hypergraphs, in particular for small non-trivial hypergraphs such as the Fano plane (the projective plane of size 7) or the models representing the networks of cross kidney donation with loop.
	\item Generalizing the results of Theorem \ref{thm:stable3uniform}, to the complete $k$-uniform hypergraphs. In that case, we conjecture that the stability region is equal to $\mathscr{N}^{-}_3(\mathbb{H})=\{\mu,\;\mu(i)<1/k,\;i\in\maV\}.$
\end{itemize}

We believe that the present thesis represents a good starting point for a fruitful avenue for research on such systems.


\addcontentsline{toc}{chapter}{Appendix}
\clearpage
\vspace{-0.5cm}
\chapter*{Appendix}
\pagestyle{fancy}
\fancyhf{}
\fancyfoot[CE,CO]{\thepage}
\fancyhead[RE,LO]{{\it THE TRANSITIONS FOR THE PROOF OF THEROEM} \ref{thm:stable3uniform}}
\section*{The transitions for the proof of Theorem \ref{thm:stable3uniform}}
(i) First, for any $i\in\bar{J},$ and any integer $x_i\geq 2,$ the chain $\{Y_n;\;n\in\N\}$ makes the following transitions from state $Y_n=x_i.\gre_i,$	\begin{equation}
\label{eq:GtransY1}
Y_{n+1}=\left\{\begin{array}{ll}
Y_n+4\gre_i &\mbox{ w.p. }\mu(i)^4;\\
Y_n+3\gre_i+\gre_j &\mbox{ w.p. }4\mu(i)^3\mu(j);\\
Y_n+2\gre_i+2\gre_j &\mbox{ w.p. }6\mu(i)^2\mu(j)^2;\\
Y_n+\gre_i+3\gre_j &\mbox{ w.p. }4\mu(i)\mu(j)^3;\\
Y_n+4\gre_j &\mbox{ w.p. }\mu(j)^4;\\
Y_n+\gre_i &\mbox{ w.p. }12\mu(i)^2\mu(j)\mu(k);\\
Y_n+\gre_j &\mbox{ w.p. }12\mu(i)\mu(j)^2\mu(k);\\
Y_n-2\gre_i &\mbox{ w.p. }10\mu(j)^2\mu(k)\mu(\ell)\\
& \mbox{ (\scriptsize{the input has 2 $j$, 1 $k$ and 1 $\ell$, but does not end in $jj$});}\\
Y_n-\gre_i+2\gre_j &\mbox{ w.p. }2\mu(k)\mu(\ell)\mu(j)^2\\ 
&\mbox{ (\scriptsize{the input is of the form $ij jj$})};\\
Y_n-2\gre_i &\mbox{ w.p. }6\mu(j)^2\mu(k)^2;\\
Y_n-\gre_i+2\gre_j &\mbox{ w.p. }4\mu(j)^3\mu(k);\\
Y_n+\gre_j &\mbox{ w.p. }24\mu(i)\mu(k)\mu(\ell)\mu(j);\\
Y_n-2\gre_i &\mbox{ w.p. }24\mu(j)\mu(k)\mu(\ell)\mu(m).
\end{array}\right.
\end{equation}
(ii) For any $i\in J,$ and any integer $x_i\geq 2,$ the transitions of $\{Y_n;\;n\in\N\}$ from the state $x_i.\gre_i$ can be retrieved as a similar fashion to (\ref{eq:GtransY1}). 
 Set $H=\{i,j,k\}\subset J,$ we have the following transitions,
\begin{equation}
\label{eq:InCtransY1}
%
Y_{n+1}=\left\{\begin{array}{llll}
Y_n+4\gre_i &\mbox{ w.p. }\mu(i)^4;\\
Y_n+3\gre_i+\gre_{\ell}&\mbox{ w.p. }4\mu(i)^3\mu(\ell)&\ell\in\maV\backslash\{i\};\\
Y_n+2\gre_i+2\gre_{\ell} &\mbox{ w.p. }6\mu(i)^2\mu(\ell)^2&\ell\in\maV\backslash\{i\};\\
Y_n+\gre_i+3\gre_{\ell} &\mbox{ w.p. }4\mu(i)\mu(\ell)^3&\ell\in\maV\backslash\{i\};\\
Y_n+4\gre_{\ell} &\mbox{ w.p. }\mu(\ell)^4&\ell\in\maV\backslash\{i\};\\
%
Y_n+\gre_j &\mbox{ w.p. }12\mu(i)\mu(j)^2\mu(\ell)&\ell\in \overline{H};\\
Y_n+\gre_{\ell}&\mbox{ w.p. }12\mu(i)\mu(j)\mu(\ell)^2&\ell\in \overline{H};\\
Y_n+\gre_{\ell} &\mbox{ w.p. }12\mu(i)\mu(\ell)^2\mu(m)&\ell\ne m\in \overline{H};\\
Y_n+\gre_j&\mbox{ w.p. }6\mu(i)\mu(j)\mu(\ell)\mu(m)&\ell \ne m \in \overline{H}&\textrm{ends with }j;\\
Y_n+\gre_{\ell}&\mbox{ w.p. }18\mu(i)\mu(j)\mu(\ell)\mu(m)&\ell \ne m \in \overline{H}&\textrm{otherwise};\\
Y_n+\gre_{\ell}&\mbox{ w.p. }24\mu(i)\mu(\ell)\mu(m)\mu(p)&\ell\ne m \ne p\in\overline{H};\\
Y_n+\gre_j+3\gre_k&\mbox{ w.p. }4\mu(j)\mu(k)^3;\\
Y_n-\gre_i+2\gre_{\ell} &\mbox{ w.p. }4\mu(j)\mu(\ell)^3&\ell\in \overline{H};\\
Y_n-\gre_i+2\gre_k &\mbox{ w.p. }2\mu(j)\mu(k)^2\mu(\ell)&\ell \in \overline{H}&\textrm{ends with }kk;\\
Y_n-\gre_i+\gre_j+\gre_k &\mbox{ w.p. }10\mu(j)\mu(k)^2\mu(\ell)&\ell \in \overline{H}&\textrm{otherwise};\\

Y_n-2\gre_i &\mbox{ w.p. }12\mu(j)\mu(k)\mu(\ell)^2&\ell\in \overline{H};\\

Y_n-\gre_i+2\gre_{\ell} &\mbox{ w.p. }2\mu(j)\mu(\ell)^2\mu(m)&\ell\ne m\in\overline{H}&\textrm{ends with }\ell\ell;\\
Y_n-2\gre_i &\mbox{ w.p. }10\mu(j)\mu(\ell)^2\mu(m)&\ell\ne m\in\overline{H}&\textrm{otherwise};\\

Y_n-\gre_i+\gre_j+\gre_k&\mbox{ w.p. }4\mu(j)\mu(k)\mu(\ell)\mu(m)&\ell \ne m \in \overline{H}&\textrm{ends with }jk;\\
Y_n-2\gre_i&\mbox{ w.p. }20\mu(j)\mu(k)\mu(\ell)\mu(m)&\ell \ne m \in \overline{H}&\textrm{otherwise};\\

Y_n-2\gre_i&\mbox{ w.p. }24\mu(j)\mu(\ell)\mu(m)\mu(p)&\ell\ne m \ne p\in\overline{H};\\

Y_n+2\gre_i+\gre_j+\gre_k &\mbox{ w.p. }12\mu(i)^2\mu(j)\mu(k);\\
Y_n+\gre_i&\mbox{ w.p. }12\mu(i)^2\mu(j)\mu(\ell)&\ell \in \overline{H};\\

Y_n+\gre_i &\mbox{ w.p. }12\mu(i)^2\mu(\ell)\mu(m)&\ell\ne m\in\overline{H};\\
%
Y_n+2\gre_j+2\gre_k &\mbox{ w.p. }6\mu(j)^2\mu(k)^2;\\
Y_n-2\gre_i &\mbox{ w.p. }6\mu(j)^2\mu(\ell)^2&\ell\in\overline{H};\\
Y_n-\gre_i+2\gre_j &\mbox{ w.p. }2\mu(j)^2\mu(\ell)\mu(m)&\ell\ne m\in\overline{H}&\textrm{ends with }jj;\\
Y_n-2\gre_i &\mbox{ w.p. }10\mu(j)^2\mu(\ell)\mu(m)&\ell\ne m\in\overline{H}&\textrm{otherwise};\\

%
%
Y_n-\gre_i+2\gre_j &\mbox{ w.p. }4\mu(j)^3\mu(\ell)&\ell\in\overline{H};\\
%
Y_n+\gre_i+\gre_j+2\gre_k&\mbox{ w.p. }12\mu(i)\mu(j)\mu(k)^2;\\
Y_n+\gre_j &\mbox{ w.p. }24\mu(i)\mu(j)\mu(k)\mu(\ell)&\ell \in \overline{H};\\
%
%

Y_n-\gre_i+2\gre_{\ell}&\mbox{ w.p. }4\mu(\ell)^3\mu(m)&\ell\ne m\in\overline{H};\\
Y_n-2\gre_i &\mbox{ w.p. }6\mu(\ell)^2\mu(m)^2&\ell\ne m\in \overline{H};\\

Y_n-\gre_i+2\gre_{\ell} &\mbox{ w.p. }2\mu(\ell)^2\mu(m)\mu(p)
&\ell\ne m\ne p \in\overline{H}&\textrm{ends with }\ell\ell;\\
Y_n-2\gre_i &\mbox{ w.p. }10\mu(\ell)^2\mu(m)\mu(p)
&\ell\ne m\ne p \in\overline{H}&\textrm{otherwise};\\
Y_n-2\gre_i&\mbox{ w.p. }24\mu(\ell)\mu(m)\mu(p)\mu(s)&\ell\ne m\ne p \ne s\in\overline{H}.
\end{array}\right.
\end{equation}
Then we decduce that,\begin{multline}
\Delta'_{i}:=\esp{Q\left(Y_{n+1}\right) - Q\left(Y_n\right) | Y_n = x_i.\gre_i}=\\
\begin{aligned}
&\;(8a_i+16)\mu^4(i) + 4(6a_i+10)\mu^3(i)\sum_{\ell\in\maV} \mu(\ell) + 6(4a_i+8)\mu^2(i)\sum_{\ell\in\maV} \mu^2(\ell) 
+ 4(2a_i+10)\mu(i)\sum_{\ell\in\maV} \mu^3(\ell)\\
&+16\sum_{\ell\in\maV} \mu^4(\ell) 
+ 24 \mu(i) \sum_{\substack{j\in H:\\ \ell\in \overline{H}}} \mu^2(j)\mu(\ell)
+ 24 \mu(i) \sum_{\substack{j\in H:\\ \ell\in \overline{H}}} \mu(j)\mu^2(\ell)
+ 24 \mu(i) \sum_{\substack{\ell,m\in\overline{H}}}\mu^2(\ell)\mu(m)\\
&+ 12 \mu(i) \sum_{\substack{j\in H:\\ \ell,m\in \overline{H}:\\\textrm{ends with }j}} \mu(j)\mu(\ell)\mu(m)
+ 36 \mu(i) \sum_{\substack{j\in H:\\ \ell,m\in \overline{H}:\\\textrm{otherwise}}} \mu(j)\mu(\ell)\mu(m)
+ 48 \mu(i) \sum_{\substack{\ell,m,p\in \overline{H}}} \mu(\ell)\mu(m)\mu(p)\qquad\\
&+44\sum_{\substack{j,k\in H}}\mu(j)\mu^3(k)-4(2a_i+5) \sum_{\substack{j\in H:\\\ell\in \overline{H}}} \mu(j) \mu^3(\ell)-2(2a_i+5) \sum_{\substack{j,k\in H:\\\ell\in\overline{H}:\\\textrm{ends with }kk}}  \mu(j)\mu^2(k)\mu(\ell)\\
&-10(2a_i+3)\sum_{\substack{j,k\in H:\\\ell\in\overline{H}:\\\textrm{otherwise}}} \mu(j)\mu^2(k)  \mu(\ell)
-12(4a_i+4) \mu(j)\mu(k) \sum_{\substack{\ell\in\overline{H}}} \mu^2(\ell)\\
&-2(2a_i+5) \sum_{\substack{j\in H:\\\ell\in\overline{H}:\\\textrm{ends with }\ell\ell}} \mu(j)\mu^2(\ell)\mu(m)
-10(4a_i+4) \sum_{\substack{j\in H:\\\ell\in\overline{H}:\\\textrm{otherwise}}} \mu(j)\mu^2(\ell)\mu(m)\\
&-4(2a_i+3) \mu(j)\mu(k)\sum_{\substack{\ell,m\in\overline{H}:\\\textrm{ends with }jk}} \mu(\ell)\mu(m)
-20(4a_i+4) \mu(j)\mu(k)\sum_{\substack{\ell,m\in\overline{H}:\\\textrm{otherwise}}} \mu(\ell)\mu(m)\\
&-24(4a_i+4) \sum_{\substack{j\in H:\\\ell,m\in\overline{H}}} \mu(j)\mu(\ell)\mu(m)\mu(p)
+ 12(4a_i+6) \mu^2(i)\mu(j)\mu(k)\\
&+12(2a_i+1) \mu^2(i)\sum_{\substack{j\in H:\\\ell\in\overline{H}}} \mu(j)\mu(\ell)+12(2a_i+1) \mu^2(i)\sum_{\substack{\ell,m\in\overline{H}}} \mu(\ell)\mu(m)
+48 \mu^2(j)\mu^2(k)\\
&-6(4a_i+4)\sum_{\substack{j\in H:\\\ell\in\overline{H}}} \mu^2(j)\mu^2(\ell)
-2(2a_i+5)\sum_{\substack{j\in H:\\\ell\in\overline{H}:\\\textrm{ends with }jj}} \mu^2(j)\mu(\ell)\mu(m)\\
&-10(4a_i+4)\sum_{\substack{j\in H:\\\ell\in\overline{H}:\\\textrm{otherwise}}} \mu^2(j)\mu(\ell)\mu(m)
-4(2a_i+5)\sum_{\substack{j\in H:\\\ell\in\overline{H}}} \mu^3(j)\mu(\ell)
+12(2a_i+6)\mu(i)\sum_{\substack{j,k\in H}}\mu(j)\mu^2(k)\\
&+48\mu(i)\mu(j)\mu(k)\sum_{\substack{\ell\in\overline{H}}}\mu(\ell)
-4(2a_i+5)\sum_{\substack{\ell,m\in\overline{H}}} \mu^3(\ell)\mu(m)
-6(4a_i+4)\sum_{\substack{\ell,m\in\overline{H}}} \mu^2(\ell)\mu^2(m)\\
&-2(2a_i+5)\sum_{\substack{\ell,m,p\in\overline{H}:\\ \textrm{ends with }\ell\ell}} \mu^2(\ell)\mu(m)\mu(p)
-10(4a_i+4)\sum_{\substack{\ell,m,p\in\overline{H}:\\\textrm{otherwise}}} \mu^2(\ell)\mu(m)\mu(p)\\
&-24(4a_i+4)\sum_{\substack{\ell,m,p,s\in\overline{H}}} \mu(\ell)\mu(m)\mu(p)\mu(s)
=\nu_{i}(\mu)x_{i}+\beta'_{i}(\mu),
\label{eq:InCLyapY1}
\end{aligned}
\end{multline}
\clearpage
\addcontentsline{toc}{chapter}{Bibliography}


\begin{thebibliography}{99}
\footnotesize \itemsep=0pt
\fancyhead{}
\renewcommand{\headrulewidth}{1pt}

\fancyfoot[CE,CO]{\thepage}
\fancyhead[RE,LO]{{\it BIBLIOGRAPHY}}
	
	
	
	
	
	
	\bibitem{AW11}
	I.~Adan and G.~Weiss (2012). 
	\newblock Exact FCFS matching rates for two infinite multi-type sequences.
	\newblock {\em Operations Research}, {\bf 60}(2):475--489, 2012.
	
	
	\bibitem{ABMW17}
	{I. Adan, A. Busic, J. Mairesse and G. Weiss (2017).} Reversibility and further properties of the FCFM Bipartite matching model. 
	ArXiv math.PR 1507.05939.  
	
	
	\bibitem{AKRW18}
	{I. Adan, I. Kleiner, R. Righter and G. Weiss (2018).} {FCFS parallel service systems and matching models}. 
	{\em Performance Evaluation} {\bf 127}: {253--272}. 
	
	\bibitem{AW14}
	{I.~Adan and G.~Weiss (2014).}
	A skill based parallel service system under FCFS-ALIS -- steady state, overloads, and abandonments.
	{\em Sto]chastic Systems}, {\bf 4}(1), 250--299, 2014.
	
	\bibitem{AAM07}
	{Z. Aksin, M. Armony and V. Mehrotra (2007).} The modern call-center, a multi-disciplinary
	perspective on operations management research. Production and Operations Manage-
	ment 16 (6), forthcoming
	
	\bibitem{Ber89}
	{C. Berge (1989).} Hypergraphs : Combinatorics of Finite Sets. North-Holland,
	3rd edition, 1989.
	
	\bibitem{BDPS11}
	{O. Boxma, I. David, D. Perry and W. Stadje  (2011).} 
	A new look at organ transplantation models and double matching queues. {\em Probab. Engineer. Inf. Sciences} {\bf 25}: 135-155.
	
	\bibitem{Bre99}
    {P. Br\'emaud (1999).} Markov Chains: Gibbs Fields, Monte Carlo Simulation, and Queues (Texts Appl. Math. {\bf 31}). Springer, new York.
	
	\bibitem{BC15} 
	{B. Buke and  H. Chen (2015).} Stabilizing Policies for Probabilistic Matching Systems. {\em Queueing Syst. Theor. Appl.} \textbf{80}(1-2): 35--69.
	
	\bibitem{BC17} 
	{B. Buke and H. Chen (2017).} Fluid and diffusion approximations of probabilistic matching systems. {\em Queueing Syst. Theor. Appl.} \textbf{86}(1-2): 1--33.
	
	\bibitem{BGM13}
	{A. Bu$\check{\mbox{s}}$i\'c, V. Gupta and  J. Mairesse (2013).} Stability of the bipartite matching model. 
	{\em Advances in Applied Probability} 45(2): 351-378. 
	
	\bibitem{BM14} 
	{A. Bu$\check{\mbox{s}}$i\'c and S. Meyn (2014).} Approximate optimality with bounded regret in dynamic matching models. ArXiv math.PR 1411.1044, 2014.
	
	\bibitem{BMMR20}
	{J. Begeot, I. Marcovici, P. Moyal, Y. Rahme (2020).} A general stochastic matching model on multigraphs. arXiv preprint arXiv:2011.05169.
	
	\bibitem{BEC16}
	{P. Bailey, A. Edwards and AE. Courtney (2016).} Living kidney donation. BMJ. 2016 Sep 14;354:i4746. doi: 10.1136/bmj.i4746. PMID: 27629818.
	
	
	\bibitem{CKW09}
	{R.~Caldentey, E.H. Kaplan, and G.~Weiss (2009).} 
	\newblock {FCFS} infinite bipartite matching of servers and customers.
	\newblock {\em Adv. Appl. Probab}, 41(3):695--730, 2009.
	
	\bibitem{C91}
	{F. Comets (1991)}, Limites hydrodynamiques, Asterisque (1991),	no. 201-203, Exp. No. 735, 167-192 (1992), Seminaire Bourbaki, Vol. 1990/91.

	
	\bibitem{JD95}
	{J.G. Dai (1995).} On positive Harris recurrence of multiclass queueing networks: a unified
	approach via fluid limit models. The Annals of Applied Probability, 49–77, 1995.
	
	\bibitem{DMKPFL15}
	{A.E. Davis, S. Mehrotra, V. Kilambi, O. Perry, J.J. Friedewald, D.P. Ladner (2015).} Addressing	US national geographic disparity in kidney transplantation by creating sharing partnerships, Working Paper 2015.
	
	\bibitem{FMM95}
	{G. Fayolle, G.,V. A. Malyshev, and M.V. Menshikov (1995).} Topics in the Constructive Theory of Countable Markov Chains. Cambridge University Press.
	
	\bibitem{GW14}
	{I.~Gurvich and A.~Ward (2014).} 
	\newblock On the dynamic control of matching queues.
	\newblock {\em Stochastic Systems}, {\bf 4}(2), 1--45, 2014.
	
	\bibitem{GLM16}
	{I. Gurvich, M. Lariviere and A. Moreno. (2016).}
	\newblock Operations in the on-demand economy:
	Staffing services with self-scheduling capacity. Available at SSRN 2336514, 2016.
	
	\bibitem{GM2000}
	{O. Garnett and A. Mandelbaum (2000).} An introduction to skills-based routing and its operational complexities. Teaching notes, 114.
	
	\bibitem{PH95} 
	{P.E. Haxell (1995).} A condition for matchability in hypergraphs. {\em Graphs and Combinatorics} {\bf 11}(3):245–248.
	
	\bibitem{Hall35}
	{Ph. Hall (1935).} On Representatives of Subsets. {\em J. London Math. Soc.}, {\bf 10}(1) 26--30, 1935.
	
	\bibitem{HD19}
	 {S.K. Harm and N.M. Dunbar (2019).} Immunologic risks of whole blood: ABO compatibility, D-alloimmunization, and transfusion-related acute lung injury. In Transfusion, Vol. 59, (pp 1507-1511). 
	 
	 \bibitem{HJS19}
     {C. M. Holscher, K. R. Jackson, and D. L. Segev (2020).} Transplanting the untransplantable. American Journal of Kidney Diseases, 75(1), 114-123.
	
	
	\bibitem{I16}
	{R. Ibrahim (2016).}
	\newblock  Staffing a service system where capacity is random. Working paper, 2016.
	

	\bibitem{JMRS20}
	{M. Jonckheere, P. Moyal, C. Ramirez and N. Soprano-Loto (2020).} 
	\newblock Generalized Max-Weight policies in stochastic matching. \newblock {ArXiv} math.PR/2011.04535.
	
	\bibitem{Kurtz92}
	{T.G. Kurtz (1992).} Averaging for martingale problems and stochastic approximation. In Applied Stochastic Analysis (pp. 186-209). Springer, Berlin, Heidelberg.
	
	\bibitem{Kulkarni17}
	{V.G. Kulkarni (2016).} Modeling and analysis of stochastic systems. Crc Press.
	
	\bibitem{LZ13}
	{J. Luo and J. Zhang (2013).} Staffing and control of instant messaging contact centers. Operations Research, 61(2), 328–343, 2013.
	
	\bibitem{MaiMoy16}
	{J. Mairesse and P. Moyal (2016).} 
	Stability of the stochastc matching model. 
	{\em Journal of Applied Probability} 53(4): 1064-1077. 
	
	\bibitem{M08}
	{P. Moyal (2008).} Comparison of service disciplines in real-time queues. Operations Research Letters 36(4), 496-499.
	
	\bibitem{M13}
	{P. Moyal (2013).} On queues with impatience: stability, and the optimality of Earliest Deadline First. Queueing Systems: Theory and Applications 75 (2-4), 211-242.
	
	\bibitem{MBM17}
	{P. Moyal, A. Bu$\check{\mbox{s}}$i\'c and J. Mairesse (2017).} A product form and a sub-additive theorem for the general stochastic matching model. ArXiv math.PR/1711.02620. 
	
	\bibitem{MBM18}
	{P. Moyal, A. Bu$\check{\mbox{s}}$i\'c and J. Mairesse (2018).}  Loynes construction for the Extended bipartite matching. 
	{ArXiv} math.PR/1803.02788;
	
	\bibitem{MoyPer17}
	{P. Moyal and O. Perry (2017).} 
	On the instability of matching queues. {\em Annals of Applied Probability} {\bf 27}(6): 3385-3434. 
	
	\bibitem{NS16}
	{M. Nazari and A.L. Stolyar (2016).} Optimal control of general dynamic matching systems. {\em Preprint}, ArXiv math. PR/1608.01646.
	
	\bibitem{OW19}
	{E. Özkan and A. Ward (2019).} Dynamic matching for real-time ridesharing. Stochastic Systems, 10(1), 29-70, 2020.
	
	\bibitem{OYO18}
	{M. \"{O}zen, S. Yılmaz, T. \"{O}zkan, \& al (2018).} Incomplete Antibodies May Reduce ABO Cross-Match Incompatibility: A Pilot Study. Turkish Journal of Hematology, 35(1), 54.
	
	\bibitem{PW96}
	{J.G. Propp and D.B. Wilson (1996).} Exact sampling with coupled Markov chains and applications to stastical mechanics. Random structures and Algorithms 9(1-2): 223–252, 1996.	
	
	\bibitem{PW11}
	{O. Perry, W. Whitt (2011).} An ODE for an overloaded X model involving a stochastic averaging principle. Stochastic Systems, 1 (1), 17–66, 2011.
	
	\bibitem{PW13}
	{O. Perry and W. Whitt (2013).} A fluid limit for an overloaded X model via a stochastic averaging principle. Mathematics of Operations Research, 38(2) 294 – 349, 2013.
	
	\bibitem{R13}
	{P. Robert (2013).} Stochastic networks and queues. {\em Springer Science \& Business Media.}
	
	
	\bibitem{RM19}
	{Y. Rahme and P. Moyal (2019).} A stochastic matching models on hypergraphs. ArXiv math.PR/1907.12711, 2019.
	
	\bibitem{RS92}
	{A.N. Rybko and A.L. Stolyar (1992).} Ergodicity of stochastic processes describing the operations of open queueing networks. Problems Inform. Transmission 28, 3–26, 1992 (in Russian).
	
	\bibitem{SRJ15}
	{B. Siddhartha, C. Riquelme and R. Johari. (2015).}
	\newblock Pricing in Ride-Share Platforms: A Queueing-Theoretic Approach.  Available at SSRN 2568258, 2015.	
	
	\bibitem{TW08}
	{R. Talreja and W. Whitt (2008).} 
	Fluid models for overloaded multiclass many-server queueing systems with first-come, first-served routing.
	{\em Management Science}, {\bf 54}(8), 1513--1527, 2008.
	
	
	\bibitem{W66}
	{W.M. Watkins (1966).} Blood-group substances. Science, 152(3719), 172-181.

	
	\bibitem{WZZ18}
	{S. Wu, J. Zhang and R.Q. Zhang (2018).} Management of a shared-spectrum network in wireless communications. Operations research, 66(4), 1119-1135.
	
	
	

	
\end{thebibliography}

\end{document}